\documentclass[12pt]{article}
\usepackage{amsmath,amssymb,amsthm}
\usepackage[frenchb,english]{babel}
\usepackage[latin1]{inputenc}
\usepackage{graphics}
\usepackage{graphicx}
\usepackage{epsfig}
\usepackage{amsfonts}
\usepackage{amssymb}
\usepackage{latexsym}
\usepackage{amscd}
\usepackage{color}
\usepackage{shadow}
\usepackage{a4wide}
\usepackage[all]{xy}

\usepackage{url}

\bibliographystyle{plain}

\numberwithin{equation}{section}

\setlength{\evensidemargin}{+0.30in}
\setlength{\evensidemargin}{+0.00in} \setlength{\oddsidemargin}
{+0.30in} \setlength{\oddsidemargin} {+0.00in}
\setlength{\textwidth}     {+6.50in} \setlength{\topmargin}
{-0.50in} \setlength{\topmargin}     {+0.00in}
\setlength{\textheight}    {+8.50in}
\parskip=3pt
\normalsize \makeatletter

\newtheorem{thm}{Theorem}[section]
\newtheorem{prop}[thm]{Proposition}

\newtheorem{lemma}[thm]{Lemma}

\theoremstyle{remark}
\newtheorem{assum}{Assumption}
\newtheorem{rmq}[thm]{Remark}

\newtheorem{dfn}[thm]{Definition}

\title{Counterexamples to the Strichartz inequalities for the wave equation in domains II}

\author{Oana Ivanovici  
 \\Johns Hopkins University\\ Department of Mathematics, Baltimore MD 21218\\
ivanovici@math.jhu.edu }
\date{ }

\begin{document}
\bigskip
\bigskip
\bigskip
\bigskip
\maketitle

\begin{abstract}
In this paper we consider a smooth and bounded domain $\Omega\subset\mathbb{R}^d$ of dimension $d\geq 2$ with smooth boundary $\partial\Omega$ and we   construct sequences of solutions to the wave equation with Dirichlet boundary conditions which contradict the Strichartz estimates of the free space, providing losses of derivatives at least for a subset of the usual range of indices. This is due to micro-local phenomena such as caustics generated in arbitrarily small time near the boundary. 

The result we obtain here is a generalization of the paper \cite{doi} where we provided a counterexample to the optimal Strichartz estimates for the wave equation in the particular case when the manifold is the Friedlander's model domain of dimension $2$.  The key tool which allows to generalize the result of \cite{doi} is Melrose's equivalence of glancing hypersurfaces theorem, together with a subtle reduction to the two-dimensional case. 
\end{abstract}

2000 Mathematics Subject Classification: 35L20, 58J30, 58J32.

%%%%%%%%%%%%%%%%%%%%%%%%%%%%%%%%%%%%%%%%%%%%%%%%%%%%%%%%%%%%%%%%%%%%%%%%%%%%

\section{Introduction}
Let $\Omega$ be a smooth manifold of dimension $d\geq 2$ with $C^{\infty}$ boundary $\partial\Omega$, equipped with a Riemannian metric $g$. Let $\Delta_{g}$ be the Laplace-Beltrami operator associated to $g$ on $\Omega$, acting on $L^{2}(\Omega)$ with Dirichlet boundary condition. Let $0<T<\infty$ and consider the wave equation with Dirichlet boundary conditions:
\begin{equation}\label{unde}
\left\{
      \begin{array}{ll}
      (\partial^{2}_{t}-\Delta_{g})u=0\ \mathrm{on} \ \Omega\times [0,T],\\
      u|_{t=0}=u_{0},\quad \partial_{t}u|_{t=0}=u_{1},\\
      u|_{\partial\Omega}=0.
      \end{array}
      \right.
\end{equation}
Strichartz estimates are a family of dispersive estimates on solutions $u:\Omega\times [0,T]\rightarrow\mathbb{C}$ to the wave equation \eqref{unde}. In their most general form, local Strichartz estimates state that
\begin{equation}\label{strichartz}
\|u\|_{L^{q}([0,T],L^{r}(\Omega))}\leq C(\|u_{0}\|_{\dot{H}^{\gamma}(\Omega)}+\|u_{1}\|_{\dot{H}^{\gamma-1}}),
\end{equation}
where $\dot{H}^{\gamma}(\Omega)$ denotes the homogeneous Sobolev space over $\Omega$ and where the pair $(q,r)$ is wave admissible in dimension $d$, i.e. it satisfies $2\leq q\leq\infty$, $2\leq r<\infty$ and moreover
\begin{equation}\label{adpair}
\frac{1}{q}+\frac{d}{r}=\frac{d}{2}-\gamma, \quad \frac{2}{q}+\frac{d-1}{r}\leq\frac{d-1}{2}.
\end{equation}
When equality holds in \eqref{adpair} the pair $(q,r)$ is called sharp wave admissible in dimension $d$. Estimates involving $r=\infty$ hold when $(q,r,d)\neq (2,\infty,3)$, but typically require the use of Besov spaces.

Our main result is work in the opposite direction. Roughly speaking, we show that if $\Omega\subset\mathbb{R}^{d}$ is a smooth and bounded domain of $\mathbb{R}^d$ and $(q,r)$ is a sharp wave-admissible pair in dimension $d\geq 2$ with $r>4$, then there exists $T=T(\Omega)$ such that the quotient
\[
\frac{\|u\|_{L^{q}([0,T],L^{r}(\Omega))}}{\|u_{0}\|_{\dot{H}^{\gamma+\frac{1}{6}(\frac{1}{4}-\frac{1}{r})}(\Omega)}+\|u_{1}\|_{\dot{H}^{\gamma+\frac{1}{6}(\frac{1}{4}-\frac{1}{r})-1}(\Omega)}}
\]
takes arbitrarily large values for suitable initial data $(u_{0},u_{1})$, i.e. a loss of at least $\frac{1}{6}(\frac{1}{4}-\frac{1}{r})$ derivatives is unavoidable in the Strichartz estimates for the wave flow.
\vskip5mm

The main motivation for the above types of Strichartz estimates comes from
applications to harmonic analysis and the study of nonlinear dispersive equations.
Estimates like \eqref{strichartz}  can be used to prove existence
theorems for nonlinear wave equations. 

In $\mathbb{R}^{d}$ and for $g_{ij}=\delta_{ij}$, Strichartz estimates in the context of the wave and Schr\"{o}dinger equations have a long history, beginning with Strichartz pioneering work \cite{stri77}, where he proved the particular case $q=r$ for the wave and (classical) Schr\"{o}dinger equation. This was later generalized to mixed $L^{q}((-T,T),L^{r}(\Omega))$ norms by Ginibre and Velo \cite{give85} for Schr\"{o}dinger equation, where $(q,r)$ is sharp admissible and $q>2$; the wave estimates were obtained independently by Ginibre-Velo \cite{give95} and Lindblad-Sogge \cite{ls95}, following earlier work by Kapitanski \cite{lev90}. The remaining endpoints for both equations  were finally settled by Keel and Tao \cite{keta98}. In that case $\gamma=\frac{(d+1)}{2}(\frac{1}{2}-\frac{1}{r})$ and one can obtain a global estimate with $T=\infty$; (see also Kato \cite{ka87}, Cazenave-Weissler \cite{cawe90}). 

However, for general manifolds phenomena such as trapped geodesics or finiteness of volume can preclude the development of global estimates, leading us to consider local in time estimates.

In the variable coefficients case, even without boundary, the situation is much more complicated: we simply recall here the pioneering work of Staffilani and Tataru \cite{stta02}, dealing with compact, non trapping perturbations of the flat metric and recent work of Bouclet and Tzvetkov \cite{botz} in the context of Schrodinger equation, which considerably weakens the decay of the perturbation (retaining the non trapping character at spatial infinity). On compact manifolds  without boundary, due to the finite speed of propagation, it is enough to work in coordinate charts and to establish local Strichartz estimates for variable coefficients wave operators in $\mathbb{R}^{d}$: we recall here the works by Kapitanski \cite{kap91} and Mockenhaupt, Seeger and Sogge \cite{moseso} in the case of smooth coefficients when one can use the Lax parametrix construction to obtain the appropriate dispersive estimates. In the case of $C^{1,1}$ coefficients, Strichartz estimates were shown in the works by Smith \cite{sm98} and by Tataru \cite{tat02}, the latter work establishing the full range of local estimates; here the lack of smoothness prevents the use of Fourier integral operators and instead wave packets and coherent state methods are used to construct parametrices for the wave operator.
In these situations, if the metric is sufficiently smooth (if it has at least two derivatives bounded), the Strichartz estimates hold as in the Euclidian case.

Let us recall the result for the flat space: if we denote by $\Delta$ the Euclidian Laplace operator, then
the Strichartz estimates for the wave equation posed on $\mathbb{R}^{d}$ read as follows (see \cite{keta98}):
\begin{prop}\label{proprd}
Let $(q,r)$ be a wave admissible pair in dimension $d\geq2$. If $u$ satisfies 
\begin{equation}
(\partial^{2}_{t}-\Delta)u=0,\quad 
[0,T]\times\mathbb{R}^{d},\quad u|_{t=0}=u_{0},\quad
\partial_{t}u|_{t=0}=u_{1}
\end{equation}
for some $0<T<\infty$, $u_{0}, u_{1}\in C^{\infty}(\mathbb{R}^{d})$, then there is a constant $C=C_{T}$ such that
\begin{equation}\label{striw}
\|u\|_{L^{q}([0,T],L^{r}(\mathbb{R}^{d}))}\leq C(\|u_{0}\|_{\dot{H}^{\frac{(d+1)}{2}(\frac{1}{2}-\frac{1}{r})}(\mathbb{R}^{d})}+\|u_{1}\|_{\dot{H}^{\frac{(d+1)}{2}(\frac{1}{2}-\frac{1}{r})-1}(\mathbb{R}^{d})}).
\end{equation}
\end{prop}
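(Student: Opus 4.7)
The plan is to follow the now-standard route combining a dispersive $L^1\to L^\infty$ estimate with energy conservation, and then to invoke the abstract Strichartz machinery of Keel-Tao. First I would reduce \eqref{striw} to an estimate for the half-wave propagator $U(t)=e^{it\sqrt{-\Delta}}$ by writing the solution to the wave Cauchy problem as
\[
u(t,\cdot) = \cos(t\sqrt{-\Delta})u_{0} + \frac{\sin(t\sqrt{-\Delta})}{\sqrt{-\Delta}}u_{1},
\]
so that it suffices to prove $\|U(t)f\|_{L^{q}_{t}L^{r}_{x}}\lesssim \|f\|_{\dot H^{\gamma}}$ for $(q,r)$ sharp wave admissible. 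A Littlewood-Paley decomposition $f=\sum_{k}f_{k}$, with $\widehat{f_{k}}$ supported on $|\xi|\sim 2^{k}$, then reduces matters via a parabolic rescaling to the single dyadic shell $|\xi|\sim 1$.

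The analytic heart of the argument is the dispersive decay
\[
\|U(t)P_{0}f\|_{L^{\infty}(\mathbb{R}^{d})} \lesssim (1+|t|)^{-(d-1)/2}\,\|f\|_{L^{1}(\mathbb{R}^{d})},
\]
which I would establish by stationary phase applied to the oscillatory integral for the kernel of $U(t)P_{0}$; the exponent $(d-1)/2$ reflects the fact that the unit sphere has $d-1$ non-vanishing principal curvatures. Combined with the trivial $L^{2}$ isometry $\|U(t)f\|_{L^{2}}=\|f\|_{L^{2}}$, this places $U(t)P_{0}$ into the abstract framework of Keel-Tao with decay exponent $\sigma=(d-1)/2$, whose output is exactly the full admissible range described by \eqref{adpair}.

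To finish I would undo the dyadic rescaling, which promotes the unit-frequency bound into the scale-invariant estimate at the sharp regularity $\gamma=\frac{d+1}{2}(\frac{1}{2}-\frac{1}{r})$, and then sum the frequency pieces using the Littlewood-Paley square function characterization of $\dot H^{\gamma}$ together with the Minkowski inequality in $L^{q}_{t}L^{r}_{x}$, which is available because $q,r\ge 2$. The main obstacle is the endpoint case $q=2$ (realized for $d\ge 4$): the non-endpoint pairs follow from a direct $TT^{*}$ argument combined with the Hardy-Littlewood-Sobolev inequality applied to the time variable, but the endpoint genuinely requires the bilinear real interpolation of Keel-Tao \cite{keta98} to handle the logarithmic failure of Hardy-Littlewood-Sobolev. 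Since the proposition merely records the flat-space benchmark that the remainder of the paper will contradict in the presence of a boundary, I would at this point quote \cite{keta98} directly for the delicate endpoint rather than reprove it.
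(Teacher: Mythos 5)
Your proposal is a correct sketch of the standard argument, and it matches the paper's treatment: the paper does not prove Proposition \ref{proprd} at all but simply records it as the known flat-space benchmark with a citation to Keel--Tao \cite{keta98}. Your outline (reduction to the half-wave propagator, Littlewood--Paley and rescaling, the $(1+|t|)^{-(d-1)/2}$ dispersive bound plus $L^{2}$ conservation fed into the abstract Keel--Tao theorem, with the $q=2$ endpoint quoted from \cite{keta98}) is precisely the argument underlying that citation, so there is nothing to reconcile.
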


Even though the boundaryless case has been well understood for some time,
obtaining results for the case of manifolds with boundary has been surprisingly elusive.

For a manifold with smooth, strictly geodesically concave boundary (i.e. for which the second fundamental form is strictly negative definite), the Melrose and Taylor parametrix yields the Strichartz estimates for the wave equation with Dirichlet boundary condition for the range of exponents in \eqref{adpair} (not including the endpoints) as shown in the paper of Smith and Sogge \cite{smso95}. If the concavity assumption is removed, however, the presence of multiply reflecting geodesic and their limits, the gliding rays, prevent the construction of a similar parametrix!

Note that on an exterior domain a source point does not generate caustics and that the presence of caustics generated in small time near a source point is the one which makes things difficult inside a strictly convex set. 

Recently, Burq, Lebeau and Planchon \cite{bulepl07}, \cite{bupl07} established Strichartz type inequalities on a manifold with boundary using the $L^{r}(\Omega)$ estimates for the spectral projectors obtained by Smith and Sogge \cite{smso06}. The range of triples $(q,r,\gamma)$ that can be obtained in this manner, however, is restricted by the allowed range of $r$ in the square function estimate for the wave equation, which controls the norm of $u$ in the space $L^{r}(\Omega,L^{2}(-T,T))$ (see \cite{smso06}). In dimension $3$, for example, this restricts the indices to $q,r \geq 5$. The work of Blair, Smith and Sogge \cite{blsmso08} expands the range of indices $q$ and $r$ obtained in \cite{bulepl07}: specifically, they show that if $\Omega$ is a compact manifold with boundary (or without boundary but with Lipschitz metric $g$) and $(q,r,\gamma)$ is a triple satisfying the first condition in \eqref{adpair} together with the restriction 
\[
\left\{
      \begin{array}{ll}
      \frac{3}{q}+\frac{d-1}{r}\leq\frac{d-1}{2},\quad d\leq 4\\
      \frac{1}{q}+\frac{1}{r}\leq\frac{1}{2},\quad d\geq 4,
      \end{array}
      \right.
\]
then the Strichartz estimates \eqref{strichartz} hold true for solutions $u$ to \eqref{unde} satisfying Dirichlet or Neumann homogeneous boundary conditions, with a constant $C$ depending on $\Omega$ and $T$. 
\vskip5mm

In this paper we prove that Strichartz estimates for the wave equation inside the domain $\Omega$ suffer losses when compared to the usual case $\mathbb{R}^{d}$, at least for a subset of the usual range of indices, under the assumption that there exists a point in $T^{*}\partial\Omega$ where the second fundamental form on the boundary of the manifold has a strictly positive eigenfunction. Precisely, out assumption reads as follows:

\begin{assum}\label{assumomega} Let $\Omega$ be a smooth manifold of dimension $d\geq 2$ with $C^\infty$ boundary $\partial\Omega$. We assume that there exists a bicharacteristic that intersects $\partial\Omega\times\mathbb{R}$ tangentially at some point $(\rho_{0},\vartheta_{0})\in T^{*}(\partial\Omega\times\mathbb{R})$ having exactly second order contact with the boundary at $(\rho_{0},\vartheta_{0})$ and which remains in the complementary of $\bar{\Omega}\times\mathbb{R}$.
We call the point $(\rho_{0},\vartheta_{0})$ a \emph{gliding point}.
\end{assum}
\begin{rmq}\label{rmqas1}
In particular, any smooth and bounded domain of $\mathbb{R}^d$, $d\geq 2$,  satisfies the condition in the Assumption \ref{assumomega}.
\end{rmq}

Our main result reads as follows:
\begin{thm}\label{thmstrichartz}
Let $(\Omega,g)$ satisfy the conditions in Assumption \ref{assumomega} and with $d\in\{2,3,4\}$. 
Then there exists $T=T(\Omega)\in(0,\infty)$ and for every small $\epsilon>0$ there exist sequences
$V_{h,j,\epsilon}\in C^{\infty}(\bar{\Omega})$, $j=\overline{0,1}$ such that the solution $V_{h,\epsilon}$ to the wave equation with
Dirichlet boundary conditions
\begin{equation}\label{wavvv}
\left\{
\begin{array}{ll}
(\partial^{2}_{t}-\Delta_{g})V_{h,\epsilon}=0,\\ 
V_{h,\epsilon}|_{t=0}=V_{h,0,\epsilon},\quad
\partial_{t}V_{h,\epsilon}|_{t=0}=V_{h,1,\epsilon},\\
V_{h,\epsilon}|_{\partial\Omega\times[0,T]}=0,
\end{array}
\right.
\end{equation}
satisfies
\begin{equation}\label{ince}
\sup_{\epsilon>0,h\in(0,1],j} h^{-\frac{(d+1)}{2}(\frac{1}{2}-\frac{1}{r})-\frac{1}{6}(\frac{1}{4}-\frac{1}{r})+2\epsilon+j}\|V_{h,j,\epsilon}\|_{{L}^{2}(\Omega)}\leq 1
\end{equation}
and
\begin{equation}\label{ince2}
\lim_{h\rightarrow 0}\|V_{h,\epsilon}\|_{L^{q}_{t}([0,1],L^{r}(\Omega))}=\infty,
\end{equation}
for every sharp wave admissible pair $(q,r)$ in dimension $d$ with $r>4$.
Moreover $V_{h,\epsilon}$ has compact support for the normal variable in a neighborhood of the boundary of size $h^{\frac{1-\epsilon}{2}}$ and is well localized at spatial frequency $\frac{1}{h}$ in the tangential variable.
\end{thm}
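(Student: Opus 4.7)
The plan is to reduce the general domain to the Friedlander half-space model via Melrose's equivalence of glancing hypersurfaces theorem, apply the two-dimensional construction of \cite{doi} in that model, and for dimensions $3$ and $4$ lift the counterexample by tensoring with a transverse semiclassical bump.

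Concretely, in the first step the gliding point $(\rho_0,\vartheta_0)$ supplied by Assumption \ref{assumomega} makes $\partial\Omega\times\mathbb{R}$ and the light cone of $\partial_t^2-\Delta_g$ into a pair of glancing hypersurfaces in $T^*(\Omega\times\mathbb{R})$. By Melrose's equivalence theorem there exists a local symplectomorphism $\kappa$ near $(\rho_0,\vartheta_0)$ that maps these two hypersurfaces onto their analogues for the Friedlander model operator $P_F:=\partial_t^2-\partial_x^2-(1+x)\partial_{y_1}^2-\sum_{j=2}^{d-1}\partial_{y_j}^2$ on the half-space $\{x\geq 0\}$. Quantizing $\kappa$ by an elliptic Fourier integral operator $T$ produces an intertwiner which preserves Sobolev norms and conjugates $P_F$ to $\partial_t^2-\Delta_g$ modulo operators of lower order, to be absorbed in an error analysis.

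In the second step I apply the $2$D construction of \cite{doi} in the Friedlander half-space $\{(x,y_1):x>0\}$ to obtain a sequence $v_{h,\epsilon}$ concentrated near a gliding ray, with spatial frequency $\sim 1/h$ in $y_1$, normal support of width $h^{(1-\epsilon)/2}$, initial data norms of order $h^{-\gamma+2\epsilon}$, and $\|v_{h,\epsilon}\|_{L^qL^r}$ violating the free $2$D Strichartz inequality by exactly the caustic-induced loss $\tfrac{1}{6}(\tfrac{1}{4}-\tfrac{1}{r})$. For $d\in\{3,4\}$ I set
\[
V_{h,\epsilon}^{\mathrm{mod}}(x,y,t):=v_{h,\epsilon}(x,y_1,t)\,\chi_{h,\epsilon}(y_2,\dots,y_{d-1}),
\]
where $\chi_{h,\epsilon}$ is a real, smooth, compactly supported function of spatial scale $h^{(1-\epsilon)/2}$ and frequency much smaller than $1/h$; since $\sum_{j\geq 2}\partial_{y_j}^2\chi_{h,\epsilon}$ is negligible compared with the $h^{-2}$ main term, $V_{h,\epsilon}^{\mathrm{mod}}$ is an approximate solution of $P_F V = 0$. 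The product structure converts the desired bounds for $V_{h,\epsilon}^{\mathrm{mod}}$ into the two-dimensional estimates of \cite{doi} multiplied by $L^2$ and $L^r$ norms of $\chi_{h,\epsilon}$; choosing $\chi_{h,\epsilon}$ appropriately and requiring $d\leq 4$ keeps the dispersive gain in the extra variables from defeating the planar caustic loss, producing a genuine violation of the Strichartz inequality in the model. Transporting back via $T$, and correcting by an $O(h^\infty)$ term supported away from the gliding trajectory so that the Dirichlet condition holds exactly, yields $V_{h,\epsilon}$ satisfying \eqref{ince} and \eqref{ince2}.

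The hardest step is the third one: unlike $L^2$ norms, $L^qL^r$ norms are not preserved microlocally by the Fourier integral operator $T$, so I need to argue separately that $\|TV_{h,\epsilon}^{\mathrm{mod}}\|_{L^qL^r(\Omega)}\simeq\|V_{h,\epsilon}^{\mathrm{mod}}\|_{L^qL^r}$. I would exploit the tight phase-space concentration of $V_{h,\epsilon}^{\mathrm{mod}}$ in a single chart, together with stationary phase (equivalently, the fact that $\kappa$ induces a smooth diffeomorphism of base coordinates after conjugation by an appropriate tangential Fourier multiplier), to obtain comparability constants independent of $h$ and $\epsilon$. The restriction $r>4$ is inherited from the two-dimensional construction of \cite{doi}, while the restriction $d\leq 4$ arises from balancing the planar caustic loss against the transverse dispersive cost.
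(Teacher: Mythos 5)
Your overall strategy (Melrose's theorem plus the model construction of \cite{doi}, tensored with a transverse bump for $d=3,4$) points in the right direction, but the two steps you lean on most heavily have genuine gaps. The first is the global conjugation by an elliptic Fourier integral operator $T$ quantizing the Melrose symplectomorphism. Such a $T$ acts only microlocally: it does not map the model half-space onto $\Omega$, does not send functions vanishing at $x=0$ to functions vanishing on $\partial\Omega$ (and the Dirichlet condition must survive $N\simeq h^{-(1-\epsilon)/4}$ reflections, not just one), and, as you yourself note, does not preserve $L^{q}L^{r}$ norms. Your proposed fix -- that $\kappa$ becomes a lift of a base diffeomorphism after conjugation by a tangential Fourier multiplier -- is not available near the glancing set: the boundary canonical transformation genuinely mixes base and fiber variables (cf.\ \eqref{chipartmodel}), and the $L^{r}$ mass of the solution is carried by cusp caustics whose physical-space shape is exactly what an FIO can distort. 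This is why the paper does not transport the interior model solution at all: Melrose's theorem is used only to produce the eikonal phases $\theta,\zeta$ (Theorem \ref{thmphase}), the model case is invoked only through the boundary operator $J$ of \eqref{fioj} quantizing $\chi_{\partial}$ to transfer the iterated boundary traces and symbols $g^{n}_{h}$, and the $L^{r}$ lower bounds for the resulting cusps $u^{n}_{h}$ are computed directly in the general geometry (Proposition \ref{propnorm}), after a reduction in which the $2$D operator is $\partial_{x}^{2}+(1+xb(y))\partial_{y}^{2}$ with variable $b$, not the exact Friedlander operator -- so the bulk of the work you hope to bypass with $T$ reappears.

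The second gap is the passage from approximate to exact solutions. Your tensored function solves the equation only up to errors that are $O_{L^{2}}(1/h)$ from the $2$D construction and $O_{L^{2}}(h^{-1-\epsilon/3})$ from the cross terms $B_{j,k}\partial^{2}_{y_j y_k}$; these are nowhere near $O(h^{\infty})$, and an $O(h^{\infty})$ correction can at best repair the boundary values, not the equation. To conclude \eqref{ince2} for the \emph{exact} solution one must control the difference in $L^{q}L^{r}$, which the paper does by a contradiction argument: assume the Strichartz bound with loss $\beta(r,d)-2\epsilon$, run Duhamel on the error (Lemmas \ref{lemestw}--\ref{lemh}, using the tangential frequency localization to gain a factor $h$ in $\dot H^{-1}$), and show the error is smaller than the main term by $h^{\epsilon}$, contradicting the assumed bound. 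Without this mechanism your argument does not yield \eqref{ince2}. Finally, a smaller point: the restriction $d\le 4$ is not a balance between caustic loss and ``transverse dispersive cost''; it is simply that for $d\ge 5$ every sharp admissible pair has $r\le 4$ (Remark \ref{rmqd}), while the transverse bump (a Gaussian at scale $h^{1/2}$ in the paper) only serves to convert the $2$D loss relative to $\frac32(\frac12-\frac1r)$ into the $d$-dimensional loss relative to $\frac{d+1}{2}(\frac12-\frac1r)$.
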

\begin{rmq}
Notice that Theorem \ref{thmstrichartz} shows an explicit loss of at least $\frac{1}{6}(\frac{1}{4}-\frac{1}{r})$ derivatives in the Strichartz estimates for domains $\Omega$ satisfying the Assumption \ref{assumomega} compared to the Euclidian case (see Proposition \ref{proprd}). 
\end{rmq}
\begin{rmq}\label{rmqd}
The proof of Theorem \ref{thmstrichartz} will show that the restriction on the dimension comes only from the fact that for $d\geq 5$ all admissible pairs $(q,r)$ satisfy $r\leq 4$. 
\end{rmq}
\begin{rmq}
From the Remarks \ref{rmqas1} and \ref{rmqd} it follows that Theorem \ref{thmstrichartz} holds true if $\Omega$ is any smooth, bounded domain of $\mathbb{R}^d$ with $d\in \{2,3,4\}$.
\end{rmq}
\begin{rmq}
In \cite{doi} we proved Theorem \ref{thmstrichartz} in the particular case of the two-dimensional half-space $\{(x,y)| x>0, y\in\mathbb{R}\}$ with Laplace operator given by $\partial^2_x+(1+x)\partial^2_y$. We notice that the half-space together with the metric inherited from the above Laplace operator becomes a strictly convex domain (called the Friedlander's model domain). 

In this paper we generalize the result of \cite{doi} to any smooth domain satisfying the Assumption \ref{assumomega} using the Melrose's Theorem of glancing surfaces.
\end{rmq}
\begin{rmq}
Notice that Theorem \ref{thmstrichartz} states for instance that the scale-invariant  Strichartz estimates fail for $\frac{3}{q}+\frac{1}{r}>\frac{15}{24}$, whereas the result of Blair, Smith and Sogge states that such estimates hold if $\frac{3}{q}+\frac{1}{r}\leq\frac{1}{2}$. Of course, the counterexample places a lower bound on the loss for such indices $(q,r)$, and the work \cite{blsmso08} would place some upper bounds, but this concise statement shows one explicit gap in our knowledge that remains to be filled.

A very interesting and natural question would be to determine the sharp range of exponents for \eqref{strichartz} in any dimension $d\geq 2$! 
\end{rmq}

A classical way to prove Strichartz inequalities is to use dispersive estimates: the fact that weakened dispersive estimates can \emph{still} imply optimal (and scale invariant) Strichartz estimates for the solution of the wave equation was first noticed by Lebeau: in \cite{gle06} he proved that a loss of  derivatives is unavoidable for the wave equation inside a strictly convex domain, and this appears because of swallowtail type caustics in the wave front set of $u$:
\[
|\chi(hD_{t})u(t,x)|\lesssim h^{-d}\min(1,(h/t)^{\frac{d-2}{2}+\frac{1}{4}}).
\]
However, these estimates, although optimal for the dispersion, imply Strichartz type inequalities without losses, but with indices $(q,r,d)$ satisfying 
\[
\frac{1}{q}\leq(\frac{d-2}{2}+\frac 14)(\frac{1}{2}-\frac{1}{r}).
\]
A natural strategy for proving Theorem \ref{thmstrichartz} would be to use the Rayleigh whispering gallery modes which accumulate their energy near the boundary contributing to large $L^{r}(\Omega)$ norms. Applying the semi-classical Schr\"{o}dinger evolution shows that a loss of derivatives is necessary for the Strichartz estimates. However, when dealing with the wave operator this strategy fails as the gallery modes satisfy the Strichartz estimates of the free space, as it is shown in \cite{doi}. 

In the proof of Theorem \ref{thmstrichartz} we shall proceed in a different manner, using co-normal waves with multiply reflected cusps at the boundary, together with Melrose's Theorem of glancing rays to reduce the study of the iterated boundary operators to the Friedlander case, in which case all the computations are explicit. We only recall here the main ingredients of the proof given in \cite{doi} and show how this can be used to construct a counterexample under the much more general assumptions of Theorem \ref{thmstrichartz}. The reduction to the model case relies essentially on Melrose's Theorem \cite{meta87} of glancing surfaces.

The organization of the paper is as follows: in Section \ref{secred} we show that in order to prove Theorem \ref{thmstrichartz} it is enough to consider the two-dimensional case. In Section \ref{secdoide} we recall the construction in the model case of the strictly convex domain of dimension two we dealt with in \cite{doi} and use it to determine an approximate solution of \eqref{wavvv} which satisfies Theorem \ref{thmstrichartz}. In the Appendix we compute the $L^{r}$ norms of a wave with a cusp type singularity.

\section*{Acknowledgements}
The author would like to thank Gilles Lebeau who suggested this problem to her and Nicolas Burq for many helpful discussions on the subject. She would also like to thank Michael Taylor for having sent her the manuscript "Boundary problems  for the wave equations with grazing and gliding rays".

\section{Reduction to the two dimensional case}\label{secred}
Let $\Omega$ satisfy the assumptions of Theorem \ref{thmstrichartz}. Write local coordinates on $\Omega$ as $(x,y_{1},..,y_{d-1})$ with $x> 0$ on $\Omega$, $\partial\Omega=\{(0,y)|y=(y_{1},..,y_{d-1})\in\mathbb{R}^{d-1}\}$ and local coordinates induced by the product $X=\Omega\times\mathbb{R}_{t}$, as $(x,y,t)$. 

Local coordinates on the base induce local coordinates on the cotangent bundle, namely $(\rho,\vartheta)=(x,y,t,\xi,\eta,\tau)$ on $T^{*}X$ near $\pi^{-1}(q)$, $q\in T^{*}\partial X$, where $\pi:T^{*}X\rightarrow ^{b}T^{*}X$ is the canonical inclusion from the cotangent bundle into the $b$-cotangent bundle defined by $^{b}T^{*}X=T^{*}\mathring{X}\cup T^{*}\partial X$. The corresponding local coordinates on the boundary are denoted $(y,t,\eta,\tau)$ (on a neighborhood of a point $q$ in $T^{*}\partial X$). The metric function in $T^{*}\Omega$ has the form
\[
g(x,y,\xi,\eta)=A(x,y)\xi^{2}+2\sum_{j=1}^{d-1}C_{j}(x,y)\xi\eta_{j}+\sum_{j,k=1}^{d-1}B_{j,k}(x,y)\eta_{j}\eta_{k},
\]
with $A$, $B_{j,k}$, $C_{j}$ smooth. Moreover, these coordinates can be chosen so that $A(x,y)=1$ and $C_{j}(x,y)=0$ (see \cite[Appendix C]{hormand}). Thus, in this coordinates chart the metric on the boundary writes
\[
g(0,y,\xi,\eta)=\xi^{2}+\sum_{j,k=1}^{d-1}B_{j,k}(0,y)\eta_{j}\eta_{k}.
\]
On $T^{*}\partial\Omega$ the metric $g$ takes even a simpler form, since introducing geodesic coordinates we can assume moreover that, locally,
\[
B_{1,1}(0,y)=1,\quad B_{1,j}(0,y)=0\quad \forall j\in\{2,..,d-1\}.
\]
Hence, if we write $R(x,y,\eta):=\sum_{j,k=1}^{d-1}B_{j,k}(x,y)\eta_{j}\eta_{k}$, then for small $x$ we have
\begin{multline}\label{formetric}
R(x,y,\eta)=
(1+x\partial_{x}B_{1,1}(0,y_{1},y'))\eta^{2}_{1}\\+\sum_{j=1}^{d-1}(x\partial_{x}B_{1,j}(0,y)+O(x^{2}))\eta_{1}\eta_{j}+\sum_{j,k=2}^{d-1}B_{j,k}(x,y)\eta_{j}\eta_{k}.
\end{multline}
The Assumption \ref{assumomega} on the domain $\Omega$ is equivalent to saying that there exists a point $(0,y_{0},\xi_{0},\eta_{0})$ on $T^{*}\Omega$ where the boundary is microlocally strictly convex, i.e. that there exists a bicharacteristic passing through this point that intersects $\partial\Omega$ tangentially having exactly second order contact with the boundary and remaining in the complement of $\partial\bar{\Omega}$. If $p\in C^{\infty}(T^{*}X\setminus o)$ (where we write $o$ for the "zero" section) denotes the principal symbol of the wave operator $\partial^{2}_{t}-\Delta_{g}$, this last condition translates into
\begin{equation}\label{glid11}
\tau^{2}=R(0,y_{0},\eta_{0}),\quad \{p,x\}= \frac{\partial p}{\partial\xi}=2\xi_{0}=0,
\end{equation}
\begin{equation}\label{glid12}
\{\{p,x\},p\}=\{\frac{\partial p}{\partial\xi},p\}=2\partial_{x}R(0,y_{0},\eta_{0})>0,
\end{equation}
where $\{f_{1},f_{2}\}$ denotes the Poisson bracket  
\[
\{f_{1},f_{2}\}=\frac{\partial f_{1}}{\partial\vartheta}\frac{\partial f_{2}}{\partial\rho}-\frac{\partial f_{1}}{\partial\rho}\frac{\partial f_{2}}{\partial\vartheta}.
\]
Denote the gliding point (in $T^{*}\Omega\times\mathbb{R}$) provided by the Assumption \ref{assumomega} by
\[
(\rho_{0},\vartheta_{0})=(0,y_{0},0,0,\eta_{0},\tau_{0}=-\sqrt{R(0,y_{0},\eta_{0})}).
\] 
We start the proof of Theorem \ref{thmstrichartz} by reducing the problem to the study of the two dimensional case. Consider the following assumptions:
\begin{assum}\label{assumthms2}
Let $(\tilde{\Omega},\tilde{g})$ be a smooth manifold of dimension $2$ with $C^{\infty}$ boundary and Riemannian metric $\tilde{g}$. Assume that in a chart of local coordinates 
$\tilde{\Omega}=\{(x,\tilde{y})|x>0,\tilde{y}\in\mathbb{R}\}$
and that the Laplace-Beltrami operator associated to $\tilde{g}$ is given by
\[
\partial^{2}_{x}+(1+xb(\tilde{y}))\partial^{2}_{\tilde{y}},
\]
where $b(\tilde{y})$ is a smooth function. Suppose in addition that there exists a bicaracteristic which intersects the boundary tangentially at $(0,\tilde{y}_{0},\tilde{\xi}_{0},\tilde{\eta}_{0})\in T^{*}\tilde{\Omega}$ having exactly second order contact with the boundary and which remains in the complementary of $\overline{\tilde{\Omega}}$. This is equivalent to saying that at $(0,\tilde{y}_{0},\tilde{\xi}_{0},\tilde{\eta}_{0})$ the following holds
\[
\tilde{\xi}_{0}=0,\quad 2b(\tilde{y}_{0})>0.
\]
We suppose in addition (without loss of generality, since we can always rescale the normal variable $x$) that $b(\tilde{y}_{0})=1$.
\end{assum}
\begin{thm}\label{thms2}
Let $(\tilde{\Omega},\tilde{g})$ satisfy the conditions in Assumption \ref{assumthms2}. Then there exists $T=T(\tilde{\Omega})\in (0,\infty)$ and for every $\epsilon>0$ small enough there exist sequences $\tilde{V}_{h,j,\epsilon}$, $j\in\{0,1\}$ and approximate solutions $\tilde{V}_{h,\epsilon}$ to the wave equation on $\tilde{\Omega}$ with Dirichlet boundary condition 
\begin{equation}\label{undered}
\left\{
      \begin{array}{ll}
\partial^{2}_{t}V-\partial^{2}_{x}V-(1+xb(\tilde{y}))\partial^{2}_{\tilde{y}}V=0,\quad\text{on}\quad \tilde{\Omega}\times\mathbb{R}\\
V|_{t=0}=\tilde{V}_{h,0,\epsilon},\quad
\partial_{t}V|_{t=0}=\tilde{V}_{h,1,\epsilon},\\
V|_{\partial\tilde{\Omega}\times[0,T]}=0,
     \end{array}
     \right.
\end{equation}
which satisfy the following conditions:
\begin{itemize}
\item First, $\tilde{V}_{h,\epsilon}$ is an approximate solution to \eqref{undered} in the sense that
\begin{equation}
\partial^{2}_{t}\tilde{V}_{h,\epsilon}-\partial^{2}_{x}\tilde{V}_{h,\epsilon}-(1+xb(\tilde{y}))\partial^{2}_{\tilde{y}}\tilde{V}_{h,\epsilon}=O_{L^{2}(\tilde{\Omega})}(1/h),\quad \|\tilde{V}_{h,\epsilon}\|_{L^{2}(\tilde{\Omega})}\leq 1.
\end{equation}

\item Secondly, $\tilde{V}_{h,\epsilon}$ writes as a sum
\begin{equation}\label{hred}
 \tilde{V}_{h,\epsilon}(x,\tilde{y},t)=\sum_{n=0}^{N}v^{n}_{h,\epsilon}(x,\tilde{y},t),
\end{equation}
where $1\leq N\simeq h^{-\frac{(1-\epsilon)}{4}}$ and 
where the functions $v^{n}_{h,\epsilon}(x,\tilde{y},t)$ satisfy the following conditions:
\begin{itemize}
\item  for $4<r<\infty$:
\begin{equation}\label{hest}     
\left\{
\begin{array}{ll}
\|v^{n}_{h,\epsilon}(.,t)\|_{L^{r}(\tilde{\Omega})}\geq C h^{-\frac{3}{2}(\frac{1}{2}-\frac{1}{r})-\frac{1}{6}(\frac{1}{4}-\frac{1}{r})+2\epsilon},\\
\sup_{\epsilon>0}\Big(\|v^{n}_{h,\epsilon}(.,t)\|_{L^{2}(\tilde{\Omega})}+h\|\partial_{t}v^{n}_{h,\epsilon}(.,t)\|_{L^{2}(\tilde{\Omega})}\Big)\leq 1,
\end{array}
\right.
\end{equation}
where the constant $C=C(T)>0$ is independent of $h$, $\epsilon$ and $n$;

\item $v^{n}_{h,\epsilon}(x,\tilde{y},t)$ are essentially supported for the time variable $t$ in almost disjoint intervals of time $I_n$ satisfying $|I_0|\simeq |I_n|\simeq  h^{\frac{(1-\epsilon)}{4}}$ for all $n\in\{0,..,N\}$, and also supported for the tangential variable $\tilde{y}$ in almost disjoint intervals.

\item $\tilde{V}_{h,\epsilon}$ are supported for the normal variable $0\leq x \lesssim h^{(1-\epsilon)/2}$ (where the respective constants in the inequality depend only on $\tilde{\Omega}$) and localized at spatial frequency $\frac{1}{h}$ in the tangential variable $\tilde{y}$. Moreover we have, uniformly in $\epsilon>0$,
\begin{equation}\label{hnormes}
\sup_{\epsilon>0} \|\tilde{V}_{h,\epsilon}\|_{L^{2}(\tilde{\Omega})}\lesssim 1,\quad \sup_{\epsilon>0}\|\partial_{\tilde{y}}\tilde{V}_{h,\epsilon}\|_{L^{2}(\tilde{\Omega})}\lesssim\frac{1}{h},\quad  \sup_{\epsilon>0}\|\partial^{2}_{\tilde{y}}\tilde{V}_{h,\epsilon}\|_{L^{2}(\tilde{\Omega})}\lesssim\frac{1}{h^{2}}.
\end{equation}
\end{itemize}
\end{itemize}
\end{thm}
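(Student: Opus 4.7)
The plan is to import the explicit whispering-gallery construction from the Friedlander model case ($b\equiv 1$) treated in \cite{doi} and transport it to the general operator $P=\partial_t^2-\partial_x^2-(1+xb(\tilde y))\partial_{\tilde y}^2$ by means of Melrose's equivalence theorem for glancing hypersurfaces. After translating so that $\tilde y_0=0$, the gliding point for $P$ at $(0,0,0,\tilde\eta_0,\tau_0)$ has the same osculating data (defining function $x$ and positive $\partial_x R$-coefficient normalised to $1$ by $b(0)=1$) as the gliding point for the Friedlander operator $P_0=\partial_t^2-\partial_x^2-(1+x)\partial_{\tilde y}^2$. Melrose's theorem then supplies, on a microlocal neighbourhood of the bicharacteristic segment $\{\rho(t):t\in[0,T]\}$, a homogeneous canonical transformation $\chi$ preserving $\{x=0\}$ and the two characteristic varieties, which we quantise into an elliptic zero-order Fourier integral operator $F$ intertwining $P$ and $P_0$ modulo smoothing.

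From \cite{doi} one has, in the Friedlander model, explicit whispering-gallery wave packets of the form
\[
v^{n,0}_{h,\epsilon}(x,\tilde y,t)=\int e^{i(\tilde y\eta-t\tau_k(\eta,h))/h}\,\mathrm{Ai}\!\bigl(h^{-2/3}(\eta^{2/3}x-\omega_k)\bigr)\,\psi_\epsilon(\eta)\,\alpha_n(\eta,t,h)\,d\eta,
\]
where $\psi_\epsilon$ is an $h^\epsilon$-window centred at $\eta=1/h$, $\omega_k$ is a (fixed) zero of the Airy function implementing the Dirichlet condition, $\tau_k(\eta,h)$ is the corresponding whispering-gallery dispersion relation, and $\alpha_n$ is an amplitude essentially supported on a time window $I_n$ of length $|I_n|\simeq h^{(1-\epsilon)/4}$ around $t_n\simeq n|I_0|$, at which the $n$-th cusp caustic forms at a tangential position $\tilde y_n$. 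I would set $v^n_{h,\epsilon}:=F\bigl(v^{n,0}_{h,\epsilon}\bigr)$ and $\tilde V_{h,\epsilon}:=\sum_{n=0}^N v^n_{h,\epsilon}$ for $N\simeq h^{-(1-\epsilon)/4}$. The intertwining identity immediately converts an $L^2$-small $P_0$-residual on the Friedlander side into an $O(1/h)$ residual of $P\tilde V_{h,\epsilon}$, giving \eqref{undered}.

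The $L^r$ lower bound in \eqref{hest} follows, inside each $I_n$, from stationary phase at the swallowtail-type cusp caustic of $v^{n,0}_{h,\epsilon}$: the Airy caustic contributes the factor $h^{-\tfrac16(\tfrac14-\tfrac1r)}$ beyond the usual free-wave cusp size $h^{-\tfrac32(\tfrac12-\tfrac1r)}$, and the $h^\epsilon$-frequency window contributes the $h^{2\epsilon}$ correction; this is the content of the Appendix. Since $F$ is an elliptic zero-order FIO whose canonical relation is a diffeomorphism in a neighbourhood of the carrying Lagrangian, it preserves the order of the singularity and pulls essential supports to essential supports, so the lower bound and the essential disjointness (in $t$ through the windows $I_n$, and in $\tilde y$ through the reflection positions $\tilde y_n$) both transfer to $v^n_{h,\epsilon}$. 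Plancherel in $\eta$ combined with the concentration $|\eta|\simeq 1/h$ on $\mathrm{supp}(\psi_\epsilon)$ delivers the $L^2$ normalisation and the derivative estimates \eqref{hnormes}.

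The main obstacle I expect is controlling the cumulative error of the $N$-fold sum within the $O(1/h)$ budget of \eqref{undered}, since both the WKB residuals of each branch and the error committed by $F$ at the successive boundary reflections accumulate with $n$. The key observation is that the Friedlander ansatz is engineered to solve $P_0v^{n,0}_{h,\epsilon}=0$ exactly in the interior and to satisfy the Dirichlet condition through the choice $\mathrm{Ai}(\omega_k)=0$, so the only residue comes from the amplitude transport at reflections, which can be corrected to arbitrary order in $h$; on the general side, the contribution of the perturbation $x(b(\tilde y)-1)\partial_{\tilde y}^2$ is subprincipal on the support $x\lesssim h^{(1-\epsilon)/2}$, $|\tilde y-\tilde y_0|\lesssim 1$, so it is already absorbed into the construction of $\chi$ and $F$. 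The choice $N\simeq h^{-(1-\epsilon)/4}$ and the time scale $T=T(\tilde\Omega)$ are dictated precisely by the requirement that the total tangential drift of the bicharacteristic stays inside the neighbourhood on which Melrose's theorem and the above WKB analysis are valid.
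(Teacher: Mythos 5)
Your central ansatz is the one the paper explicitly rules out. You build the model-side packets from whispering gallery modes, enforcing the Dirichlet condition by fixing a zero $\omega_k$ of the Airy function, i.e.\ coupling the caustic distance to the frequency via $\eta^{2/3}a=\omega_k$. As recalled in the introduction and in Section \ref{sectmodel} (and proved in \cite{doi}), gallery modes satisfy the \emph{free-space} Strichartz and dispersive estimates, so no superposition of them can produce the lower bound \eqref{hest} with the extra $h^{-\frac16(\frac14-\frac1r)}$; moreover such modes live in $0\le x\lesssim h^{2/3}$, not $h^{(1-\epsilon)/2}$ as required, and they glide along the boundary rather than forming the multiply reflected cusp caustics your time windows $I_n$ presuppose. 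The actual mechanism for the loss is different: one takes cusp solutions with a \emph{fixed} caustic distance $a$ (independent of $\eta$), for which the Dirichlet condition fails and must be restored iteratively, each term $u^{n+1}$ cancelling the outgoing boundary trace of $u^n$ (Proposition \ref{proptracemod}); the exponent $\frac16(\frac14-\frac1r)$ then comes from optimizing $a\simeq h^{(1-\epsilon)/2}$ under the constraints $N\lesssim a^{3/2}h^{-1+\epsilon}$ (so that the iterated boundary operators $(J_{+}\circ I_{-})^{\circ n}$ do not degenerate) and $Na^{1/2}\lesssim Y$. It is not a universal ``Airy caustic'' factor, and your proposal contains no mechanism producing it.

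Separately, the way you invoke Melrose's theorem skips the hard point. Quantizing the equivalence-of-glancing-hypersurfaces map $\chi$ into an interior elliptic FIO $F$ that intertwines $P$ and $P_0$ \emph{and} maps Dirichlet data to Dirichlet data modulo smoothing, uniformly over $N\simeq h^{-(1-\epsilon)/4}$ reflections, is not automatic — an FIO associated to $\chi$ does not preserve vanishing on $\{x=0\}$, and near glancing this is essentially the Melrose–Taylor parametrix problem. The paper instead solves the eikonal system \eqref{sistem1} for phases $\theta,\zeta$ adapted to the general domain (Theorem \ref{thmphase}, via the Taylor expansion in $x$ of Section \ref{sectphafcts}), builds each $u^n_h$ directly with phase $\Phi^n=\theta+\eta^{1/3}\xi\zeta+\eta\xi^3/3+\frac43 n(-\zeta_0)^{3/2}$, and uses the model only through the boundary operator $J$ with phase $\theta_0$ (quantizing $\chi_\partial$, which conjugates the billiard ball maps), choosing the symbols $g^n_h$ so that the traces cancel pairwise and verifying, via the careful choice of $T$ and $N$ in Section \ref{sectNt} and Proposition \ref{propdiruh01}, that the leftover traces do not pollute $[0,T]$. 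Without the fixed-$a$ cusp ansatz, the iterative boundary cancellation, and the boundary (not interior) use of Melrose's theorem, the key estimates \eqref{hest} and the support/frequency properties cannot be obtained along the route you describe.
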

In the rest of this section we show how Theorem \ref{thms2} implies Theorem \ref{thmstrichartz}. Assume we have proved Theorem \ref{thms2}. 
Let $(\Omega,g)$ be a Riemannian manifold of dimension $d>2$ satisfying the assumptions of Theorem \ref{thmstrichartz} and let $(0,y_{0},\xi_{0},\eta_{0})\in T^{*}\Omega$ be a point satisfying \eqref{glid11}, \eqref{glid12}. From \eqref{formetric} it follows that local coordinates can be chosen such that $y_{0}=0\in\mathbb{R}^{d-1}$, $\eta_{0}=(1,0,..,0)\in\mathbb{R}^{d-1}$ and such that the Laplace-Beltrami operator $\Delta_{g}$ be given by
\begin{equation}\label{deflaplags}
\Delta_{g}=\partial^{2}_{x}+\sum_{j,k=1}^{d-1}B_{j,k}(x,y)\partial_{j}\partial_{k},
\end{equation}
where for $x$ small enough
\[
B_{1,1}(x,y)=1+x\partial_{x}B_{1,1}(0,y)+O(x^{2}),\quad \partial_{x}B_{1,1}(0,y)>0
\]
and for $j\in\{2,..,d-1\}$ we have $B_{1,j}(0,y)=0$. By performing a rescaling of the normal variable $x$, we can assume without loosing generality that $\partial_{x}B_{1,1}(0,0)=1$.

We can now define $\tilde{\Omega}$, locally in a neighborhood of $(x=0,y_1=0,\xi=0,\eta_1=0)$, to be the two dimensional half-space $\tilde{\Omega}:=\{(x,y_{1})|x>0,y_{1}\in\mathbb{R}\}$  equipped with the metric 
\[
\tilde{g}(x,y_{1},\xi,\eta_{1}):=\xi^{2}+(1+xb(y_{1}))\eta^{2}_{1},\quad b(y_{1}):=\partial_{x}B_{1,1}(0,y_{1},0).
\]
Recall that we assumed $b(0)=1$.
Applying Theorem \ref{thms2} near $(0,y_{1}=0,0,\eta_{1}=1)\in T^{*}\tilde{\Omega}$ we obtain, for $\epsilon>0$ small enough, sequences $\tilde{V}_{h,\epsilon,j}$, $j\in\{0,1\}$ such that the solution $\tilde{V}_{h,\epsilon}$ to \eqref{undered} satisfies \eqref{hred}, \eqref{hest} and \eqref{hnormes}. Let $\chi\in C^{\infty}_{0}(\mathbb{R}^{d-2})$ be a cut-off function supported in the coordinate chart such that $\chi=1$ in a neighborhood of $0\in\mathbb{R}^{d-2}$ and
for $j\in\{0,1\}$ set
\begin{equation}\label{forvhej}
V_{h,\epsilon,j}(x,y_{1},y'):=h^{-(d-2)/4}\tilde{V}_{h,\epsilon/3,j}(x,y_{1})e^{-\frac{|y'|^{2}}{2h}}\chi(y').
\end{equation}
\begin{prop}\label{propcontr}
The solution $V_{h,\epsilon}$ to the wave equation \eqref{wavvv} with Dirichlet boundary condition where $\Delta_{g}$ is given by \eqref{deflaplags} and with initial data $(V_{h,\epsilon,0},V_{h,\epsilon,1})$ defined in \eqref{forvhej} satisfies \eqref{ince}, \eqref{ince2}.
\end{prop}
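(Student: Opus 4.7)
The plan is to transfer the two-dimensional counterexample built in Theorem \ref{thms2} to $d$ dimensions by tensoring with a Gaussian in the transverse variables $y'$, construct a parametrix whose $L^q_tL^r_x$-norm blows up at the expected rate, and then estimate the gap between this parametrix and the true solution $V_{h,\epsilon}$ of \eqref{wavvv}.

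I would first dispatch the normalization \eqref{ince} directly from the tensor-product form \eqref{forvhej}: since the normalized Gaussian $h^{-(d-2)/4}e^{-|y'|^2/(2h)}\chi(y')$ has $L^2(\mathbb{R}^{d-2})$-norm of order one, we get $\|V_{h,\epsilon,j}\|_{L^2(\Omega)}\simeq\|\tilde V_{h,\epsilon/3,j}\|_{L^2(\tilde\Omega)}$, and the target inequality reduces to the corresponding 2D bound from Theorem \ref{thms2}, shifted by the gap of Sobolev exponents $\frac{d+1}{2}(1/2-1/r)-\frac{3}{2}(1/2-1/r)=\frac{d-2}{2}(1/2-1/r)$.

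Next, I would introduce the parametrix $V^{\sharp}(x,y_1,y',t):=h^{-(d-2)/4}\tilde V_{h,\epsilon/3}(x,y_1,t)\,e^{-|y'|^2/(2h)}\chi(y')$, which inherits the prescribed Cauchy data and the Dirichlet condition at $x=0$. A direct computation of the residual $F:=(\partial_t^2-\Delta_g)V^{\sharp}$ using \eqref{deflaplags} and the expansion $B_{1,1}(x,y_1,y')=1+xb(y_1)+O(x|y'|+x^2)$ splits $F$ into four pieces: (i) the 2D wave residual of Theorem \ref{thms2} tensored with the Gaussian, of size $O_{L^2}(h^{-1})$; (ii) the metric mismatch $[B_{1,1}-(1+xb(y_1))]\partial_{y_1}^2 V^{\sharp}$, controlled via $x\lesssim h^{(1-\epsilon)/2}$ and $|y'|\lesssim h^{1/2}$ on the support; (iii) the cross pieces $B_{1,j}\partial_{y_1}\partial_j V^{\sharp}$ for $j\geq 2$, which pick up an extra factor of $x$ from the vanishing $B_{1,j}(0,y)=0$; and (iv) the purely transverse piece $\sum_{j,k\geq 2}B_{j,k}\partial_j\partial_k V^{\sharp}$ of size $h^{-1}$ from the Gaussian's second $y'$-derivatives. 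In parallel, the tensor-product identity $\|V^{\sharp}(\cdot,t)\|_{L^r(\Omega)}\simeq h^{-\frac{d-2}{2}(1/2-1/r)}\|\tilde V_{h,\epsilon/3}(\cdot,t)\|_{L^r(\tilde\Omega)}$, combined with summing \eqref{hest} over the $N\simeq h^{-(1-\epsilon)/4}$ almost-disjoint time intervals $I_n$ of \eqref{hred}, delivers the blow-up $\|V^{\sharp}\|_{L^q_tL^r_x}\gtrsim h^{-\frac{d+1}{2}(1/2-1/r)-\frac{1}{6}(1/4-1/r)+2\epsilon/3}\to\infty$.

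Finally, for the remainder $W:=V_{h,\epsilon}-V^{\sharp}$, which solves the wave equation with vanishing Cauchy data, vanishing Dirichlet trace, and source $-F$, I would apply standard energy estimates to control $W$ in $L^{\infty}_tH^1_x$ and combine with the Strichartz-with-loss estimates of Blair--Smith--Sogge \cite{blsmso08} (applicable since $d\leq 4$) to produce an $L^q_tL^r_x$-bound on $W$ that, for $\epsilon>0$ small enough, is strictly dominated by the lower bound on $V^{\sharp}$; the triangle inequality then closes \eqref{ince2}. The hard part will be exactly this last calibration: one must align the parameter $\epsilon/3$ used in the lift with the target $2\epsilon$ appearing in \eqref{ince} so that the Strichartz loss one can afford on $W$ beats the very slow blow-up rate $h^{-\frac{1}{6}(1/4-1/r)+2\epsilon/3}$, which is sensitive to every $\epsilon$-dependent correction produced by the $O(x|y'|+x^2)$ term in $B_{1,1}$.
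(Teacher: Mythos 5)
Your construction of the lift is the same as the paper's: you tensor the two-dimensional quasimode $\tilde V_{h,\epsilon/3}$ with the normalized Gaussian in $y'$ (this is exactly the paper's $W_{h,\epsilon}$), you split the residual $(\partial_t^2-\Delta_g)W_{h,\epsilon}$ into the 2D residual plus the metric-mismatch, cross, and purely transverse pieces, and you obtain the lower bound $\|W_{h,\epsilon}\|_{L^qL^r}\gtrsim h^{-\beta(r,d)+2\epsilon/3}$ by summing \eqref{hest} over the almost disjoint intervals $I_n$; all of this matches Lemma \ref{lemestw} and the first part of Lemma \ref{lemestimerr}.

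The gap is in your treatment of the remainder $W:=V_{h,\epsilon}-W_{h,\epsilon}$. You propose to bound $\|W\|_{L^qL^r}$ \emph{unconditionally}, via energy estimates plus the Blair--Smith--Sogge estimates. This cannot close. First, for a sharp wave admissible pair with $r>4$ and $d\le 4$, the exponents violate the Blair--Smith--Sogge condition $\frac3q+\frac{d-1}{r}\le\frac{d-1}{2}$, so their scale-invariant estimate is simply not available at $(q,r)$; what one can extract (by Sobolev embedding from their admissible range, or by Bernstein/energy, which gives $\|W\|_{L^r}\lesssim h^{-d(\frac12-\frac1r)}\|W\|_{L^2}$ on frequency-$1/h$ data) carries a loss exceeding $\beta(r,d)$ by a \emph{fixed} positive amount, independent of $\epsilon$: e.g. $d(\frac12-\frac1r)-\frac{d+1}{2}(\frac12-\frac1r)=\frac{d-1}{2}(\frac12-\frac1r)\gg\frac16(\frac14-\frac1r)$. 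Since the source term is only smaller than the data by the tiny factor $h^{-\epsilon/3}$ versus the main term's $h^{-\beta+2\epsilon/3}$, you would need an unconditional operator bound of size $\ll h^{-\beta+\epsilon}$ to dominate the remainder by the main term --- i.e. a Strichartz estimate \emph{better} than the loss whose necessity is the very content of the theorem. No such estimate exists in the literature, and your sketch supplies no mechanism to produce one for the particular source. The paper avoids this trap by arguing by contradiction: it \emph{assumes} $\sin(t\sqrt{-\Delta_g}):L^2\to L^q([0,T],L^r)$ is bounded by $h^{-\beta(r,d)+2\epsilon}$, measures the source in $L^1_t\dot H^{-1}$ (of size $h^{-\epsilon/3}$, using the frequency localization via Lemma \ref{lemh}), and applies the \emph{assumed} estimate through Duhamel to get $\|w_{h,\epsilon,err}\|_{L^qL^r}\lesssim h^{-\beta+2\epsilon-\epsilon/3}$, which is strictly dominated by $h^{-\beta+2\epsilon/3}$; the triangle inequality then contradicts the assumed bound on $\|V_{h,\epsilon}\|_{L^qL^r}$. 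This self-referential use of the hypothesized estimate on the error is the missing idea in your last step, and without it the "calibration" you flag as hard is in fact impossible by the route you propose.
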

\begin{rmq} Notice that Proposition \ref{propcontr} implies immediately Theorem \ref{thmstrichartz}.
\end{rmq}
\begin{proof}
We proceed by contradiction.
Let $(q,r)$ be a sharp wave admissible pair in dimension $d\in\{2,3,4\}$ with $r>4$ and set
\[
\beta(r,d)=\frac{(d+1)}{2}(\frac{1}{2}-\frac{1}{r})+\frac{1}{6}(\frac{1}{4}-\frac{1}{r}).
\]
We suppose to the contrary that the operator
\[
\sin (t\sqrt{-\Delta_{g}}):L^{2}(\Omega)\rightarrow L^{q}([0,T],L^{r}(\Omega))
\]
is bounded by $h^{-\beta(r,d)+2\epsilon}$, where $T=T(\tilde{\Omega})$ is given by Theorem \ref{thms2}. Let $\tilde{V}_{h,\epsilon/3}$ be the approximate solution to \eqref{undered} with initial data $(\tilde{V}_{h,\epsilon/3,j})_{j=0,1}$ satisfying all the conditions in Theorem \ref{thms2}. For $t\in [0,T]$ we define
\[
W_{h,\epsilon}(x,y,t):=h^{-(d-2)/4}\tilde{V}_{h,\epsilon/3}(x,y_{1},t)e^{-\frac{|y'|^{2}}{2h}}\chi(y').
\]
\begin{lemma}\label{lemestw}
There exists a constant $c(T)>0$ independent of $h$ such that $W_{h,\epsilon}$ satisfies
\begin{equation}\label{estwred}
\|W_{h,\epsilon}\|_{L^{q}([0,T],L^{r}(\Omega))}\geq c(T) h^{-\beta(r,d)+2\epsilon/3},
\end{equation}
\begin{equation}
\|W_{h,\epsilon}|_{t=0}\|_{L^{2}(\Omega)}+h\|\partial_{t}W_{h,\epsilon}|_{t=0}\|_{L^{2}(\Omega)}\lesssim 1.
\end{equation}
\end{lemma}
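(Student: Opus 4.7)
The plan is to exploit the tensor-product structure of $W_{h,\epsilon}$, which is the two-dimensional approximate solution $\tilde{V}_{h,\epsilon/3}(x,y_1,t)$ from Theorem \ref{thms2} multiplied by a Gaussian concentration factor $e^{-|y'|^2/(2h)}\chi(y')$ in the transverse variable $y'\in\mathbb{R}^{d-2}$. By Fubini applied to this product structure, for each fixed $t$ one has
\[
\|W_{h,\epsilon}(\cdot,t)\|_{L^r(\Omega)} = h^{-(d-2)/4}\,\|\tilde{V}_{h,\epsilon/3}(\cdot,t)\|_{L^r(\tilde{\Omega})}\,\|e^{-|y'|^2/(2h)}\chi\|_{L^r(\mathbb{R}^{d-2})},
\]
and a direct Gaussian computation yields $\|e^{-|y'|^2/(2h)}\chi\|_{L^r(\mathbb{R}^{d-2})}\simeq h^{(d-2)/(2r)}$ and $\|e^{-|y'|^2/(2h)}\chi\|_{L^2(\mathbb{R}^{d-2})}\simeq h^{(d-2)/4}$. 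Thus the lemma reduces to the estimates provided by Theorem \ref{thms2} for the two-dimensional object $\tilde{V}_{h,\epsilon/3}$, together with a careful accounting of powers of $h$.

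For the $L^q_tL^r_x$ lower bound I would invoke the decomposition $\tilde{V}_{h,\epsilon/3}=\sum_{n=0}^{N}v^{n}_{h,\epsilon/3}$ and use the essentially disjoint time supports $I_n$ (and the essentially disjoint tangential supports) to replace $\tilde{V}_{h,\epsilon/3}(\cdot,t)$ by the single active piece $v^{n(t)}_{h,\epsilon/3}(\cdot,t)$ up to negligible tail errors. Then \eqref{hest} applied with parameter $\epsilon/3$ gives
\[
\|\tilde{V}_{h,\epsilon/3}(\cdot,t)\|_{L^r(\tilde{\Omega})}\gtrsim h^{-\frac{3}{2}(\frac{1}{2}-\frac{1}{r})-\frac{1}{6}(\frac{1}{4}-\frac{1}{r})+\frac{2\epsilon}{3}}\quad\text{for }t\in I_{n(t)}.
\]
Since $|I_n|\simeq h^{(1-\epsilon/3)/4}$ uniformly in $n$ and $N\simeq h^{-(1-\epsilon/3)/4}$, the total mass satisfies $\sum_n|I_n|\simeq 1$ and integration in time preserves the same exponent for $\int_0^T\|\tilde{V}_{h,\epsilon/3}(\cdot,t)\|_{L^r}^q\,dt$. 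Multiplying by the prefactor $h^{-(d-2)/4}$ and the Gaussian $L^r$ contribution $h^{(d-2)/(2r)}$ and taking the $q$-th root, the exponent collapses to
\[
-\tfrac{d-2}{4}+\tfrac{d-2}{2r}-\tfrac{3}{2}(\tfrac{1}{2}-\tfrac{1}{r})-\tfrac{1}{6}(\tfrac{1}{4}-\tfrac{1}{r})+\tfrac{2\epsilon}{3}=-\tfrac{d+1}{2}(\tfrac{1}{2}-\tfrac{1}{r})-\tfrac{1}{6}(\tfrac{1}{4}-\tfrac{1}{r})+\tfrac{2\epsilon}{3}=-\beta(r,d)+\tfrac{2\epsilon}{3},
\]
which is precisely \eqref{estwred}.

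The $L^2$ bounds on the initial data are an immediate consequence of the same factorization: the two powers of $h$ arising from $h^{-(d-2)/2}$ and from $\|e^{-|y'|^2/(2h)}\chi\|^2_{L^2}\simeq h^{(d-2)/2}$ cancel exactly, leaving $\|W_{h,\epsilon}(\cdot,0)\|_{L^2(\Omega)}\lesssim\|\tilde{V}_{h,\epsilon/3,0}\|_{L^2(\tilde{\Omega})}\lesssim 1$ by \eqref{hnormes}. The analogous computation for $\partial_t W_{h,\epsilon}|_{t=0}=h^{-(d-2)/4}\tilde{V}_{h,\epsilon/3,1}(x,y_1)e^{-|y'|^2/(2h)}\chi(y')$ reduces the bound $h\|\partial_t W_{h,\epsilon}|_{t=0}\|_{L^2}\lesssim 1$ to $h\|\tilde{V}_{h,\epsilon/3,1}\|_{L^2(\tilde{\Omega})}\lesssim 1$, which follows from $h\|\partial_t v^n(\cdot,t)\|_{L^2}\leq 1$ in \eqref{hest} evaluated at $t=0$, where only the piece $v^0_{h,\epsilon/3}$ is active (the other $v^n$ have time-support concentrated away from $0$ and contribute only negligible tails).

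The main technical point is the ``almost-disjoint'' replacement used in the first step: since the $v^n$ are only \emph{essentially} supported in the intervals $I_n$ and in disjoint tangential ranges, one must verify that the overlaps of the rapidly decaying tails are absorbed into a harmless $(1+o(1))$ factor, so that $\|\tilde{V}_{h,\epsilon/3}(\cdot,t)\|_{L^r}$ really is bounded below by $\|v^{n(t)}_{h,\epsilon/3}(\cdot,t)\|_{L^r}$ up to a universal constant. The tangential disjointness from \eqref{hest} ensures that even when several $v^n$ have simultaneously non-negligible time tails, their spatial profiles do not interfere constructively when measured in $L^r$. Once this replacement is justified, the remainder of the argument is purely the bookkeeping of powers of $h$ carried out above.
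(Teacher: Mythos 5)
Your proposal is correct and follows essentially the same route as the paper: factor the tensor-product structure via Fubini, compute the Gaussian $L^r$ and $L^2$ norms in $y'$, restrict the time integral to the almost disjoint intervals $I_n$ where a single cusp $v^{n}_{h,\epsilon/3}$ is active (tails being $O(h^{\infty})$), apply \eqref{hest} with parameter $\epsilon/3$, and sum $\sum_n|I_n|\simeq T$ to get the exponent $-\beta(r,d)+2\epsilon/3$, with the initial-data bounds following from the same factorization and \eqref{hnormes}. The bookkeeping of powers of $h$ matches the paper's computation exactly.
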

\begin{proof}
Indeed, using the special form of $\tilde{V}_{h,\epsilon}$ provided by Theorem \ref{thms2} we can estimate
\begin{align}
\nonumber
\|W_{h,\epsilon}\|^{q}_{L^{q}([0,T],L^{r}(\Omega))} & =\int_{0}^{T}\|W_{h,\epsilon}\|^{q}_{L^{r}(\Omega)}dt \\
\nonumber
& =\Big(\int_{0}^{T}\|\sum_{n=0}^{N}v^{n}_{h,\epsilon/3}\|^{q}_{L^{r}(\tilde{\Omega})}dt\Big) \times \|h^{-(d-2)/4}e^{-\frac{|y'|^{2}}{2h}}\chi(y')\|^{q}_{L^{r}(\mathbb{R}^{d-2})}\\
\nonumber
& \geq ch^{-\frac{q(d-2)}{2}(\frac{1}{2}-\frac{1}{r})}\sum_{k\leq N}\int_{t\in I_{k}}\|\sum_{n=0}^{N}v^{n}_{h,\epsilon/3}\|^{q}_{L^{r}(\tilde{\Omega})}dt +O(h^{\infty})\\
\nonumber
& \simeq ch^{-\frac{q(d-2)}{2}(\frac{1}{2}-\frac{1}{r})}\sum_{k\leq N}|I_{k}|\|v^{0}_{h,\epsilon/3}\|^{q}_{L^{r}(\tilde{\Omega})} +O(h^{\infty})\\
\nonumber
& \simeq cT h^{-\frac{q(d-2)}{2}(\frac{1}{2}-\frac{1}{r})}\|v^{0}_{h,\epsilon/3}\|^{q}_{L^{r}(\tilde{\Omega})}+O(h^{\infty})\\ 
& \geq cT h^{(-\beta(r,d)+2\epsilon/3)q}.
\end{align}
We used here the fact that each $v^n_{h,\epsilon/3}$ provided by Theorem \ref{thms2} is essentially supported in time in  an interval $I_n$ of size $1/N$ and that $(I_n)_{n\in\{0,..,N\}}$ are almost disjoint. Take $c(T)=(c T)^{1/q}$, where $c$ is the bound from below of the integral in the $d-2$ tangential variables. 

To estimate the $L^{2}(\Omega)$ norm we use again the fact that $v^{n}_{h,\epsilon}$ and its time derivative have disjoint essential supports in the tangential variable $y_{1}$. For $W_{h,\epsilon}(.,0)$ we have, for instance
\[
\|W_{h,\epsilon}|_{t=0}\|_{L^{2}(\Omega)}=\|\tilde{V}_{h,\epsilon/3,0}\|_{L^{2}(x,y_{1})}\|h^{-(d-2)/4}e^{-\frac{|y'|^{2}}{2h}}\chi(y')\|_{L^{2}(\mathbb{R}^{d-2})}\lesssim 1.
\]
\end{proof}
Let $V_{h,\epsilon}$ be the solution to the wave equation \eqref{wavvv} with initial data $(V_{h,\epsilon,j})_{j=0,1}$ and write 
\[
V_{h,\epsilon}=W_{h,\epsilon}+w_{h,\epsilon,err}.
\] 
If we denote $\Delta_{\tilde{g}}=\partial^{2}_{x}+(1+xb(y_{1}))\partial^{2}_{y_{1}}$, $\square_{\tilde{g}}=\partial^{2}_{t}-\Delta_{\tilde{g}}$, then $W_{h,\epsilon}$ solves 
\begin{align}
\left\{
\begin{array}{ll}
\square_{\tilde{g}} W_{h,\epsilon} & =\square_{\tilde{g}}\tilde{V}_{h,\epsilon/3} h^{-(d-2)/4}e^{-\frac{|y'|^{2}}{2h}}\chi(y'),\\
W_{h,\epsilon}|_{t=0} & =V_{h,\epsilon,0},\quad \partial_{t}W_{h,\epsilon}|_{t=0}=V_{h,\epsilon,1},\\
W_{h,\epsilon}|_{\partial\Omega\times [0,T]} & =0.
\end{array}
\right.
\end{align}
Since $V_{h,\epsilon}$ is a solution to \eqref{wavvv},  $w_{h,\epsilon,err}$ must satisfy the following equation
\begin{align}\label{eqwerr}
\left\{
\begin{array}{ll}
\square_{g} w_{h,\epsilon,err} & =-\square_{\tilde{g}}\tilde{V}_{h,\epsilon/3} h^{-(d-2)/4}e^{-\frac{|y'|^{2}}{2h}}\chi(y')
-(1+xb(y_{1}))\partial^{2}_{y_{1}}W_{h,\epsilon}+\\ & +\sum_{j,k=1}^{d-1}B_{j,k}(x,y)
\partial^{2}_{y_{j},y_{k}}W_{h,\epsilon},\\
w_{h,\epsilon,err}|_{t=0} & = 0,\quad \partial_{t}w_{h,\epsilon,err}|_{t=0}=0,\\
w_{h,\epsilon,err}|_{\partial\Omega\times[0,T]} & =0,
\end{array}
\right.
\end{align}
where we set $\square_{g}:=\partial^{2}_{t}-\Delta_{g}$ and we used that 
\[
\Delta_{g}-\Delta_{\tilde{g}}=-(1+xb(y_{1}))\partial^{2}_{y_{1}}+\sum_{j,k=1}^{d-1}B_{j,k}(x,y)
\partial^{2}_{y_{j},y_{k}}.
\]
\begin{lemma}\label{lemestimerr}
For $t\in [0,T]$ the solution $w_{h,\epsilon,err}$ to the wave equation \eqref{eqwerr} satisfies 
\begin{equation}\label{estimerr}
\|(\partial^{2}_{t}-\Delta_{g})w_{h,\epsilon,err}(.,t)\|_{L^{2}(\Omega)}  \lesssim h^{-2(1-(1-\epsilon/3)/2)}\|w_{h,\epsilon,err}\|_{L^{2}(\Omega)}\simeq h^{-1-\epsilon/3},
\end{equation}
\begin{equation}\label{estimerr1}
\|(\partial^{2}_{t}-\Delta_{g})w_{h,\epsilon,err}(.t)\|_{\dot{H}^{-1}(\Omega)}  \lesssim  h^{-\epsilon/3}\|w_{h,\epsilon,err}\|_{L^{2}(\Omega)}\simeq h^{-\epsilon/3},
\end{equation}
where the estimates hold uniformly in $t\in [0,T]$ with constants independent of $\epsilon$ of $h$.

Moreover,
\begin{equation}\label{estimwqr}
\|w_{h,\epsilon,err}\|_{L^{q}([0,T],L^{r}(\Omega))}\leq C h^{-\beta(r,d)+2\epsilon-\epsilon/3},
\end{equation}
where $C=C(T)>0$ is independent of $\epsilon$.
\end{lemma}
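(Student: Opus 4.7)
The plan is to estimate the source $\square_g w_{h,\epsilon,err}$ in $L^2$ and in $\dot{H}^{-1}$ directly from the explicit formula in \eqref{eqwerr}, and then to invert $\square_g$ by Duhamel together with the standing contradiction hypothesis on $\sin(t\sqrt{-\Delta_g})\colon L^2(\Omega)\to L^q([0,T],L^r(\Omega))$ to reach \eqref{estimwqr}.

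First I split the right-hand side of \eqref{eqwerr} as $F_1+F_2$, where
\[
F_1=-\square_{\tilde g}\tilde V_{h,\epsilon/3}\cdot h^{-(d-2)/4}e^{-|y'|^2/(2h)}\chi(y'),\qquad F_2=(\Delta_g-\Delta_{\tilde g})W_{h,\epsilon}.
\]
The piece $F_1$ is controlled by Theorem \ref{thms2}, which gives $\|\square_{\tilde g}\tilde V_{h,\epsilon/3}\|_{L^2(\tilde\Omega)}=O(1/h)$ against an $L^2$-normalised Gaussian in $y'$, so $\|F_1\|_{L^2(\Omega)}=O(1/h)$. For $F_2$ I use the normal form \eqref{formetric} together with the rescaling $\partial_xB_{1,1}(0,0)=1$ and the choice $b(y_1)=\partial_xB_{1,1}(0,y_1,0)$: this yields $B_{1,1}(x,y)-(1+xb(y_1))=x\cdot O(|y'|)+O(x^2)$, $B_{1,j}(x,y)=O(x)$ for $j\geq 2$ (since $B_{1,j}(0,y)=0$), and $B_{j,k}=O(1)$ for $j,k\geq 2$. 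Each coefficient vanishes at $(x,y')=(0,0)$ to exactly the order needed to compensate the derivative losses: on $\operatorname{supp}W_{h,\epsilon}$ one has $x\lesssim h^{(1-\epsilon/3)/2}$ (by the normal-variable localisation from Theorem \ref{thms2}) and effectively $|y'|\lesssim h^{1/2}$ (by the Gaussian), while tangential derivatives produce $\|\partial^k_{y_1}\tilde V_{h,\epsilon/3}\|_{L^2}\lesssim h^{-k}$ from \eqref{hnormes} and $\|h^{-(d-2)/4}\partial^\alpha_{y'}(e^{-|y'|^2/(2h)}\chi)\|_{L^2}\lesssim h^{-|\alpha|/2}$. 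Balancing the worst coefficient $\lesssim h^{1-\epsilon/3}$ against the $h^{-2}$ loss on $\partial^2_{y_1}$ gives the dominant $(y_1,y_1)$ contribution $\lesssim h^{-1-\epsilon/3}$, with the mixed $(y_1,y_j)$ terms of size $h^{(1-\epsilon/3)/2}\cdot h^{-3/2}=h^{-1-\epsilon/6}$ and the pure $y'$-terms of size $h^{-1}$ both strictly smaller. This yields \eqref{estimerr}.

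For \eqref{estimerr1}, every piece of $F_1$ and $F_2$ is tangentially frequency-localised at scale $\sim 1/h$: the factor $\tilde V_{h,\epsilon/3}$ is so by Theorem \ref{thms2}, and the $y'$-derivatives of the Gaussian introduce only tangential frequencies $\lesssim h^{-1/2}\ll h^{-1}$. Consequently $(-\Delta_g)^{-1/2}$ applied to the source gains a factor $h$, and $\|\square_g w_{h,\epsilon,err}(\cdot,t)\|_{\dot H^{-1}(\Omega)}\lesssim h\cdot h^{-1-\epsilon/3}=h^{-\epsilon/3}$, which is \eqref{estimerr1}. Finally, for \eqref{estimwqr} I apply Duhamel with vanishing initial data,
\[
w_{h,\epsilon,err}(t)=\int_0^t\frac{\sin((t-s)\sqrt{-\Delta_g})}{\sqrt{-\Delta_g}}\,\square_g w_{h,\epsilon,err}(s)\,ds,
\]
and use the contradiction hypothesis in the equivalent form $\|(\sin(t\sqrt{-\Delta_g})/\sqrt{-\Delta_g})g\|_{L^qL^r}\lesssim h^{-\beta(r,d)+2\epsilon}\|g\|_{\dot H^{-1}(\Omega)}$ (obtained by substituting $f=(-\Delta_g)^{-1/2}g$ and valid because the source is at tangential frequency $\sim 1/h$), combined with \eqref{estimerr1} and Minkowski in time on $[0,T]$. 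This gives $\|w_{h,\epsilon,err}\|_{L^qL^r}\lesssim T\,h^{-\beta(r,d)+2\epsilon-\epsilon/3}$, which is \eqref{estimwqr}.

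The main obstacle is the term-by-term bookkeeping in $F_2$: one must verify that the Taylor-vanishing orders of each coefficient $B_{j,k}$ at $(x,y')=(0,0)$ genuinely match the derivative losses coming from $W_{h,\epsilon}$, so that no term escapes with a worse power than $h^{-1-\epsilon/3}$. This relies crucially on the normal-form coordinates used in \eqref{formetric} (in particular $B_{1,j}(0,y)=0$ for $j\geq 2$) and on the rescaling $b(0)=1$ arranged at the start of the section; without these, the mixed and Gaussian-derivative pieces would no longer be subdominant.
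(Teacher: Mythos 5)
Your proof is correct and follows essentially the same route as the paper's: the same splitting of the source into the $\square_{\tilde g}\tilde V_{h,\epsilon/3}$ piece and the $(\Delta_g-\Delta_{\tilde g})W_{h,\epsilon}$ piece with Taylor-expansion bookkeeping of the coefficients $B_{j,k}$ on the support of $W_{h,\epsilon}$, the same gain of a factor $h$ in $\dot H^{-1}$ coming from the tangential frequency localisation at scale $1/h$ (this is exactly the paper's Lemma \ref{lemh}), and the same Duhamel--Minkowski argument combined with the standing contradiction hypothesis on $\sin(t\sqrt{-\Delta_g})$ to get \eqref{estimwqr}. Your term-by-term accounting of $F_2$ (mixed $(y_1,y_j)$ and pure $y'$ terms strictly subdominant) is slightly sharper than the paper's crude bound $h^{-2+(1-\epsilon/3)}\|\tilde V_{h,\epsilon/3}\|_{L^2}$, but this is a refinement of the same argument, not a different method.
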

\begin{proof}
We start with \eqref{estimwqr}.
Assume we have already proved \eqref{estimerr1}. The Duhamel formula for $w_{h,\epsilon,err}$ writes
\begin{equation}
w_{h,\epsilon,err}(x,y,t)=\int_{0}^{t}\frac{\sin((t-s)\sqrt{-\Delta_{g}})}{\sqrt{-\Delta_{g}}}\Big((\partial^{2}_{t}-\Delta_{g})w_{h,\epsilon,err}(x,y,s)\Big)ds.
\end{equation}
Using the Minkowski inequality together with \eqref{estimerr} we find
\begin{align}\label{esterr}
\|w_{h,\epsilon,err}(.,t)\|_{L^{r}(\Omega)} & =\|\int_{0}^{t}\frac{\sin((t-s)\sqrt{-\Delta_{g}})}{\sqrt{-\Delta_{g}}}\Big((\partial^{2}_{t}-\Delta_{g})w_{h,\epsilon,err}(.,s)\Big)ds\|_{L^{r}(\Omega)} \\
\nonumber
& \lesssim \int_{0}^{t} \|\frac{\sin((t-s)\sqrt{-\Delta_{g}})}{\sqrt{-\Delta_{g}}}\Big((\partial^{2}_{t}-\Delta_{g}) w_{h,\epsilon,err}(.,s)\Big)\|_{L^{r}(\Omega)}ds \\
\nonumber
& \lesssim h^{-\beta(r,d)+2\epsilon}\|(\sqrt{-\Delta_{g}})^{-1}(\partial^{2}_{t}-\Delta_{g}) w_{h,\epsilon,err}\|_{L^{1}([0,T],L^{2}(\Omega))}\\
\nonumber
& \simeq h^{-\beta(r,d)+2\epsilon}\|(\partial^{2}_{t}-\Delta_{g}) w_{h,\epsilon,err}\|_{L^{1}([0,T],\dot{H}^{-1}(\Omega))}\\
\nonumber
& \lesssim h^{-\beta(r,d)+2\epsilon-\epsilon/3},
\end{align}
where in the third line we used that the wave operator $\sin (t\sqrt{-\Delta_{g}})$ was supposed to be 
bounded by $h^{-\beta(r,d)+2\epsilon}$ and where in the last line we used \eqref{estimerr}. The respective constant in the last inequality depend only on $T$ and the estimates hold uniformly with respect to $t\in [0,T]$.

We now proceed with \eqref{estimerr} and \eqref{estimerr1}. In order to do this we use the special form of $\Delta_{g}$ and the fact that $\tilde{V}_{h,\epsilon/3}(x,y_{1},t)$ (and thereforee $V_{h,\epsilon}$) is supported for $0\leq x\lesssim h^{(1-\epsilon/3)/2}$. The inhomogeneous part of the equation \eqref{eqwerr} writes
\begin{equation}\label{estimnonlinterms}
\square\tilde{V}_{h,\epsilon/3} h^{-(d-2)/4}e^{-\frac{|y'|^{2}}{2h}}\chi(y')+(1+xb(y_{1}))\partial^{2}_{y_{1}}W_{h,\epsilon}-\sum_{j,k=1}^{d-1}B_{j,k}(x,y)\partial^{2}_{y_{j},y_{k}}W_{h,\epsilon}.
\end{equation}
The $L^{2}(\Omega)$ norm of $\square\tilde{V}_{h,\epsilon/3} h^{-(d-2)/4}e^{-\frac{|y'|^{2}}{2h}}\chi(y')$ is estimated using the last condition in Theorem \ref{thms2} and its contribution in the norm of the non-linear term of \eqref{eqwerr} is $O_{L^{2}(\Omega)}(1/h)$. We estimate the second term in \eqref{estimnonlinterms} as follows:
\begin{multline}
-(1+xb(y_{1}))\partial^{2}_{y_{1}}W_{h,\epsilon}+B_{1,1}(x,y)\partial^{2}_{y_{1},y_{1}}W_{h,\epsilon}=\\ \\h^{-(d-2)/4}e^{-\frac{|y'|^{2}}{2h}}\chi(y')\partial^{2}_{y_{1}}\tilde{V}_{h,\epsilon/3} \Big(B_{1,1}(x,y)-1-b(y_{1})\Big).
\end{multline}
The last term in \eqref{estimnonlinterms} splits into two sums, corresponding to $k=1$,
\begin{multline}\label{eqwerrlastterm}
\sum_{j=1}^{d-1}B_{1,j}(x,y)\partial^{2}_{y_{1},y_{j}}W_{h,\epsilon}=\\ 
-h^{-(d-2)/4}e^{-\frac{|y'|^{2}}{2h}} \frac{1}{h}\partial_{y_{1}}\tilde{V}_{h,\epsilon/3}\sum_{j=2}^{d-1}B_{1,j}(x,y)\Big(y_{j}\chi(y')+
h\partial_{y_{j}}\chi(y')\Big),
\end{multline}
and $k\in\{2,..,d-1\}$, respectively,
\begin{multline}\label{eqwerrlastterm}
\sum_{j,k=2}^{d-1}B_{j,k}(x,y)\partial^{2}_{y_{j},y_{k}}W_{h,\epsilon}=
h^{-(d-2)/4}e^{-\frac{|y'|^{2}}{2h}} 
\frac{1}{h^{2}}B_{j,k}(x,y)\tilde{V}_{h,\epsilon/3}\\
\times\sum_{j,k=2}^{d-1}\Big(y_{j}y_{k}\chi(y')-h(y_{j}\partial_{y_{k}}\chi(y')+y_{k}\partial_{y_{j}}\chi(y')+\delta_{j=k})
+h^{2}\partial^{2}_{y_{j},y_{k}}\chi(y')\Big).
\end{multline}
If $|y'|\geq h^{(1-\epsilon')/2}$ for some $\epsilon'>0$, then $e^{-\frac{|y'|^{2}}{2h}}\leq C_{M}h^{M}$, for all $M\geq 0$, thus taking $\epsilon'=\epsilon/3$ we can estimate the $L^{2}(\Omega)$ norm of \eqref{eqwerrlastterm} as follows
\begin{align}
\nonumber
\|-(1+xb(y_{1}))\partial^{2}_{y_{1}}W_{h,\epsilon}+\sum_{j,k=1}^{d-1}B_{j,k}(x,y)\partial_{y_{j}}\partial_{y_{k}}W_{h,\epsilon}\|_{L^{2}(\Omega)}&  \lesssim h^{-2+(1-\epsilon/3)}\|\tilde{V}_{h,\epsilon/3}\|_{L^{2}(\tilde{\Omega})}\\  & \lesssim h^{-1-\epsilon/3},
\end{align}
where we used that 
\[
\sup_{\epsilon>0}\|\tilde{V}_{h,\epsilon/3}\|_{L^{2}(\tilde{\Omega})}\lesssim 1, \quad \sup_{\epsilon>0}\|\partial_{y_{1}}\tilde{V}_{h,\epsilon/3}\|_{L^{2}(\tilde{\Omega})}\lesssim\frac{1}{h}, \quad \sup_{\epsilon>0}\|\partial^{2}_{y_{1}}\tilde{V}_{h,\epsilon/3}\|_{L^{2}(\tilde{\Omega})}\lesssim\frac{1}{h^{2}}.
\]
In the same way we obtain the following bounds
\begin{align}
\nonumber
\|-(1+xb(y_{1}))\partial^{2}_{y_{1}}W_{h,\epsilon}+\sum_{j,k=1}^{d-1}B_{j,k}(x,y)\partial^{2}_{y_{j},y_{k}}W_{h,\epsilon}\|_{\dot{H}^{-1}(\Omega)} \\
\nonumber
 \lesssim  
h\|-(1+xb(y_{1}))\partial^{2}_{y_{1}}W_{h,\epsilon}+\sum_{j,k=1}^{d-1}B_{j,k}(x,y)\partial^{2}_{y_{j},y_{k}}W_{h,\epsilon}\|_{L^{2}(\Omega)}\\  \lesssim  h^{-\epsilon/3}.
\end{align}
For the last inequality we used the following lemma
\begin{lemma}\label{lemh}
Let $f(x,y):\Omega\rightarrow\mathbb{R}$ be localized at frequency $1/h$ in the $y\in\mathbb{R}^{d-1}$ variable, i.e. such that there exists $\psi\in C^{\infty}_{0}(\mathbb{R}^{d-1}\setminus \{0\})$ with $\psi(hD_{y})f=f$. Then there exists a constant $C>0$ independent of $h$ such that one has
\[
\|f\|_{\dot{H}^{-1}(\Omega)}\leq Ch\|f\|_{L^{2}(\Omega)}.
\]
\end{lemma}
\begin{proof}(\emph{of Lemma \ref{lemh}}:)
Since $\psi(hD_{y})f=f$ we have
\begin{align}
\|f\|_{\dot{H}^{-1}(\Omega)}&=\sup_{\|g\|_{\dot{H}^{1}(\Omega)}\leq 1}\int \psi f\bar{g}\leq\|f\|_{L^{2}(\Omega)}\times \sup_{\|g\|_{\dot{H}^{1}(\Omega)}\leq 1}\|\psi(hD_{y})g\|_{L^{2}(\Omega)}\\
\nonumber
&\leq h\|f\|_{L^{2}(\Omega)}\|\tilde{\psi}(hD_{y})\nabla_{y}g\|_{L^{2}(\Omega)}\\
\nonumber
&\leq C h\|f\|_{L^{2}(\Omega)},
\end{align}
where we set $\tilde{\psi}(\eta)=|\eta|^{-1}\psi(\eta)$. Hence Lemma \ref{lemh} is proved.
\end{proof}
\end{proof}
\emph{End of the proof of Proposition \ref{propcontr}:}

Recall that we have assumed that the operator
\[
\sin t\sqrt{-\Delta_{g}}:L^{2}(\Omega)\rightarrow L^{q}([0,T],L^{r}(\Omega))
\]
is bounded by $h^{-\beta(r,d)+2\epsilon}$. This assumption implies
\begin{align}\label{estimv}
\|V_{h,\epsilon}\|_{L^{q}([0,T],L^{r}(\Omega))} & \leq C h^{-\beta(r,d)+2\epsilon}(\|V_{h,\epsilon,0}\|_{L^{2}(\Omega)}+\|V_{h,\epsilon,1}\|_{\dot{H}^{-1}(\Omega)})\\ 
\nonumber
& \leq \tilde{C} h^{-\beta(r,d)+2\epsilon},
\end{align}
where $C,\tilde{C}>0$ are independent of $h$. If \eqref{estimv} were true, together with \eqref{estwred} it would yield
\begin{align}
h^{-\beta(r,d)+2\epsilon/3} & \lesssim\|W_{h,\epsilon}\|_{L^{q}([0,T],L^{r}(\Omega))}\\ 
\nonumber
& \lesssim (\|V_{h,\epsilon}\|_{L^{q}([0,T],L^{r}(\Omega))}+\|w_{h,\epsilon,err}\|_{L^{q}([0,T],L^{r}(\Omega))}).
\end{align}
The last estimate together with \eqref{estimwqr} and \eqref{estimv} gives a contradiction, since it would imply 
\[
h^{-\beta(r,d)+2\epsilon/3}\lesssim h^{-\beta(r,d)+2\epsilon}+h^{-\beta(r,d)+2\epsilon-\epsilon/3} 
\]
which obviously can't be true. The proof is complete.
\end{proof}

\section{Proof of Theorem \ref{thmstrichartz}}\label{secdoide}
In order to finish the proof of Theorem \ref{thmstrichartz} it remains to prove Theorem \ref{thms2}. We recall its statement using the notations of Theorem \ref{thmstrichartz} that we will keep in the rest of this paper. We are therefore reduced to prove the following:
\begin{thm}\label{thms3}
Let $(\Omega,g)$ be a Riemannian manifold of dimension $d=2$ with $C^\infty$ boundary $\partial\Omega$. Suppose that local coordinates can be chosen near $(x=0,y=0)$ such that in a neighborhood of this point $\Omega$ be given by
\begin{equation}\label{defomeg}
\Omega=\{(x,y)|x>0,y\in\mathbb{R}\},
\end{equation}
and the Laplace-Beltrami operator $\Delta_{g}$ associated to the metric $g$ be given by
\begin{equation}\label{deflaplag}
\Delta_{g}=\partial^{2}_{x}+(1+xb(y))\partial^{2}_{y},
\end{equation}
where $b$ is a smooth function with $b(0)=0$. Let $Y>0$ be such that for $y\in [0,Y]$ we have $|b^{1/3}(y)-1|\leq \frac{1}{10}$. Then there exists $T=T(\Omega)$ and for every $\epsilon>0$ small enough there exist sequences $V_{h,j,\epsilon}$, $j\in\{0,1\}$ and approximate solutions $V_{h,\epsilon}$ to the wave equation on $\Omega$ with Dirichlet boundary condition 
\begin{equation}\label{undered3}
\left\{
      \begin{array}{ll}
\partial^{2}_{t}V-\partial^{2}_{x}V-(1+xb(y))\partial^{2}_{y}V=0,\quad\text{on}\quad \Omega\times\mathbb{R}\\
V|_{t=0}=V_{h,0,\epsilon},\quad
\partial_{t}V|_{t=0}=V_{h,1,\epsilon},\\
V|_{\partial\Omega\times[0,T]}=0,
     \end{array}
     \right.
\end{equation}
satisfying the following conditions:
\begin{itemize}
\item First, $V_{h,\epsilon}$ is an approximate solution to \eqref{undered} in the sense that
\begin{equation}
\partial^{2}_{t}V_{h,\epsilon}-\partial^{2}_{x}V_{h,\epsilon}-(1+xb(y))\partial^{2}_{y}V_{h,\epsilon}=O_{L^{2}(\Omega)}(1/h),\quad \|V_{h,\epsilon}\|_{L^{2}(\Omega)}\leq 1.
\end{equation}

\item Secondly, $V_{h,\epsilon}$ writes as a sum
\begin{equation}\label{hred}
V_{h,\epsilon}(x,y,t)=\sum_{n=0}^{N}v^{n}_{h,\epsilon}(x,y,t),\quad N\simeq h^{-\frac{(1-\epsilon)}{4}},
\end{equation}
where the functions $v^{n}_{h,\epsilon}(x,y,t)$ satisfy the following conditions:
\begin{itemize}
\item  for $4<r<\infty$:
\begin{equation}\label{hest}     
\left\{
\begin{array}{ll}
\|v^{n}_{h,\epsilon}(.,t)\|_{L^{r}(\Omega)}\geq C h^{-\frac{3}{2}(\frac{1}{2}-\frac{1}{r})-\frac{1}{6}(\frac{1}{4}-\frac{1}{r})+2\epsilon},\\
\sup_{\epsilon>0}\Big(\|v^{n}_{h,\epsilon}(.,t)\|_{L^{2}(\Omega)}+h\|\partial_{t}v^{n}_{h,\epsilon}(.,t)\|_{L^{2}(\Omega)}\Big)\leq 1,
\end{array}
\right.
\end{equation}
where the constant $C=C(Y)\simeq Y^{1/q}>0$ is independent of $h$, $\epsilon$ or $n$;

\item $v^{n}_{h,\epsilon}(x,y,t)$ are essentially supported for the time variable $t$ in almost disjoint intervals of time each of size $\simeq h^{\frac{(1-\epsilon)}{4}}$ and for the tangential variable $y$ in almost disjoint intervals.

\item $V_{h,\epsilon}$ are supported for the normal variable $0\leq x\lesssim h^{(1-\epsilon)/2}$ (with constants dependent only on $\Omega$ and not on $\epsilon$,$h$) and localized at spatial frequency $\frac{1}{h}$ in the tangential variable $y$. Moreover, we have,uniformly in $\epsilon>0$,
\begin{equation}\label{hnormes}
\sup_{\epsilon>0} \|V_{h,\epsilon}\|_{L^{2}(\Omega)}\lesssim 1,\quad \sup_{\epsilon>0}\|\partial_{y}V_{h,\epsilon}\|_{L^{2}(\Omega)}\lesssim\frac{1}{h},\quad  \sup_{\epsilon>0}\|\partial^{2}_{y}V_{h,\epsilon}\|_{L^{2}(\Omega)}\lesssim\frac{1}{h^{2}}.
\end{equation}
\end{itemize}
\end{itemize}
\end{thm}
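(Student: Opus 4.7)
The plan is to transport the explicit approximate solution constructed in \cite{doi} for the Friedlander model $\square_{\mathrm{Fr}} := \partial_t^2 - \partial_x^2 - (1+x)\partial_y^2$ to the general operator $\square_g := \partial_t^2 - \partial_x^2 - (1+xb(y))\partial_y^2$ via Melrose's equivalence of glancing hypersurfaces theorem, in such a way that the multiply reflected cusp wave packets responsible for the loss of derivatives are preserved.

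The first step is to verify that the boundary $\{x=0\}$ and the characteristic variety $\{\tau^2 - \xi^2 - (1+xb(y))\eta^2 = 0\}$ form a glancing pair at the gliding point (placed at $y=0$ with $b(0)=1$ after rescaling) in the sense of Melrose; this is precisely the content of \eqref{glid11}--\eqref{glid12}, and the Friedlander pair satisfies the same normal form at the corresponding point. Melrose's theorem \cite{meta87} then provides a local symplectomorphism $\kappa$ of a neighborhood of the gliding point in $T^{*}(\bar{\Omega}\times\mathbb{R}_t)$ which fixes time, preserves the boundary hypersurface, and carries the characteristic variety of $\square_g$ onto that of $\square_{\mathrm{Fr}}$, sending the curved gliding ray of $\square_g$ onto the straight Friedlander gliding ray. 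Quantizing $\kappa$ as a zero-order boundary-preserving semi-classical Fourier integral operator $F$, one arranges $F\square_g = \square_{\mathrm{Fr}} F + R$, where $R$ is a remainder of order $h^{-1}$ localized microlocally near the ray.

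Next I would set $V_{h,\epsilon} := F^{-1}\tilde V_{h,\epsilon}^{\mathrm{Fr}}$, where $\tilde V_{h,\epsilon}^{\mathrm{Fr}} = \sum_{n=0}^N \tilde v_{h,\epsilon}^n$ is the Friedlander cusp wave packet sum from \cite{doi}, and take the initial data $V_{h,j,\epsilon} := \partial_t^{j} V_{h,\epsilon}|_{t=0}$, so that $V_{h,\epsilon}$ solves \eqref{undered3} up to an $O_{L^2}(1/h)$ error and $V_{h,\epsilon}|_{\partial\Omega} = 0$ because $F$ is boundary preserving. The support in the layer $0\le x\lesssim h^{(1-\epsilon)/2}$, the frequency localization at scale $1/h$ in $y$, the almost disjoint supports in $y$ and in $t$ of the pieces $v_{h,\epsilon}^n := F^{-1}\tilde v_{h,\epsilon}^n$, and the $L^2$ bounds \eqref{hnormes} are all inherited from \cite{doi} since $F^{\pm 1}$ are $L^2$-bounded semi-classical FIOs whose underlying canonical transformations act on phase-space tubes of the relevant size by quasi-translation along $\kappa$. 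For the $L^r$ lower bound \eqref{hest} one observes that $v_{h,\epsilon}^n$ is a cusp Lagrangian distribution of exactly the same type as $\tilde v_{h,\epsilon}^n$, because $\kappa$ carries the Friedlander cusp Lagrangian onto the corresponding cusp Lagrangian for $\square_g$; the stationary-phase Airy computation performed in the Appendix of \cite{doi} then transfers and yields the desired lower bound $h^{-\frac{3}{2}(\frac12-\frac1r)-\frac16(\frac14-\frac1r)+2\epsilon}$ up to multiplicative constants depending only on the jets of $\kappa$ at the gliding point.

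The main obstacle is precisely this $L^r$ transfer with $r>4$: a generic FIO is not bounded on $L^r$ for $r\ne 2$, so one cannot invoke $L^r$-boundedness of $F^{\pm 1}$ as a black box. The rescue is that the $\tilde v_{h,\epsilon}^n$ are very structured -- cusp Lagrangian distributions concentrated in tubes of width $h^{(1-\epsilon)/2}$ around an Airy-type caustic -- and one can put $F^{-1}$ into an explicit normal form in boundary coordinates adapted to $\kappa$, in which $F^{-1}$ acts close to the identity on such cusp distributions modulo manifestly lower-order pieces. The delicate point is then to carry out this stationary-phase comparison uniformly in the reflection index $n$ and in $h$, and to verify that the cumulative error summed over $N \simeq h^{-(1-\epsilon)/4}$ reflections stays below the $O_{L^2}(1/h)$ threshold demanded by Theorem \ref{thms3}. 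Controlling the construction uniformly across the many cusp reflections, and passing from the microlocal identities furnished by Melrose's theorem to a genuine Dirichlet-preserving approximate solution, constitutes the core technical work of the proof.
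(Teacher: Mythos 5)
Your proposal diverges from the paper's actual argument in a fundamental way: you conjugate the full Friedlander solution by an \emph{interior} semi-classical FIO $F$ quantizing the Melrose symplectomorphism, whereas the paper never does this. The paper uses Melrose's theorem only to produce phase functions $\theta,\zeta$ solving the eikonal system \eqref{sistem1} for the general operator (Theorem \ref{thmphase}, made explicit in Section \ref{sectphafcts}), builds the cusps $u^n_h$ directly as oscillatory integrals \eqref{solgen1} with the general phase $\Phi^n$ of \eqref{dfnphangen}, and uses the model case only through the \emph{boundary} FIO $J$ (canonical relation $\chi_\partial$, the restriction of Melrose's map to $T^*\partial X$), which is applied to the boundary traces $Tr_\pm(u^n_{F,h})$ to \emph{define} the symbols $g^n_h$ (Proposition \ref{lemjfhn}) so that the pairwise cancellation of traces is inherited (Proposition \ref{propboundjtracepm}). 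The $L^r$ lower bounds are then computed directly from the explicit phase representation by Airy/stationary-phase analysis in the Appendix (Proposition \ref{propnorm}), with uniformity in $n$ coming from the ellipticity of $J$ and the bound $\partial^2_{y\eta}\theta_0=b^{2/3}(y)+O(\zeta_0)$ (Proposition \ref{propnnn}).

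The gaps in your route are genuine. First, the claim that $F$ can be taken ``boundary preserving'' so that $V_{h,\epsilon}|_{\partial\Omega}=F^{-1}\tilde V^{\mathrm{Fr}}_{h,\epsilon}|_{\partial\Omega}=0$ does not follow from anything in Melrose's theorem: that theorem gives a symplectomorphism carrying the hypersurface $\{x=0\}$ to itself at the level of phase space, but a quantization of it is not an operator that intertwines restriction to $\partial\Omega$ or maps the Dirichlet kernel to the Dirichlet kernel; enforcing the boundary condition is precisely where the work lies, and in the paper it is done by hand through $J$, the choice of $T$ in \eqref{choiceT}, and Proposition \ref{propdiruh01} (even then the endpoint trace $J(Tr_-(u^N_{F,h}))$ needs a separate argument). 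Second, the intertwining $F\square_g=\square_{F}F+R$ with $R=O(h^{-1})$ is asserted but not available: an Egorov-type conjugation only matches principal symbols up to a non-vanishing elliptic factor ($p\circ\chi=e\,p_F$), and near the glancing set no clean operator-level conjugation of the two wave operators exists --- this is exactly why gliding-ray parametrices are built through Airy-type phases rather than by conjugating the model solution. Third, the $L^r$ transfer for $r>4$, which you correctly flag as the main obstacle, is left unresolved: ``$F^{-1}$ acts close to the identity on cusp distributions'' is the missing content, not a reduction of it, and it would have to be proved uniformly over $N\simeq h^{-(1-\epsilon)/4}$ reflections. The paper's design avoids all three problems simultaneously by keeping the interior construction intrinsic to $\square_g$ and confining the use of the model to the boundary trace algebra.
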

\begin{rmq}\label{defepsilon}
In what follows we fix $\epsilon>0$ small enough and we do not mention anymore the dependence on $\epsilon$ of the solution to the wave equation \eqref{undered3} we are going to construct. 
\end{rmq}
Before starting the proof of Theorem \ref{thms3} we briefly recall some definitions we shall use in the rest of the paper  (for details see \cite{hormand} or \cite{vas06}, for example).

Set $X=\Omega\times\mathbb{R}_{t}$, let $\square_{g}=\partial^{2}_{t}-\Delta_{g}$ denote the wave operator on $X$ and let $p\in C^{\infty}(T^{*}X\setminus o)$  be the principal symbol of $\square_{g}$, which is homogeneous of degree $2$ in $T^{*}X\setminus o$ (where we write $o$ for the "zero section" of $T^{*}X$), 
\begin{equation}\label{symbwave}
p(x,y,t,\xi,\eta,\tau)=\xi^{2}+(1+xb(y))\eta^{2}-\tau^{2}.
\end{equation}
The characteristic set $P:=\text{Char}(p)\subset T^{*}X\setminus o$ of $\square_{g}$ is defined by $p^{-1}(\{0\})$. If we denote $N^{*}\partial\Omega$ the conormal bundle of $\partial X$ we notice that $\text{Char}(p)\cap N^{*}\partial\Omega=\emptyset$, meaning that the boundary is non-characteristic for $\square_{g}$. 

Let us consider the Dirichlet problem for $\square_{g}$:
\begin{equation}\label{bdrypb}
\square_{g} u=0, \quad u|_{\partial X}=0.
\end{equation}
The statement of the propagation of singularities of solutions to \eqref{bdrypb} has two main ingredients: locating singularities of a distribution, as captured by the wave front set, and describing the curves along which they propagate, namely the bicharacteristics. Both of these are closely related to an appropriate notion of "phase space", in which both the wave front set and the bicharateristics are located. On manifolds without boundary, this phase space is the standard cotangent bundle $T^{*}X$. In presence of boundaries the phase space is the $b$-cotangent bundle, $^{b}T^{*}X$. Let $o$ denote the zero section of $^{b}T^{*}X$. Then $^{b}T^{*}X\setminus o$ is equipped with an $\mathbb{R}^{+}$-action (fiberwise multiplication) which has no fixed points.  
There is a natural non-injective "inclusion" $\pi:T^{*}X\rightarrow ^{b}T^{*}X$. We define the elliptic, glancing and hyperbolic sets in $T^{*}\partial X$ as follows: 
\[
\mathcal{E}=\{q\in \pi(T^{*}X)\setminus o| \pi^{-1}(q)\cap \text{Char}(p)=\emptyset\},
\]
\[
\mathcal{G}=\{q\in \pi(T^{*}X)\setminus o| \text{Card}(\pi^{-1}(q)\cap \text{Char}(p))=1\},
\]
\[
\mathcal{H}=\{q\in \pi(T^{*}X)\setminus o| \text{Card}(\pi^{-1}(q)\cap \text{Char}(p))\geq 2\},
\]
with $\text{Card}$ denoting the cardinality of a set; each of these is a conic subset of $\pi(T^{*}X)\setminus o$. Note that in $T^{*}\mathring{X}$, $\pi$ is the identity map, so every point $q\in T^{*}\mathring{X}$ is either elliptic or glancing, depending on weather $q\notin \text{Char}(p)$ or $q\in \text{Char}(p)$.

The canonical local coordinates on $T^{*}X$ will be denoted $(x,y,t,\xi,\eta,\tau)$, so one forms are $\alpha=\xi dx+\eta dy+\tau dt$.
Let $(\rho,\vartheta)=(x,y,t,\xi,\eta,\tau)$ on $T^{*}X$ near $\pi^{-1}(q)$, $q\in T^{*}\partial X$, and corresponding coordinates $(y,t,\eta,\tau)$ on a neighborhood $\mathcal{U}$ of $q$ in $T^{*}\partial X$. Consequently,
\[
\mathcal{E}\cap\mathcal{U}=\{(y,t,\eta,\tau)| \tau^{2}<\eta^{2}\},
\]
\[
\mathcal{G}\cap\mathcal{U}=\{(y,t,\eta,\tau)| \tau^{2}=\eta^{2}\},
\]
\[
\mathcal{H}\cap\mathcal{U}=\{(y,t,\eta,\tau)| \tau^{2}>\eta^{2}\}.
\]
Let $\rho=\rho(s)=(x,y,t)(s)$, $\vartheta=\vartheta(s)=(\xi,\eta,\tau)(s)$ be a bicharacteristic of $p(\rho,\vartheta)$, i.e. such that $(\rho,\vartheta)$ satisfies
\begin{equation}
\frac{d\rho}{ds}=\frac{\partial p}{\partial\vartheta},\quad\frac{d\vartheta}{ds}=-\frac{\partial p}{\partial\rho},\quad
p(\rho(0),\vartheta(0))=0.
\end{equation}
We say that $(\rho(s),\vartheta(s))|_{s=0}$ on the boundary $\partial X$ is a gliding point if it satisfies
\begin{equation}\label{hypconv}
x(\rho(0))=0,\quad \frac{d}{ds}x(\rho(0))=0,\quad \frac{d^{2}}{ds^{2}}x(\rho(0))<0.
\end{equation}
This is equivalent to saying that $(\rho,\vartheta)\in T^{*}X\setminus o$ is a gliding point if
\begin{equation}\label{hyconv}
p(\rho,\vartheta)=0,\quad \{p,x\}_{|_{(\rho,\vartheta)}}=0,\quad \{\{p,x\},p\}_{|_{(\rho,\vartheta)}}>0.
\end{equation}
The assumption on the domain $\Omega$ in Theorem \ref{thms3} near the boundary point $(x=0,y=0)$ is equivalent to saying that there exists a bicaracteristic intersecting tangentially the boundary at $(0,0)$ and having second order contact with the boundary at this point. From \eqref{hyconv} this last condition translates into the following: $\exists (\xi_{0},\eta_{0},\tau_0)$, such that

\begin{equation}\label{glid1}
\tau^{2}_0=(1+xb(y))\eta^{2}_{|_{(0,0,\xi_0,\eta_0)}},\quad \{p,x\}_{|_{(0,0,\xi_0,\eta_0)}}= \frac{\partial p}{\partial\xi}_{|_{(0,0,\xi_0,\eta_0)}}=2\xi_{0}=0,
\end{equation}
\begin{equation}\label{glid2}
\{\{p,x\},p\}_{|_{(0,0,\xi_0,\eta_0)}}=\{\frac{\partial p}{\partial\xi},p\}_{|_{(0,0,\xi_0,\eta_0)}}=2b(0)\eta^{2}_{0}>0.
\end{equation}
Since $b(0)=1$ we must have $\xi_0=0$ and $\eta_0\neq 0$. Suppose without loss of generality that $\eta_{0}=1$. Let $(\rho_{0},\vartheta_{0})=(x=0,y=0,t=0,\xi_0=0,\eta_0=1,\tau_0=-1)$ and denote the gliding point (in $T^{*}\partial X$) by
\begin{equation}\label{dfnglidpt}
\pi (\rho_{0},\vartheta_{0})=(y=0,t=0,\eta_{0},\tau_{0}=-\eta_{0})=(0,0,1,-1)\in\mathcal{G}.
\end{equation} 
 
We define the semi-classical wave front set $WF_{h}(u)$ of a distribution $u$ on $\mathbb{R}^{3}$ to be the complement of the set of points $(\rho=(x,y,t),\zeta=(\xi,\eta,\tau))\in\mathbb{R}^{3}\times(\mathbb{R}^{3}\setminus 0)$  for which there exists a symbol $a(\rho,\zeta)\in\mathcal{S}(\mathbb{R}^{6})$ such that $a(\rho,\zeta)\neq 0$ and for all integers $m\geq 0$ the following holds
\[
\|a(\rho,hD_{\rho})u\|_{L^{2}}\leq c_{m}h^{m}.
\]

\subsection{Choice of an approximate solution}
We look for an approximate solution to the equation \eqref{undered3} of the form
\begin{equation}\label{solgen}
u_{h}(x,y,t)=\int_{\xi,\eta,\tau}e^{\frac{i}{h}(\theta+\zeta\xi+\frac{\xi^{3}}{3})}g_{h}d\xi d\eta d\tau,
\end{equation}
where the phase functions $\theta(x,y,t,\eta,\tau)$, $\zeta(x,y,\eta,\tau)$ are real valued and homogeneous in $(\eta,\tau)$ of degree $1$ and $2/3$, respectively, and where  $g_{h}$ is a symbol to be determined in the next sections. In order $u_h$ to solve \eqref{undered3}, the functions  $\theta$, $\zeta$ must solve an eikonal equation that we derive in what follows. We denote by $<.,.>$ the symmetric bilinear form obtained by polarization of the second order homogeneous principal symbol $p$ of the wave operator $\square_{g}$,
\begin{equation}\label{product}
<da,db>=\partial_{x}a\partial_{x}b+(1+xb(y))\partial_{y}a\partial_{y}b-\partial_{t}a\partial_{t}b.
\end{equation}
Applying the wave operator $h^{2}\square_{g}$ to $u_{h}$, the main contribution becomes
\begin{multline}\label{eqeik}
(\partial_{x}\theta+\xi\partial_{x}\zeta)^{2}+(1+xb(y))(\partial_{y}\theta+\xi\partial_{y}\zeta)^{2}-(\partial_{t}\theta+\xi\partial_{t}\zeta)^{2}=\\
=<d\theta,d\theta>-2\xi<d\theta,d\zeta>+\xi^{2}<d\zeta,d\zeta>.
\end{multline}
In order to eliminate this term after integrations by parts in $\xi$ we ask that the right hand side of \eqref{eqeik} to be a nontrivial multiple of $\partial_{\xi}\Phi$, where we set
\begin{equation}\label{phasephi}
\Phi=\theta+\zeta\xi+\frac{\xi^{3}}{3}.
\end{equation}
This is equivalent to determine $\theta$, $\zeta$ solutions to
\begin{equation}\label{sistem1}
\left\{
	\begin{array}{ll}
<d\theta,d\theta>-\zeta<d\zeta,d\zeta>=0,\\
<d\theta,d\zeta>=0.
\end{array}
	\right.
\end{equation}
The system \eqref{sistem1} is a nonlinear system of partial differential equations, which is elliptic where $\zeta>0$ (shadow region), hyperbolic where $\zeta<0$ (illuminated region) and parabolic where $\zeta=0$ (caustic curve or surface). It is crucial that there is a solution of the form
\begin{equation}\label{phasesistem}
\phi^{\pm}=\theta\mp\frac{2}{3}(-\zeta)^{3/2} 
\end{equation}
with $\theta$, $\zeta$ smooth.  In terms of \eqref{phasesistem}, the eikonal equation takes the form
\begin{equation}\label{psistem}
p(x,y,t,d\phi^{\pm})=0
\end{equation}
by taking the sum and the difference of the equations \eqref{psistem}. 
It is easy, by Hamilton-Jacobi theory, to find many smooth solutions to the eikonal equation \eqref{psistem}. Solutions with the singularity \eqref{phasesistem} arise from solving the initial value problem for    
\eqref{psistem} off an initial surface which does not have the usual transversality condition, corresponding to the fact that there are bicharacteristics tangent to the boundary.

\subsubsection{Geometric reduction}\label{secphases}
Let $X=\Omega\times\mathbb{R}$ as before. Let $p$ and $q$ be functions on $T^*X$ with independent differentials at a point $(\rho,\vartheta)\in T^{*}X\setminus o$. We denote by $P$ and $Q$ the hypersurfaces defined by $p$ and $q$, respectively.
\begin{dfn}\label{dfnglancing}
We say that the hypersurfaces $P$, $Q$ in the symplectic manifold $T^{*}X$ are glancing surfaces at $(\rho,\vartheta)$ if
\begin{enumerate}
\item $\{p,q\}((\rho,\vartheta))=0$,
\item $\{p,\{p,q\}\}((\rho,\vartheta))\neq 0$ and $\{q,\{q,p\}\}((\rho,\vartheta))\neq 0$.
\end{enumerate}
\end{dfn}
In our case we take $q$ to be the defining function of the boundary $\partial\Omega$, therefore $q=x$, and $p$ the symbol of the wave operator $\square_{g}$ defined in \eqref{symbwave}. Precisely,
\begin{equation}\label{hypergen}
Q=\{q(x,y,t,\xi,\eta,\tau)=x=0\},\quad P=\{p=\xi^{2}+(1+xb(y))\eta^{2}-\tau^{2}=0\},
\end{equation}
which are glancing at $(\rho_{0},\vartheta_{0})$ defined in \eqref{dfnglidpt}.
The nondegeneracy conditions in Definition \ref{dfnglancing} hold at a point $(\rho,\vartheta)$ with $\{p,q\}=0$ if and only if $\partial\Omega$ is strictly convex at $(\rho,\vartheta)$.
\begin{rmq}
A model case of a pair of glancing surfaces is given by
\begin{equation}\label{hypermod}
Q_{F}=\{q_{F}(x,y,\xi,\eta,\tau)=x=0\},\quad P_{F}=\{p_{F}=\xi^{2}+(1+x)\eta^{2}-\tau^{2}=0\},
\end{equation}
which have a  second order intersection at the point 
\[
(\bar{\rho}_{0},\bar{\vartheta}_{0}):=(0,y_{0}=0,t_{0}=0,0,\eta_{0}=1,\tau_{0}=-1)\in T^{*}X_{F}\setminus o.
\]
This model case was studied in \cite{doi}. There is a deep geometrical reason underlying the similarity of the general gliding ray parametrice for \eqref{hypergen} and the one for the model example \eqref{hypermod}, which will facilitate solution to the eikonal equation.
\end{rmq}

\begin{thm}\label{thmphase}
Let $P$ and $Q$ be two hypersurfaces in $T^{*}X\setminus o$ satisfying the glancing conditions in Definition \ref{dfnglancing} at $(\rho_{0},\vartheta_{0})\in P\cap Q\subset T^{*}X\setminus o$. Then there exist real functions $\theta$ and $\zeta$ which are $C^{\infty}$ in a conic neighborhood $\mathcal{U}$ of $(\rho_{0},1,-1)\in X\times\mathbb{R}^{2}$, are homogeneous of degrees one and two-thirds, respectively, and have the following properties
\begin{itemize}
\item $\zeta_{0}:=\zeta|_{x=0}=-(\tau^{2}-\eta^{2})\eta^{-4/3}$ and $\partial_x \zeta|_{\partial X}>0$ on $\mathcal{U}\cap \partial X\times\mathbb{R}^{2}$,
\item $d_{y,t}(\partial_{\eta}\theta,\partial_{\tau}\theta)$ are linearly independent on $\mathcal{U}$,
\item the system \eqref{sistem1} holds in $\zeta\leq 0$ and in Taylor series on $\partial X$.
\end{itemize}
Moreover, $\zeta$ is a defining function for the fold set denoted $\Sigma$. By translation invariance in time $\zeta$ is independent of $t$ while the phase function $\theta$ is linear in the time variable.
\end{thm}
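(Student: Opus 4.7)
The plan is to reduce Theorem \ref{thmphase} to the Friedlander model $(P_F,Q_F)$ via Melrose's equivalence theorem for glancing hypersurfaces \cite{meta87}, in which the phases can be written down explicitly, and then to transport the construction back via the symplectic equivalence.

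The model case is handled by direct computation: for $p_F = \xi^{2}+(1+x)\eta^{2}-\tau^{2}$, take
\[
\theta_F(y,t,\eta,\tau) = y\eta + t\tau, \qquad \zeta_F(x,\eta,\tau) = x\eta^{2/3} - (\tau^{2}-\eta^{2})\eta^{-4/3}.
\]
With the bilinear form $\langle\cdot,\cdot\rangle$ polarizing $p_F$ one finds $\langle d\theta_F,d\theta_F\rangle = (1+x)\eta^{2}-\tau^{2}$, $\langle d\zeta_F,d\zeta_F\rangle = \eta^{4/3}$, $\langle d\theta_F,d\zeta_F\rangle = 0$, and $\zeta_F\langle d\zeta_F,d\zeta_F\rangle = (1+x)\eta^{2}-\tau^{2}$, so that the eikonal system \eqref{sistem1} holds identically (not merely in Taylor series). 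Each of the listed properties is then immediate by inspection: the homogeneity degrees $1$ and $2/3$, the boundary value $\zeta_F|_{x=0} = -(\tau^{2}-\eta^{2})\eta^{-4/3}$, the positivity $\partial_x\zeta_F = \eta^{2/3}>0$, the linear independence of $d_{y,t}(\partial_\eta\theta_F,\partial_\tau\theta_F) = d_{y,t}(y,t)$, $\zeta_F$ as a defining function of the fold set, and the $t$-structure ($\theta_F$ linear in $t$, $\zeta_F$ independent of $t$).

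Next I would invoke Melrose's theorem: under the hypotheses of Definition \ref{dfnglancing}, there exists a homogeneous symplectic diffeomorphism $\chi$ from a conic neighborhood of $(\rho_0,\vartheta_0)\in T^{*}X\setminus o$ onto a conic neighborhood of $(\bar\rho_0,\bar\vartheta_0)\in T^{*}X_F\setminus o$ with $\chi(P) = P_F$ and $\chi(Q)=Q_F$. Because $p$ and $p_F$ are both $t$-translation invariant and $\partial X$, $\partial X_F$ are products with $\mathbb{R}_t$, one can further arrange $\chi$ to intertwine the $t$-translations, which is what is needed for the $t$-dependence in property 5. The model Lagrangians $\Lambda_F^\pm\subset P_F$ generated by the phase functions $\phi_F^\pm = \theta_F\mp\tfrac{2}{3}(-\zeta_F)^{3/2}$ then pull back to Lagrangians $\Lambda^\pm := \chi^{-1}(\Lambda_F^\pm)\subset P$. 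In the region $\{\zeta_F\leq 0\}$ the model Lagrangians are smoothly parameterized by $(x,y,t,\eta,\tau)$, and this projectability transfers through $\chi$, producing phase functions $\phi^\pm$ on $\mathcal{U}$ with $p(\rho,d_\rho\phi^\pm)=0$. Setting
\[
\theta := \tfrac{1}{2}(\phi^+ + \phi^-), \qquad -\tfrac{2}{3}(-\zeta)^{3/2} := \tfrac{1}{2}(\phi^+ - \phi^-)
\]
(the latter uniquely determining $\zeta\leq 0$) yields a pair solving \eqref{sistem1} in $\zeta\leq 0$, and each of the listed properties transfers from the model via $\chi$.

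The main obstacle is the smoothness across the caustic $\{\zeta = 0\}$ together with the statement that \eqref{sistem1} holds in Taylor series on $\partial X$. Both are encoded in the precise form of Melrose's theorem, which provides an equivalence of normal forms to infinite order at the glancing set; this is exactly what forces $\chi$ to extend smoothly across the fold locus while intertwining the boundary Taylor expansions. The smoothness of $\phi_F^\pm$ in $(\eta,\tau)$ near $\zeta_F = 0$ is the standard smoothness of the Airy-type fold phase, and it passes through $\chi$ to give smooth $(\theta,\zeta)$ on all of $\mathcal{U}$. The remaining verifications (homogeneity of the pulled-back phases, boundary value of $\zeta$, positivity of $\partial_x\zeta$, and linear independence of $d_{y,t}(\partial_\eta\theta,\partial_\tau\theta)$) are then routine consequences of $\chi$ being a homogeneous symplectic diffeomorphism that maps fold sets to fold sets and is a diffeomorphism at the glancing point.
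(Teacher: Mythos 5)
Your route (reduce to the Friedlander pair via Melrose's equivalence of glancing hypersurfaces and pull the explicit model phases back) is not the route the paper takes, and as written it has a genuine gap at its central step. A symplectomorphism $\chi$ of cotangent bundles does not act on phase functions: $\theta$ and $\zeta$ live on $X\times\mathbb{R}^{2}$, not on $T^{*}X$, and what $\chi$ transports are the Lagrangian leaves, not their generating functions. Your assertion that ``projectability transfers through $\chi$'' is exactly what fails in general: being a graph $d_{\rho}\phi^{\pm}$ over the base is not invariant under symplectomorphisms, and even granting projectability of each image leaf, the statement that the two branches fit together as $\theta\mp\tfrac{2}{3}(-\zeta)^{3/2}$ with $\theta,\zeta$ \emph{smooth across the caustic}, homogeneous of degrees $1$ and $2/3$, with $\zeta|_{x=0}=-(\tau^{2}-\eta^{2})\eta^{-4/3}$, $\partial_{x}\zeta|_{\partial X}>0$ and $d_{y,t}(\partial_{\eta}\theta,\partial_{\tau}\theta)$ independent, is precisely the content of Theorem \ref{thmphase}; it cannot be dismissed as a ``routine consequence'' of $\chi$ mapping folds to folds. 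To make your scheme work one would need a generating function for $\chi$ adapted to the fold (i.e.\ essentially the Melrose--Taylor eikonal construction itself), together with the conic/homogeneous version of the equivalence theorem and the time-translation normalization — none of which is supplied; invoking ``equivalence of normal forms to infinite order at the glancing set'' does not by itself produce the Taylor-series statement for \eqref{sistem1} on $\partial X$.

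By contrast, the paper proves Theorem \ref{thmphase} directly for the reduced two-dimensional operator $\partial_{x}^{2}+(1+xb(y))\partial_{y}^{2}$: it writes $\theta=\sum_{j}\frac{x^{j}}{j!}\theta_{j}$, $\zeta=\zeta_{0}+\sum_{j\geq1}\frac{x^{j}}{j!}\zeta_{j}$ with $\zeta_{0}=-(\tau^{2}-\eta^{2})\eta^{-4/3}$, plugs these into \eqref{sistem1}, and solves recursively: the second eikonal equation forces $\theta_{1}=0$ and determines $\theta_{l+1}$ via \eqref{eqthetl}, while the first gives \eqref{eqtheto}, \eqref{eqzet1tau} (so $\zeta_{1}=b^{1/3}\tau^{2/3}$ modulo $O(\zeta_{0})$, positive near the glancing point) and then \eqref{thetaxp} determines $\zeta_{p}$ for $p\geq2$; homogeneity, linearity of $\theta$ in $t$, independence of $\zeta$ of $t$, and the independence of $d_{y,t}(\partial_{\eta}\theta,\partial_{\tau}\theta)$ are read off from this explicit recursion, and the eikonal system then holds in Taylor series on $\partial X$ by construction. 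The equivalence-of-glancing-hypersurfaces theorem enters the paper only later, through the boundary symplectomorphism $\chi_{\partial}$ generated by $\theta_{0}$, to reduce the iterated boundary (billiard ball) operators to the Friedlander model — not to produce the phases. If you want to keep your approach, you must either supply the generating-function/fold normal-form argument that converts the symplectic equivalence into smooth homogeneous $(\theta,\zeta)$, or fall back on the direct recursive construction as the paper does.
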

\begin{rmq}
Theorem \ref{thmphase} has been proved independently by Melrose in \cite{mel76} and by Eskin in \cite[Thm.1]{esk77} for the following canonical glancing surfaces
\begin{equation}
Q_{can}=\{x=0\},\quad P_{can}=\{\xi^2+x\eta^2-\tau\eta=0\},
\end{equation} 
near the glancing point $(x=0,y,t,\xi=0,\eta=1,\tau=0)$. In the last part of this section we show that phase functions $\theta$ and $\zeta$ can be chosen to verify the conditions stated above and we also give their precise form near the glancing point $(x=0,y=0,\eta=1,\tau=-1)$.
\end{rmq}
\begin{rmq}
The phase functions $\theta$, $\zeta$, solutions to the eikonal equations \eqref{sistem1}, will be perfectly determined to satisfy the conditions in Theorem \ref{thmphase}. In the next sections we shall use the approach in the case of the model glancing surfaces \eqref{hypermod} in order to determine a parametrix for \eqref{undered3} near the general glancing surfaces \eqref{hypergen}. This will be possible using the symplectomorphisme generated by the restriction of the phase function $\theta$ to $\partial X$.
\end{rmq}
\begin{rmq}
Notice in fact that if $P$ and $Q$ are the hypersurfaces in the symplectic space $T^{*}X\setminus o$ defined in \eqref{hypergen}
and glancing at $(\rho_{0},\vartheta_{0})\in T^{*}X\setminus o$, then there exists a canonical transformation  
\begin{equation}\label{canuse}
\chi:\Gamma\subset T^{*}X_{F}\setminus o\rightarrow T^{*}X\setminus o,
\end{equation}
defined in a conic neighborhood $\Gamma$ of $(\bar{\rho}_{0},\bar{\vartheta}_{0})$ and taking $(\bar{\rho}_{0},\bar{\vartheta}_{0})$ to $(\rho_{0},\vartheta_{0})$ and the model pair $P_{F}$ and $Q_{F}$ to $P$ and $Q$. The fact that $\chi$, which is symplectic, maps $Q_{F}$ onto $Q$ means that it defines a local canonical transformation from the quotient space of $Q_{F}$, modulo its Hamilton fibration, to the corresponding quotient space of $Q$, which is naturally identified as the cotangent space of the hypersurface
\[
Q/\mathbb{R}H_{q}\simeq T^{*}\partial X.
\]
Now, as we just said, on $Q$ (and similarly on $Q_{F}$) the symplectic form gives a Hamilton foliation. Let this determine an equivalence relation $\sim$.  Then $Q\cap P/\sim$ has the structure of a symplectic manifold with boundary, and it is naturally isomorphic to the closure of the "hyperbolic" set in $T^{*}\partial X$, the region over which real rays pass, and similarly $Q_{F}\cap P_{F}/\sim$. 
Therefore, the restriction of $\chi$ to $T^{*}\partial X_{F}$, that we denote $\chi_{\partial}$, is also a canonical transformation from a neighborhood $\gamma\subset T^{*}\partial X_{F}\setminus o$ of $\pi(\bar{\rho}_{0},\bar{\vartheta}_{0})$ to a neighborhood of $\pi(\rho_{0},\vartheta_{0})\in T^{*}\partial X\setminus o$,
\[
\chi_{\partial}:\gamma\rightarrow T^{*}\partial X \setminus o, \quad \gamma\subset T^{*}\partial X_{F}\setminus o,
\]
\[
\gamma=\{(y,t,\eta,\tau)\in T^{*}\partial X_{F}|\exists \xi, (0,y,t,\xi,\eta,\tau)\in \Gamma\}
\]
defined in the hyperbolic region by
\begin{equation}\label{chipartmodel}
\chi^{-1}_{\partial}:(y,t,d_{y}\theta_{0},d_{t}\theta_{0})\rightarrow (d_{\eta}\theta_{0},d_{\tau}\theta_{0},\eta,\tau),\quad \chi^{-1}_{\partial}(\pi(\rho_{0},\vartheta_{0}))=\pi(\bar{\rho}_{0},\bar{\vartheta}_{0}),
\end{equation}
where $\theta_{0}:=\theta|_{\partial X}$ is the restriction to $\partial X$ of the phase function $\theta$ introduced in Theorem \ref{thmphase}.

The map $\chi_{\partial}$ has the important property that near $\pi(\bar{\rho}_{0},\bar{\vartheta}_{0})$, it conjugates the billiard ball map $\delta^{\pm}\subset (T^{*}\partial X\setminus o)\times (T^{*}\partial X\setminus o)$ to the normal form $\delta^{\pm}_{F}$ introduced in \eqref{billball} in Section \ref{sectmodel}. Roughly speaking, the billiard ball maps are defined as follows: if $(y,t,\eta,\tau)$ is a hyperbolic point and if $\xi_{+}>0$ denotes the positive solution to $p(0,y,t,\xi,\eta,\tau)=0$ we consider the integral curve $(\rho(s),\vartheta(s))=\exp(s H_{p})(0,y,t,\xi_{+},\eta,\tau)$ of the Hamiltonian vector field of $p$ starting at $(0,y,t,\xi_{+},\eta,\tau)$; if it intersects transversally $T^{*} X|_{\partial X}$ at a time $s_{1}>0$ and lies entirely in $T^{*}\mathring{X}$ for $s\in (0,s_{1})$ we set $(0,y',t',\xi'_{-},\eta',\tau')=\exp(s_{1}H_{p})(0,y,t,\xi_{+},\eta,\tau)$ and define $\delta^{+}(y,t,\eta,\tau):=(y',t',\eta',\tau')$. Its local inverse is denoted $\delta^{-}$. An interpolating Hamiltonian for the billiard ball maps $\delta^{\pm}$ is  $\zeta_{0}$ and we have $\delta^{\pm}(y,t,\eta,\tau)=\exp(\pm\frac{4}{3}H_{(-\zeta_{0})^{3/2}})$. 
\end{rmq}

\subsubsection{Phase functions in the Friedlander's model}
In the Friedlander's model of the half space $\Omega_{F}:=\{(x,y)\in\mathbb{R}_{+}\times\mathbb{R}\}$   with Laplace operator defined by $\Delta_{F}:=\partial^{2}_{x}+(1+x)\partial^{2}_{y}$ that we have dealt with in \cite{doi}, the equation \eqref{psistem} has a solution of the form
\begin{equation}\label{fazacuspmodel}
\phi^{\pm}_{F}=\theta_{F}\mp\frac{2}{3}(-\zeta_{F})^{3/2},
\end{equation}
where
\begin{equation}\label{fazemodel}
\theta_{F}(x,y,t,\eta,\tau)=y\eta+t\tau,\quad \zeta_{F}(x,y,\eta,\tau)=(x-\frac{\tau^{2}-\eta^{2}}{\eta^{2}})\eta^{2/3},
\end{equation}
as can be seen by direct computation. This solution serves very much as a guide to the general construction as we shall see in the next sections.

\subsubsection{Phase functions associated to the special operator $\square_{g}$\\ (Proof of Theorem \ref{thmphase})}\label{sectphafcts}
Notice that in Section \ref{secred} we reduced the problem to the construction of a solution to the wave equation \eqref{undered3} in the two-dimensional case of the half-space $\Omega$ defined in \eqref{defomeg} together with the Laplace operator $\Delta_{g}$ given in \eqref{deflaplag}.

In this part we show that smooth solutions $\theta$ and $\zeta$ of the eikonal equation \eqref{sistem1} can be constructed near the glancing point $(\rho_{0},\vartheta_{0})=(0,0,0,0,1,-1)$ such that they satisfy the conditions in Theorem \ref{thmphase}. 
\begin{rmq}
Notice that $\theta$ can be chosen linear in $t$ and $\zeta$ independent of $t$. This is possible since, because of the translation invariance of both forms $(P_F,P)$ and $(Q_F,Q)$, one can take the canonical transformation $\chi$ to conjugate translation in the time variable.
\end{rmq}
Let $\mathcal{U}$ be a conic neighborhood of $(\varrho_0,1,-1)$ and for $(x,y,t,\eta,\tau)\in \mathcal{U}$ write
\begin{equation}\label{specialtheta}
\theta(x,y,t,\eta,\tau)=\theta_0(y,t,\eta,\tau)+\sum_{j\geq 1}\frac{x^j}{j!}\theta_j(y,\eta,\tau),
\end{equation}
\begin{equation}
\zeta(x,y,\eta,\tau)=\zeta_0(\eta,\tau)+\sum_{j\geq 1}\frac{x^j}{j!}\zeta_j(y,\eta,\tau),
\end{equation}
where $\theta_0$ is linear in $t$, $\partial_t\theta_0=\tau$, $\zeta_0(\eta,\tau)=-\frac{(\tau^2-\eta^2)}{\eta^2}\eta^{2/3}$ is independent of $y$ and where $\zeta_1(y,\eta,\tau)>0$ on $\mathcal{U}\cap \partial X\times \mathbb{R}^2$. We have to prove that smooth functions $\theta_j$ and $\zeta_j$ do exists such that the eikonal equations \eqref{sistem1} to be satisfied. The second eikonal equation in \eqref{sistem1} writes
\begin{equation}\label{estimthetax}
\partial_{x}\zeta\partial_x\theta=-(1+xb(y))\partial_{y}\zeta\partial_{y}\theta,
\end{equation}
from which we find
\begin{align}
\nonumber
\sum_{l\geq 0}\frac{x^l}{l!}\sum_{k=0}^l C_l^k\theta_{k+1}\zeta_{l+1-k} & =-(1+xb(y))\Big(\sum_{l\geq 1}\frac{x^l}{l!}\sum_{k=0}^l C_l^k\partial_y \theta_k\partial_y\zeta_{l-k}\Big)\\
& = -\sum_{l\geq 1}\frac{x^l}{l!}\Big(\sum_{k=0}^l C_l^k\partial_y \theta_k\partial_y\zeta_{l-k}+b(y)(\sum_{k=0}^{l-1} C_{l-1}^k\partial_y \theta_k\partial_y\zeta_{l-1-k})\Big).
\end{align}
Since the coefficient of $x^0$ in the right hand side vanishes and $\zeta_1>0$, we obtain $\theta_1(y,\eta,\tau)=0$. From the last condition we also determine $\theta_{l+1}$ for all $l\geq 1$
\begin{equation}\label{eqthetl}
\theta_{l+1}=-\frac{1}{\zeta_1}\Big(\sum_{k=0}^{l-1}C_l^k\theta_{k+1}\zeta_{l+1-k}+\sum_{k=0}^l C_l^k\partial_y \theta_k\partial_y\zeta_{l-k}+b(y)(\sum_{k=0}^{l-1} C_{l-1}^k\partial_y \theta_k\partial_y\zeta_{l-1-k})\Big).
\end{equation}
\begin{rmq}\label{remtheta}
Notice that $\theta_{l+1}$ is perfectly determined by $(\theta_k)_{k\in\{0,..,l\}}$ and $(\zeta_k)_{k\in\{0,..,l\}}$. Indeed, the term involving $\zeta_{l+1}$ in the preceding sum is a multiple of  $\theta_1$ and hence vanishes. 
\end{rmq}
Introducing \eqref{estimthetax} in \eqref{sistem1} gives
\begin{equation}\label{estimthetay}
\Big((\partial_{y}\theta)^{2}(1+xb(y))-\zeta(\partial_x\zeta)^2\Big)\Big((\partial_x \zeta)^2+(1+xb(y))(\partial _y \zeta)^2\Big)\\
=\tau^{2} (\partial_x \zeta)^2.
\end{equation}
Setting $C_l^k=\frac{l!}{k!(l-k)!}$, we compute
\[
(\partial_y\theta)^2=\sum_{l\geq 0}\frac{x^l}{l!}\sum_{k=0}^{l}C_l^k \partial_y\theta_{k}\partial_y \theta_{l-k},
\]
\[
(\partial_y\zeta)^2=\sum_{l\geq 2}\frac{x^l}{l!}\sum_{k=1}^{l}C_l^k \partial_y\zeta_{k}\partial_y \zeta_{l-k},
\]
\[
(\partial_x\zeta)^2=\sum_{l\geq 0}\frac{x^l}{l!}\sum_{k=0}^{l}C_l^k \zeta_{k+1}\zeta_{l+1-k},
\]
and if we let $d_p=\sum_{k=0}^{p}C_p^k \zeta_{k+1}\zeta_{p+1-k}$ denote the coefficient of $\frac{x^p}{p!}$ in the last expansion, then
\[
\zeta(\partial_x\zeta)^2=\sum_{l\geq 0}\frac{x^l}{l!}\sum_{k=0}^{l}C_l^k \zeta_{k}d_{l-k}.
\]
The first factor in the left hand side of \eqref{estimthetax} is given by
\begin{align}\label{alfal}
\nonumber
& \sum_{l\geq 0}\frac{x^l}{l!}\Big(\sum_{k=0}^{l}C_l^k \partial_y\theta_{k}\partial_y \theta_{l-k}+b(y)(\sum_{k=0}^{l-1}C_{l-1}^k \partial_y\theta_{k}\partial_y \theta_{l-1-k})-\sum_{k=0}^{l}C_l^k \zeta_{k}d_{l-k}\Big) \\
&=(\partial_y\theta_0)^2-\zeta_0\zeta_1^2+ \sum_{l\geq 1}\frac{x^l}{l!}\sum_{k=0}^{l}C_l^k \Big(\partial_y\theta_{k}\partial_y \theta_{l-k}+b(y)\frac{(l-k)}{l}\partial_y\theta_{k}\partial_y \theta_{l-1-k}-\zeta_k d_{l-k}\Big).
\end{align}
The second factor in \eqref{estimthetax} is
\begin{align}\label{betaj}
& \sum_{j\geq 0}\frac{x^j}{j!}\Big(\sum_{k=0}^{j}C_j^k \zeta_{k+1}\zeta_{j+1-k}+\sum_{k=1}^{j}C_j^k \partial_y\zeta_{k}\partial_y \zeta_{j-k}+b(y)(\sum_{k=1}^{j-1}C_{j-1}^k \partial_y\zeta_{k}\partial_y \zeta_{j-1-k})\Big)\\
& = \zeta_1^2+x(2\zeta_1\zeta_2)+\sum_{j\geq 2}\frac{x^j}{j!}\sum_{k=0}^{j}C_j^k \Big(\zeta_{k+1}\zeta_{j+1-k}+\partial_y\zeta_{k}\partial_y \zeta_{j-k}+b(y)\frac{(j-k)}{j}\partial_y\zeta_{k}\partial_y \zeta_{j-1-k}\Big).
\end{align}
The equation \eqref{estimthetax} now yields
\begin{equation}\label{eqtheto}
(\partial_y\theta_0)^2=\tau^2+\zeta_0\zeta_1^2,\quad l=j=0,
\end{equation}
\begin{equation}\label{eqzet1tau}
\zeta_1^3=b\tau^2+\zeta_0(b\zeta_1^2-2\zeta_1\zeta_2),\quad l+j=1.
\end{equation}
Notice from the formula of $\zeta_0$ that for $(\eta,\tau)$ in a small, conic neighborhood of $(1,-1)$ the contribution of $\zeta_0(\eta,\tau)$ will be very small. In particular, $\zeta_1(y,\eta,-\eta)=b^{1/3}(y)\eta^{2/3}$, and hence for $(\eta,\tau)$ close to $(1,-1)$ we can define $\zeta_1$ modulo $O(\zeta_0)$ terms.

If we denote $\alpha_l$ (resp. $\beta_j$) the coefficient of $\frac{x^l}{l!}$ in \eqref{alfal} (resp. $\frac{x^j}{j!}$ in \eqref{betaj}) we have $\alpha_0=\tau^2$, and \eqref{estimthetax} writes
\begin{equation}\label{thetaxp}
\sum_{l=0}^p C_p^l \alpha_l\beta_{p-l}=\tau^2 d_p,\quad \forall p\geq 2.
\end{equation}
For $p\geq 2$, the coefficient of  $\zeta_{p+1}$ in $\alpha_0\beta_p$ is $C_p^0\alpha_0=\tau^2$, and this cancels the one involving $\zeta_{p+1}$ in $\tau^2d_p$. Hence the only remaining terms in \eqref{thetaxp} involving $\zeta_{p+1}$ is the one in $\alpha_p\beta_0$, which is a multiple of $\zeta_0$ and hence very small.

On the other hand, in the sum \eqref{thetaxp} there are only terms involving $(\theta_k)_{k\in\{0,..,l\}}$ and they are defined, according to the Remark \ref{remtheta} (and a recurrent formula involving \eqref{eqtheto} and \eqref{eqthetl}), in terms of $(\zeta_k)_{k\leq l-1}$. 

We want to obtain a formula of $\zeta_p$. It appears three times in \eqref{thetaxp}, in the product $\alpha_p\beta_0$ (with coefficient $-\zeta_1^4$) and in both $\alpha_0\beta_p$ and $\tau^2 d_p$, the last two contribution being equal and hence canceling each other. Therefore we can define $\zeta_p(y,\eta,-\eta)$ from \eqref{thetaxp} for all $p\geq 2$, and also $\zeta_p(y,\eta,\tau)$ for $(\eta,\tau)$ close to $(1,-1)$ (modulo $O(\zeta_0)$ terms). 

By recurrence, we obtain an asymptotic expansion of $\zeta_1(y,\eta,\tau)$ in terms of $\zeta_0$ near the glancing point $(\eta,\tau)=(1,-1)$ (where $\zeta_0=0$).

\subsection{A model operator}\label{sectmodel}
In \cite{doi} we proved Theorem \ref{thmstrichartz} in the case of a two-dimesional, strictly convex domain $\Omega_{F}=\{(x,y)\in\mathbb{R}_{+}\times\mathbb{R}\}$ with Laplace operator given by
\begin{equation}\label{pfried}
\Delta_{F}=\partial^{2}_{x}+(1+x)\partial^{2}_{y}.
\end{equation}
\begin{rmq}
In this paper we want to construct examples for general manifolds with a gliding ray, but the heart of the matter is well illustrated by the particular example studied in \cite{doi} which will generalize using Melrose's equivalence of glancing hypersurfaces theorem.  Therefore we start by recalling the main steps in the construction of \cite{doi} and then use the particular solution of the model case to define an approximate solution for the more general domain described in Section \ref{secdoide}.
\end{rmq}
Let $X_{F}=\Omega_{F}\times\mathbb{R}$ and let $p_{F}\in C^{\infty}(T^{*}X_{F}\setminus o)$ denote the homogeneous symbol of the model wave operator $\square_{F}$, $p_{F}(x,y,t,\xi,\eta,\tau)=\xi^{2}+(1+x)\eta^{2}-\tau^{2}$. Consider the wave equation 
\begin{equation}\label{undef}
\left\{
\begin{array}{ll}
\partial^{2}_{t}v-\partial^{2}_{x}v-(1+x)\partial^{2}_{y}v=0,\\
v|_{\partial\Omega_{F}\times [0,T]}=0.
\end{array}
\right.
\end{equation}
The particular manifold $\Omega_F$ with metric inherited from the Laplace operator $\Delta_F$ studied in \cite{doi} is one for which the eigenmodes are explicitly expressed in terms of Airy's function and can be written as
\[
e^{iy\eta}Ai(|\eta|^{2/3}x-\omega_{k}),
\] 
where the Dirichlet condition dictates that $-\omega_{k}$ have to be the zeroes of the Airy function $Ai(-\omega_{k})=0$. Rewriting the mode in the form 
\[
e^{iy\eta}Ai(|\eta|^{2/3}(x-a)),
\]
for some parameter $a$, the eigenvalue is $-\eta(1+a)^{1/2}$, which means that such a wave moves with velocity $(1+a)^{1/2}$ in the $y$ direction. Writing the Airy function in terms of its integral formula, the corresponding wave writes under the form
\[
\int e^{i(y\eta-t\eta(1+a)^{1/2}+\xi(x-a)+\frac{\xi^3}{3})}d\xi d\eta.
\]
If we impose that $\eta^{2/3}a$ to be equal to some fixed zero of the Airy function $-\omega_k$, this will imply that the corresponding wave belongs to the regime of gallery modes described in \cite[Section 2]{doi}, for which sharp Strichartz and dispersive estimates hold as shown in \cite[Thm 1.8]{doi}. To construct wave-fronts that do not disperse requires superimposing waves with the same value of $a$. If one ignores the boundary condition for the moment, the superposition of such waves over a range $\eta\simeq 1/h$ would give, as can be seen by the asymptotic expansions of the Airy function, a solution living in an $h$-depending neighborhood of the cusp 
\[
y-(1+a)^{1/2}t=\pm|a-x|^{3/2},\quad x\in [0,a].
\]
The goal is to construct a similar solution that satisfies boundary conditions at $x=0$, while taking $a$ as small as possible depending on $h$. Rather than attempt to deal with the zeros of the Airy function, the boundary conditions  are met by taking a superposition of localized cusp solutions, each term in the sum being chosen to cancel off the boundary values of the previous one. Therefore we construct a parametrix for \eqref{undef} as a sum
\[
U_{F,h}(x,y,t)=\sum_{n=0}^{N}u^{n}_{F,h}(x,y,t),
\]
where $u^n_{F,h}$ are cusp type solutions of the form
\[
u^{n}_{F,h}(x,y,t)=\int_{\xi,\eta} e^{\frac{i}{h}(y\eta-t\eta(1+a)^{1/2}+\xi(x-a)+\frac{\xi^3}{3}+\frac{4}{3}\eta na^{3/2})}g^{n}_{F}(t,\xi,\eta,h)\Psi(\eta)d\xi d\eta,
\]
where $g_F^n$ are smooth functions compactly supported for $\eta$ near $1$ and where the relation between the amplitudes in the sum is dictated by the billiard ball maps. In this model case, due to the presence of the translations in $(y,t)$, the billiard ball maps have specific formulas
\begin{equation}\label{billball}
\delta^{\pm}_{F}(y,t,\eta,\tau)=\Big(y\pm4(\frac{\tau^{2}}{\eta^{2}}-1)^{1/2}\pm\frac{8}{3}(\frac{\tau^{2}}{\eta^{2}}-1)^{3/2},t\mp 4(\frac{\tau^{2}}{\eta^{2}}-1)^{1/2}\frac{\tau}{\eta},\eta,\tau\Big).
\end{equation}
After the change of variables $\xi\rightarrow\eta^{1/3}\xi$ the phase functions of $u^n_{F,h}$ become homogeneous of degree one and the associated Lagrangian sets are defined by
\begin{equation}
\Lambda_{F,n}:=\{\xi^{2}+(x-a)=0,y-t(1+a)^{1/2}+\xi(x-a)+\frac{\xi^{3}}{3}+\frac{4}{3}na^{3/2}=0\}\subset T^{*}X_{F}\setminus o.
\end{equation}
\begin{rmq}
Notice that, on the boundary, $\Lambda_{F,n}$ is the graph of the canonical transformation $(\delta^{\pm}_F)^n$ and writes as the composite relation with $n$ factors $\Lambda_{F,0}\circ...\circ\Lambda_{F,0}$.
The iterated billiard ball maps are given by the formula
\begin{equation}\label{deltait}
(\delta^{\pm}_{F})^{n}(y,t,\eta,\tau)=(y\pm4n(\frac{\tau^{2}}{\eta^{2}}-1)^{1/2}\pm\frac{8}{3}n(\frac{\tau^{2}}{\eta^{2}}-1)^{3/2},t\mp 4n(\frac{\tau^{2}}{\eta^{2}}-1)^{1/2}\frac{\tau}{\eta},\eta,\tau).
\end{equation}
\end{rmq}

\begin{figure}
\begin{center}
\includegraphics[width=15cm]{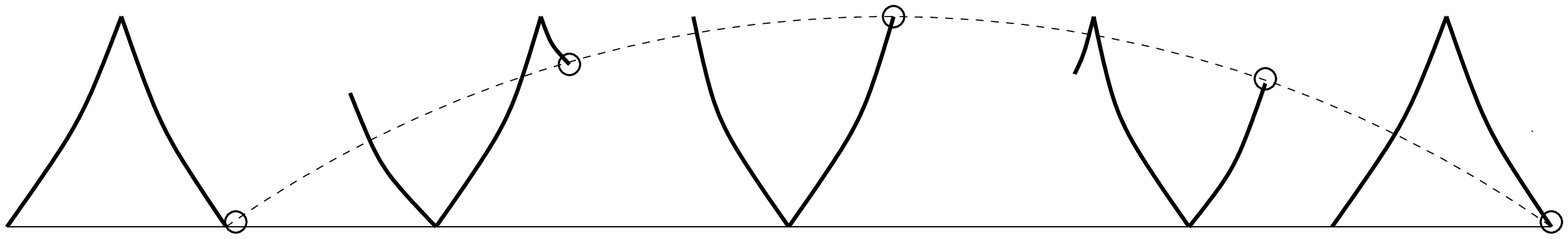}
\caption{Propagation of the cusp}
\label{fig}
\end{center}
\end{figure}

We introduce the fold set $\Sigma_F=\{\xi=0\}$ as the set of singular points of the canonical projection $\Lambda_{F,n}\rightarrow X_F$ and we define the caustic set to be the image of $\Sigma_F$ through this canonical projection, hence the set $\{x-\frac{\tau^{2}-\eta^{2}}{\eta^2}=0\}=\{\zeta_F(x,y,\eta,\tau)=0\}$.
Near the caustic set $\{\zeta_F=0\}$ each solution $u^n_{F,h}$ lives essentially on a cusp defined by
\[
y-t(1+a)^{1/2}+\frac 43 na^{3/2}=\pm(a-x)^{3/2}.
\]
The key observation is that, if the parameter $a$ is small enough, depending on the frequency, each such cusp type solution provides a loss in the Strichartz estimates (loss which increases when $a$ gets smaller).

If the symbols $g_F^n$ could be  chosen such that at time $t=4na^{1/2}$ to localize in a fixed neighborhood of the caustic set, then the respective "pieces of cusps" will propagate until they will reach the boundary but that shortly after that their contribution will become $O_{L^{2}}(h^{\infty})$, since as $t$ increases, one quickly quits a neighborhood of the Lagrangian $\Lambda_{F,n}$ which contains the semi-classical wave front set $WF_{h}(u^{n}_{F,h})$. The choice of the symbols is perfectly determined by the Dirichlet condition.

In fact on the boundary the phases functions have two critical, non-degenerate points, thus each $u^{n}_{F,h}$ writes as a sum of two trace operators, $\text{Tr}_{\pm}(u^{n}_{F,h})$, and in order to obtain a contribution $O_{L^{2}}(h^{\infty})$ on the boundary we define the symbol $g^{n+1}_{F}$ such that the contribution of  $\text{Tr}_{+}(u^{n+1}_{F,h})$ to counteract the contribution $\text{Tr}_{-}(u^{n}_{F,h})$ of the $n$th cusp. This is possible by Egorov theorem, as long as $N\ll a^{3/2}/h$. This last condition, together with the assumption of finite time $T=1$, allows to estimate the number of iterations $N$ and the parameter $a$.

\subsubsection{Construction of an approximate solution in the model case}\label{secmodelconstruction}
In this section we recall the construction from \cite{doi} of the symbols $g_F^n$ of the cusp type parametrices $u_{F,h}^n$ for the model wave operator $\square_F$,
\begin{equation}\label{integral}
u^n_{F,h}(x,y,t)=\int e^{\frac{i}{h}\eta(y-t(1+a)^{1/2}+\xi(x-a)+\frac{\xi^{3}}{3}+\frac 43 na^{3/2})}g^n_{F}(t,\xi,\eta,h) d\xi d\eta.
\end{equation}
Since our main observation was that for such parametrices the loss in the Stricharz estimates increases when $a$ decreases, we will take $a$ to be a strictly positive power of the parameter $h$, $a\simeq h^{\alpha}$  for some $\alpha$ to be chosen later as large as possible in the interval $(0,2/3)$.
\begin{rmq}
Notice that the case $a\simeq h^{2/3}$ corresponds to the whispering gallery modes: in this case it follows from \cite{doi} that losses cannot occur in the Strichartz estimates. The opposite case $a\simeq 1$ describes a wave transverse to the boundary: in this hyperbolic situation the solution to the wave equation can be easily determined as a sum of an incoming and an outgoing wave and one can easily show that it satisfies sharp Strichartz estimates.
\end{rmq}
Applying the wave operator $\square_{F}$ to $u^n_{F,h}$ and integrating by parts with respect to $\xi$ using the eikonal system \eqref{sistem1} gives
\begin{multline}\label{aprosF}
\square_{F} u^n_{F,h}(x,y,t)
=\int e^{\frac{i}{h}\eta(y-t(1+a)^{1/2}+\xi(x-a)+\frac{\xi^{3}}{3}+\frac 43 na^{3/2})}\\\times \Big(\frac{i}{h}\eta(\partial_{\xi}g^n_{F}
-2(1+a)^{1/2}\partial_{t}g^n_{F})+\partial^{2}_{t}g^n_{F}\Big)d\xi d\eta.
\end{multline}
In order to solve the first transport equation which appears in \eqref{aprosF}, we shall choose $g^n_F$ to be smooth functions depending on the integral curve of the vector field $\partial_{\xi}-2(1+a)^{1/2}\partial_t$.

\begin{dfn}\label{dfns}
Let $\lambda\geq 1$. For a given compact $K\subset\mathbb{R}$ we define the space $\mathcal{S}_{K}(\lambda)$, consisting of functions $\varrho(z,\lambda)\in C^{\infty}(\mathbb{R})$ which satisfy
\begin{enumerate}
\item  $\sup_{z\in\mathbb{R},\lambda\geq 1}|\partial^{\alpha}_{z}\varrho(z,\lambda)|\leq C_{\alpha}$, where $C_{\alpha}$ are constants independent of $\lambda$,
\item If $\psi(z)\in C^{\infty}_{0}$ is a smooth function equal to $1$ in a neighborhood of $K$, $0\leq\psi\leq 1$ then $(1-\psi)\varrho\in O_{\mathcal{S}(\mathbb{R})}(\lambda^{-\infty})$.
\end{enumerate}
Here $\mathcal{S}(\mathbb{R})$ denotes the Schwartz space of rapidly decreasing functions.

\textit{Example:} An example of function $\varrho(z,\lambda)\in \mathcal{S}_{K}(\lambda)$, $K\subset\mathbb{R}$ is the following: let $k(z)$ be the smooth function on $\mathbb{R}$ defined by
\[
k(z)=\left\{
                \begin{array}{ll}
               C\exp{(-1/(1-|z|^{2}))},\quad\ \text{if}\ |z|<1,\\
               0,\quad \ \text{if} \ |z| \geq 1,
                \end{array}
                \right.
\]
where $C$ is a constant chosen such that $\int_{\mathbb{R}}k(z)dz=1$. Define a mollifier $k_{\lambda}(z):=\lambda k(\lambda z)$ and let $\tilde{\varrho}\in C^{\infty}_{0}(K)$ be a smooth function with compact support included in $K$. If we set $\varrho(z,\lambda)=(\tilde{\varrho}*k_{\lambda})(z)$, then one can easily check that $\varrho$ belongs to $\mathcal{S}_{K}(\lambda)$.
\end{dfn}
We define a new parameter $\lambda=a^{3/2}/h$ (large since $a\gg h^{2/3}$) and for some small (but fixed) $0<c_{0}\leq \frac 38$ we set $K_{0}=[-c_{0},c_{0}]$.  We take $\varrho(.,\lambda)\in \mathcal{S}_{K_{0}}(\lambda)$ be a smooth function belonging to the space $\mathcal{S}_{K_{0}}(\lambda)$ defined in the Definition \ref{dfns}. We set
\begin{equation}\label{gforme}
g^0_{F}(t,\xi,\eta,h)=\varrho\Big(\frac{t+2(1+a)^{1/2}\xi}{2(1+a)^{1/2}a^{1/2}},\lambda\Big)\Psi(\eta),
\end{equation}
where $\Psi\in C^{\infty}_{0}(\mathbb{R}\setminus \{0\})$ is supported in a small neighborhood of $1$ and $0\leq\Psi(\eta)\leq 1$. In what follows we use the boundary condition to determine the symbols $g^n_F$ for every $0\leq n\leq N$.
\begin{prop}\label{bound}
On the boundary $u^0_{F,h}$ writes (modulo $O_{L^{2}}(\lambda^{-\infty})$) as a sum of two trace operators,
\begin{equation}\label{boun}
u^0_{F,h}(0,y,t)=\sum_{\pm}Tr_{\pm}(u^0_{F,h})(y,t),
\end{equation}
where
\begin{multline}\label{optragen}
Tr_{\pm}(u^0_{F,h})(y,t):=h^{1/3}\int_{\eta} e^{\frac{i}{h}(y\eta-t(1+a)^{1/2}\eta\mp\frac{2}{3} a^{3/2}\eta)}\Psi(\eta)(\eta\lambda)^{-1/6} \times \\ \times I_{\pm}(\varrho(.,\lambda))_{\eta}\Big(\frac{t}{2(1+a)^{1/2}a^{1/2}},\lambda\Big)d\eta,
\end{multline}
with $I_{\pm}(\varrho(.,\lambda))_{\eta}(z,\lambda)$ given by
\begin{equation}\label{alfapm}
I_{\pm}(\varrho(.,\lambda))_{\eta}(z,\lambda)=
\frac{\eta\lambda}{2\pi}\int_{w}e^{i\eta\lambda(w(z-z')\mp\frac{2}{3}((1-w)^{3/2}-1))}\kappa(w)a_{\pm}(w,\eta\lambda)\varrho(z',\lambda)dw.
\end{equation}
Here $\kappa$ is a smooth function supported for $w$ in a small neighborhood of $0$  such that $0\leq \kappa\leq 1$ and $\kappa(w)=1$ for $w$ close to $0$. In the integral \eqref{alfapm}, $a_{\pm}$ are given by the asymptotic expansions of  the Airy function $Ai(-(\lambda\eta)^{2/3}(1-w))$
\[
a_{\pm}(w,\eta,\lambda)\simeq e^{\pm i\pi/2-i\pi/4}(1-w)^{-1/4}\sum_{j\geq 0}a_{\pm,j}(-1)^{-j/2}(1-w)^{-3j/2}(\eta\lambda)^{-j},
\]
Precisely, we used the decomposition 
\[
Ai(-(\lambda\eta)^{2/3}(1-w))=\sum_{\pm}A_{\pm}(-(\lambda\eta)^{2/3}(1-w)),
\]
where $A_{\pm}(z)=Ai(e^{\mp 2\pi i/3}z)$. In particular, using the properties of the Airy functions $A_{\pm}$ it follows that the symbols $k(w)a_{\pm}(w,\eta\lambda)$ are elliptic at $w=0$ (see \cite[Appendix]{doi} for details).
\end{prop}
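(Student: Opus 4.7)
The plan is to reduce the trace $u^0_{F,h}(0,y,t)$ to an oscillatory integral that can be evaluated in closed form via the Airy function, and then to apply the asymptotic expansion $Ai=A_++A_-$ in order to extract the two half-wave pieces $\mathrm{Tr}_\pm$. Setting $x=0$ in \eqref{integral} (with $n=0$) gives
\begin{equation*}
u^0_{F,h}(0,y,t)=\int e^{\frac{i}{h}\eta(y-t(1+a)^{1/2})}\,\Psi(\eta)\int e^{\frac{i}{h}\eta(\xi^3/3-a\xi)}\varrho\!\left(\tfrac{t+2(1+a)^{1/2}\xi}{2(1+a)^{1/2}a^{1/2}},\lambda\right)d\xi\, d\eta.
\end{equation*}
The rescaling $\xi=a^{1/2}\sigma$, together with $\lambda=a^{3/2}/h$ and $z=t/(2(1+a)^{1/2}a^{1/2})$, turns the inner phase into $\eta\lambda(\sigma^3/3-\sigma)$ and the symbol into $\varrho(z+\sigma,\lambda)$, producing a prefactor $a^{1/2}$. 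This is the usual normal form for a gliding-ray integral.

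Next I represent $\varrho(z+\sigma,\lambda)=\int\!\!\int\varrho(z',\lambda)e^{i\omega(z'-z-\sigma)}dz'\,d\omega/(2\pi)$ and set $\omega=-w\eta\lambda$, so that the $\sigma$-integral becomes the standard Airy integral
\begin{equation*}
\int e^{i\eta\lambda(\sigma^3/3-(1-w)\sigma)}d\sigma=2\pi(\eta\lambda)^{-1/3}\,Ai\!\left(-(\eta\lambda)^{2/3}(1-w)\right).
\end{equation*}
Collecting factors yields
\begin{equation*}
u^0_{F,h}(0,y,t)=a^{1/2}(\eta\lambda)^{2/3}\!\int\!\!\int e^{iw\eta\lambda(z-z')}Ai\!\left(-(\eta\lambda)^{2/3}(1-w)\right)\varrho(z',\lambda)dz'\,dw
\end{equation*}
(modulated by the outer factor $e^{\frac{i}{h}\eta(y-t(1+a)^{1/2})}\Psi(\eta)$). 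I then use the decomposition $Ai(-s)=A_+(-s)+A_-(-s)$ with $A_\pm(\zeta)=Ai(e^{\mp 2i\pi/3}\zeta)$ and the asymptotic expansions
\begin{equation*}
A_\pm\!\left(-(\eta\lambda)^{2/3}(1-w)\right)\sim (\eta\lambda)^{-1/6}\,a_\pm(w,\eta\lambda)\,e^{\mp i\frac{2}{3}\eta\lambda(1-w)^{3/2}},
\end{equation*}
valid uniformly for $w$ in a small neighbourhood of $0$ (where $(\eta\lambda)^{2/3}(1-w)$ is large since $\lambda\gg 1$ and $\eta\sim 1$), with $a_\pm$ as in the statement. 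Pulling the phase $e^{\mp i\frac{2}{3}\eta\lambda}$ out of the integral and combining it with the outer factor gives precisely the $\eta$-phase $e^{\frac{i}{h}(y\eta-t(1+a)^{1/2}\eta\mp\frac{2}{3}a^{3/2}\eta)}$ in \eqref{optragen}, while the residual phase inside becomes $\mp\frac{2}{3}\eta\lambda((1-w)^{3/2}-1)$, as required. Matching powers of $\lambda$ and $h$ produces the overall prefactor $h^{1/3}(\eta\lambda)^{-1/6}$ times the operator $I_\pm$.

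Finally I introduce the cutoff $\kappa(w)$ near $w=0$: since $\varrho\in\mathcal{S}_{K_0}(\lambda)$, its Fourier content is effectively supported at $|\omega|\lesssim\lambda$, hence at $|w|\lesssim 1$; on $|w|\gtrsim 1$ (say outside $\mathrm{supp}\,\kappa$) a repeated integration by parts in $w$ — using that the phase $w\eta\lambda(z-z')\mp\frac{2}{3}\eta\lambda((1-w)^{3/2}-1)$ is non-stationary for $|z-z'|$ bounded (which holds since $z,z'\in K_0$) and that $\eta\lambda\gg 1$ — produces $O(\lambda^{-\infty})$ contributions. The three sources of error, namely the tail of $\varrho$ outside $K_0$ (Schwartz-small by definition of $\mathcal{S}_{K_0}(\lambda)$), the Airy remainder beyond the asymptotic expansion, and the truncation by $\kappa$, are all $O_{L^2}(\lambda^{-\infty})$. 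The main technical obstacle is the last step: one must verify that for the relevant range of $(z,z')\in K_0\times K_0$ and $w$ outside a neighbourhood of $0$, the phase in $w$ has no critical point (equivalently, $|z-z'|\neq(1-w)^{1/2}$), which is the reason one takes $K_0$ small enough, consistent with the choice $c_0\leq 3/8$. Once this is granted, the decomposition \eqref{boun}–\eqref{alfapm} follows.
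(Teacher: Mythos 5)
Your route to the trace formula is the same as the paper's (which defers the details to \cite[Prop. 3.3, Lemma 3.2]{doi} and reproduces them for general $n$ in the proof of Proposition \ref{lemjfhn}): set $x=0$, rescale $\xi=a^{1/2}\sigma$, Fourier--represent $\varrho$, evaluate the $\sigma$-integral as an Airy function, split $Ai=A_++A_-$ and use the asymptotics of $A_\pm$ on the support of $\kappa$; up to harmless elliptic factors (a constant $2\pi$ and a power of $\eta$) which the paper likewise absorbs, your bookkeeping reproduces \eqref{optragen}--\eqref{alfapm}. The genuine gap is in the step you yourself flag as the main obstacle: the insertion of the cutoff $\kappa$. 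Your justification --- non-stationarity in $w$ of the phase $w\eta\lambda(z-z')\mp\tfrac23\eta\lambda((1-w)^{3/2}-1)$ for $z,z'\in K_0$ --- is false as stated. The $w$-derivative of that phase is $(z-z')\pm(1-w)^{1/2}$, which vanishes at $w=1-(z-z')^2$; since $|z-z'|$ can be as large as $2c_0\le 3/4$, this stationary point lies in $[7/16,1]$, i.e. exactly in the region you want to discard, and taking $c_0\le 3/8$ does not remove it (that smallness of $c_0$ is used later, for the disjointness of supports and the boundary argument, not here). In addition, the oscillatory phase you propose to integrate by parts against only exists where the $A_\pm$ expansions are valid, which fails for $w$ near $1$ and beyond, and your preliminary observation that the Fourier content of $\varrho$ sits at $|w|\lesssim 1$ could at best justify a cutoff at $|w|\lesssim 1$, not a cutoff to an arbitrarily small neighbourhood of $w=0$ as required by the statement.

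The correct mechanism, and the one behind \cite[Lemma 3.2]{doi} which the paper invokes here and again around \eqref{fnhint2}, is the frequency localization of $\varrho\in\mathcal{S}_{K_0}(\lambda)$: by Definition \ref{dfns} all $z'$-derivatives of $\varrho$ are bounded uniformly in $\lambda$ and $\varrho$ is concentrated on $K_0$ modulo Schwartz-small tails, so repeated integration by parts in $z'$ against $e^{i\eta\lambda w(z-z')}$ (equivalently, the uniform rapid decay of $\widehat{\varrho}$) gives a bound $O_N\bigl((\eta\lambda|w|)^{-N}\bigr)$ for $|w|\ge\delta$, uniformly in $\lambda$. Since the factor $Ai\bigl(-(\eta\lambda)^{2/3}(1-w)\bigr)$ is uniformly bounded there (no asymptotics are needed outside $\operatorname{supp}\kappa$), the whole region $|w|\ge\delta$ contributes $O(\lambda^{-\infty})$, stationary points or not; the cutoff $\kappa$ is inserted \emph{before} the Airy function is split into $A_\pm$, and only on $\operatorname{supp}\kappa$ does one use the uniform expansions. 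With this repair of the truncation step, the rest of your computation goes through and coincides with the paper's proof.
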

The proof of Proposition \ref{bound} is given in \cite[Proposition 3.3]{doi} (see also \cite[Lemma 3.2]{doi}).
\begin{prop}\label{lemlocal}
Let $p\in\mathbb{Z}$ and $K_{p}=[-c_{0}+p,c_{0}+p]$. Then for $\eta$ belonging to the support of $\Psi$ we have 
\[
I_{\pm,\eta}:\mathcal{S}_{K_{p}}(\lambda)\rightarrow\mathcal{S}_{K_{p\mp1}}(\lambda).
\]
\end{prop}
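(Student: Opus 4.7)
The plan is to treat $I_{\pm,\eta}$ as a two-dimensional oscillatory integral operator in the variables $(w,z')$, with phase
\[
\Phi_{\pm}(z,z',w)=w(z-z')\mp\tfrac{2}{3}\bigl((1-w)^{3/2}-1\bigr)
\]
and large parameter $\eta\lambda$ (where $\eta$ stays in a fixed compact subset of $(0,\infty)$ by the support hypothesis on $\Psi$). A direct computation gives $\partial_{z'}\Phi_{\pm}=-w$ and $\partial_w\Phi_{\pm}=(z-z')\pm(1-w)^{1/2}$, so the only critical point is at $(w,z')=(0,z\pm 1)$, with Hessian $\begin{pmatrix}\mp 1/2 & -1\\ -1 & 0\end{pmatrix}$, whose determinant $-1$ is non-degenerate. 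This already identifies the geometric content of the proposition: the operator $I_{\pm,\eta}$ is, up to a smoothing remainder, a translation of $z$ by $\pm 1$ (combined with a smooth symbol factor), so it should send functions essentially supported near $K_p$ to functions essentially supported near $K_{p\mp 1}$.

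First, using the definition of $\mathcal{S}_{K_p}(\lambda)$, I would split $\varrho=\psi\varrho+(1-\psi)\varrho$ for a cutoff $\psi\in C^{\infty}_{0}$ equal to $1$ near $K_p$. The second piece is $O_{\mathcal{S}(\mathbb{R})}(\lambda^{-\infty})$ and therefore contributes $O(\lambda^{-\infty})$ to $I_{\pm}(\varrho)$ in every Schwartz semi-norm, uniformly in $\eta$. It suffices, then, to analyze $I_{\pm,\eta}(\psi\varrho)$, for which the $z'$ integration runs over a fixed compact set containing $K_p$. For $z$ outside a neighborhood $K_{p\mp 1}^{\delta}$ of size $\delta$, the relation $z'=z\pm 1$ forces $z'$ out of the $z'$-support by at least $\delta/2$, so that for all relevant $z'$ one has $|z-z'\pm 1|\gtrsim \delta$. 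Since $\kappa$ is supported in a small neighborhood of $w=0$, on this support $\partial_w\Phi_{\pm}=(z-z'\pm 1)+O(w^2)$ is bounded below by $\delta/4$ after shrinking $\text{supp}(\kappa)$ if necessary. A standard non-stationary phase integration by parts in $w$ (with the operator $L=(i\eta\lambda\,\partial_w\Phi_{\pm})^{-1}\partial_w$) then gives arbitrary polynomial decay in $\lambda$, yielding $I_{\pm,\eta}(\varrho)(z,\lambda)\in O_{\mathcal{S}(\mathbb{R})}(\lambda^{-\infty})$ off a neighborhood of $K_{p\mp 1}$.

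Next, I would check the uniform bound on derivatives. Using the identity $\eta\lambda w\,e^{i\eta\lambda\Phi_{\pm}}=i\partial_{z'}(e^{i\eta\lambda\Phi_{\pm}})$, each $\partial_{z}$ applied to $I_{\pm,\eta}(\varrho)$ brings down $i\eta\lambda w$, which integration by parts in $z'$ converts into a $-\partial_{z'}$ falling on $\varrho(z',\lambda)$. Iterating, $\partial_z^k I_{\pm,\eta}(\varrho)(z,\lambda)$ becomes, up to the smoothing remainder coming from $\partial_{z'}$ hitting $a_{\pm}$ or $\kappa$ (which are independent of $z'$), an oscillatory integral of the same type with $\varrho$ replaced by $\partial_{z'}^k\varrho$. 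Since $\partial_{z'}^k\varrho$ belongs to $\mathcal{S}_{K_p}(\lambda)$ with semi-norms controlled by those of $\varrho$, and since a single application of two-dimensional stationary phase at $(0,z\pm 1)$ (with the asymptotic expansion of $a_{\pm}$ giving a full symbol expansion in $(\eta\lambda)^{-1}$) yields the uniform bound $|I_{\pm,\eta}(\tilde\varrho)|\leq C\|\tilde\varrho\|_{L^{\infty}}$ with $C$ independent of $\lambda$ and of $\eta\in\text{supp}(\Psi)$, this closes condition (1) in Definition \ref{dfns}.

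The main technical subtlety is combining the two regimes: near the critical point one must apply stationary phase to obtain the uniform $L^{\infty}$ bound and extract the leading translation structure, while off the critical point one must use the non-stationarity of the phase in $w$ to obtain rapid decay. Getting compatible constants for the shrunken support of $\kappa$ and the neighborhoods $K_{p\mp 1}^{\delta}$, uniformly for $\eta\in\text{supp}(\Psi)$, is what ties the two regimes together. Once the localization of $\kappa$ is fixed small enough that $(1-w)^{1/2}=1+O(w_0)$ throughout its support and the Hessian remains non-degenerate, the stationary-phase expansion and the IBP argument in $w$ cover complementary regions in $z$, and combining them yields $I_{\pm,\eta}(\varrho)\in\mathcal{S}_{K_{p\mp 1}}(\lambda)$ with a continuity estimate depending only on a finite number of semi-norms of $\varrho$ in $\mathcal{S}_{K_p}(\lambda)$.
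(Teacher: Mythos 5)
The paper itself does not prove this statement---it simply cites \cite[Lemma 3.4]{doi}---so your argument can only be judged on its own terms. Its architecture is sound: the critical point $(w,z')=(0,z\pm1)$ with non-degenerate Hessian, the reduction of the derivative bounds to the convolution structure ($\partial_z^k I_{\pm,\eta}(\varrho)=I_{\pm,\eta}(\partial^k\varrho)$ via $\partial_{z'}\Phi_\pm=-w$), and the resulting ``translation by $\mp1$'' picture correctly identify the target interval $K_{p\mp1}$.

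The gap is in the off-critical localization step. First, $\partial_w\Phi_\pm=(z-z'\pm1)\mp\tfrac{w}{2}+O(w^2)$, not $(z-z'\pm1)+O(w^2)$: the linear term is present, and it is exactly what produces stationary points of the $w$-phase at $w\approx\mp2(z-z'\pm1)\neq0$ \emph{inside} $\mathrm{supp}\,\kappa$ as soon as the distance $\delta$ from $z$ to $K_{p\mp1}$ is smaller than the fixed width of $\mathrm{supp}\,\kappa$. Since condition (2) of Definition \ref{dfns} must hold for every cutoff equal to $1$ near $K_{p\mp1}$, i.e.\ for arbitrarily small $\delta$, integration by parts in $w$ alone cannot give the required $O_{\mathcal S(\mathbb R)}(\lambda^{-\infty})$ decay in that regime. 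Nor can you ``shrink $\mathrm{supp}\,\kappa$'': $\kappa$ is fixed by the construction of Proposition \ref{bound}, and replacing it by a narrower cutoff changes $I_{\pm,\eta}(\varrho)$ by a term that itself requires an argument (on the discarded region $|w|\geq c$ one must integrate by parts in $z'$, using $\partial_{z'}\Phi_\pm=-w$). The standard repair is to use the full gradient: for $z'$ in a small neighborhood of $K_p$, $w\in\mathrm{supp}\,\kappa$ and $\mathrm{dist}(z,K_{p\mp1})\geq\delta$ one has $|\nabla_{w,z'}\Phi_\pm|^2=|(z-z')\pm(1-w)^{1/2}|^2+w^2\gtrsim\delta^2$, so a joint non-stationary-phase integration by parts in $(w,z')$ (which also produces the $\langle z\rangle^{-N}$ decay needed for the Schwartz seminorms) gives the claimed rapid decay; alternatively, apply stationary phase in $w$ first and then integrate by parts in $z'$ against the resulting oscillatory kernel, whose $z'$-phase derivative has modulus $|1-(z-z')^2|\gtrsim\delta$ in this region. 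With that step supplied (and noting that the stationary-phase remainder bound involves finitely many derivatives of $\varrho$, not just $\|\varrho\|_{L^\infty}$---harmless, since these are uniformly bounded for $\varrho\in\mathcal S_{K_p}(\lambda)$), your proof closes.
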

The proof of Proposition \ref{lemlocal} is given in (\cite[Lemma 3.4]{doi}).
\begin{prop}
Let $\eta$ belong to the support of $\Psi$ and let $J_{\pm,\eta}$ be the operators defined for some $\tilde{\lambda}\geq1$ and $\breve{\varrho}\in\mathcal{S}_{K\mp 1}(\tilde{\lambda})$ by the formula
\begin{equation}\label{opbbb}
J_{\pm}(\breve{\varrho}(.,\tilde{\lambda}))_{\eta}(z',\lambda):=\frac{\eta\lambda}{2\pi}\int e^{i\eta\lambda((z'-z)w\pm\frac{2}{3}((1-w)^{3/2}-1))}b_{\pm}(w,\eta\lambda)\breve{\varrho}(z,\tilde{\lambda})dzdw,
\end{equation}
where $b_{\pm}(w,\eta\lambda)=\frac{k(w)}{a_{\pm}(w,\eta\lambda)}$ are asymptotic expansions in $(\eta\lambda)^{-1}$. Then $J_{\pm,\eta}$ satisfy 
\[
\breve{\varrho}(.,\tilde{\lambda})=I_{\pm}(J_{\pm}(\breve{\varrho}(.,\tilde{\lambda}))_{\eta}(.,\lambda))_{\eta}(.,\lambda)+O_{\mathcal{S}(\mathbb{R})}(\lambda^{-\infty})+O_{\mathcal{S}(\mathbb{R})}(\tilde{\lambda}^{-\infty}),
\]
\[
\varrho(.,\tilde{\lambda})=J_{\pm}(I_{\pm}(\varrho(.,\tilde{\lambda}))_{\eta}(.,\lambda))_{\eta}(.,\lambda)+O_{\mathcal{S}(\mathbb{R})}(\lambda^{-\infty})+O_{\mathcal{S}(\mathbb{R})}(\tilde{\lambda}^{-\infty}).
\]
\end{prop}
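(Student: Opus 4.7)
The plan is to compose $I_{\pm}$ and $J_{\pm}$ as semiclassical Fourier integral operators and check that the composition is the identity modulo rapidly decaying remainders in each of the two large parameters. It suffices to treat $I_{\pm}\circ J_{\pm}$; the reverse composition $J_{\pm}\circ I_{\pm}$ follows from the same argument after interchanging the roles of $a_{\pm}$ and $b_{\pm}$ and flipping the sign of the cubic part of the phase.

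First, I would write the Schwartz kernel of $I_{\pm}\circ J_{\pm}$ from \eqref{alfapm} and \eqref{opbbb} as a four-fold oscillatory integral in $(z',w_{1},w_{2},z)$ with combined phase
\[
\eta\lambda\Bigl[w_{1}(z-z')+w_{2}(z'-z'')\mp\tfrac{2}{3}\bigl((1-w_{1})^{3/2}-(1-w_{2})^{3/2}\bigr)\Bigr].
\]
Fourier inversion in the intermediate variable $z'$ produces $(2\pi/(\eta\lambda))\delta(w_{1}-w_{2})$, which forces $w_{1}=w_{2}=w$ and cancels the $(1-w_{i})^{3/2}$ terms against each other. The amplitudes then combine via the defining relation $b_{\pm}=\kappa/a_{\pm}$: at the level of asymptotic expansions in $(\eta\lambda)^{-1}$, $a_{\pm}b_{\pm}$ equals $\kappa$ exactly, and any finite truncation produces a remainder $O((\eta\lambda)^{-\infty})$. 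Hence the reduced kernel is
\[
K(z,z'')=\frac{\eta\lambda}{2\pi}\int e^{i\eta\lambda w(z-z'')}\kappa^{2}(w)\,dw+O_{\mathcal{S}}((\eta\lambda)^{-\infty}),
\]
which, after the rescaling $u=\eta\lambda w$, is exactly the semiclassical Fourier multiplier with symbol $\kappa^{2}(u/(\eta\lambda))$. Since $\kappa\equiv 1$ near the origin, this multiplier equals $1$ on a region of the form $|u|\lesssim\eta\lambda$, and $\kappa(0)=1$ ensures that on any function whose Fourier transform is rapidly decaying the operator acts as the identity in the limit $\eta\lambda\to\infty$.

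Next I would split the input by means of Definition \ref{dfns}: choose $\psi\in C^{\infty}_{0}(\mathbb{R})$ with $\psi\equiv 1$ on a neighborhood of $K\mp 1$, and decompose $\breve\varrho=\psi\breve\varrho+(1-\psi)\breve\varrho$. The tail satisfies $(1-\psi)\breve\varrho=O_{\mathcal{S}(\mathbb{R})}(\tilde\lambda^{-\infty})$ by the second clause of Definition \ref{dfns}, so applying $I_{\pm}J_{\pm}$ (which is polynomially bounded on $\mathcal{S}(\mathbb{R})$) preserves this rapid decay and contributes to the $O_{\mathcal{S}}(\tilde\lambda^{-\infty})$ part of the error. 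For the compactly supported piece $\psi\breve\varrho$, the $\tilde\lambda$-uniform derivative bounds from Definition \ref{dfns}(1) give $|\widehat{\psi\breve\varrho}(u)|\leq C_{N}(1+|u|)^{-N}$ for every $N$, with constants independent of $\tilde\lambda$; since the symbol $\kappa^{2}(u/(\eta\lambda))-1$ vanishes on $|u|\lesssim\eta\lambda$, the corresponding error is $O_{\mathcal{S}(\mathbb{R})}(\lambda^{-\infty})$. Summing the two contributions gives the first identity, and the second follows by the symmetric argument.

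The main obstacle is in the first step. Although the identity $a_{\pm}b_{\pm}=\kappa$ is formally clean, one must truncate the asymptotic expansions of $a_{\pm}$ and $b_{\pm}$ at some order $N$, insert the truncations into the oscillatory integral, and carry the resulting $O((\eta\lambda)^{-N})$ symbol remainders through the stationary-phase argument in $z'$. The verification that these remainder kernels produce $\mathcal{S}(\mathbb{R})$-errors bounded by $\lambda^{-N}$ with constants independent of $\tilde\lambda$ is routine semiclassical FIO calculus, but the two-parameter structure $(\lambda,\tilde\lambda)$ requires careful bookkeeping in order to separate the $\lambda^{-\infty}$ and $\tilde\lambda^{-\infty}$ contributions cleanly.
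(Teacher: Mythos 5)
Your proof is correct and follows the intended route: the paper does not reprove this proposition but defers to \cite{doi} (Section 3.3.1), where $J_{\pm,\eta}$ is likewise built by dividing out the elliptic factor $\kappa a_{\pm}$ near $w=0$ and the composition is checked exactly as you do, via the $z'$-integration forcing $w_{1}=w_{2}$, the cancellation of the $(1-w)^{3/2}$ phase terms, and the resulting multiplier $\kappa^{2}(\cdot/(\eta\lambda))$ acting as the identity up to $O_{\mathcal{S}}(\lambda^{-\infty})$, with the $O_{\mathcal{S}}(\tilde\lambda^{-\infty})$ error produced by the essential-support cutoff of Definition \ref{dfns} just as in your splitting. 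The only bookkeeping point worth keeping in mind is that absorbing the tail term requires the polynomial-in-$\lambda$ operator bounds to be harmless against $\tilde\lambda^{-\infty}$, which is the case in the regime where the proposition is applied ($\tilde\lambda=\lambda/n\geq h^{-\epsilon}$).
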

The construction of the operators $J_{\pm,\eta}$ is detailed in \cite[Section 3.3.1]{doi}.
\begin{prop}\label{propimportant1}
Let $N\lesssim \lambda h^{\epsilon}$ for some small $\epsilon>0$ and let $1\leq n\leq N$. Let $T_{k}$ denote the translation operator which to a given function $\varrho(z)$ associates $\varrho(z+k)$. Then for $\eta\in \text{supp}(\Psi)$ we have
\begin{equation}
(T_1\circ J_{+}(.)_{\eta}\circ I_{-}(.)_{\eta}\circ T_1)^{\circ n}:\mathcal{S}_{K_{0}}(\lambda)\rightarrow\mathcal{S}_{K_{0}}(\lambda/n)\quad \text{uniformly in}\quad  n.
\end{equation}
Notice that since $\lambda/n\geq h^{-\epsilon}\gg 1$, then $O_{\mathcal{S}(\mathbb{R})}(\lambda^{-\infty})=O_{\mathcal{S}(\mathbb{R})}((\lambda/n)^{-\infty})=O_{\mathcal{S}(\mathbb{R})}(h^{\infty})$.

Moreover, the operator defined above writes as a convolution 
\[
(T_{1}\circ J_{+}(.)_{\eta}\circ I_{-}(.)_{\eta}\circ T_{1})^{\circ n}(\varrho)=(F_{\eta\lambda})^{*n}*\varrho,
\] 
where
\begin{equation}\label{Fn}
(F_{\eta\lambda})^{*n}(z)=
\frac{\eta\lambda}{2\pi}\int_{w}e^{i\eta\lambda(wz+n(2w+\frac{4}{3}((1-w)^{3/2}-1))} \Big(\kappa(w)a_{+}(w,\eta\lambda)b_{-}(w,\eta\lambda)\Big)^{n}dw.
\end{equation}
\end{prop}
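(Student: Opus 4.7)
The plan is to reduce the iterated composition to a single convolution operator with an explicit oscillatory integral kernel, and then analyze that kernel by stationary phase.

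First I would compute the one-step operator $T_1 \circ J_+ \circ I_- \circ T_1$. Using the explicit formulas \eqref{alfapm} for $I_-$ and \eqref{opbbb} for $J_+$, each of these is a convolution operator in the variable $z$, since in both formulas the phase depends only on the difference between the input and output variables. When one composes $J_+ \circ I_-$, the intermediate variable appears only through a term of the form $(w_1 - w_2)u$ in the combined phase, so integration in $u$ produces $(2\pi/\eta\lambda)\,\delta(w_1 - w_2)$ and collapses the two $w$-integrations to a single one. Absorbing the two bracketing translations $T_1$, which shift the argument of the kernel by $+2$, one arrives at the one-step convolution kernel
\[
F_{\eta\lambda}(\zeta) = \frac{\eta\lambda}{2\pi}\int_w e^{i\eta\lambda(w\zeta + 2w + \frac{4}{3}((1-w)^{3/2}-1))}\, \kappa(w)\,a_+(w,\eta\lambda)\,b_-(w,\eta\lambda)\, dw,
\]
up to the sign conventions dictated by the Airy decomposition of Proposition \ref{bound}.

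Second, since convolutions on the line commute, the $n$-fold iterate is simply convolution with $(F_{\eta\lambda})^{*n}$. The cleanest way to derive the single-integral formula \eqref{Fn} is in Fourier: the symbol of $F_{\eta\lambda}$ evaluated at $w = k/(\eta\lambda)$ is $e^{i\eta\lambda(2w + \frac{4}{3}((1-w)^{3/2}-1))}\kappa(w)a_+ b_-(w,\eta\lambda)$, and raising this symbol to the $n$-th power followed by inverse Fourier transform gives exactly \eqref{Fn}. Equivalently, one can iterate Step 1 directly in physical space: each additional composition introduces a fresh $w$-variable and a fresh intermediate integration, the intermediate integration produces a $\delta$ pinning the new $w$ to the previous one, and hence all $n$ copies collapse into a single integration variable while the phase and amplitude accumulate as stated.

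Third, the mapping $\mathcal{S}_{K_0}(\lambda) \to \mathcal{S}_{K_0}(\lambda/n)$ uniformly in $n$ is obtained by stationary phase applied to $(F_{\eta\lambda})^{*n}$. The phase $\Phi(w) = w\zeta + n(2w + \frac{4}{3}((1-w)^{3/2}-1))$ has critical point $w_0(\zeta) = -\zeta/n + O((\zeta/n)^2)$ and second derivative $\Phi''(w_0) \sim n$, so the effective large parameter of the oscillatory integral is $\eta\lambda/n$ rather than $\eta\lambda$. When $|\zeta|$ exceeds a fixed fraction of $n$ the critical point leaves $\mathrm{supp}(\kappa)$, and iterated integration by parts in $w$ yields decay of order $(\eta\lambda/n)^{-M}$ for every $M$. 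Convolving against $\varrho \in \mathcal{S}_{K_0}(\lambda)$ and combining with the rapid decay of $\varrho$ outside $K_0$, one concludes that $(F_{\eta\lambda})^{*n} * \varrho$ decays like $(\lambda/n)^{-\infty}$ outside $K_0$. Uniform bounds for the derivatives in a neighborhood of $K_0$ are obtained by differentiating under the integral sign, using that $a_+ b_- = \kappa$ at principal order (by construction of $J_\pm$ as approximate inverses of $I_\pm$), so that the prefactor $(\kappa a_+ b_-)^n$ and all its $w$-derivatives remain uniformly controlled on $\mathrm{supp}(\kappa)$.

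The main obstacle is the uniformity in $n$ at this last step: both the amplitude raised to the $n$-th power and the $n$-fold product rule when differentiating in $w$ could a priori produce constants growing with $n$. The key point is that near the stationary $w_0$, one has $a_+(0) b_-(0) = 1 + O((\eta\lambda)^{-1})$, so $\log(\kappa a_+ b_-)$ is $O(w^2)$ there, and the stationary phase gain $(n\eta\lambda)^{-1/2}$ per order beats the polynomial growth of the expansion coefficients. The hypothesis $n \leq N \lesssim \lambda h^\epsilon$, which is exactly $\lambda/n \geq h^{-\epsilon}$, is what converts the $(\lambda/n)^{-\infty}$ bound into the genuine $O(h^\infty)$ remainder that will be used in the subsequent construction.
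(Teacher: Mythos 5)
Your Steps 1 and 2 are correct and are essentially the intended argument: the operators \eqref{alfapm} and \eqref{opbbb} are convolutions, composing them collapses the intermediate integration, the two translations shift the phase by $2w$, and taking $n$-th powers of the Fourier symbol gives exactly \eqref{Fn}; likewise your identification of the effective large parameter $\lambda/n$ (equivalently the rescaling $\tilde w=nw$, $\tilde\lambda=\lambda/n$ that the paper itself uses later when it differentiates $(F_{\eta\lambda})^{*n}$) and the uniformity mechanism $(\kappa a_{+}b_{-})^{n}=(1+O((\eta\lambda)^{-1}))^{n}$ with $n\lesssim\lambda h^{\epsilon}$ are the right ingredients.

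The gap is in your justification that the image lies in $\mathcal{S}_{K_{0}}(\lambda/n)$, i.e.\ that the essential support does not move off $K_{0}$. You argue: the critical point $w_{0}(\zeta)\approx-\zeta/n$ leaves $\mathrm{supp}\,\kappa$ once $|\zeta|$ exceeds a fixed fraction of $n$, hence the kernel decays there, and combined with the concentration of $\varrho$ in $K_{0}$ the convolution is $O((\lambda/n)^{-\infty})$ outside $K_{0}$. This does not follow: for $|\zeta|$ between $O(1)$ and $c\,n$ the stationary point stays inside $\mathrm{supp}\,\kappa$ and your own stationary phase computation gives $|(F_{\eta\lambda})^{*n}(\zeta)|\simeq(\eta\lambda/n)^{1/2}$ there, so the kernel is large on an interval of length $\sim n$ (it is a chirp, like a Schr\"odinger propagator at time $\sim n/(\eta\lambda)$), and a size-and-support argument alone would only place $(F_{\eta\lambda})^{*n}*\varrho$ in something like $\mathcal{S}_{[-Cn,Cn]}$, which is useless here. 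The localization in $K_{0}$ — which is the whole point of the proposition, since it is what keeps each reflected cusp in a fixed window after the translations — requires using the oscillation against $\varrho$: e.g.\ for $z$ at distance $d$ from $K_{0}$, split the $w$-integral at $|w|\sim d/n$; where $|w|\lesssim d/n$ the full phase satisfies $|\partial_{w}\Phi|\gtrsim\eta\lambda d$ and integration by parts in $w$ gains $\lambda/(n d)$ per step (each derivative of $(\kappa a_{+}b_{-})^{n}$ costs a factor $n$), while where $|w|\gtrsim d/n$ integration by parts in $z'$ gains $(\eta\lambda|w|)^{-1}\lesssim n/(\eta\lambda d)$ per step; only this (or, equivalently, the observation that the Fourier multiplier $e^{i\eta\lambda n f(k/\eta\lambda)}(\kappa a_{+}b_{-})^{n}(k/\eta\lambda)$ is $C^{\infty}$-close to a constant on the $O(1)$ frequency support of $\varrho$, since $n\eta\lambda f(k/\eta\lambda)=O(n/\lambda)$ there) yields the $O((\lambda/n)^{-\infty})$ decay outside $K_{0}$. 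The same issue affects your derivative bounds: "differentiating under the integral sign" brings down a factor $\eta\lambda w$, which is not harmless; one should instead write $\partial_{z}^{k}((F_{\eta\lambda})^{*n}*\varrho)=(F_{\eta\lambda})^{*n}*\partial_{z}^{k}\varrho$ and again invoke the oscillatory estimate, not a kernel bound. So the plan is the right one, but the key localization step as you state it would fail and needs the non-stationary phase argument in the pair $(w,z')$ (or the multiplier argument) to be carried out.
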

The proof of Proposition \ref{propimportant1} is given in \cite[Proposition 3.6]{doi}.
\begin{dfn}\label{dfncuspn}
Let $\varrho(.,\lambda)\in\mathcal{S}_{K_{0}}(\lambda)$ and $\eta\in \text{supp}(\Psi)$. For $1\leq n\leq N$, $N\lesssim\lambda h^{\epsilon}$ set
\[
\varrho^{n}(z,\eta,\lambda):=(-1)^{n}(T_{1}\circ J_{+}(.)_{\eta}\circ I_{-}(.)_{\eta}\circ T_{1})^{n}(\varrho(.,\lambda))(z),\quad \varrho^{0}(z,\eta,\lambda)=\varrho(z,\lambda).
\]
\begin{rmq}
From Proposition \ref{propimportant1} it follows that $\varrho^{n}(z,\eta,\lambda)\in\mathcal{S}_{K_{0}}(\lambda/n)$. 
\end{rmq}
\end{dfn}
\begin{dfn}\label{dfnsimbadmodn}
For $0\leq n\leq N$ with $N\lesssim\lambda h^{\epsilon}$ define 
\begin{equation}\label{forun1}
g^n_F(t,\xi,\eta,h):=\varrho^{n}\Big(\frac{t+2(1+a)^{1/2}\xi}{2(1+a)^{1/2}a^{1/2}}-2n,\eta,\lambda\Big)\Psi(\eta).
 \end{equation}
\end{dfn}
\begin{prop}\label{proptracemod}
We have for all $0\leq n\leq N-1$
\begin{equation}\label{tracebord}
Tr_{-}(u^{n}_{F,h})(y,t)+Tr_{+}(u^{n+1}_{F,h})(y,t)=O_{L^{2}}(\lambda^{-\infty}).
\end{equation} 
\end{prop}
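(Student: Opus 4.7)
The plan is to apply the trace formula of Proposition \ref{bound} to each $u^{n}_{F,h}$ and then use the definition of $\varrho^{n+1}$ from Definition \ref{dfncuspn} together with the approximate-inverse relation $I_{+}\circ J_{+}\simeq\mathrm{Id}$ to produce the desired cancellation between $Tr_{-}(u^{n}_{F,h})$ and $Tr_{+}(u^{n+1}_{F,h})$.

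First, I would verify that Proposition \ref{bound} extends verbatim to the symbols $g^{n}_{F}$ of Definition \ref{dfnsimbadmodn}: repeating the stationary-phase argument in $\xi$ on the boundary $\{x=0\}$, the extra phase $\frac{4}{3}na^{3/2}\eta$ in the definition of $u^{n}_{F,h}$ and the shift by $-2n$ in the argument of $\varrho^{n}$ commute through the computation, giving
\[
Tr_{\pm}(u^{n}_{F,h})(y,t)=h^{1/3}\int e^{\frac{i}{h}\eta\bigl(y-t(1+a)^{1/2}+\frac{2(2n\pm 1)}{3}a^{3/2}\bigr)}\Psi(\eta)(\eta\lambda)^{-1/6}I_{\pm}(\varrho^{n})_{\eta}\!\Bigl(\tfrac{t}{2(1+a)^{1/2}a^{1/2}}-2n,\lambda\Bigr)d\eta,
\]
where I used that $I_{\pm}$ commutes with translations in its input variable (an immediate change-of-variable in \eqref{alfapm}).

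The key observation is then that the oscillating phases of $Tr_{-}(u^{n}_{F,h})$ and of $Tr_{+}(u^{n+1}_{F,h})$ both reduce to
\[
y\eta-t(1+a)^{1/2}\eta+\tfrac{2(2n+1)}{3}a^{3/2}\eta,
\]
so the two traces can be combined under a single $\eta$-integral. It therefore suffices to show, uniformly in $\eta\in\mathrm{supp}(\Psi)$, that
\[
I_{-}(\varrho^{n})_{\eta}(z-2n,\lambda)+I_{+}(\varrho^{n+1})_{\eta}(z-2(n+1),\lambda)=O_{\mathcal{S}(\mathbb{R})}(\lambda^{-\infty}),
\]
which, after shifting $z\mapsto z+2n$ and using again the translation-invariance of $I_{\pm}$, becomes
\[
I_{-}(\varrho^{n})_{\eta}(z,\lambda)+I_{+}(\varrho^{n+1})_{\eta}(z-2,\lambda)=O_{\mathcal{S}(\mathbb{R})}(\lambda^{-\infty}).
\]

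To establish this I would unravel $\varrho^{n+1}$ using Definition \ref{dfncuspn}: $\varrho^{n+1}=-T_{1}\circ J_{+}\circ I_{-}\circ T_{1}(\varrho^{n})$. Since $I_{+}$ commutes with $T_{1}$ and, by the second proposition on $J_{\pm}$, $I_{+}\circ J_{+}=\mathrm{Id}+O_{\mathcal{S}(\mathbb{R})}(\lambda^{-\infty})$ on $\mathcal{S}_{K_{p}}(\lambda)$, one obtains
\[
I_{+}(\varrho^{n+1})=-T_{1}^{2}\bigl(I_{-}(\varrho^{n})\bigr)+O_{\mathcal{S}(\mathbb{R})}(\lambda^{-\infty}),
\]
and evaluating at $z-2$ gives $I_{+}(\varrho^{n+1})(z-2)=-I_{-}(\varrho^{n})(z)+O_{\mathcal{S}(\mathbb{R})}(\lambda^{-\infty})$, i.e.\ the exact cancellation. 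Plugging back into the $\eta$-integral and using the compact support of $\Psi$ and the factor $h^{1/3}(\eta\lambda)^{-1/6}$ to convert rapid decay in $\lambda$ into an $L^{2}(dy\,dt)$ bound via Plancherel in $\eta$ yields \eqref{tracebord}.

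The main obstacle is purely bookkeeping: one has to keep straight the two distinct shifts (the translation by $2n$ coming from the symbol $g^{n}_{F}$, and the translation by $1$ baked into the operator $T_{1}\circ J_{+}\circ I_{-}\circ T_{1}$), together with the three phase contributions $\mp\frac{2}{3}a^{3/2}\eta$ from the trace and $\frac{4}{3}na^{3/2}\eta$ from the $n$-th cusp, and verify that Proposition \ref{propimportant1} indeed places $\varrho^{n}$ in a space $\mathcal{S}_{K_{0}}(\lambda/n)$ on which the approximate-inverse identity has error $O((\lambda/n)^{-\infty})=O(h^{\infty})$ under the constraint $N\lesssim\lambda h^{\epsilon}$. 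Once these identifications are correct, the cancellation is algebraic.
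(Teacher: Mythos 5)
Your proposal is correct and follows essentially the same route as the paper: the paper's proof likewise reduces \eqref{tracebord} to the relation $I_{-}(T_{1}(\varrho^{n}))_{\eta}+I_{+}(T_{-1}(\varrho^{n+1}))_{\eta}=O_{\mathcal{S}(\mathbb{R})}(\lambda^{-\infty})$ — which is exactly the cancellation you derive from Definition \ref{dfncuspn} together with $I_{+}\circ J_{+}\simeq\mathrm{Id}$ — combined with the fact that the convolution operators $I_{\pm,\eta}$ commute with translations. One small slip: in your displayed general trace formula the phase shift should be $\frac{2(2n\mp 1)}{3}a^{3/2}\eta$ for $Tr_{\pm}(u^{n}_{F,h})$ (not $\frac{2(2n\pm1)}{3}$), which is consistent with your subsequent, correct observation that both relevant phases equal $\frac{2(2n+1)}{3}a^{3/2}\eta$.
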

\begin{proof}
The equality \eqref{tracebord} follows from the relation
\[
I_{-}(T_{1}(\varrho^{n}(.,\eta,\lambda)))_{\eta}+I_{+}(T_{-1}(\varrho^{n+1}(.,\eta,\lambda)))_{\eta}=O_{\mathcal{S}(\mathbb{R})}(\lambda^{-\infty}),
\]
together with the fact that the operators $I_{\pm,\eta}$ are of convolution type and therefore they commute with translations.
\end{proof}
\begin{prop}\label{propsuppunfh}
If $0\leq n\leq N$, $u^{n}_{F,h}(.,y,t)$ is essentially supported for $y$ and $t$ in the interval
\begin{equation}\label{inco}
I_n(c_0):=2a^{1/2}(1+a)^{1/2}\times [2n-(1+c_{0}),2n+(1+c_{0})],
\end{equation}
i.e. for $y$ or $t$ outside any neighborhood of $I_n(c_0)$ the contribution of $u^n_{F,h}$ is $O_{L^{2}}(h^{\infty})$.
\end{prop}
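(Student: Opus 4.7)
The plan is to combine the essential support of the symbol $g^n_F$ with a non-stationary phase argument. Since by Proposition \ref{propimportant1} we have $\varrho^n(\cdot,\eta,\lambda)\in\mathcal{S}_{K_0}(\lambda/n)$ with $\lambda/n\ge h^{-\epsilon}$, the symbol $g^n_F(t,\xi,\eta,h)=\varrho^n\bigl(\tfrac{t+2(1+a)^{1/2}\xi}{2(1+a)^{1/2}a^{1/2}}-2n,\eta,\lambda\bigr)\Psi(\eta)$ is $O(h^\infty)$ outside any neighborhood of the strip $\{|z|\le c_0\}$, where $z:=\tfrac{t+2(1+a)^{1/2}\xi}{2(1+a)^{1/2}a^{1/2}}-2n$. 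I will treat the $t$- and $y$-claims separately; each reduces to extracting a non-vanishing lower bound for a phase derivative on the essential support of $g^n_F$ and then integrating by parts enough times to beat $h^{\infty}$.

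For the $t$-support, I would assume $t\notin I_n(c_0)$ by a fixed positive amount, say $t<2a^{1/2}(1+a)^{1/2}(2n-1-c_0-\delta_0)$ with $\delta_0>0$. The essential-support constraint $|z|\le c_0$ then forces $\xi\ge a^{1/2}(1+\delta_0)$, and hence $|\partial_\xi\Phi/\eta|=|x-a+\xi^2|\ge 2a\delta_0$ for all $x\ge 0$. Integrating by parts $k$ times with $L=\tfrac{h}{i\eta(x-a+\xi^2)}\partial_\xi$ yields a symbol of size $O((h/(a^{3/2}\delta_0^2))^k)$, using $|\partial_\xi g^n_F|=O(a^{-1/2})$ and $|\partial_\xi(1/(x-a+\xi^2))|=O(a^{-3/2}\delta_0^{-2})$. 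Since $a=h^{(1-\epsilon)/2}$, the per-step gain equals $h^{(1+3\epsilon)/4}/\delta_0^2$—a positive power of $h$—and the integral against the compact supports of $\Psi$ and of $g^n_F$ in $\xi$ is therefore $O(h^\infty)$ uniformly in $x\ge 0$ and $y\in\mathbb{R}$.

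For the $y$-support, I would assume instead that $t$ lies in a neighborhood of $I_n(c_0)$ (otherwise the previous step applies) and that $x$ is in a bounded range $[0,M]$; the complementary region $x\ge M$ is again handled by IBP in $\xi$, where $|\partial_\xi\Phi/\eta|\gtrsim x\gg a$. On the essential support of $g^n_F$ we have $|\xi|\lesssim a^{1/2}$, and the remainder terms $\xi(x-a)+\xi^3/3+\tfrac{4}{3}na^{3/2}$ are all $o(1)$ as $h\to 0$, since $n\le N$ implies $na^{3/2}\lesssim h^{1-\epsilon}$. Hence $\partial_\eta\Phi=y-t(1+a)^{1/2}+o(1)$, and the set of $y$ admitting a stationary point of the phase in $\eta$ somewhere in the support lies inside $I_n(c_0)+o(1)$. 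For $y$ outside any fixed neighborhood of $I_n(c_0)$ this gives $|\partial_\eta\Phi|\ge c>0$, and IBP in $\eta$ then yields $O(h^\infty)$. Combining the two regimes of $x$ and integrating in $x$ produces the claimed $L^2_x$ bound.

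The main obstacle lies in this final $\eta$-IBP: since $\varrho^n$ is built from iterated applications of $I_{-,\eta}$ and $J_{+,\eta}$ whose kernels carry $e^{i\eta\lambda\Theta(w,z-z')}$ with $\lambda\sim h^{-(1+3\epsilon)/4}$, naive differentiation in $\eta$ brings down factors of $\lambda\Theta$. The point is that the internal $(w,z')$-integrals localize near the stationary points $w=0$, $z'=z\pm 1$, where $\Theta$ vanishes, so in effect each $\partial_\eta\varrho^n$ contributes only $O(1)$; this uniformity must be verified and then propagated through the $n$-fold iteration, following the pattern of the analogous estimates carried out in \cite{doi}.
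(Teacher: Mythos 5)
Your overall strategy (essential support of $\varrho^{n}$ in its first variable, plus non-stationary phase in $\xi$ and then in $\eta$) is the right one, and it matches in spirit how the paper handles the general-case analogue (Proposition \ref{propsuppunhx} is obtained from the symbol support $|z|\le c_0$ combined with the Lagrangian localization of Proposition \ref{lemlagrawfset}; the model statement itself is quoted from \cite{doi} without proof). Your $t$-part is correct and, importantly, is stated at the right scale: the excluded region is $t\notin I_n(c_0+\delta_0)$, i.e.\ measured in the natural units $2a^{1/2}(1+a)^{1/2}$. The genuine gap is in the $y$-part: as written you only exclude $y$ outside a \emph{fixed} neighborhood of $I_n(c_0)$, because you work on $x\in[0,M]$ with $M$ fixed, where $|\xi(x-a)|\lesssim Ma^{1/2}$ is merely $o(1)$, so stationarity in $\eta$ only localizes $y$ up to $o(1)$. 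That weaker statement is useless for the purpose of the proposition: the intervals $I_n(c_0)$ have length $\simeq a^{1/2}\simeq h^{(1-\epsilon)/4}$ and consecutive ones are only $\simeq a^{1/2}$ apart, so any $h$-independent neighborhood of $I_n(c_0)$ contains \emph{all} the $I_k(c_0)$ with $|k-n|\lesssim a^{-1/2}\delta$; the almost-disjointness of the supports of the $u^n_{F,h}$ (which is exactly what Lemma \ref{lemdisjsup}, the lower bound \eqref{bonor} and the trace cancellations rely on) requires localization of $y$ at scale $a^{1/2}$, i.e.\ outside $I_n(c)$ for $c>c_0$.

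The fix stays entirely within your own toolkit, so the gap is one of calibration rather than of method: do not stop the $\xi$-integration by parts at a fixed threshold $M$, but at $x\simeq a$. Indeed, once $t$ is confined to $I_n(c_0+\delta)$ the symbol support forces $|\xi|\lesssim a^{1/2}$, hence for $x\ge 2a$ one has $x-a+\xi^2\ge a$ and your $\xi$-IBP gains $h a^{-3/2}\simeq\lambda^{-1}$ per step, killing that region; on the remaining region $x\in[0,2a]$ all the correction terms $\xi(x-a)+\xi^3/3+\tfrac43na^{3/2}$ are $O(a)=o(a^{1/2})$ (note the small slip: $Na^{3/2}\lesssim a\simeq h^{(1-\epsilon)/2}$, not $h^{1-\epsilon}$), so $\partial_\eta\Phi=y-t(1+a)^{1/2}+O(a)$ and for $y$ outside $I_n(c_0+\delta')$ with $\delta'>\delta$ one gets $|\partial_\eta\Phi|\gtrsim(\delta'-\delta)a^{1/2}$, after which the $\eta$-IBP closes the argument. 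Concerning the obstacle you flag at the end: you are right that the $\eta$-dependence of $\varrho^n$ through $(F_{\eta\lambda})^{*n}$ must be controlled, and this is exactly the analysis the paper performs in the proof of Proposition \ref{lemlagrawfset}; but here you are somewhat overcautious, since the phase is $\tfrac{\eta}{h}\Psi$ with $\Psi$ independent of $\eta$, and even the crude bound $|\partial_\eta^k\varrho^n|\lesssim(\lambda n)^k$ yields a per-step factor $h\lambda n/(\delta a^{1/2})\lesssim a^{1/2}/\delta$, a positive power of $h$, so $O_{L^2}(h^\infty)$ follows without the refined cancellation (which is nevertheless available, as in \cite{doi}).
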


\subsection{Construction of an approximate solution in the general case}\label{subsectfioapplic}

In this section we construct an approximate solution to \eqref{undered3} satisfying the conditions of the Theorem \ref{thms3}. It will be essentially based on the model construction recalled in Section \ref{sectmodel} together with Theorem \ref{thmphase}. 

Inspired from the Friedlander's case, we construct a superposition of localized cusp type solutions $u^n_h$ to \eqref{undered3} of the form \eqref{solgen}, each term in the sum over $n$ being chosen to cancel off the boundary values of the previous one. Precisely, we take $u^{n}_h$ of the form
\begin{equation}\label{solgen1}
u^{n}_{h}(x,y,t)=\int_{\xi,\eta,\tau}e^{\frac{i}{h}\Phi^{n}(x,y,t,\xi,\eta,\tau)}g^{n}_{h}d\xi d\eta d\tau
\end{equation}
for some symbols $g^{n}_{h}$ to be suitably chosen and where the phase functions are given by
\begin{equation}\label{dfnphangen}
\Phi^{n}(x,y,t,\xi,\eta,\tau):=\theta(x,y,t,\eta,\tau)+\eta^{1/3}\xi\zeta(x,y,\eta,\tau)+\eta\frac{\xi^{3}}{3}+\frac{4}{3}n(-\zeta_{0})^{3/2}(\eta,\tau).
\end{equation}
Notice that on the boundary the Lagrangian manifolds $\Lambda_n$ associated to $\Phi^n$ are the graphs of the billiard ball maps $(\delta^{\pm})^n$ described at the end of Section \ref{secphases}.
\begin{rmq}
Away from the caustic set defined by the locus where $\{\xi=0\}$ and $\{\zeta=0\}$, there are two main contributions in $u^0_{h}$ with phase functions $\theta\mp \frac{2}{3}(-\zeta)^{3/2}$. These are the phases corresponding to the Airy functions $A_{\pm}(\zeta)$ and one can think (at least away from the boundary) of the part corresponding to $A_{-}(\zeta)$ as a free wave or the "incoming piece": after hitting the boundary it gives rise to the outgoing one which corresponds to $A_{+}(\zeta)\frac{A_{-}(\zeta_{0})}{A_{+}(\zeta_{0})}$ with phase $-\frac{2}{3}(-\zeta)^{3/2}+\frac{4}{3}(-\zeta_{0})^{3/2}$. The oscillatory part $\frac{4}{3}(-\zeta_{0})^{3/2}$ corresponds to the billiard ball map shift corresponding to reflection. After $n$ reflections the shift is $\frac 43 n(-\zeta_0)^{3/2}$ and the Larangian $\Lambda_n=(\Lambda_0)^{\circ n}$ is parametrized by $\Phi^n$ defined in \eqref{dfnphangen}.
\end{rmq}
 
\subsubsection{The boundary condition}\label{sectbdcond}
We determine the symbols $g^{n}_{h}$ in \eqref{solgen1} such that $u^{n}_{h}$ to be approximate solution to \eqref{undered3} in a sense to be precise. We start by defining their restriction to the boundary by requiring the Dirichlet condition to be fulfilled. We consider an operator $J$ as follows
\begin{equation}\label{fioj}
J(f)(y,t):=\frac{1}{(2\pi h)^{2}}\int_{\eta,\tau}e^{\frac{i}{h}\theta_{0}(y,t,\eta,\tau)}d_{h}(y,\eta,\tau)\widehat{f}(\eta/h,\tau/h)d\eta d\tau,
\end{equation}
where $d_{h}(y,\eta,\tau)=d(y,\eta/h,\tau/h)$ for some elliptic symbol $d(y,\eta,\tau)$ of order $0$ and type $(1,0)$, compactly supported in a conic neighborhood of the glancing point $\pi(\rho_{0},\vartheta_{0})$. Here $\theta_{0}$ denotes the restriction to the boundary of the phase function $\theta$ introduced in Theorem \ref{thmphase}. 

The operator $J$ defines an elliptic Fourier integral operator in a neighborhood of the glancing point $(\pi(\bar{\rho}_{0},\bar{\vartheta}_{0}),\pi(\rho_{0},\vartheta_{0}))$, with canonical relation $\chi_{\partial}$ given by the symplectomorphisme generated by $\theta_{0}$ which satisfies $\chi_{\partial}(\pi(\bar{\rho}_{0},\bar{\vartheta}_{0}))=\pi(\rho_{0},\vartheta_{0})$ (see the remarks following Theorem \ref{thmphase}). 
\begin{rmq}
Using Proposition \ref{proptracemod} it is clear that if we define the two contributions of $u^{n}_h$ to the boundary to be equal to $J\circ Tr_{\pm}(u^{n}_{F,h})$, then the sum over $n$ of $u^n_h$ will verify the Dirichet condition.
\end{rmq}
In what follows we compute $J\circ Tr_{\pm}(u^{n}_{F,h})$, where $0\leq n\leq N$ for some $N$ to be determined later. We keep the notations of  Section \ref{sectmodel}.
\begin{prop}\label{lemjfhn}
On the boundary $J\circ Tr_{\pm}(u^{n}_{F,h})$ writes
\begin{multline}\label{uhbondarprop}
J\circ Tr_{\pm}(u^{n}_{F,h})(y,t)= h^{1/3}\int e^{\frac{i}{h}(\theta_{0}(y,t,\eta,-\eta(1+a)^{1/2})+\frac{2}{3}(2n\mp 1)(-\zeta_{0})^{3/2}(\eta,-\eta(1+a)^{1/2}))}(\lambda\eta)^{-1/6}\\ \times
 I_{\pm}(g^{n}_{h}(.,y,\eta))_{\eta}\Big(\frac{\partial_{\tau}\theta_{0}(y,t,\eta,-\eta(1+a)^{1/2})}{2(1+a)^{1/2}a^{1/2}}-2n,\lambda\Big)d\eta,
\end{multline}
where
\begin{equation}\label{sigmndef}
g^{n}_{h}(z,y,\eta)
\simeq
\Psi(\eta)\Big(\sum_{k\geq 0} h^{k/2}a^{-k/2}\mu_{k}(y,\eta,h)\partial^{k}_z \varrho^{n}(z,\eta,\lambda)\Big).
\end{equation}
Here $\mu_{k}(y,\eta,h)$ are symbols of order $0$ and type $(1,0)$ independent of $n$. Moreover, if $\eta\in\text{supp}(\Psi)$ and $1\leq n\leq N\lesssim \lambda h^{\epsilon}$ for some small $\epsilon>0$ then $g^{n}_{h}(.,y,\eta)\in\mathcal{S}_{K_{0}}(\lambda/n)$. 
\end{prop}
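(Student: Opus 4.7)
The plan is to substitute the Fourier integral representation \eqref{fioj} of $J$ together with the boundary formula for $Tr_{\pm}(u^n_{F,h})$ into $J\circ Tr_{\pm}(u^n_{F,h})(y,t)$, and then to extract the claimed $I_{\pm}$-structure by stationary phase in $\tau$. The analogue of \eqref{optragen} for arbitrary $n$, obtained by the identical argument of Proposition \ref{bound} applied to $g^n_{F}$ from Definition \ref{dfnsimbadmodn}, reads
\[
Tr_{\pm}(u^n_{F,h})(y',t') = h^{1/3}\int e^{\frac{i}{h}(y'\eta' - t'(1+a)^{1/2}\eta' + \frac{2}{3}(2n\mp 1)a^{3/2}\eta')}\Psi(\eta')(\eta'\lambda)^{-1/6}I_{\pm}(\varrho^n)_{\eta'}(z',\lambda)\,d\eta',
\]
with $z'=t'/(2(1+a)^{1/2}a^{1/2})-2n$. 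Substituting into \eqref{fioj}, the $y'$-integration produces $2\pi h\,\delta(\eta-\eta')$ and collapses $\eta'=\eta$; after the change of variable $t'\mapsto z$ via $t'=2(1+a)^{1/2}a^{1/2}(z+2n)$, the remaining $(z,\eta,\tau)$-integral has phase
\[
\Xi^n_{\pm}(y,t,\eta,\tau,z):=\theta_0(y,t,\eta,\tau) - 2(1+a)^{1/2}a^{1/2}(z+2n)\tau + \tfrac{2}{3}(2n\mp 1)a^{3/2}\eta
\]
and amplitude $d_h(y,\eta,\tau)\Psi(\eta)(\eta\lambda)^{-1/6}I_{\pm}(\varrho^n)_{\eta}(z,\lambda)$, with overall weight $h^{1/3}(1+a)^{1/2}a^{1/2}/(\pi h)$.

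Next I would perform stationary phase in $\tau$. The non-degeneracy $\partial_{\tau}^2\theta_0\neq 0$ near the glancing point, which is part of Theorem \ref{thmphase}, implies that the critical equation $\partial_{\tau}\theta_0(y,t,\eta,\tau_c)=2(1+a)^{1/2}a^{1/2}(z+2n)$ defines a smooth $\tau_c(y,t,\eta,z)$, equal to $\tau_a(\eta):=-(1+a)^{1/2}\eta$ precisely at $z=z_\star(y,t,\eta):=\partial_{\tau}\theta_0(y,t,\eta,\tau_a(\eta))/(2(1+a)^{1/2}a^{1/2})-2n$. A direct computation using $\partial_z\Xi^n_{\pm}|_{\tau=\tau_c}=-2(1+a)^{1/2}a^{1/2}\tau_c$ and $(-\zeta_0)^{3/2}(\eta,\tau_a(\eta))=a^{3/2}\eta$ gives
\[
\Xi^n_{\pm}\bigl|_{\tau=\tau_c,\,z=z_\star}=\theta_0(y,t,\eta,\tau_a(\eta))+\tfrac{2}{3}(2n\mp 1)(-\zeta_0)^{3/2}(\eta,\tau_a(\eta)),
\]
exactly reproducing the phase displayed in \eqref{uhbondarprop}.

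It then remains to recognize the $(z,\eta)$-integral as $I_{\pm}(g^n_h(\cdot,y,\eta))_{\eta}(z_\star,\lambda)$. Taylor-expanding the Legendre transform $\psi(z):=\Xi^n_{\pm}|_{\tau=\tau_c(z)}$ and the stationary-phase amplitude around $z=z_\star$, each correction $(z-z_\star)^k$ multiplied by $1/h$ transforms, after integration by parts against the oscillatory variable $w$ in the representation \eqref{alfapm} of $I_{\pm}$, into a term of the form $h^{k/2}a^{-k/2}\mu_k(y,\eta,h)\partial^k_z\varrho^n$. Summing the expansion yields the asymptotic series \eqref{sigmndef}. Since the Taylor coefficients of $\theta_0$ at $\tau_a(\eta)$ and the amplitude $d_h|_{\tau=\tau_c}$ do not involve $n$, the symbols $\mu_k$ are $n$-independent; the localization $g^n_h(\cdot,y,\eta)\in\mathcal{S}_{K_0}(\lambda/n)$ is then inherited from $\varrho^n\in\mathcal{S}_{K_0}(\lambda/n)$ via Proposition \ref{propimportant1}, since $z$-differentiation preserves compact support in $K_0$ modulo $\mathcal{S}(\mathbb{R})$-rapidly decreasing remainders.

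The main obstacle will be the bookkeeping in the last step: one has to track how the $h$-powers coming from stationary phase in $\tau$, the $a$-scaling of the effective support $\tau-\tau_a(\eta)=O((h/a)^{1/2})$, and the intrinsic scale $(\eta\lambda)^{-1}=h/(a^{3/2}\eta)$ of the $I_{\pm}$-calculus combine so that the reassembled symbol $g^n_h$ genuinely lies in $\mathcal{S}_{K_0}(\lambda/n)$, uniformly for $n\leq N\lesssim\lambda h^{\epsilon}$. This is precisely where the Melrose-type normalization supplied by Theorem \ref{thmphase} becomes decisive: the canonical choice of $\theta_0$ obtained there forces the leading Taylor coefficient to match that of the Friedlander case, so that $\mu_0$ can be taken equal to $1$, and every subsequent $\mu_k$ contributes only through strictly positive powers of $h^{1/2}a^{-1/2}\ll 1$ by our choice $a\simeq h^{\alpha}$ with $\alpha<2/3$.
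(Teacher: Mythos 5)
Your overall skeleton (insert the trace formula for $u^n_{F,h}$ into \eqref{fioj}, let the $\bar y$-integration collapse $\eta'=\eta$, evaluate the remaining oscillatory integral asymptotically, and repackage the expansion as $I_{\pm}(g^n_h)_\eta$ using the convolution structure $\partial^k I_{\pm}(\varrho^n)_\eta=I_{\pm}(\partial^k\varrho^n)_\eta$) is the same as the paper's. But the central analytic step is carried out in a way that does not work. You propose a stationary phase in $\tau$ alone, resting on ``the non-degeneracy $\partial_\tau^2\theta_0\neq0$ near the glancing point, which is part of Theorem \ref{thmphase}.'' Theorem \ref{thmphase} asserts no such thing: it gives $\zeta_0=-(\tau^2-\eta^2)\eta^{-4/3}$, $\partial_x\zeta|_{\partial X}>0$, the independence of $d_{y,t}(\partial_\eta\theta,\partial_\tau\theta)$, and linearity of $\theta$ in $t$. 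In the Friedlander model one has $\theta_0=y\eta+t\tau$, so $\partial_\tau^2\theta_0\equiv 0$, and in the general case $\theta_0$ is a perturbation of this with $\partial^2_\tau\theta_0$ small (of size $O(\zeta_0)+O(y)$), not bounded away from zero; a one-variable stationary phase in $\tau$ therefore has a degenerate (or nearly degenerate) Hessian and its ``expansion'' has uncontrolled coefficients. The non-degeneracy that actually saves the argument comes from pairing $\tau$ with its dual variable $\bar t$: after the $\bar y$-integration the phase is of the form $\theta_0(y,t,\eta,\tau)-\bar t(\tau+\eta(1+a)^{1/2})+\cdots$, linear in $\bar t$ with mixed Hessian $-1$, and the paper applies exactly this two-variable version (Proposition \ref{propstatphas}, H\"ormander Thm.~7.7.7), which forces $\tau=-\eta(1+a)^{1/2}$ and $\bar t=\partial_\tau\theta_0$ at the critical point and produces the $h^{k/2}a^{-k/2}\mu_k\partial^k_z\varrho^n$ series with $n$-independent $\mu_k$. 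Your plan to first integrate out $\tau$ for fixed $z$ and only afterwards Taylor-expand around $z_\star$ cannot be repaired without effectively redoing this joint $(\bar t,\tau)$ argument.

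Two further inaccuracies: your phase $\Xi^n_\pm$ drops the term $-\bar t(1+a)^{1/2}\eta$ coming from the trace's phase (the correct $\bar t$-coefficient is $-(\tau+(1+a)^{1/2}\eta)$, which is what makes the critical point $\tau=-\eta(1+a)^{1/2}$ and makes the critical value of the phase match \eqref{uhbondarprop}); and the claim that ``$\mu_0$ can be taken equal to $1$'' is not what the construction gives. The leading symbol $\mu_0$ is, to principal order, $d_h(y,\eta,\tau)$ evaluated at $\tau=-\eta(1+a)^{1/2}$, i.e.\ it depends on the symbol of the Fourier integral operator $J$; all that is needed (and all that is true) is that $\mu_0$ is elliptic, a fact used later in the $L^r$ lower bounds. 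The final localization statement $g^n_h(\cdot,y,\eta)\in\mathcal{S}_{K_0}(\lambda/n)$, inherited from $\varrho^n\in\mathcal{S}_{K_0}(\lambda/n)$ via Proposition \ref{propimportant1}, is stated correctly, but it only becomes available once the expansion is produced by the two-variable stationary phase described above.
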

\begin{proof}
We compute explicitely the restriction of each $u^{n}_{F,h}$ to $\{x=0\}$, for $0\leq n\leq N$. 
\begin{multline}\label{fnhint}
u^{n}_{F,h}(0,\bar{y},\bar{t})=
\int_{\xi,\bar{\eta}}e^{\frac{i}{h}\bar{\eta}(\bar{y}-\bar{t}(1+a)^{1/2}+\xi(x-a)+\frac{\xi^{3}}{3}+\frac{4}{3}na^{3/2})}\\ \times \Psi(\bar{\eta})\varrho^{n}\Big(\frac{\bar{t}+2(1+a)^{1/2}\xi}{2(1+a)^{1/2}a^{1/2}}-2n,\bar{\eta},\lambda\Big)d\xi d\bar{\eta}.
\end{multline}
Taking $\xi=a^{1/2}v$, the integral in $\xi$ in \eqref{fnhint} becomes, modulo $O_{\mathcal{S}(\mathbb{R})}(\lambda^{-\infty})$,
\begin{multline}\label{fnhint2}
\Psi(\bar{\eta})\frac{\bar{\eta}\lambda}{2\pi}\int_{z,w}e^{i\bar{\eta}\lambda w\Big(\frac{\bar{t}}{2(1+a)^{1/2}a^{1/2}}-2n-z\Big)}\varrho^{n}(z,\bar{\eta},\lambda)a^{1/2}\int_{v}e^{i\bar{\eta}\lambda(\frac{v^{3}}{3}-v(1-w))}dv dwdz\\=
\Psi(\bar{\eta})\frac{\bar{\eta}\lambda}{2\pi}\int_{z,w}e^{i\bar{\eta}\lambda w\Big(\frac{\bar{t}}{2(1+a)^{1/2}a^{1/2}}-2n-z\Big)}\varrho^{n}(z,\bar{\eta},\lambda) \\\times h^{1/3} Ai(-(\bar{\eta}\lambda)^{2/3}(1-w)) \kappa(w)dwdz,
\end{multline}
where $\kappa(w)\in C^{\infty}_{0}(\mathbb{R})$ is supported in a neighborhood of $0$, $0\leq \kappa\leq 1$ and $\kappa(w)=1$ for $w$ near $0$. Here $Ai$ denotes the Airy function. Indeed, an explicit computation shows that the integral in \eqref{fnhint} equals the term in the right hand side of \eqref{fnhint2} with $\kappa$ replaced by $1$. Using the same arguments as in \cite[Lemma 3.2]{doi} we easily see that for $w$ outside some fixed neighborhood of $0$, as small as we want, the contribution in the integral in \eqref{fnhint2} is $O_{\mathcal{S}(\mathbb{R})}(\lambda^{-\infty})$, therefore we can introduce the cut-off function $\kappa$ with the properties stated above in order to obtain the right hand side term in \eqref{fnhint2} modulo small terms.

We now decompose
$Ai(z)=A^{+}(z)+A^{-}(z)$, where
\begin{equation}\label{aipmdef}
A^{\pm}(-(\bar{\eta}\lambda)^{2/3}(1-w))= e^{\mp\frac{2}{3}i\bar{\eta}\lambda}(\bar{\eta}\lambda)^{-1/6} a_{\pm}(w,\bar{\eta},\lambda),
\end{equation}
where $a_{\pm}(w,\bar{\eta},\lambda)$ are the symbols defined in Proposition \ref{bound}. Hence, \eqref{fnhint} becomes
\begin{multline}\label{traunpmgen}
u^{n}_{F,h}(0,\bar{y},\bar{t})=
h^{1/3}\sum_{\pm}\int_{\bar{\eta}} e^{\frac{i}{h}(\bar{y}\bar{\eta}-\bar{t}(1+a)^{1/2}\bar{\eta}\mp\frac{2}{3}a^{3/2}\bar{\eta}+\frac{4}{3}na^{3/2}\bar{\eta})} \Psi(\bar{\eta})(\bar{\eta}\lambda)^{-1/6}\\\times I_{\pm}(\varrho^{n}(.,\bar{\eta},\lambda))_{\bar{\eta}}\Big(\frac{\bar{t}}{2(1+a)^{1/2}a^{1/2}}-2n,\lambda\Big)d\bar{\eta},
\end{multline}
where $I_{\pm}(\varrho^{n}(.,\bar{\eta},\lambda))_{\bar{\eta}}(z,\lambda)$ are defined in \eqref{alfapm}. 
The contributions corresponding to the $\pm$ signs in the right hand side of \eqref{traunpmgen} are denoted $Tr_{\pm}(u^{n}_{F,h})(\bar{y},\bar{t})$.

We can now proceed to compute $J\circ Tr_{\pm}(u^{n}_{F,h})(\bar{y},\bar{t})$.
\begin{multline}
J\circ Tr_{\pm}(u^{n}_{F,h})(y,t) =\frac{h^{1/3}}{(2\pi h)^{2}}\int e^{\frac{i}{h}(\theta_{0}(y,t,\eta,\tau)-\bar{y}(\eta-\bar{\eta})-\bar{t}(\tau+\bar{\eta}(1+a)^{1/2})\mp\frac{2}{3}a^{3/2}\bar{\eta}+\frac{4}{3}na^{3/2}\bar{\eta})}\Psi(\bar{\eta})\\ 
 \times  I_{\pm}(\varrho^{n}(.,\bar{\eta},\lambda))_{\bar{\eta}}\Big(\frac{\bar{t}}{2(1+a)^{1/2}a^{1/2}}-2n,\lambda\Big) (\bar{\eta}\lambda)^{-1/6}d_{h}(y,\eta,\tau)
d\bar{\eta}d\bar{y}d\bar{t}d\eta d\tau.
\end{multline}
Since the symbol is independent of $\bar{y}$, the integration in $\bar{y}$ gives $\eta=\bar{\eta}$.
Now we are in a situation where the stationary phase theorem can be applied in the variables $(\bar{t},\tau)$. We recall the stationary phase theorem: 
\begin{prop}\label{propstatphas}(\cite[Thm.7.7.7]{hormand})
Let $f(\omega,\tau)$ be a real valued function in $C^{\infty}(\mathbb{R}^{m+1})$, $\tau_{0}\in\mathbb{R}$. If $K$ is a compact subset of $\mathbb{R}^{2+m}$ and $\sigma\in C^{\infty}_{0}(K)$, then
\begin{multline}\label{sumphasstat}
\int_{\tau,\bar{t}}e^{\frac{i}{h}(f(\omega,\tau)-\bar{t}(\tau-\tau_{0}))}\sigma(\tau,\bar{t},\omega)d\tau d\bar{t}\\
\simeq (2\pi i)e^{\frac{i}{h}f(\omega,\tau_{0})}\sum_{\nu\geq 0}(ih)^{\nu}(\partial_{\tau}\partial_{\bar{t}})^{\nu}\Big(e^{\frac{i}{h}r(\omega,\tau)}\sigma(\tau,\bar{t},\omega)\Big)|_{\tau=\tau_{0},\bar{t}=\partial_{\tau}f(\omega,\tau_{0})}.
\end{multline}
Here $r(\omega,\tau)=f(\omega,\tau)-f(\omega,\tau_{0})-(\tau-\tau_{0})\partial_{\tau}f(\omega,\tau_{0})$. 
\end{prop}
\begin{rmq}\label{rmqthmstat}
In the asymptotic sum in \eqref{sumphasstat} the $\bar{t}$ derivative must act on $\sigma$, and $\tau$ derivatives acting on $e^{\frac{i}{h}r(\omega,\tau)}$ bring out with $h^{-1}$ a derivative of $r$ vanishing at $\tau=\tau_{0}$. Another $\tau$ derivative must act on it to give a non-zero contribution. This shows that the terms in the sum are $O(h^{\nu/2})$, for at most $\nu/2$ derivations bring out a factor $h^{-1}$.
\end{rmq}

We apply Proposition \ref{propstatphas} with $\omega=(y,t,\eta)\in\mathbb{R}^{3}$, $f(\omega,\tau)=\theta_{0}(\omega,\tau)$, $\tau_{0}=-\eta(1+a)^{1/2}$ and with symbol 
\[
\sigma(\tau,\bar{t},\omega):=d_{h}(y,\eta,\tau)I_{\pm}(\varrho^{n}(.,\eta,\lambda))_{\eta}\Big(\frac{\bar{t}}{2(1+a)^{1/2}a^{1/2}}-2n,\lambda\Big).
\]
Consequently $J\circ Tr_{\pm}(u^{n}_{F,h})$ admits the asymptotic expansions
\begin{multline}\label{uhbondar}
J\circ Tr_{\pm}(u^{n}_{F,h})(y,t)\simeq 
h^{1/3}\int e^{\frac{i}{h}(\theta_{0}(y,t,\eta,-\eta(1+a)^{1/2})+\frac{2}{3}(2n\mp 1)(-\zeta_{0})^{3/2}(\eta,-\eta(1+a)^{1/2}))} \Psi(\eta) \\ \times \Big[\sum_{k\geq 0} h^{k/2}a^{-k/2}\mu_k (y,\eta,h)  \partial^{k}I_{\pm}(\varrho^{n}(.,\eta,\lambda))_{\eta}\Big(\frac{\partial_{\tau}\theta_{0}(y,t,\eta,-\eta(1+a)^{1/2})}{2(1+a)^{1/2}a^{1/2}}-2n,\lambda\Big)\Big] \\\times (\eta\lambda)^{-1/6}\Psi(\eta)d\eta,
\end{multline}
where we set
\[
\mu_{k}(y,\eta,h)=i^{k}2^{-k}h^{k/2}(1+a)^{-k/2}\partial^{k}\Big(e^{\frac{i}{h}r(y,t,\eta,\tau)}d_{h}(y,\eta,\tau)\Big)|
_{\scriptsize
\left\{
\begin{array}{ll}
\tau=-\eta(1+a)^{1/2}, \\ \bar{t}=\partial_{\tau}\theta_{0}(y,t,\eta,-\eta(1+a)^{1/2}
\end{array}
\right.}.
\]
According to the Remark \ref{rmqthmstat} it follows that the main contribution of $\mu_{2\nu}$ is $(\partial^{2}_{\tau}r)^{\nu}d_{h}e^{\frac{i}{h}r(\omega,\tau)}$ and that of $\mu_{2\nu-1}$ is $h(\partial^{2}_{\tau}r)^{\nu}\partial_{\tau}d_{h}e^{\frac{i}{h}r(\omega,\tau)}$, all the other terms in the sum defining $\mu_{k}$ being positive powers of $h$. Since $d_{h}$ is a symbol of order $0$ and type $(1,0)$, we deduce that $\mu_{k}$ is a also symbol of order $0$ and type $(1,0)$.

Notice, moreover, that  $I_{\pm}(\varrho^{n})_{\eta}$ is a convolution product and consequently $\partial^{k}(I_{\pm}(\varrho^{n})_{\eta})=I_{\pm}(\partial^{k}\varrho^{n})_{\eta}$. Since from the Proposition \ref{propimportant1} and the Definition \ref{dfncuspn} the symbols $\varrho^{n}(.,\eta,\lambda)$ belong to $\mathcal{S}_{K_{0}}(\lambda/n)$, where $K_{0}=[-c_{0},c_{0}]$, it follows that the sum 
\[
\Psi(\eta)\Big(\sum_{k\geq 0} h^{k/2}a^{-k/2}\mu_{k}(y,\eta,h)\partial^{k}\varrho^{n}(z,\eta,\lambda)\Big)
\]
(denoted $g^{n}_{h}(z,y,\eta)$ in the statement of Proposition \ref{lemjfhn}) belongs also to $\mathcal{S}_{K_{0}}(\lambda/n)$.
This achieves the proof of Proposition \ref{lemjfhn}.
\end{proof}

\subsubsection{Transport equations} 
We need to determine the integral curves of the vector field 
$<2d\Phi^{n},d.>-\eta^{-1/3}<d\zeta,d\zeta>\partial_{\xi}$ which appear in the first transport equation associated to the wave operator $\square_g$.
\begin{lemma}\label{lemintcurves}
The functions
\begin{equation}\label{integralcurves}
\eta^{-2/3}\zeta+\xi^{2},\quad \partial_{\tau}\theta+\eta^{1/3} \xi\partial_{\tau}\zeta
\end{equation}
are integral curves of the vector field $<2d\Phi^{n},d.>-\eta^{-1/3}<d\zeta,d\zeta>\partial_{\xi}$,
where we recall that $\Phi^{n}$ is the homogeneous phase function
\[
\Phi^{n}(x,y,t,\xi,\eta,\tau)=\theta(x,y,t,\eta,\tau)+\eta^{1/3}\xi\zeta(x,y,\eta,\tau)+\eta\frac{\xi^{3}}{3}+\frac{4}{3}n(-\zeta_{0})^{3/2}(\eta,\tau).
\]
\end{lemma}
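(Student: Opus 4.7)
The plan is a direct calculation, showing that each of the two functions is a constant of motion for
\[
V:=\langle 2d\Phi^{n},d\cdot\rangle-\eta^{-1/3}\langle d\zeta,d\zeta\rangle\partial_{\xi},
\]
using only the eikonal system \eqref{sistem1} and two structural properties of the phase guaranteed by Theorem~\ref{thmphase}: $\zeta$ is independent of $t$ and $\theta$ is linear in $t$, so that $\partial_{t}\Phi^{n}=\partial_{t}\theta$ and in particular $\partial_{t}\zeta=0$. Set $F_{1}:=\eta^{-2/3}\zeta+\xi^{2}$ and $F_{2}:=\partial_{\tau}\theta+\eta^{1/3}\xi\partial_{\tau}\zeta$. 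A preliminary observation to keep in mind throughout is the identity
\[
\partial_{\xi}\Phi^{n}=\eta^{1/3}\zeta+\eta\xi^{2}=\eta F_{1},
\]
which will be crucial for interpreting the outcome.

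The easier step is $V(F_{1})=0$. I would expand $2\langle d\Phi^{n},dF_{1}\rangle$ by bilinearity; the $d\theta$-piece is $2\eta^{-2/3}\langle d\theta,d\zeta\rangle$, which vanishes by the second eikonal equation, while the $\eta^{1/3}\xi\,d\zeta$-piece assembles into $2\eta^{-1/3}\xi\langle d\zeta,d\zeta\rangle$. This matches exactly the contribution $\eta^{-1/3}\langle d\zeta,d\zeta\rangle\cdot 2\xi$ coming from the $\partial_{\xi}$-term of $V$, hence $V(F_{1})=0$ identically.

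The harder step is $V(F_{2})=0$. Bilinearity splits $2\langle d\Phi^{n},dF_{2}\rangle$ into three groups according to powers of $\eta^{1/3}\xi$: an order-zero piece $\partial_{\tau}\langle d\theta,d\theta\rangle$, a linear cross term $2\eta^{1/3}\xi\bigl[\langle d\theta,d\partial_{\tau}\zeta\rangle+\langle d\zeta,d\partial_{\tau}\theta\rangle\bigr]$, and the quadratic piece $\eta^{2/3}\xi^{2}\partial_{\tau}\langle d\zeta,d\zeta\rangle$. I would then differentiate the two eikonal equations in $\tau$: the identity $\langle d\partial_{\tau}\theta,d\zeta\rangle+\langle d\theta,d\partial_{\tau}\zeta\rangle=0$ kills the cross term, and $\partial_{\tau}\langle d\theta,d\theta\rangle=\partial_{\tau}\zeta\,\langle d\zeta,d\zeta\rangle+\zeta\,\partial_{\tau}\langle d\zeta,d\zeta\rangle$ rewrites the first. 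The $\partial_{\xi}$-contribution of $V$ evaluated at $F_{2}$ is $\eta^{-1/3}\langle d\zeta,d\zeta\rangle\cdot\eta^{1/3}\partial_{\tau}\zeta=\partial_{\tau}\zeta\,\langle d\zeta,d\zeta\rangle$, which cancels one of the surviving summands, and collecting what remains yields
\[
V(F_{2})=\eta^{2/3}F_{1}\,\partial_{\tau}\langle d\zeta,d\zeta\rangle.
\]
In the Friedlander model $\langle d\zeta,d\zeta\rangle=\eta^{4/3}$ is $\tau$-independent and the right-hand side vanishes identically; in general it vanishes on the critical set $\{F_{1}=0\}$.

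The main obstacle is not computational but conceptual: one must argue that this residual $F_{1}$-factor is acceptable for the transport construction. Here is where the preliminary identity $\partial_{\xi}\Phi^{n}=\eta F_{1}$ is used. Any term proportional to $F_{1}$ appearing in the transport equation for $g_{h}^{n}$ can be rewritten, using $F_{1}e^{i\Phi^{n}/h}=\tfrac{h}{i\eta}\partial_{\xi}e^{i\Phi^{n}/h}$ and an integration by parts in $\xi$, as a contribution at the next order in $h$. Consequently $F_{2}$ is an integral curve of $V$ in the only sense that matters for the WKB construction: on the Lagrangian $\Lambda_{n}$ (parametrised by $\{F_{1}=0\}$), along which the amplitudes $g_{h}^{n}$ effectively live, both $F_{1}$ and $F_{2}$ are conserved quantities for $V$. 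The eikonal system \eqref{sistem1}, together with its $\tau$-derivatives, is the only ingredient, so no further input is required.
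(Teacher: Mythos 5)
Your proposal is correct and follows essentially the same route as the paper: a direct computation along the flow using the two eikonal equations of \eqref{sistem1} and their $\tau$-derivatives, giving $V(F_{1})=0$ identically and $V(F_{2})=\eta^{2/3}F_{1}\,\partial_{\tau}\langle d\zeta,d\zeta\rangle$, which the paper likewise disposes of by restricting to the Lagrangian $\Lambda_{n}=\{\partial_{\xi}\Phi^{n}=0\}$ containing $WF_{h}(u^{n}_{h})$. Your additional remark that the residual $F_{1}$-term can be converted into a higher-order contribution via $\partial_{\xi}\Phi^{n}=\eta F_{1}$ and an integration by parts in $\xi$ is a valid (and slightly more explicit) justification of the same point.
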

\begin{proof}
The Hamiltonian system writes
\begin{equation}
\left\{
      \begin{array}{ll}
      \dot{x}=2(\partial_{x}\theta+\eta^{1/3}\xi\partial_{x}\zeta),\\
      \dot{y}=2(1+xb(y))(\partial_{y}\theta+\eta^{1/3}\xi\partial_{y}\zeta),\\
      \dot{t}=-2\tau,\\
      \dot{\xi}=-\eta^{-1/3}<d\zeta,d\zeta>
      \end{array}
      \right.
\end{equation}
and we can compute the derivative of the first integral curve in \eqref{integralcurves} 
\begin{align}
\dot{\overbrace{(\xi^{2}+\eta^{-2/3}\zeta)}} & =2\dot{\xi}\xi+\eta^{-2/3}\dot{\zeta}=2\eta^{-1/3}<d\zeta,d\zeta>\xi+\eta^{-2/3}(\dot{x}\partial_{x}\zeta+\dot{y}\partial_{y}\zeta)\\ 
\nonumber
&=
2\eta^{-1/3}<d\zeta,d\zeta>\xi-2\eta^{-4/3}<d\theta,d\zeta>+2\eta^{-1/3}<d\zeta,d\zeta>\xi\\ 
\nonumber
& =0,
\end{align}
where we used the eikonal equations \eqref{sistem1}. For the second one we have
\begin{align}
\dot{\overbrace{(\partial_{\tau}\theta+\eta^{1/3}\partial_{\tau}\zeta)}} & =\dot{x}\partial^{2}_{\tau,x}\theta+\dot{y}\partial^{2}_{\tau,y}\theta+\dot{t}\partial^{2}_{\tau,t}\theta+\eta^{1/3} \xi(\dot{x}\partial^{2}_{\tau,x}\zeta+\dot{y}\partial^{2}_{\tau,y}\zeta)+\eta^{1/3}\dot{\xi}\partial_{\tau}\zeta\\ 
\nonumber
& =
\partial_{\tau}(<d\theta,d\theta>-\zeta<d\zeta,d\zeta>)+2\eta^{-2/3}\xi\partial_{\tau}<d\theta,d\zeta>\\ 
\nonumber
& +
\partial_{\tau}(\zeta<d\zeta,d\zeta>)
+\eta^{-4/3}\xi^{2}\partial_{\tau}<d\zeta,d\zeta>\\ 
\nonumber
& -<d\zeta,d\zeta>\partial_{\tau}\zeta=\eta^{2/3}(\xi^{2}+\eta^{-2/3}\zeta)\partial_{\tau}<d\zeta,d\zeta>\\ 
\nonumber
& =0
\end{align}
on the Lagrangian $\Lambda_n$ which contains the semi-classical wave front set $WF_{h}(u^{n}_{h})$,
\[
\Lambda_n:=\{(x,y,t,\xi,\eta,\tau)|\partial_{\xi}\Phi^{n}=0,\partial_{\eta}\Phi^{n}=0,\tau=-\eta(1+a)^{1/2}\}.
\]
\end{proof}

Now we are in the situation when we can define $u^{n}_{h}$ everywhere as follows:
\begin{dfn}
Let $g^{n}_{h}$ be the symbol defined in \eqref{sigmndef} and for $0\leq n\leq N\lesssim\lambda h^{\epsilon}$ let
\begin{multline}\label{uhndef}
u^{n}_{h}(x,y,t):= \int_{\xi,\eta} e^{\frac{i}{h}\Phi^{n}(x,y,t,\xi,\eta,-\eta(1+a)^{1/2})}\\ \times g^{n}_{h}\Big(\frac{\partial_{\tau}\theta+\eta^{1/3}\xi\partial_{\tau}\zeta}{2(1+a)^{1/2}a^{1/2}}(x,y,t,\eta,-\eta(1+a)^{1/2})-2n,y,\eta\Big)ds d\eta.
\end{multline}
\end{dfn}
\begin{rmq}\label{rmqzetaz}
Notice that since $\partial_{\tau}\theta$, $\eta^{1/3}\partial_{\tau}\zeta$ are homogeneous of degree $0$ in $(\eta,\tau)$, for $\tau=-\eta(1+a)^{1/2}$ they are independent of $\eta$, so the term in the first variable in $g^{n}_{h}$ depends only of $(x,y,t)$ and $a$. 
\end{rmq}
We now prove that the restriction to the boundary of $u^{n}_{h}$ defined in \eqref{uhndef} coincides indeed with the sum of the two terms in \eqref{uhbondar}. \begin{prop}\label{propboundjtracepm}
On the boundary $\partial\Omega$ we have, indeed,
\begin{equation}\label{egaltoprove}
u^{n}_{h}(0,y,t)=\sum_{\pm}J(Tr_{\pm}(u^{n}_{F,h}))(y,t,h).
\end{equation}
Moreover,
\begin{equation}\label{condbondgen}
J(Tr_{-}(u^{n}_{F,h}))(y,t)+J(Tr_{+}(u^{n+1}_{F,h}))(y,t)=O_{L^{2}}(h^{\infty}).
\end{equation}
\end{prop}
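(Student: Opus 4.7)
The proposition has two parts: \eqref{condbondgen} follows by applying the bounded operator $J$ to Proposition \ref{proptracemod}, while \eqref{egaltoprove} requires identifying the direct boundary value of $u^n_h$ with the sum over $\pm$ of the stationary-phase expansion \eqref{uhbondarprop}.

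For \eqref{egaltoprove}, I would evaluate the phase $\Phi^n$ of \eqref{dfnphangen} at $x=0$, $\tau=-\eta(1+a)^{1/2}$. By Theorem \ref{thmphase} one has $\zeta|_{x=0}=-(\tau^2-\eta^2)\eta^{-4/3}$, so at this slice $\zeta|_{x=0}=-a\eta^{2/3}$, $(-\zeta_0)^{3/2}=a^{3/2}\eta$, and $\partial_\tau\zeta|_{x=0}=2(1+a)^{1/2}\eta^{-1/3}$; in particular $\eta^{1/3}\xi\,\partial_\tau\zeta|_{x=0}=2(1+a)^{1/2}\xi$, so the first slot of $g^n_h$ in \eqref{uhndef} becomes $\tfrac{\partial_\tau\theta_0+2(1+a)^{1/2}\xi}{2(1+a)^{1/2}a^{1/2}}-2n$. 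Rescaling $\xi=a^{1/2}v$ and setting $\lambda=a^{3/2}/h$, the inner $v$-integral takes the Airy form $\int e^{i\eta\lambda(v^3/3-v(1-w))}\,dv=2\pi(\eta\lambda)^{-1/3}Ai(-(\eta\lambda)^{2/3}(1-w))$ after writing $g^n_h$ as a convolution against a delta in its first slot, exactly as in the derivation of \eqref{fnhint2}. Decomposing $Ai=A^++A^-$ via \eqref{aipmdef}, the $\pm$ pieces contribute phase factors $e^{\mp(2i/3)\eta\lambda}=e^{(i/h)(\mp\frac{2}{3}a^{3/2}\eta)}$, which combined with the $\frac{4}{3}na^{3/2}\eta$ already present produce $\theta_0+\frac{2}{3}(2n\mp 1)(-\zeta_0)^{3/2}$, and the residual $w$-integral reproduces $I_\pm(g^n_h(\cdot,y,\eta))_\eta$. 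This matches $\sum_{\pm}$ of \eqref{uhbondarprop} term by term.

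For \eqref{condbondgen}, Proposition \ref{proptracemod} gives $Tr_-(u^n_{F,h})+Tr_+(u^{n+1}_{F,h})=O_{L^2}(\lambda^{-\infty})$. The operator $J$ in \eqref{fioj} is an elliptic FIO of order $0$ with symbol $d_h$ compactly supported near the glancing point, hence $L^2$-bounded uniformly in $h$, so applying $J$ preserves the $O_{L^2}(\lambda^{-\infty})$ bound. Since $a\simeq h^\alpha$ with $\alpha\in(0,2/3)$ one has $\lambda=a^{3/2}/h\to\infty$ as $h\to 0$, whence $\lambda^{-\infty}=O(h^\infty)$, giving \eqref{condbondgen}.

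The only delicate point is the matching in \eqref{egaltoprove} between the asymptotic expansion in powers of $h^{1/2}a^{-1/2}$ produced by the $\mu_k$-expansion of $g^n_h$ in \eqref{sigmndef} and the stationary-phase expansion in $(\bar t,\tau)$ that gave \eqref{uhbondarprop}. By construction the $\mu_k$ in \eqref{sigmndef} are exactly those arising in the proof of Proposition \ref{lemjfhn}, and they are independent of $n$ and of $\varrho^n$ (see the remark after \eqref{uhbondar}); combined with the convolution identity $\partial^k I_\pm(\varrho^n)_\eta=I_\pm(\partial^k_z\varrho^n)_\eta$, this identification is term by term and independent of $n$, reducing the verification to bookkeeping once the Airy decomposition has been carried out.
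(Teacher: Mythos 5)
Your proposal is correct and follows essentially the same route as the paper: evaluating $\Phi^n$ at $x=0$ with $\tau=-\eta(1+a)^{1/2}$, rescaling $\xi=a^{1/2}v$ to produce the Airy kernel, splitting $Ai=A^++A^-$ to recover the operators $I_\pm$ acting on $g^n_h$ (using $\partial^k I_\pm(\varrho^n)_\eta=I_\pm(\partial^k_z\varrho^n)_\eta$ and the $n$-independence of the $\mu_k$), and deducing \eqref{condbondgen} by applying the $L^2$-bounded operator $J$ to Proposition \ref{proptracemod} with $\lambda^{-\infty}=O(h^\infty)$. No gaps beyond the paper's own level of detail.
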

\begin{proof}
We first proceed with \eqref{condbondgen}.
Since $J$ is an elliptic Fourier integral operator the proof follows from Proposition \ref{proptracemod}, since 
\[
Tr_{-}(u^{n}_{F,h})(y,t,h)+Tr_{+}(u^{n+1}_{F,h})(y,t,h)=O_{L^{2}}(h^{\infty}).
\]
We now prove \eqref{egaltoprove}. At $x=0$ the integral in $\xi$ in \eqref{uhndef} writes as a sum over $k\geq 0$ of 
\begin{multline}\label{contribouh}
h^{k/2}a^{k/2}\mu_k(y,\eta,h)\times \Psi(\eta)\int e^{\frac{i}{h}(\eta^{1/3}\xi\zeta_{0}(\eta,-\eta(1+a)^{1/2})+\eta\frac{\xi^{3}}{3})}\\ \times \partial^{k}\varrho^{n}\Big(\frac{\partial_{\tau}\theta_{0}+\eta^{1/3}\xi\partial_{\tau}\zeta_{0}}{2(1+a)^{1/2}}(y,t,\eta,-\eta(1+a)^{1/2})-2n,\eta,\lambda\Big)d\xi.
\end{multline}
Using Remark \ref{rmqzetaz} and setting $\xi=a^{1/2}v$ in \eqref{contribouh}, each integral in $\xi$ becomes  
\begin{equation}\label{contribouh2}
\Psi(\eta) \frac{\eta\lambda}{2\pi}\int_{w}e^{i\eta\lambda w\Big(\frac{\partial_{\tau}\theta_{0}(y,t,\eta,-\eta(1+a)^{1/2})}{2(1+a)^{1/2}a^{1/2}}-2n-z\Big)}\partial^k\varrho^n(z,\eta,\lambda)   a^{1/2}\int_v e^{i\eta\lambda(v^3/3-v(1-w))}dvdw.
\end{equation}
The integral in $v$ writes $h^{1/3}a^{-1/2} Ai(-(\eta\lambda)^{2/3}(1-w))$ and consequently \eqref{contribouh2} becomes
\begin{multline}\label{contribouh1}
\Psi(\eta)\frac{\eta\lambda}{2\pi}\int e^{i\eta\lambda w\Big(\frac{\partial_{\tau}\theta_{0}(y,t,\eta,-\eta(1+a)^{1/2})}{2(1+a)^{1/2}a^{1/2}}-2n-z\Big)} \\\times h^{1/3} Ai(-(\eta\lambda)^{2/3}(1-w)) \kappa(w)\partial^{k}\varrho^n(z,\lambda)dzdw+O_{\mathcal{S}(\mathbb{R})}((\eta\lambda)^{-\infty}),
\end{multline}
where $\kappa$ is a smooth function supported for $w$ as close as we want to $0$, $0\leq \kappa\leq 1$, $\kappa(w)=1$ near $0$. Indeed, this follows as in the proof of Proposition \eqref{lemjfhn}: for $\kappa$ equal to $1$ near $0$ we can multiply the symbol of the integral in $w$ by $\kappa(w)+(1-\kappa(w))$ and performing integrations by parts we find that the contribution of the integral over $w$ away from a fixed neighborhood of $0$ is $O_{\mathcal{S}(\mathbb{R})}((\eta\lambda)^{-\infty})$. We now distinguish two contributions in \eqref{contribouh1}, obtained using the decomposition $Ai(z)=A^{+}(z)+A^{-}(z)$, where $A_{\pm}(z)$ are recalled in \eqref{aipmdef}. Consequently, \eqref{contribouh2} becomes, modulo $O_{\mathcal{S}(\mathbb{R})}((\eta\lambda)^{-\infty})$ terms,
\[
h^{1/3}\Psi(\eta)e^{\mp\frac{2}{3}i\eta\lambda}(\eta\lambda)^{-1/6}I_{\pm}(\partial^{k}\varrho^{n}(.,\eta,\lambda))_{\eta}\Big(\frac{\partial_{\tau}\theta_{0}(y,t,\eta,-\eta(1+a)^{1/2})}{2(1+a)^{1/2}a^{1/2}}-2n,\lambda\Big).
\]
Using again the fact that $I_{\pm}(\varrho^{n})_{\eta}$ is a convolution product we obtain $I_{\pm}(\partial^{k}\varrho^{n})_{\eta}=\partial^{k}(I_{\pm}(\varrho^{n})_{\eta})$, which allows to achieve the proof of \eqref{egaltoprove}.
\end{proof}

Let $u^{n}_{h}$ be defined by \eqref{uhndef}. We show that it is an approximate solution to \eqref{undered3} and we obtain bounds for the $L^{2}$ norm of $\square_{g}u^{n}_{h}$. Applying the wave operator $\square_{g}$ to $u^{n}_{h}$ and using the eikonal equations \eqref{sistem1} yields
\begin{multline}
\square_{g} u^{n}_{h}(x,y,t)=
\int e^{\frac{i}{h}\Phi^{n}(x,y,t,\xi,\eta,-\eta(1+a)^{1/2})}\times \\
\times \Big(\frac{i}{h}(<2d\Phi^{n},dg^{n}_{h}>+\eta^{-1/3}<d\zeta,d\zeta>\partial_{\xi}g^{n}_{h}+(\square_{g}\Phi^{n}) g^{n}_{h})+\square g^{n}_{h}\Big)d\xi d\eta .
\end{multline}
Using Lemma \ref{lemintcurves} we obtain the following:
\begin{prop}\label{propnorml2square}
\begin{equation}\label{normesqarel2}
\|\square_{g} u^{n}_{h}(.,t)\|_{L^{2}(\Omega)}=O(h^{-1})\|u^{n}_{h}(.,t)\|_{L^{2}(\Omega)},
\end{equation}
uniformly for $0\leq n\leq N\lesssim \lambda h^{\epsilon}$.
\end{prop}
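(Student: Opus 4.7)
The plan is to differentiate the oscillatory integral \eqref{uhndef} directly, use the eikonal equations \eqref{sistem1} plus one integration by parts in $\xi$ to cancel the principal $O(h^{-2})$ contribution, and then invoke Lemma \ref{lemintcurves} to reduce the remaining $O(h^{-1})$ term to something structurally comparable to $u^n_h$ itself.

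More concretely, a direct expansion yields
\begin{equation*}
h^2\square_g\bigl(e^{i\Phi^n/h}g^n_h\bigr) = e^{i\Phi^n/h}\Bigl[p(x,y,t,d\Phi^n)g^n_h + ih\bigl((\square_g\Phi^n)g^n_h - 2\langle d\Phi^n, dg^n_h\rangle\bigr) + h^2\square_g g^n_h\Bigr],
\end{equation*}
with $\langle\cdot,\cdot\rangle$ the bilinear form \eqref{product}. The eikonal system \eqref{sistem1} satisfied by $(\theta,\zeta)$ gives $p(x,y,t,d\Phi^n) = \eta^{-1/3}\partial_\xi\Phi^n\cdot\langle d\zeta,d\zeta\rangle$, so the principal contribution is a multiple of $\partial_\xi\Phi^n$. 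Since $g^n_h(\cdot,y,\eta)\in\mathcal{S}_{K_0}(\lambda/n)$ is essentially compactly supported in $\xi$ (up to $O_{\mathcal{S}}(\lambda^{-\infty})$ tails), integrating by parts in $\xi$ converts it into the $O(h^{-1})$ expression $ih\int e^{i\Phi^n/h}\eta^{-1/3}\langle d\zeta,d\zeta\rangle\partial_\xi g^n_h\,d\xi d\eta$. Regrouping, I obtain
\begin{equation*}
\square_g u^n_h = \frac{i}{h}\int e^{i\Phi^n/h}\bigl[(\square_g\Phi^n)g^n_h - \mathcal{T}g^n_h\bigr]\,d\xi\,d\eta + \int e^{i\Phi^n/h}\square_g g^n_h\,d\xi\,d\eta,
\end{equation*}
where $\mathcal{T}=2\langle d\Phi^n, d\cdot\rangle-\eta^{-1/3}\langle d\zeta,d\zeta\rangle\partial_\xi$ is exactly the vector field studied in Lemma \ref{lemintcurves}.

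By Lemma \ref{lemintcurves} the first argument $z=(\partial_\tau\theta+\eta^{1/3}\xi\partial_\tau\zeta)/(2(1+a)^{1/2}a^{1/2})-2n$ of $g^n_h$ is annihilated by $\mathcal{T}$, so the chain rule gives $\mathcal{T}g^n_h = 2(1+xb(y))\partial_y\Phi^n\cdot(\partial_y g^n_h)(z,y,\eta)$; the expansion \eqref{sigmndef} shows $\partial_y g^n_h$ has the same structure as $g^n_h$, only the symbols $\mu_k$ being differentiated and remaining of type $(1,0)$. On the essential support of $g^n_h$ one has $|\eta^{1/3}\xi|=O(a^{1/2})$, so $\square_g\Phi^n$ and $\partial_y\Phi^n$ are uniformly bounded. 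Consequently the oscillatory integrals with symbols $(\square_g\Phi^n)g^n_h$ or $\mathcal{T}g^n_h$ in place of $g^n_h$ have $L^2$ norms bounded by $\|u^n_h\|_{L^2}$, uniformly in $n\leq N\lesssim\lambda h^\epsilon$. The remainder $\int e^{i\Phi^n/h}\square_g g^n_h\,d\xi d\eta$ costs at most an extra factor $a^{-1}=h^{-\alpha}$ (from the chain-rule identities $\partial_{x,y,t}z\sim a^{-1/2}$), which is dominated by $h^{-1}$ since $\alpha<2/3$. The main technical obstacle is the underlying $L^2$ comparison claim asserting that replacing $g^n_h$ by a bounded multiple of itself produces an oscillatory integral with $L^2$ norm $\lesssim\|u^n_h\|_{L^2}$; this is the analog of the $L^2$ estimates for the model parametrix carried out in \cite{doi}, combined here with Proposition \ref{lemjfhn} to secure the uniform dependence on $n$.
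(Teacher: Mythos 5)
Your proposal is correct and follows essentially the same route as the paper: apply $\square_g$, use the eikonal system \eqref{sistem1} to write the principal term as a multiple of $\partial_\xi\Phi^n$, integrate by parts in $\xi$, recognize the transport field of Lemma \ref{lemintcurves} so that only $\partial_y g^n_h$ survives at order $h^{-1}$, and then bound the remaining symbols uniformly in $n$ via $g^n_h\in\mathcal{S}_{K_0}(\lambda/n)$ and the type $(1,0)$ bounds on $\mu_k$, with the order-one terms carrying only an $a^{-1}\ll h^{-1}$ factor. The $L^2$-comparison step you flag as the main technical obstacle is treated at the same level of brevity in the paper itself (it simply pulls out $\|\square_g\Phi^n\|_{L^\infty}$ and invokes the uniform symbol bounds), so your argument matches the paper's proof in both structure and detail.
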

\begin{rmq}
This result is useful since in order to estimate the error between the approximate solution we are constructing and the exact solution to \eqref{undered3} we are going to use the same approach as in Lemma \ref{lemestimerr} with error $O(h^{-1})\|u^{n}_{h}(.,t)\|_{L^{2}(\Omega)}$. 
\end{rmq}
\begin{proof}
Using the eikonal equations \eqref{sistem1} and integrating by parts with respect to $\xi$ yields
\begin{multline}\label{estl2normsqg}
\square_{g} u^{n}_{h}(x,y,t)=
\int e^{\frac{i}{h}\Phi^{n}(x,y,t,\xi,\eta,-\eta(1+a)^{1/2})}
\Big(\frac{i}{h}((\square_{g}\Phi^{n})g^{n}_{h}+2(1+xb(y))\partial_{y}\Phi_{h}\partial_{y}g^{n}_{h})\\
+\frac{1}{4(1+a)a}(<d\partial_{\tau}\Phi,d\partial_{\tau}\Phi>-\partial_{\tau}\zeta\partial_{\tau}<d\zeta,d\zeta>)\partial^{2}_{z}g^{n}_{h}
+\frac{1}{2(1+a)^{1/2}a^{1/2}}\square_{g}(\partial_{\tau}\Phi)\partial_{z}g^{n}_{h}\\
-(1+xb(y))\partial^{2}_{y}g^{n}_{h}\Big) \Big(\frac{\partial_{\tau}\theta+\eta^{1/3}\xi\partial_{\tau}\zeta}{2(1+a)^{1/2}a^{1/2}}(x,y,t,\eta,-\eta(1+a)^{1/2})-2n,y,\eta,h\Big)d\xi d\eta.
\end{multline}
Here $\partial_{z}g^{n}_{h}$ denotes the derivative of $g^{n}_{h}(z,y,\eta)$ with respect to the first variable.

We estimate the $L^{2}(\Omega)$ norm of the terms in the integral above: since $\square_{g} \Phi^{n}$ is bounded in $(x,y)$, uniformly in $0\leq n\leq N\lesssim \lambda h^{\epsilon}$, independent of $t$ then 
\[
\|\int_{s,\eta}e^{\frac{i}{h}\Phi^{n}(x,y,t,\eta^{1/3}s,\eta,-\eta(1+a)^{1/2})}
(\square_{g}\Phi^{n})g^{n}_{h}\|_{L^{2}(\Omega)}\lesssim \|\square_{g}\Phi^{n}\|_{L^{\infty}(\Omega)}\|u^{n}_{h}\|_{L^{2}(\Omega)}.
\]
The remaining terms have coefficients $\lesssim a^{-1}$ and their symbols are uniformly bounded functions in $(x,y)$ as well. In order to estimate the $L^{2}(\Omega)$ norms of these terms involving derivatives of $g^{n}_{h}$ with respect to its first variable we use the fact that $g^{n}_{h}(.,y,\eta)\in\mathcal{S}_{K_{0}}(\lambda/n)$ so that for every $k\geq 0$ there exist constants $C_{k}>0$ independent of $h$ such that $\sup_{z}|\partial^{k}_{z}g^{n}_{h}(z,\eta,\lambda)|\leq C_{k}$ and the essential support of $\partial^{k}_{z}g^{n}_{h}$ is $K_{0}=[-c_{0},c_{0}]$. To handle the terms involving derivatives of $g^{n}_{h}$ with respect to $y$ we use the explicit formula \eqref{sigmndef} and the fact that $\mu_{k}$ are symbols of order $0$ and type $(1,0)$.
\end{proof}

\subsection{Main properties of the parametrix}
In this section we state the main properties of the parametrix
\begin{equation}\label{uhdef}
U_{h}(x,y,t):=\sum_{n=0}^{N}u^{n}_{h}(x,y,t),
\end{equation}
where $u^{n}_{h}(x,y,t)$ are introduced in \eqref{uhndef} and where $N\lesssim \lambda h^{\epsilon}$, for some $\epsilon>0$. Here $\epsilon$ will be the one fixed at the beginning of Section \ref{secdoide} (see Remark \ref{defepsilon}). We first prove that each $u^{n}_{h}$ is essentially supported for $t$ in an interval of time of size $\simeq a^{1/2}$ and that $(u^n_h)_n$ have almost disjoint supports in time and in the tangential variable $y$. 

In Section \eqref{sectNt} we choose $N$ so that each cusp type solution $u^n_h$ preserves, for $0\leq n\leq N$, the same properties as the first one, $u^0_h$. We prove that this condition requires that
\begin{equation}\label{condaN}
4Na^{1/2}\lesssim Y.
\end{equation}
Taking into account the condition $N\lesssim \lambda h^{\epsilon}$, which must hold so  that the construction of the Fourier integral operators in Section does not degenerate, and since we require the parameter $a$ to be as small as possible (since the loss of derivatives of the norm $\|U_h\|_{L^q L^r}$ increases when $a$ gets smaller, as shown  in the Appendix), we must have 
\begin{equation}
 N\simeq \lambda h^{\epsilon},\ \text{where}\  \lambda=\frac{a^{3/2}}{h}.
\end{equation}
The last conditions yield $a\simeq  \frac 12 Y^{1/2} h^{\frac{1-\epsilon}{2}}$. 
\begin{rmq}
Recall that the assumption that the number $N$ should be "much" smaller than the parameter $\lambda$ was necessary in the construction of the model solution (see Section \ref{sectmodel}). Precisely, the construction of  the operator $(J_{+,\eta}\circ I_{-,\eta})^{\circ n}$  in the proof of Proposition \ref{propimportant1} involves a stationary phase argument with (large) parameter $\lambda/n$; this parameter has to be a strictly positive power of $h^{-1}$,  otherwise the construction fails since the contribution coming from the error terms doesn't belong to $O_{\mathcal{S}(\mathbb{R})}(h^{\infty})$; this is possible only taking $n\leq N\lesssim \lambda h^{\epsilon}$. 
\end{rmq}

\subsubsection{Wave front set}
\begin{prop}\label{lemlagrawfset}
Let $u^{n}_{h}$ be given in \eqref{uhndef}. Then the wave front set $WF_{h}(u^{n}_{h})$ of $u^{n}_{h}$ is contained in the Lagrangian set $\Lambda_{\Phi^{n}}$ defined by\begin{multline}\label{lagranh}
\Lambda_n:=\Big\{(x,y,t,\xi,\eta,-\eta(1+a)^{1/2})| \zeta(x,y,\eta,-\eta(1+a)^{1/2})+\eta^{2/3}\xi^{2}=0,\\(\partial_{\eta}\theta-(1+a)^{1/2}\partial_{\tau}\theta+\xi\zeta)(x,y,t,1,-(1+a)^{1/2})+\frac{\xi^{3}}{3}+\frac{4}{3}na^{3/2}=0\Big\}.
\end{multline}
In other words, outside any neighborhood of $\Lambda_n$ the contribution of $u^{n}_{h}$ is $O_{L^{2}}(h^{\infty})$.
\end{prop}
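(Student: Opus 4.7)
The plan is to recognize $u^n_h$ as a semi-classical Lagrangian distribution defined by an oscillatory integral in the variables $(\xi,\eta)$, with phase $\Phi^n$ restricted to $\tau=-\eta(1+a)^{1/2}$ and amplitude $g^n_h$, and then to use the standard non-stationary phase argument to locate its wave front set on the Lagrangian manifold generated by this phase. My first step will be to check that the manifold in \eqref{lagranh} is exactly the one produced by the stationary phase conditions in $(\xi,\eta)$. Differentiating $\Phi^n(x,y,t,\xi,\eta,-\eta(1+a)^{1/2})$ in $\xi$ and dividing by $\eta^{1/3}$ yields the first defining equation $\zeta(x,y,\eta,-\eta(1+a)^{1/2})+\eta^{2/3}\xi^2=0$. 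For the second condition, I compute the total $\eta$-derivative (accounting for $\tau=-\eta(1+a)^{1/2}$) and apply Euler's identity to the homogeneous functions $\theta$, $\eta^{1/3}\zeta$ and $(-\zeta_0)^{3/2}$ (all of degree one in $(\eta,\tau)$, with $\zeta$ of degree $2/3$); after collecting the resulting Euler contributions, the critical equation reduces to
\[
\theta(x,y,t,1,-(1+a)^{1/2})+\xi\,\zeta(x,y,1,-(1+a)^{1/2})+\frac{\xi^3}{3}+\frac{4}{3}na^{3/2}=0,
\]
which (using $\zeta_0(1,-(1+a)^{1/2})=-a$ and homogeneity) matches the second defining condition of $\Lambda_n$.

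Next, given a point $(\rho_0,\zeta^\flat_0)\notin\Lambda_n$, I pick a cut-off symbol $\chi\in\mathcal{S}(T^*X)$ with $\chi(\rho_0,\zeta^\flat_0)\ne0$ and supported in a small neighborhood of $(\rho_0,\zeta^\flat_0)$ disjoint from $\Lambda_n$. Expressing $\chi(\rho,hD_\rho)u^n_h$ as a multi-dimensional oscillatory integral
\[
\chi(\rho,hD_\rho)u^n_h(\rho)=\frac{1}{(2\pi h)^3}\int e^{\frac{i}{h}\Psi}\chi(\rho,\zeta^\flat)\,g^n_h\,d\rho' d\zeta^\flat d\xi d\eta,\qquad \Psi=(\rho-\rho')\cdot\zeta^\flat+\Phi^n(\rho',\xi,\eta,-\eta(1+a)^{1/2}),
\]
the critical set of $\Psi$ in $(\rho',\zeta^\flat,\xi,\eta)$ is parametrized by $\rho'=\rho$, $\zeta^\flat=d_{\rho'}\Phi^n$, together with the two stationary conditions identified above; so by the first step this critical set projects exactly to $\Lambda_n\subset T^*X$. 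Since $\chi$ is supported away from $\Lambda_n$, the support of $\chi\cdot g^n_h$ meets no critical point of $\Psi$, so iterated integration by parts along an appropriate direction in which $d\Psi$ is bounded below will yield $\chi(\rho,hD_\rho)u^n_h=O_{L^2}(h^\infty)$.

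The main obstacle will be quantitative control of the amplitude $g^n_h$ under iterated IBP. By Proposition \ref{lemjfhn}, $g^n_h$ is essentially supported in $z\in K_0=[-c_0,c_0]$ where $z=(\partial_\tau\theta+\eta^{1/3}\xi\partial_\tau\zeta)/(2(1+a)^{1/2}a^{1/2})-2n$; differentiating $g^n_h$ with respect to $\xi$ via the chain rule produces factors $\partial_z\varrho^n\cdot a^{-1/2}$, with $a^{-1/2}\simeq h^{-(1-\epsilon)/4}$. Since $\varrho^n\in\mathcal{S}_{K_0}(\lambda/n)$ with $\lambda/n\gtrsim h^{-\epsilon}$, each $z$-derivative of $\varrho^n$ is uniformly bounded by Definition \ref{dfns}, so the total cost of one $\xi$-derivative of $g^n_h$ is only $a^{-1/2}$. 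Each IBP on the other hand gains a factor $h$, and the ratio $h\cdot a^{-1/2}\simeq h^{(3+\epsilon)/4}$ tends to zero uniformly in $n\le N$. A similar analysis applies to $\eta$-derivatives (where the chain rule is harmless) and to $(x,y,t)$-derivatives (where the inhomogeneities of $\theta$, $\zeta$ in $(x,y)$ are symbols of order zero). Iterating the IBP procedure yields the desired $O_{L^2}(h^\infty)$ bound, giving $WF_h(u^n_h)\subset\Lambda_n$ uniformly in $0\le n\le N$.
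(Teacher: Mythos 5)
Your identification of the stationary set is correct and matches the paper: restricted to $\tau=-\eta(1+a)^{1/2}$ the phase is $\eta$ times an $\eta$-independent bracket, so $\partial_\xi$ gives $\zeta+\eta^{2/3}\xi^2=0$ and $\partial_\eta$ gives the bracket itself, which coincides with the second condition in \eqref{lagranh} by Euler's identity ($\partial_\eta\theta-(1+a)^{1/2}\partial_\tau\theta=\theta$ at $(1,-(1+a)^{1/2})$). Your bookkeeping for the $\xi$-integrations by parts ($h\cdot a^{-1/2}\simeq h^{(3+\epsilon)/4}$ gained per step) is also exactly the paper's first case. The gap is in the sentence ``a similar analysis applies to $\eta$-derivatives (where the chain rule is harmless)''. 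The chain-rule part is indeed harmless (the first argument of $g^n_h$ is $\eta$-independent because $\partial_\tau\theta$ and $\eta^{1/3}\partial_\tau\zeta$ are homogeneous of degree $0$), but $g^n_h(z,y,\eta)$ depends on $\eta$ \emph{directly} through $\varrho^n(z,\eta,\lambda)=(F_{\eta\lambda})^{*n}*\varrho^0$, and Definition \ref{dfns} / membership in $\mathcal{S}_{K_0}(\lambda/n)$, which you invoke, controls only $z$-derivatives, not $\eta$-derivatives. Differentiating the $n$-fold convolution kernel in $\eta$ brings down from its phase a factor of size $\tilde\lambda\bigl(z\tilde w+n^2 f(\tfrac{\tilde w}{n})\bigr)$ (after the rescaling $\tilde w=nw$, $\tilde\lambda=\lambda/n$), which is a priori large and grows with the number of integrations by parts; this is precisely the technical heart of the paper's proof, where one shows by a stationary-phase argument in $(\tilde w,z')$ that this factor vanishes at the critical point, so the $\eta$-derivatives of $\varrho^n$ are in fact bounded uniformly in $n\le N$ (the term where $\partial_\eta$ hits $c^n(w,\eta\lambda)$ being controlled by $n\ll\lambda$).

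Your proposal simply asserts the needed amplitude bounds, so as written it assumes away the main difficulty. One could argue that a cruder, polynomially lossy bound on $\partial_\eta^k g^n_h$ would still yield $O_{L^2}(h^\infty)$, since each integration by parts against $|\partial_\eta\Phi^n|\ge c$ gains a full factor $h$ and any loss $o(h^{-1})$ per derivative is acceptable; but even such a crude bound requires an argument about the oscillatory integral defining $(F_{\eta\lambda})^{*n}$ (uniformly in $n\lesssim\lambda h^\epsilon$), and none is given. To complete your proof you must either reproduce the paper's computation \eqref{derivfetlan} and the vanishing-at-the-critical-point argument, or supply an alternative uniform (or polynomially controlled) estimate for $\partial_\eta^k\bigl((F_{\eta\lambda})^{*n}*\varrho^0\bigr)$ together with the $\eta$-derivatives of the symbols $\mu_k$.
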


\begin{proof}
Let $(x,y)$ be such that $|\partial_{\xi}\Phi^{n}|\geq c$ for some $c>0$ and let $L_{1}$ be the operator $L_{1}:=\frac{h}{i}\frac{1}{\xi^{2}+\eta^{-2/3}\zeta}\partial_{\xi}$. After $m$ integrations by parts with respect to $\xi$ we gain a factor $h^{(1-\alpha/2)m}$ and the contribution of $u^{n}_{h}$ in this case is $O_{L^{2}}(h^{\infty})$.

Let now $|\partial_{\eta}\Phi^{n}|\geq c>0$ for some positive constant $c$; we perform (repeated) integrations by parts using this time the operator $L_{2}:=\frac{h}{i}\frac{\partial_{\eta}\Phi^{n}}{|\partial_{\eta}\Phi^{n}|^{2}}\partial_{\eta}$. We need however to estimate the derivatives with respect to $\eta$ of $g^{n}_{h}$, precisely we have to estimate
\[
L^{*m}_{2}\Big(g^{n}_{h}(\frac{\partial_{\tau}\theta+\eta^{1/3}\xi\partial_{\tau}\zeta}{2(1+a)^{1/2}a^{1/2}}(x,y,t,\eta,-\eta(1+a)^{1/2})-2n,y,\eta\Big).
\]
Since $\partial_{\tau}\theta$ and $\eta^{1/3}\partial_{\tau}\zeta$ are homogeneous of degree $0$ with respect to $(\eta,\tau)$, the first variable in the symbol $g^{n}_{h}$ is independent of $\eta$.

The symbol $g^{n}_{h}(z,y,\eta)$ is an asymptotic sum whose general term is of the form 
\[
h^{k/2}a^{-k/2}\Psi(\eta)\eta^{1/3}\mu_{k}(y,\eta,h)\partial^{k}\varrho^{n}(z,\eta,\lambda),
\]
where we recall from Definition \ref{dfncuspn} and Proposition \ref{propimportant1} of Section \ref{sectmodel} that  $\partial^{k}\varrho^{n}$ writes as a convolution
\[
\partial^{k}\varrho^{n}(z,\eta,\lambda)=(F_{\eta\lambda})^{*n}*\partial^{k}\varrho^{0}(.,\lambda)(z),\quad \forall k\geq 0,
\]
where $\varrho^{0}\in\mathcal{S}_{K_{0}}(\lambda)$ is independent of $\eta$ and where $(F_{\eta\lambda})^{*n}$ is defined in \eqref{Fn}. 
Since $\mu_{k}$ are symbols of order $0$ and type $(1,0)$ it will be enough to estimate the contribution of the terms involving the derivates on $(F_{\eta\lambda})^{*n}$. We recall that $(F_{\eta\lambda})^{*n}$ has the explicit form
\[
(F_{\eta\lambda})^{*n}(z)=\frac{\eta\lambda}{2\pi}\int_{w}e^{i\eta\lambda(wz+n(2w+\frac{4}{3}((1-w)^{3/2}-1))} (c(w,\eta\lambda))^{n}dw,
\]
where 
 \begin{align}
 \nonumber
 c(w,\eta\lambda) & =\kappa(w)a_{+}(w,\eta\lambda)b_{-}(w,\eta\lambda)\\
 & \simeq\kappa^{2}(w)\Big(1+\sum_{j\geq 1}c_{j}(1-w)^{-3j/2}(\eta\lambda)^{-j}\Big).
 \end{align}
  Making the change of variables $\tilde{w}=nw$ and setting $\tilde{\lambda}=\lambda/n\geq h^{-\epsilon}\gg 1$ yields
 \[
(F_{\eta\lambda})^{*n}(z)=\frac{\eta\tilde{\lambda}}{2\pi}\int_{\tilde{w}}e^{i\eta\tilde{\lambda}(z\tilde{w}+ n^{2}f(\frac{\tilde{w}}{n}))}c^{n}(\frac{\tilde{w}}{\lambda},\eta n\tilde{\lambda})d\tilde{w},
\]
where $f(w):=2w+\frac{4}{3}((1-w)^{3/2}-1)$. Hence one $\eta$-derivative yields
\begin{multline}\label{derivfetlan}
\partial_{\eta}(F_{\eta\lambda})^{*n}(z)=\frac{1}{\eta}(F_{\eta\lambda})^{*n}(z)+\frac{\eta\tilde{\lambda}}{2\pi}\int_{\tilde{w}}e^{i\eta\tilde{\lambda}(z\tilde{w}+n^{2}f(\frac{\tilde{w}}{n}))}i\tilde{\lambda}(z\tilde{w}+ n^{2}f(\frac{\tilde{w}}{n}))c^{n}(\frac{\tilde{w}}{n},\eta n\tilde{\lambda})d\tilde{w}\\
+ \frac{\eta\tilde{\lambda}}{2\pi}\int_{\tilde{w}}e^{i\eta\tilde{\lambda}(z\tilde{w}+ n^{2}f(\frac{\tilde{w}}{n}))}n\partial_{\eta}c(\frac{\tilde{w}}{n},\eta n \tilde{\lambda})c^{n-1}(\frac{\tilde{w}}{n},\eta n \tilde{\lambda})d\tilde{w}.
\end{multline}
The symbol of the third term in the right hand side of \eqref{derivfetlan} is $n\partial_{\eta}c(w,\eta\lambda)c^{n-1}(w,\eta\lambda)$ and we have
\[
\partial_{\eta}c(w,\eta\lambda)=-\eta^{-2}\lambda^{-1}\sum_{j\geq 1}jc_{j}(1-w)^{-3j/2}(\eta\lambda)^{-(j-1)},
\]
and since $n\ll \lambda$, the contribution from this term is easily handled with.

The symbol in the second term in the right hand side of \eqref{derivfetlan} equals the symbol of $(F_{\eta\lambda})^{*n}$ multiplied by the factor $i\tilde{\lambda}(z\tilde{w}+\lambda n f(\frac{\tilde{w}}{n}))$. Recall that on the support of $c(w,\eta\lambda)$ we have $w=\tilde{w}/n\in \text{supp}(\kappa)$ and that the support of $\kappa$ can be taken as close as we want from $0$. Moreover, on the support of $\kappa(w)$ we have $f(w)=w^{2}/2+O(w^{3})$, hence
$n^{2}f(\frac{\tilde{w}}{n})=\tilde{w}^{2}/2+O(\tilde{w}^{3}/n)$.
On the other hand, when we take the convolution product of the second term in \eqref{derivfetlan} with $\varrho^{0}(.,\lambda)$ we find that the critical points of the phase in the oscillatory integral obtained in this way, given by
\[
\frac{\eta\tilde{\lambda}}{2\pi}\int_{\tilde{w},z'}e^{i\eta\tilde{\lambda}((z-z')\tilde{w}+n^{2}f(\frac{\tilde{w}}{n}))}i\tilde{\lambda}((z-z')\tilde{w}+n^{2}f(\frac{\tilde{w}}{n}))c^{n}(\frac{\tilde{w}}{n},\eta n\tilde{\lambda})\varrho^{0}(z',\lambda)d\tilde{w}dz',
\]
must satisfy $\tilde{w}=0$ and $z=z'$. The phase function which we denoted by $\phi_{n}(z,z',\tilde{w})$ satisfies $\phi_{n}(z,z,0)=0$. Applying the stationary phase theorem in $\tilde{w}$ and $z'$, the first term in the asymptotic expansion obtained in this way vanishes, and the next ones are multiplied by strictly negative, integer powers of $\tilde{\lambda}$, hence the contribution from this term will is also bounded. 

Notice that when we take higher order derivatives in $\eta$ of $\varrho^{n}$, we obtain symbols which are products of $\tilde{\lambda}^{j} (\phi_{n})^{j}\partial^{k-j}_{\eta}(c^{n}(\tilde{w}/n,\eta n \tilde{\lambda}))$ and can be dealt with in the same way, taking into account this time that the first $j$ terms in the asymptotic expansion obtained after applying the stationary phase vanish. As a consequence, after each integration by parts in $\eta$ using the operator $L_{2}$ we gain a factor $h$,  meaning that the contribution of $u^{n}_{h}$ is $O_{L^{2}}(h^{\infty})$.
\end{proof}

\subsubsection{Time $T$ and number of reflections $N$}\label{sectNt}
In what follows we choose the number $N$ of iterated cusp type solutions $u^n_h$.\begin{prop} \label{propnnn}
Let $1\leq N\leq C_0 Y a^{-1/2}$, for some fixed $C_0>0$. If $C_0$ is chosen sufficiently small, then the operator $J$ introduced in Section \ref{sectbdcond} is elliptic near the set 

$\Big(\cup_{0\leq n\leq N}I_n(c_0)\Big)\times \Big(\cup_{0\leq n\leq N}I_n(c_0)\Big)\times (1,-(1+a)^{1/2})$ and $\chi_{\partial}$ is a diffeomorphisme from a neighborhood of this set onto its image. We recall that $I_n(c_0)$ was introduced in \eqref{inco} and $0<c_0\leq \frac 38$ was fixed in Section \ref{secmodelconstruction}. 
\end{prop}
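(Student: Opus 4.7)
The plan is to combine two ingredients: a fixed, $h$-independent open neighborhood $\gamma\subset T^{*}\partial X_F$ of $\pi(\bar\rho_0,\bar\vartheta_0)=(0,0,1,-1)$ on which $\chi_\partial$ is smooth with smooth inverse and the symbol $d$ of $J$ is elliptic (both facts available from the remarks following Theorem \ref{thmphase}, where $\chi_\partial$ and $d$ were set up in fixed conic neighborhoods of the glancing point depending only on $(\Omega,g)$), together with an elementary size estimate showing that, provided $C_0$ is small enough, the set
\[
S:=\Big(\cup_{0\leq n\leq N}I_n(c_0)\Big)\times\Big(\cup_{0\leq n\leq N}I_n(c_0)\Big)\times\{(1,-(1+a)^{1/2})\}
\]
lies inside $\gamma$. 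Since $\gamma$ and $Y$ depend only on the fixed geometry, this qualitative fit will be uniform in $h$.

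First I would establish a size estimate for $\cup_{n}I_n(c_0)$. From \eqref{inco} each $I_n(c_0)$ is centered at $4na^{1/2}(1+a)^{1/2}$ with half-width $2(1+c_0)a^{1/2}(1+a)^{1/2}$, so the union over $0\leq n\leq N$ is contained in a single interval of length at most $\bigl(4N+4(1+c_0)\bigr)a^{1/2}(1+a)^{1/2}$. Plugging in the hypothesis $N\leq C_0 Y a^{-1/2}$ together with $a\lesssim h^{(1-\epsilon)/2}\ll 1$ (as recorded at the start of Section \ref{subsectfioapplic}), this length is bounded by a uniform constant multiple of $C_0 Y$. The cotangent component $(1,-(1+a)^{1/2})$ is automatically within $O(a)$ of the glancing direction $(1,-1)$.

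Next I would fix $\delta>0$ small enough that the polydisk of radius $\delta$ about $(0,0,1,-1)$ is contained in $\gamma$ and in the conic elliptic support of $d$; such a $\delta$ depends only on $(\Omega,g)$ through the construction of Theorem \ref{thmphase}. Choosing $C_0\leq \delta/(10Y)$ and $h$ small enough that $|(1+a)^{1/2}-1|\leq \delta/2$, the set $S$ fits inside this polydisk, so $\chi_\partial$ restricts to a diffeomorphism from a neighborhood of $S$ onto its image, and $d_h$ is elliptic on $\chi_\partial(S)$, which is exactly the content of the two claims.

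The only genuine obstacle is geometric: since $\chi_\partial$ is built only as a local canonical transformation near the glancing point, one must control the cumulative displacement of the iterated billiard-ball images $(\delta^{\pm}_F)^n$ applied to the reference data at $(y,t)=(0,0)$, $(\eta,\tau)=(1,-(1+a)^{1/2})$. These displacements are of size exactly $\simeq n a^{1/2}$ (visible from the explicit formula \eqref{deltait} evaluated at $\tau=-\eta(1+a)^{1/2}$), and the bound $N a^{1/2}\lesssim C_0 Y$ is precisely the condition that keeps them inside the fixed neighborhood on which the phases $\theta$ and $\zeta$ of Theorem \ref{thmphase} were defined. With $C_0$ calibrated as above, both conclusions of the proposition follow immediately.
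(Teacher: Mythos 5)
Your argument is correct, but it is genuinely different in spirit from the paper's proof. You treat the problem \emph{qualitatively}: since Melrose's construction (Theorem \ref{thmphase} and the remarks following it, in particular the linear independence of $d_{y,t}(\partial_{\eta}\theta,\partial_{\tau}\theta)$ on $\mathcal{U}$) already furnishes a fixed, $h$-independent neighborhood of the glancing point on which $\chi_{\partial}$ is a symplectomorphism onto its image and on which the symbol $d$ of $J$ is bounded below, you simply calibrate $C_0$ (of order $\delta/Y$) so that the whole set of iterated billiard-ball data, of diameter $\simeq 4Na^{1/2}\lesssim C_0Y$, fits inside that neighborhood; the frequency component is within $O(a)$ of $(1,-1)$, so the containment is uniform in $h$. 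The paper instead argues \emph{quantitatively}: it computes $\nabla_{y,t}(\chi_{\partial}^{-1})$ explicitly from the eikonal equations, obtaining $\partial^{2}_{y,\eta}\theta_0=b^{2/3}(y)+O(\zeta_0)$ and $\partial^{2}_{y,\tau}\theta_0=1-(\tau/\eta)^{4/3}b^{2/3}(y)+O(\zeta_0)$ (formulas \eqref{thetety}, \eqref{thettay}), uses the conjugation $(\delta^{+})^{n}\circ\chi_{\partial}=\chi_{\partial}\circ(\delta^{+}_F)^{n}$ to place the iterates at $(\bar y_n,\bar t_n)$, and shows via \eqref{nablachi} that the Jacobian stays close to the identity as long as $y\in[0,Y]$ where $|b^{2/3}-1|<\frac14$; ellipticity of $d$ is then handled by shrinking $Y$ rather than $C_0$. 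What each buys: your route is shorter and proves the statement as literally phrased, but your threshold for $C_0$ depends on the unquantified size $\delta$ of the neighborhood coming out of the equivalence-of-glancing-hypersurfaces theorem (this only affects constants, which may depend on $\Omega$, so the downstream contradiction argument survives); the paper's route makes $C_0$ an essentially absolute constant governed by the hypothesis on $b$, keeps the admissible $(y,t)$-range of macroscopic size comparable to $Y$, and, importantly, produces the near-identity Jacobian estimates \eqref{thetety}, \eqref{thettay} that are reused verbatim in Lemma \ref{lemlengsup}, Proposition \ref{propdiruh01} and the Appendix. One small imprecision to fix in your write-up: the symbol $d$ lives on the $T^{*}\partial X$ side, so "the polydisk is contained in the elliptic region of $d$" should read that $\chi_{\partial}$ of the polydisk is contained in it, which follows by continuity of $\chi_{\partial}$ after possibly shrinking $\delta$; you in effect use this when asserting ellipticity of $d_h$ on $\chi_{\partial}(S)$.
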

\begin{proof}
We use the properties of $\chi_{\partial}$ defined in Section \ref{secphases} together with the assumption on $b(y)$ in Theorem \ref{thms3}. Recall from \eqref{chipartmodel} that we have
\begin{equation}
\chi_{\partial}(d_{\eta}\theta_{0},d_{\tau}\theta_{0},\eta,\tau)=(y,t,d_{y}\theta_{0},d_{t}\theta_{0}).
\end{equation}
We can explicitely compute the gradient $\nabla_{y,t}(\chi^{-1}_{\partial})$ as follows
\begin{equation}\label{matrixderivchi}
\left(
\begin{array}{cc}
   \partial^{2}_{y,\eta}\theta_0(y,t,\eta,\tau)  & 0   \\
  \partial^{2}_{y,\tau}\theta_0(y,t,\eta,\tau)   &  1 
\end{array}
\right).
\end{equation}
Using \eqref{eqtheto}, \eqref{eqzet1tau} we have
\begin{equation}\label{eqderivthet}
(\partial_{y}\theta_0)^2(y,t,\eta,\tau) = \tau^2+\zeta_0(\eta,\tau)b^{2/3}(y)\tau^{4/3}+O(\zeta_0^2),
\end{equation}
therefore, taking the derivative with respect to $\eta$ in \eqref{eqderivthet} we find
\begin{equation}
2\partial_y\theta_0\partial^2_{\eta,y}\theta_0(y,t,\eta,\tau)=2\eta b^{2/3}(y)\Big(\frac{\tau}{\eta}\Big)^{4/3}+O(\zeta_0).
\end{equation}
This finally implies that 
\begin{equation}\label{thetety}
\partial^{2}_{y,\eta}\theta_0(y,t,\eta,\tau)=b^{2/3}(y)+O(\zeta_0(\eta,\tau)).
\end{equation}
Taking now the derivative with respect to $\tau$ in \eqref{eqderivthet} yields
\begin{equation}\label{thettay}
\partial^{2}_{y,\tau}\theta_0(y,t,\eta,\tau)=\Big(1-\Big(\frac{\tau}{\eta}\Big)^{4/3}b^{2/3}(y)\Big)+O(\zeta_0(\eta,\tau)).
\end{equation}
Notice also that the right hand side in both \eqref{thetety} and \eqref{thettay} is independent of $t$ (which follows from the linearity in time of $\theta$). Since $|b^{1/3}(y)-1|\leq\frac{1}{10}$ for $y\in [0,Y]$, and if $\eta/\tau$ is close to $1$ it follows that $\chi_{\partial}$ is a diffeomorphism from a small, fixed, conic neighborhood of $\pi(\bar{\varrho}_0,\bar{\vartheta}_0)$ into a small neighborhood of  $\pi(\varrho_0,\vartheta_0)$. 

\begin{rmq} 
In particular, if we consider the restriction of $\chi^{-1}_{\partial}$ to $(\eta,\tau)=(1,-(1+a)^{1/2})$, its Jacobian is given by $\text{Jac} (\chi^{-1}_{\partial})=\partial^{2}_{y,\eta}\theta_0(y,t,1,-(1+a)^{1/2})$ and is independent of $t$.
\end{rmq}

Recall that $\chi_{\partial}$ conjugates the billiard ball map $\delta^{\pm}$ to the normal form, $\chi_{\partial}\circ \delta^{\pm}_F=\delta^{\pm}\circ\chi_{\partial}$. Since we consider only positive time it is enough to work with $\delta^+$, $\delta^+_F$. For $n\geq 1$ and for $\pi(\bar{\varrho},\bar{\vartheta})$ in a small, conic neighborhood of $\pi(\bar{\varrho}_0,\bar{\vartheta}_0)$ we have, writting $\pi(\varrho,\vartheta)=\chi_{\partial}(\pi(\bar{\varrho},\bar{\vartheta}))$ (which belongs to a small neighborhood of $\pi(\varrho_0,\vartheta_0)$),
\begin{align}\label{dnchidnf}
\nonumber
(\delta^+)^n(\pi(\varrho,\vartheta))&= (\delta^+)^n(\chi_{\partial}(\pi(\bar{\varrho},\bar{\vartheta})))\\
&=\chi_{\partial}\Big((\delta^+_F)^n(\pi(\bar{\varrho},\bar{\vartheta}))\Big),
\end{align}
where $(\delta^+_F)^n$ is given by the formula \eqref{deltait} that we recall here,
\[
(\delta^+_F)^n(y,t,\eta,\tau)=\Big(y\pm4n(\frac{\tau^{2}}{\eta^{2}}-1)^{1/2}\pm\frac{8}{3}n(\frac{\tau^{2}}{\eta^{2}}-1)^{3/2},t\mp 4n(\frac{\tau^{2}}{\eta^{2}}-1)^{1/2}\frac{\tau}{\eta},\eta,\tau\Big).
\]
We have assumed (without loss of generality) that $\pi(\varrho_0,\vartheta_0)=(0,0,1,-1)$. Modulo a translation we can also assume that $\pi(\bar{\varrho}_0,\bar{\vartheta}_0)=(0,0,1,-1)$. We now take $\bar{\varrho}=\bar{\varrho_0}$ and $\bar{\vartheta}=(1,-(1+a)^{1/2})$ in \eqref{dnchidnf}. Therefore, for $a$ small, depending on $h$, $\pi(\bar{\varrho},\bar{\vartheta})$ belongs to a conic neighborhood of $\pi(\bar{\varrho}_0,\bar{\vartheta}_0)$, (letting $h$ go to $0$, this neighborhood can be made as small as we want). Rewriting \eqref{dnchidnf} at this point and using \eqref{deltait} yields
\begin{align}
\nonumber
(\delta^+)^n(\pi(\varrho,\vartheta))&=\chi_{\partial}\Big((\delta^+_F)^n(\pi(\bar{\varrho},\bar{\vartheta}))\Big),\\
\nonumber
&=\chi_{\partial}\Big(4na^{1/2}(1+a)-\frac 43 na^{3/2},4na^{1/2}(1+a)^{1/2},1,-(1+a)^{1/2}\Big)\\
&=:\chi_{\partial}(\bar{y}_n,\bar{t}_n,1,-(1+a)^{1/2}),
\end{align}
where we set
\[
\bar{y}_n=4na^{1/2}(1+a)-\frac 43 na^{3/2},
\]
\[
\bar{t}_n=4na^{1/2}(1+a)^{1/2}.
\]
If $na^{1/2}$ belongs to a small but fixed neighborhood of $0$ of size $Y$, we can write the right hand side term in the last equation as follows
\begin{equation}\label{nablachi}
\chi_{\partial}(\bar{y}_n,\bar{t}_n,1,-(1+a)^{1/2})=\nabla_{y,t}\chi_{\partial}(\bar{y},\bar{t},1,-(1+a)^{1/2})
\left(
\begin{array}{c}
  \bar{y}_n \\
  \bar{t}_n
\end{array}
\right),
\end{equation}
for some $(\bar{y},\bar{t})\in [0,\bar{y}_n]\times [0,\bar{t}_n]$. The matrix $\nabla_{y,t}\chi_{\partial}(\bar{y},\bar{t},1,-(1+a)^{1/2})$ is independent of $\bar{t}$ (since it is given by the inverse of $\nabla_{y,t}(\chi_{\partial}^{-1})$ computed in \eqref{matrixderivchi}) and at $\bar{y}=0$ is equal to
\[
\left(
\begin{array}{cc}
 1+O(a) &   0   \\
 O(a)  & 1  
\end{array}
\right)^{-1}
\]
(where we used \eqref{matrixderivchi}, \eqref{thetety} and \eqref{thettay}), therefore if $\bar{y}_n$ belongs to a small, fixed neighborhood of $0$ of size $y_0$, the matrix $\nabla_{y,t}\chi_{\partial}(\bar{y},\bar{t},1,-(1+a)^{1/2})$ remains close to the identity.

We can now estimate the number of iterations $N$ that we will use in our construction. 
Since $\bar{y}_n=4na^{1/2}+O(na^{3/2})$, choose $N\geq 1$ such that $Na^{1/2}$ to be sufficiently small so that for $0\leq \bar{y}\leq \bar{y}_N$ the gradient matrix $\nabla_{y,t}(\chi_{\partial})(\bar{y},.,1,-(1+a)^{1/2})$ to remain close to the identity. This is possible due to the independence on $\bar{t}_N, \bar{t}$ of the gradient matrix and its uniformly boundedness. Since from the initial assumption we have $|b^{2/3}(y)-1|< \frac 14$ for $y\in [0,Y]$ we can take
\begin{equation}\label{condnnn}
4N\simeq C_0 Y a^{-1/2},
\end{equation}
for some sufficiently small constant $0<C_0\leq 1$.
\begin{rmq}
The operator $J$ is elliptic in a small, fixed neighborhood of $0$. Taking $Y$  smaller if necessary, the symbol $d$ of $J$ will be elliptic for $y\in [0,Y]$, therefore the symbol of $J\circ Tr_{\pm}(u^n_{F,h})$ will remain elliptic for any $0\leq n\leq N$ with $N$ given by \eqref{condnnn}  if $C_0$ is small enough. This will be useful when computing the $L^r$ norms of the cusps $u^n_h$, whose the symbols will depend on $d$ and therefore will be elliptic uniformly in $0\leq n\leq N$.
\end{rmq}
\begin{rmq}
In the following we set 
\begin{equation}\label{defa}
a:=\frac{\sqrt{C_0}}{2}Y^{1/2}h^{\frac{1-\epsilon}{2}}.
\end{equation}
Indeed, from the condition $N\lesssim \lambda h^{\epsilon}=a^{3/2}h^{-(1-\epsilon)}$ which must hold so that the construction of the operators $(J_{+,\eta}\circ I_{-,\eta})^{\circ N}$ not to degenerate and from Proposition \ref{propnorm} in the Appendix which states that the lost of derivatives of $L^{q}L^r$ norms of $U_h$ (compared to the optimal Strichartz)  grows with $N$, the worst loss seems to appear for $N\simeq \lambda h^{\epsilon}$ (and $\epsilon>0$ small). Using now \eqref{condnnn} we get the estimate \eqref{defa} for the small parameter $a$.
\end{rmq}
\end{proof}

\paragraph{Time interval $[0,T]$:}
In the remaining part of this section we estimate the interval of time $[0,T]$ on which the norm of $U_h$ will be evaluated. Using Proposition \ref{propsuppunhx} below and $N$ like in \eqref{condnnn} we see that for $0\leq n\leq N$, the cusp $u^n_h(.,t)$ will be essentially supported for $(x,y,t)$ such that 
\begin{equation}
\partial_{\tau}\theta(x,y,t,1,-(1+a)^{1/2})\in  I_n(c_0),
\end{equation}
where
\begin{equation}
I_n(c_0)=2a^{1/2}(1+a)^{1/2}\times [2n-(1+c_{0}),2n+(1+c_{0})].
\end{equation}
We shall choose $T$ to belong to the essential support of $u^N_h$. 
From Proposition \ref{propsuppunhx} in Section \ref{sectlocasupp} below it follows that on the essential support of $u^N_h$ the following should hold:
\begin{multline}
\Big(\partial_{\eta}\theta-(1+a)^{1/2}\partial_{\tau}\theta\Big)(x,y,t,1,-(1+a)^{1/2})=-\frac 43 Na^{3/2}\\\pm \frac 23(-\zeta)^{3/2}(x,y,1,-(1+a)^{1/2}).
\end{multline}
We introduce the defining function for the caustic set, denoted $C(y,\eta,\tau)$, as follows
\[
-\zeta(x,y,\eta,\tau)=0\quad \text{if and only if}\quad x=C(y,\eta,\tau).
\]
Indeed, since $-\zeta(x,y,1,-(1+a)^{1/2})=a-x\zeta_1(y,1,-(1+a)^{1/2})+O(a^2)$ and using \eqref{eqzet1tau} together with $|b^{1/3}(y)-1|\leq1/10$ for $y\in [0,Y]$, we are allowed to  apply the implicit function theorem to determine the smooth curve $C(y,1,-(1+a)^{1/2})$ for $y\in [0,Y]$.  

Using the special form of the phase function $\theta(x,y,t,\eta,\tau)$ given in \eqref{specialtheta} we have $\partial_t\theta=\tau$, therefore its derivative with respect to the $\eta$ variable is independent of $t$. Hence the application
\begin{equation}\label{applicY}
y\rightarrow \partial_{\eta}\theta(C(y,1,-(1+a)^{1/2}),y,.,1,-(1+a)^{1/2})
\end{equation}
is independent of time. Moreover, \eqref{applicY} is a diffeomorphisme in a neighborhood of $y=0$ since its derivative doesn't vanish. Indeed
\[
\partial^{2}_{y,\eta}\theta_0(y,.,1,-(1+a)^{1/2})=b^{2/3}(y)+O(a)\in [(1-\frac{1}{10})^{2},(1+\frac{1}{10})^{2}]
\]
stays away from a neighborhood of $0$ and $C(y,1,-(1+a)^{1/2})=a b^{-1/3}(y)+O(a^2)$. 

Let $\bar{t}_N:=4Na^{1/2}(1+a)^{1/2}$ be the center of the interval $I_N(c_0)$. From the discussion above (and if $C_0$ in \eqref{condnnn} is small) it follows that there exists an unique $y_N\in [0,Y]$ such that 
\begin{equation}\label{choiceYNT}
\partial_{\eta}\theta(C(y_N,1,-(1+a)^{1/2}),y_N,.,1,-(1+a)^{1/2})=(1+a)^{1/2}\bar{t}_N-\frac 43Na^{3/2}=\bar{y}_N.
\end{equation}
We can now define $T$ as the unique time value which satisfies:
\begin{equation}\label{choiceT}
\partial_{\tau}\theta(C(y_N,1,-(1+a)^{1/2}),y_N,T,1,-(1+a)^{1/2})=\bar{t}_N.
\end{equation}
\begin{rmq}\label{rmqchoiceT}
The motivation of the choice of $T$ will be given in the end of this section and will be particularly useful to show that the restriction of the parametrix $U_h$ to $\partial\Omega\times [0,T]$ is $O(h^{\infty})$ (see Proposition \ref{propdiruh01}).
\end{rmq}

\subsubsection{Localization of the cusps}\label{sectlocasupp}
In this section we describe the essential supports of $u^{n}_{h}$. Recall from Proposition \ref{propsuppunfh} that in the Friedlander's case the solutions $u^{n}_{F,h}$ to \eqref{undef} were essentially supported in time and the tangential variable in the intervals, respectively
\begin{equation}\label{inttimecasmod}
2a^{1/2}\times [2n-(1+c_{0}),2n+(1+c_{0})],
\end{equation}
where $c_{0}$ is a constant sufficiently small and such that $\varrho\in\mathcal{S}_{K_{0}}(\lambda)$, with $K_{0}=[-c_{0},c_{0}]$.

We prove a similar property for $u^{n}_{h}$: precisely, their essential supports in the time variables will be contained in almost disjoint intervals obtained by taking the image of \eqref{inttimecasmod} by the symplectomorphisme $\chi_{\partial}$.
For $0\leq n\leq N$, we define
\begin{equation}\label{definc}
I_{n}(c): =[2n-(1+c), 2n+(1+c)]\times 2(1+a)^{1/2}a^{1/2}.
\end{equation}
We prove the following:
\begin{prop}\label{propsuppunhx}
We have
\begin{multline}\label{inclussuppunh}
\text{ess-supp}(u^{n}_{h})\subset \Big\{(x,y,t)| \partial_{\tau}\theta(x,y,t,1,-(1+a)^{1/2})\in I_{n}(c_{0})\
\text{and}\\
\Big(\partial_{\eta}\theta-(1+a)^{1/2}\partial_{\tau}\theta+\frac{4}{3}na^{3/2}\mp\frac{2}{3}(-\zeta)^{3/2}\Big)(x,y,t,1,-(1+a)^{1/2})=0\Big\}.
\end{multline}
By $\text{ess-supp}(u^{n}_{h})$ we denote the closure of the set of points outside of which $u^{n}_{h}$ is $O_{L^{2}}(h^{\infty})$.
In particular, $(u^{n}_{h})_{n\in\{0,..,N\}}$ have almost disjoint supports in time and the tangential variable.
\end{prop}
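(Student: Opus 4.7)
The plan is to combine two independent sources of localization: the semi-classical wave front set inclusion $WF_{h}(u^{n}_{h})\subset\Lambda_{n}$ proved in Proposition \ref{lemlagrawfset}, and the essential support of the symbol $g^{n}_{h}(z,y,\eta)$ in its first variable, which by Proposition \ref{lemjfhn} together with Definition \ref{dfncuspn} and Proposition \ref{propimportant1} is contained in $K_{0}=[-c_{0},c_{0}]$. Each of the two conditions in \eqref{inclussuppunh} comes from one of these two sources; the almost-disjointness is then read off from the explicit shape of the resulting constraints.

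First I would derive the second condition in \eqref{inclussuppunh} purely from \eqref{lagranh}. The equation $\zeta+\eta^{2/3}\xi^{2}=0$ forces $\zeta\leq 0$ and $\xi=\pm\eta^{-1/3}(-\zeta)^{1/2}$; substituting this into the oscillatory combination $\eta^{1/3}\xi\zeta+\eta\xi^{3}/3$ yields $\mp\tfrac{2}{3}\eta^{-1/3}(-\zeta)^{3/2}$. Since $\theta$ and $\zeta$ are homogeneous of degrees $1$ and $2/3$ in $(\eta,\tau)$, the quantities $\partial_{\eta}\theta$, $\partial_{\tau}\theta$, and $\xi\zeta+\xi^{3}/3$ can be evaluated at $\eta=1$ without ambiguity, and the second defining equation of $\Lambda_{n}$ becomes exactly
\[
\Bigl(\partial_{\eta}\theta-(1+a)^{1/2}\partial_{\tau}\theta+\tfrac{4}{3}na^{3/2}\mp\tfrac{2}{3}(-\zeta)^{3/2}\Bigr)(x,y,t,1,-(1+a)^{1/2})=0.
\]

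Next I would extract the first condition from the essential support of $g^{n}_{h}$. On $\Lambda_{n}$ the first argument of $g^{n}_{h}$ reduces, using $\eta^{1/3}\xi=\pm(-\zeta)^{1/2}$ and homogeneity, to
$\bigl(\partial_{\tau}\theta\pm(-\zeta)^{1/2}\partial_{\tau}\zeta\bigr)/\bigl(2(1+a)^{1/2}a^{1/2}\bigr)-2n\in K_{0}$. The correction term is controlled using $-\zeta=a-x\zeta_{1}(y,1,-(1+a)^{1/2})+O(a^{2},x^{2})\leq a(1+O(a))$ on $\{x\geq 0,\zeta\leq 0\}$ (since $\zeta_{1}>0$ by \eqref{eqzet1tau} near $y=0$), together with $\partial_{\tau}\zeta=\partial_{\tau}\zeta_{0}+O(x)=2(1+a)^{1/2}+O(a)+O(x)$ obtained from $\zeta_{0}=-(\tau^{2}-\eta^{2})\eta^{-4/3}$. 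This gives $|(-\zeta)^{1/2}\partial_{\tau}\zeta|\leq 2(1+a)^{1/2}a^{1/2}(1+O(a))$, so the correction contributes at most $1$ (up to negligible errors) to the interval, enlarging $K_{0}$ into $[-(1+c_{0}),1+c_{0}]$, which is precisely $\partial_{\tau}\theta\in I_{n}(c_{0})$ after multiplying by $2(1+a)^{1/2}a^{1/2}$.

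Finally the almost-disjointness: the centres of $I_{n}(c_{0})$ are $4n(1+a)^{1/2}a^{1/2}$ with half-width $2(1+c_{0})(1+a)^{1/2}a^{1/2}$, and since $c_{0}\leq 3/8<1$, consecutive intervals overlap only on a bounded fraction of each interval, which is what is needed for the $L^{q}_{t}$ summation of $\sum_{n}u^{n}_{h}$ in the sequel. For the $y$-variable, the second condition in \eqref{inclussuppunh}, restricted to the caustic curve $x=C(y,1,-(1+a)^{1/2})$, reads $\partial_{\eta}\theta_{0}(y,\cdot,1,-(1+a)^{1/2})-(1+a)^{1/2}\partial_{\tau}\theta_{0}=-\tfrac{4}{3}na^{3/2}+\text{(bounded in }n\text{)}$; by \eqref{thetety}--\eqref{thettay} and the hypothesis $|b^{1/3}(y)-1|\leq 1/10$ the map $y\mapsto\partial_{\eta}\theta_{0}$ is a local diffeomorphism, and the $n$-dependent shift of order $a^{3/2}$ translates into a shift of order $a^{1/2}$ in $y$, yielding almost-disjoint $y$-supports spaced by $\simeq 4a^{1/2}$. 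The point where care is required is the uniform control of the correction $\pm(-\zeta)^{1/2}\partial_{\tau}\zeta$ over the whole essential support of $u^{n}_{h}$, uniformly in $0\leq n\leq N$; this is where the choice $c_{0}\leq 3/8$ combined with the asymptotic expansion of $\zeta$ built in Section \ref{sectphafcts} does the real work, ensuring that the resulting cusp indeed lives in the narrow neighbourhood described by \eqref{inclussuppunh}.
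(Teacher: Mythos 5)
Your argument for the two defining conditions is essentially the paper's own proof: you combine the localization on the Lagrangian $\Lambda_n$ from Proposition \ref{lemlagrawfset} (which gives $\zeta\leq 0$, $\eta^{1/3}\xi=\pm(-\zeta)^{1/2}$ and hence the second condition of \eqref{inclussuppunh}) with the fact that $g^{n}_{h}(\cdot,y,\eta)\in\mathcal{S}_{K_{0}}(\lambda/n)$, and then absorb the correction $\eta^{1/3}\xi\,\partial_{\tau}\zeta$ using $0\leq-\zeta\leq-\zeta_{0}=a\eta^{2/3}$ and $\partial_{\tau}\zeta=2(1+a)^{1/2}+O(a)$, exactly as in the paper (the paper phrases this as a contradiction with an auxiliary constant $c$, you phrase it as a direct triangle inequality; the content is identical). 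Your treatment of the time intervals $I_{n}(c_{0})$ and their overlap geometry is also the same as the paper's, which concludes by noting that a common point of two essential supports would force $\partial_{\tau}\theta_{0}\in I_{n}(c_{0})\cap I_{k}(c_{0})$, impossible for $|n-k|\geq 2$.

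The one concrete misstep is your explanation of the tangential separation: you claim that ``the $n$-dependent shift of order $a^{3/2}$ translates into a shift of order $a^{1/2}$ in $y$.'' The term $\tfrac{4}{3}na^{3/2}$ moves $\partial_{\eta}\theta_{0}$ by only $\tfrac43 a^{3/2}$ per reflection (and cumulatively by $O(a)$ since $N\simeq a^{-1/2}$), which is far too small to separate supports of width $\simeq a^{1/2}$. The actual source of the $\simeq 4a^{1/2}$ spacing in $y$ is the term $(1+a)^{1/2}\partial_{\tau}\theta$ in the second condition, fed by the time localization $\partial_{\tau}\theta\in I_{n}(c_{0})$ from the first condition (this is exactly how the paper obtains $\bar{y}_{n}=4na^{1/2}(1+a)-\tfrac43 na^{3/2}$ in Section \ref{sectNt}); the paper's proof of the proposition itself avoids a separate $y$-argument altogether and deduces the almost-disjointness of the $(y,t)$-supports directly from the non-overlap of the $I_{n}(c_{0})$. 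Since you already have both conditions and the interval geometry, the correct conclusion is available to you, but as written the mechanism you invoke for the $y$-variable does not prove it.
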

\begin{proof}
Let $\eta\in\text{supp}(\Psi)$. Since according to Lemma \ref{lemjfhn} we have $g^{n}_{h}(.,y,\eta)\in \mathcal{S}_{[-c_{0},c_{0}]}(\lambda/n)$, on the essential support (in the first variable) of the symbol $g^{n}_{h}$ the following holds 
\begin{equation}\label{ineg1}
\Big|\frac{(\partial_{\tau}\theta+\eta^{1/3}\xi\partial_{\tau}\zeta)(x,y,t,\eta,-\eta(1+a)^{1/2})}{2(1+a)^{1/2}a^{1/2}}-2n\Big|\\ \leq c_0.
\end{equation}
Let $c\in (0,1)$ be such that 
\begin{equation}\label{timea}
\Big|\frac{\partial_{\tau}\theta(x,y,t,\eta,-\eta(1+a)^{1/2})}{2(1+a)^{1/2}a^{1/2}}-2n\Big|\geq (1+c).
\end{equation}
We show that on the essential support of $u^{n}_{h}$ we must have $c\leq c_{0}$, therefore the contribution of $u^{n}_{h}$ will be $O_{L^{2}}(h^{\infty})$ for $(x,y,t)$ outside a set on which
\begin{equation}\label{unsupport}
\Big|\frac{\partial_{\tau}\theta(x,y,t,\eta,-\eta(1+a)^{1/2})}{2(1+a)^{1/2}a^{1/2}}-2n\Big|\leq (1+c_{0}).
\end{equation}
The inequalities \eqref{ineg1} and \eqref{timea} yield
\begin{align}\label{ineg2}
\nonumber
\Big|\frac{\eta^{1/3}\xi\partial_{\tau}\zeta(x,y,\eta,-\eta(1+a)^{1/2})}{2(1+a)^{1/2}a^{1/2}}\Big| & \geq \Big|\frac{\partial_{\tau}\theta(x,y,t,\eta,-\eta(1+a)^{1/2})}{2(1+a)^{1/2}a^{1/2}}-2n\Big|\\ 
\nonumber
& -\Big|\frac{(\partial_{\tau}\theta+\eta^{1/3}\xi\partial_{\tau}\zeta)(x,y,t,\eta,-\eta(1+a)^{1/2})}{2(1+a)^{1/2}a^{1/2}}-2n\Big|\\ & 
\geq (1+c-c_{0}).
\end{align}
On the other hand, on the Lagrangian $\Lambda_n$ introduced in \eqref{lagranh} we have
\begin{equation}\label{condlagra}
|\eta^{1/3}\xi|=(-\zeta)^{1/2}(x,y,\eta,-\eta(1+a)^{1/2}),\quad \zeta\leq 0.
\end{equation}
Since, according to Proposition \ref{lemlagrawfset}, outside any neighborhood of the Lagrangian set $\Lambda_n$ the contribution of $u^{n}_{h}$ equals $O_{L^{2}}(h^{\infty})$, it follows from \eqref{condlagra} and \eqref{ineg2} that if $(x,y,t)\in \text{ess-supp}(u^{n}_{h})$ and $c$ is such that \eqref{timea} holds, then
\begin{equation}\label{inegcco}
(1+c-c_{0})^{2}\leq 1\quad \text{which yields}  \quad c\leq c_{0}.
\end{equation}
Indeed,  to deduce \eqref{inegcco} we used $\partial_{\tau}\zeta=\partial_{\tau}\zeta_0+O(a)$ and
\[
0\leq -\zeta(x,y,\eta,-\eta(1+a)^{1/2})\leq-\zeta_0(x,y,\eta,-\eta(1+a)^{1/2})=a\eta^{2/3}.
\]
Therefore we proved that if $(x,y,t)\in \text{ess-supp}(u^{n}_{h})$, then \eqref{unsupport} holds, which yields, for $\eta\in \text{supp}(\Psi)$,
\begin{equation}\label{apartthett}
\partial_{\tau}\theta(x,y,t,\eta,-\eta(1+a)^{1/2}) \in I_{n}(c_{0}).
\end{equation}
Since $\partial_{\eta}\Phi_n$ is homogeneous of degree $0$ in $(\eta,\tau)$, we can take $(1,-(1+a)^{1/2})$ instead of $(\eta,-\eta(1+a)^{1/2})$ in \eqref{apartthett}. 
If $(y,t)\in \text{ess-supp}(u^n_h)\cap\text{ess-supp}(u^k_h)$ then we must have
\[
\partial_{\tau}\theta_0 (y,t,1,-(1+a)^{1/2})\in I_n(c_0)\cap I_k(c_0).
\]
The intersection is non-empty unless $|n-k|\leq1$.
\end{proof}

\begin{lemma}\label{lemlengsup}
Let
\begin{equation}\label{defjncontin}
J_{n}:=\Big\{t| \exists (x,y,t)\in \text{ess-supp}(u^n_h) \ \text{s.t.}\ \partial_{\tau}\theta(x,y,t,1,-(1+a)^{1/2})\in I_{n}(c_{0}/3-1)\Big\}
\end{equation}
and let $|J_{n}|$ denote its size. Then 
\begin{equation}\label{lenghtjn}
|J_{n}|\geq c_{0}a^{1/2}.
\end{equation}
Moreover, if $c_{0}$ is sufficiently small and if $(x,y,t)$ is such that $t\in J_{n}$, then 
\begin{equation}\label{xnotbond}
\frac 12 a \leq x \leq\frac 32 a.
\end{equation}
\end{lemma}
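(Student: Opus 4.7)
My plan is to start by obtaining an explicit parameterisation of the essential support of $u^n_h$. By Proposition \ref{propsuppunhx} combined with Euler's identity $\eta\partial_\eta\theta+\tau\partial_\tau\theta=\theta$ (applied at $(\eta,\tau)=(1,-(1+a)^{1/2})$ to the homogeneous-of-degree-one $\theta$), the Lagrangian condition rewrites as
\[
\theta(x,y,t,1,-(1+a)^{1/2})\mp \tfrac{2}{3}(-\zeta)^{3/2}(x,y,1,-(1+a)^{1/2})+\tfrac{4}{3}na^{3/2}=0.
\]
Since $\theta$ is linear in $t$ with $\partial_t\theta=\tau$, I can write $\theta(x,y,t,1,-(1+a)^{1/2})=-t(1+a)^{1/2}+\widetilde{F}(x,y)$, which gives two branches
\[
t^\pm(x,y)=\tfrac{1}{(1+a)^{1/2}}\Big(\widetilde{F}(x,y)+\tfrac{4}{3}na^{3/2}\mp\tfrac{2}{3}(-\zeta)^{3/2}\Big),
\]
and $\partial_\tau\theta(x,y,t,1,-(1+a)^{1/2})=t+F(x,y)$ for a smooth $F(x,y)$ independent of $t$. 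Thus $\text{ess-supp}(u^n_h)$ is parameterised by $(x,y,\pm)$ with $x\in[0,C(y,1,-(1+a)^{1/2})]$, the remaining constraint being the symbol non-vanishing condition \eqref{ineg1} at $t=t^\pm(x,y)$.

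For the inclusion $a/2\leq x\leq 3a/2$, I would subtract the hypothesis $t\in J_n$ (which reads $|\partial_\tau\theta-4n(1+a)^{1/2}a^{1/2}|\leq\tfrac{2c_0}{3}(1+a)^{1/2}a^{1/2}$) from \eqref{ineg1} to get $|\eta^{1/3}\xi\,\partial_\tau\zeta|\leq\tfrac{8c_0}{3}(1+a)^{1/2}a^{1/2}$. On the Lagrangian $\eta^{1/3}\xi=\pm(-\zeta)^{1/2}$, while $\partial_\tau\zeta=2(1+a)^{1/2}+O(x)$, so $-\zeta\leq\tfrac{16c_0^2}{9}a(1+O(a))$. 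Using $-\zeta=a-x\zeta_1+O(x^2)$ with $\zeta_1=b^{1/3}(y)+O(a)\in[9/10,11/10]+O(a)$ (from \eqref{eqzet1tau}), this gives $x\,b^{1/3}(y)\geq a(1-16c_0^2/9)+O(a^2)$; for $c_0\leq 3/8$ and $a$ small enough, it forces $x\geq a/2$. The upper bound $x\leq 3a/2$ is immediate from $x\leq C(y,1,-(1+a)^{1/2})=ab^{-1/3}(y)+O(a^2)\leq\tfrac{10}{9}a+O(a^2)$.

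To prove $|J_n|\geq c_0 a^{1/2}$, I would fix $x_0:=a$, which is admissible since $x_0\leq C(y,1,-(1+a)^{1/2})$ (as $b^{1/3}(y)\leq 11/10<10/9$), and study the pair of smooth maps $y\mapsto t^\pm(x_0,y)$ and $y\mapsto H^\pm(x_0,y):=t^\pm(x_0,y)+F(x_0,y)$. Using \eqref{eqtheto} to get $\partial_y\widetilde{F}(x_0,y)=\partial_y\widetilde{\theta}_0+O(x_0)=\pm 1+O(a)$ and \eqref{thettay} to get $|\partial_y F(x_0,y)|=|\partial^2_{y\tau}\theta|\leq 1/5+O(a)$, one obtains $|\partial_y t^\pm|=1+O(a)$ while $|\partial_y H^\pm|\leq 5/4+O(a)$. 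Hence the preimage of $I_n(c_0/3-1)$ under $H^\pm(x_0,\cdot)$ is an interval of length at least $|I_n(c_0/3-1)|/(5/4)=\tfrac{16c_0}{15}(1+a)^{1/2}a^{1/2}$, whose image under $t^\pm(x_0,\cdot)$ is an interval of length at least $c_0 a^{1/2}$ for $a$ small. The only delicate point is matching the constants: $c_0$ must be small enough that $16c_0^2/9$ is dominated by the gap $1-11/20$ in the lower bound on $x$, while the hypothesis $|b^{1/3}(y)-1|\leq 1/10$ on $[0,Y]$ is precisely what keeps $|\partial_y F|<|\partial_y t^\pm|$ so that $|\partial_y H^\pm|$ remains bounded away from zero.
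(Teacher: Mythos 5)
Your proof of \eqref{xnotbond} is correct and is essentially the paper's own argument (subtract the $J_n$-condition from \eqref{ineg1}, use $|\eta^{1/3}\xi|=(-\zeta)^{1/2}$ on the Lagrangian, $\partial_\tau\zeta=2(1+a)^{1/2}+O(a)$ and $-\zeta=a-x\zeta_1+O(a^2)$ with $\zeta_1=b^{1/3}(y)+O(a)$). The gap is in your proof of \eqref{lenghtjn}, and it lies in the choice of the slice $x_0=a$. Since $C(y,1,-(1+a)^{1/2})=a\,b^{-1/3}(y)+O(a^2)$, the hypothesis $|b^{1/3}(y)-1|\leq\frac{1}{10}$ allows $b^{1/3}(y)>1$, in which case $C(y)<a$: your parenthetical justification "$b^{1/3}(y)\leq 11/10<10/9$" goes the wrong way, the point $(a,y)$ then lies \emph{beyond} the caustic, $-\zeta(a,y,1,-(1+a)^{1/2})=a\,(1-b^{1/3}(y))+O(a^2)<0$, and $t^{\pm}(x_0,y)$ is not even defined. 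Moreover, even on the part of $[0,Y]$ where $b^{1/3}(y)<1$ so that $x_0=a<C(y)$, lying on the Lagrangian is not enough for membership in $\text{ess-supp}(u^n_h)$: the defining condition of $J_n$ requires an \emph{essential-support} point, so you must also verify the symbol-support condition \eqref{ineg1} at your constructed points. At $x_0=a$ the extra term in \eqref{ineg1} is $\sqrt{-\zeta/a}\,(1+O(a))\approx(1-b^{1/3}(y))^{1/2}$, which can be about $0.32$; added to the allowed $c_0/3$ this exceeds $c_0\leq\frac38$, so the symbol is $O(\lambda^{-\infty})$ there and those $t$-values cannot be asserted to belong to $J_n$. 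In short, the admissible $x$'s form a $y$-dependent window of width only $O(c_0^2a)$ just inside the caustic, and a globally fixed $x_0=a$ does not stay in it.

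The natural repair is to slice along the caustic itself, $x=C(y,1,-(1+a)^{1/2})$, where $-\zeta=0$ and \eqref{ineg1} reduces exactly to the condition $\partial_\tau\theta\in I_n(c_0)$ already implied by the $J_n$-constraint; this is what the paper does, choosing $y_\pm$ through the diffeomorphism $y\mapsto\partial_\eta\theta(C(y),y,\cdot,1,-(1+a)^{1/2})$ (nondegenerate by \eqref{thetety}), defining $t_\pm$ via $\partial^2_{t,\tau}\theta=1$, and comparing $t_+-t_-$ with $\bar t_+-\bar t_-$ through \eqref{thetety} and \eqref{thettay}. Your Euler-identity parameterisation and the Lipschitz comparison of $t^{\pm}$ with $H^{\pm}$ is a perfectly workable variant of that comparison, but it must be run on the caustic (or at an $x_0$ within $O(c_0^2a)$ of $C(y)$ chosen locally near the relevant $y$-interval, which has length only $O(a^{1/2})$ so the window barely moves); you should also state explicitly that $\partial_yH^{\pm}\geq 1-|b^{2/3}-1|-O(a)>0$, which is what guarantees $H^{\pm}$ actually sweeps out all of $I_n(c_0/3-1)$ before you apply the mean value theorem to bound the preimage from below.
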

\begin{proof}
We first proceed with \eqref{xnotbond}. Let $(x,y,t)\in \text{ess-supp}(u^{n}_{h})$ so that
\begin{equation}
\Big|\frac{\partial_{\tau}\theta(x,y,t,1,-(1+a)^{1/2})}{2(1+a)^{1/2}a^{1/2}}-2n\Big|\leq \frac{c_{0}}{3}.
\end{equation}
Using the inequality \eqref{ineg1} which holds on the support of the symbol $g^{n}_{h}$ together with the condition \eqref{condlagra} which assures the localization on the Lagrangian $\Lambda_{\Phi_{n}}$, we obtain
\begin{equation}
\Big|\frac{\partial_{\tau}\theta (x,y,t,1,-(1+a)^{1/2})}{2(1+a)^{1/2}a^{1/2}}-2n+\frac{((-\zeta)^{1/2}\partial_{\tau}\zeta) (x,y,t,1,-(1+a)^{1/2})}{2(1+a)^{1/2}a^{1/2}} \Big| \leq c_{0}.
\end{equation}
From the last two inequalities we obtain
\[
\Big|\frac{((-\zeta)^{1/2}\partial_{\tau}\zeta) (x,y,t,1,-(1+a)^{1/2})}{2(1+a)^{1/2}a^{1/2}} \Big| \leq \frac 43 c_0.
\]
We can estimate the left term in the last inequality, since using $-\zeta_0(x,y,1,-(1+a)^{1/2})=a$ yields
\[
-\zeta(x,y,1,-(1+a)^{1/2})=a-x\zeta_1(y,1,-(1+a)^{1/2})+O(a^2).
\]
On the other hand we also have 
\[
\partial_{\tau}\zeta(x,y,1,-(1+a)^{1/2})=2(1+a)^{1/2}+O(a),
\]
therefore, using \eqref{eqzet1tau} in the form $\zeta_1=b(y)\tau^2+O(a)$, we obtain
\[
xb^{1/3}(y)\geq a(1-(\frac 43 c_0)^2).
\]
For $y\in [0,Y]$ we have $|b^{1/3}(y)-1|\leq1/10$ and, if $c_0$ is chosen small enough, $0<c_0\leq \frac 38$,  we obtain \eqref{xnotbond}. The second inequality follows also from the estimate of $b^{1/3}(y)$.

Now we proceed with \eqref{lenghtjn}. We shall use the defining function for the caustic set $C(y,\eta,\tau)$ introduced at the end of the last section by
\[
-\zeta(x,y,\eta,\tau)=0\quad \text{if and only if}\quad x=C(y,\eta,\tau).
\]
Since $\partial_t\theta=\tau$, the derivative $\partial_{\eta}\theta(x,y,t,\eta,\tau)$ is independent of $t$. The application
\[
y\rightarrow \partial_{\eta}\theta(C(y,1,-(1+a)^{1/2}),y,.,1,-(1+a)^{1/2})
\]
is a diffeomorphisme in a neighborhood of $y=0$ since its derivative doesn't vanish.  Indeed, recall that
\[
\partial^{2}_{y,\eta}\theta_0(y,.,1,-(1+a)^{1/2})=b^{2/3}(y)+O(a)\in [(1-\frac{1}{10})^{2},(1+\frac{1}{10})^{2}]
\]
stays away from a neighborhood of $0$ and $C(y,1,-(1+a)^{1/2})=a b^{-1/3}(y)+O(a^2)$. This implies the existence of uniques points $y_{\pm}$ such that
\[
\partial_{\eta}\theta(C(y_{\pm},1,-(1+a)^{1/2}),y_{\pm},.,1,-(1+a)^{1/2})=(1+a)^{1/2}\bar{t}_{\pm}+\frac{4}{3}na^{3/2},
\]
where set
$\bar{t}_{\pm}:=(2n\pm \frac{c_{0}}{3})\times 2(1+a)^{1/2}a^{1/2}$ such that  $I_{n}(c_{0}/3-1)=[\bar{t}_{-},\bar{t}_{+}]$. Since $\partial^{2}_{t,\tau}\theta=1$, we can also determine uniques points $t_{\pm}$ such that 
\[
\partial_{\tau}\theta(C(y_{\pm},1,-(1+a)^{1/2}),y_{\pm},t_{\pm},1,-(1+a)^{1/2})=\bar{t}_{\pm}.
\]
From the last equality we deduce that $t_{\pm}$ belong to $J_{n}$, since from the choice of $y_{\pm}$ we have $\Big(C(y_{\pm},1,-(1+a)^{1/2}),y_{\pm},t_{\pm}\Big)\in \text{ess-supp}(u^{n}_{h})$. We will estimate from below the size of $J_{n}$ by $|t_{+}-t_{-}|$ and show that $a^{1/2}\lesssim |J_n|$ which will conclude the proof. We have
\begin{align}
\nonumber
\bar{t}_+-\bar{t}_- & =t_+-t_-+\int_{y_-}^{y_+}\partial^{2}_{y,\tau}\theta_0(y,1,-(1+a)^{1/2})dy +O(a)\\
& =t_+-t_-+\int_{y_-}^{y_+}(b^{2/3}(y)-1)dy +O(a),
\end{align}
and also
\begin{align}
\nonumber
\bar{t}_+-\bar{t}_- & =\int_{y_-}^{y_+}\partial^{2}_{y,\eta}\theta_0(y,1,-(1+a)^{1/2})dy+O(a)\\
& =\int_{y_-}^{y_+}b^{2/3}(y)dy +O(a).
\end{align}
Using the last two equalities together with the fact that $ 0.8\leq b^{2/3}(y)\leq 1.2$ for $y\in [0,Y]$ we obtain
\[
 t_+-t_-=y_+-y_-+O(a) \in \Big[\frac{1}{1.2},\frac{1}{0.8}\Big] (\bar{t}_+-\bar{t}_- ),
\]
which yields \eqref{lenghtjn}.
\end{proof}

\begin{lemma}\label{lemdisjsup}
Let $k\geq 0$ and $t\in J_k$, then 
the following holds
\begin{equation}
U_{h}(x,y,t)=u^{k}_{h}(x,y,t)+O_{L^{2}}(h^{\infty}).
\end{equation}
\end{lemma}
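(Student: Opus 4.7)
The plan is to work with the decomposition $U_h - u^k_h = \sum_{n\neq k} u^n_h$ and show, for every fixed $t\in J_k$ and every $n\in\{0,\dots,N\}\setminus\{k\}$, that $\|u^n_h(\cdot,\cdot,t)\|_{L^2(\Omega)}=O(h^\infty)$; summing over the (polynomially many) indices $n$ will then give the claim. By Proposition~\ref{propsuppunhx}, $u^n_h(\cdot,\cdot,t)$ is $O_{L^2}(h^\infty)$ unless there exists $(x,y)$ satisfying both $\partial_\tau\theta(x,y,t,1,-(1+a)^{1/2})\in I_n(c_0)$ and the Lagrangian equation
\[
\bigl(\partial_\eta\theta-(1+a)^{1/2}\partial_\tau\theta+\tfrac{4}{3}na^{3/2}\mp\tfrac{2}{3}(-\zeta)^{3/2}\bigr)(x,y,t,1,-(1+a)^{1/2})=0,
\]
so the task reduces to ruling out such a point $(x,y)$ when $n\neq k$.

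Assume for contradiction that such an $(x,y)$ exists. By the very definition of $J_k$ in Lemma~\ref{lemlengsup}, there is also a point $(x_0,y_0)$ with $(x_0,y_0,t)\in\mathrm{ess\text{-}supp}(u^k_h)$ and, moreover, $\partial_\tau\theta(x_0,y_0,t,1,-(1+a)^{1/2})\in I_k(c_0/3-1)$. First I would use the Lagrangian condition (with indices $n$ and $k$ respectively) together with the near-identity diffeomorphism property of $\chi_\partial$ exploited in Proposition~\ref{propnnn} (recall $\partial^2_{y,\eta}\theta_0=b^{2/3}(y)+O(\zeta_0)$, close to $1$ by the hypothesis on $b$) to localize $y$ near $\bar y_n=4na^{1/2}(1+O(a))$ and $y_0$ near $\bar y_k=4ka^{1/2}(1+O(a))$. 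Consequently $|y-y_0|\geq 4|n-k|a^{1/2}(1+O(a))$.

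Next I would compare the two values of $\partial_\tau\theta$: since $\partial_t\theta=\tau$, the $t$-dependence of $\partial_\tau\theta$ is a pure additive shift that cancels in the difference. The mean value theorem combined with \eqref{thettay} and the standing assumption $|b^{1/3}-1|\leq 1/10$ (which yields $|\partial^2_{y,\tau}\theta_0|=|1-(\tau/\eta)^{4/3}b^{2/3}(y)|+O(\zeta_0)\leq 0.21+O(a)$ on the range of interest) gives
\[
\bigl|\partial_\tau\theta(x,y,t,\cdot)-\partial_\tau\theta(x_0,y_0,t,\cdot)\bigr|\leq (0.21+O(a))\cdot 4|n-k|a^{1/2}+O(a).
\]
On the other hand, the gap between $I_k(c_0/3-1)=2a^{1/2}(1+a)^{1/2}[2k\pm c_0/3]$ and $I_n(c_0)=2a^{1/2}(1+a)^{1/2}[2n\pm(1+c_0)]$ is at least $(4|n-k|-2(1+\tfrac{4}{3}c_0))a^{1/2}\geq (4|n-k|-3)a^{1/2}$ for $c_0\leq 3/8$, and since $0.84|n-k|<4|n-k|-3$ whenever $|n-k|\geq 1$, the shifted interval cannot reach $I_n(c_0)$, giving the desired contradiction.

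The main obstacle I anticipate is the quantitative localization of $\mathrm{ess\text{-}supp}(u^n_h)$ in the $y$-variable, i.e.\ extracting an estimate of the form $y\in[\bar y_n-Ca^{1/2},\bar y_n+Ca^{1/2}]$ from the two characterizing conditions of Proposition~\ref{propsuppunhx}. This should be a direct consequence of the Lagrangian picture developed in Proposition~\ref{propnnn} and the arguments of Lemma~\ref{lemlengsup}, but careful bookkeeping of the error terms of sizes $O(a)$, $O(a^{1/2})$ and of the constants coming from $|b^{1/3}-1|\leq 1/10$ is needed to keep a strict inequality at the final step; this is what fixes the admissible range $c_0\leq 3/8$.
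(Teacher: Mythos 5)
Your overall plan (split $U_h-u^k_h=\sum_{n\neq k}u^n_h$ and kill each term via Proposition \ref{propsuppunhx}) matches the paper, but the mechanism you use to get the contradiction is a two\emph{-point} comparison, and its quantitative core does not hold. The step that fails is the localization estimate for $|y-y_0|$. First, you state it as a lower bound, while the mean-value step that follows needs an \emph{upper} bound. More importantly, the bound $|y-y_0|\le 4|n-k|a^{1/2}(1+O(a))$ implicitly used in your display is false: from Proposition \ref{propsuppunhx} one only gets that $\partial_\eta\theta$ at the two points lies within $2(1+c_0)a^{1/2}$, resp.\ $\frac{2c_0}{3}a^{1/2}$, of $\bar y_n$, resp.\ $\bar y_k$, and inverting $\partial^2_{y,\eta}\theta_0=b^{2/3}(y)+O(a)\in[0.81,1.21]+O(a)$ costs a factor up to $1/0.81$. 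Thus the correct bound is of the form $|y-y_0|\le \frac{(4|n-k|+2+\frac{8}{3}c_0)}{0.81}a^{1/2}+O(a)$, not $4|n-k|a^{1/2}+O(a)$. Feeding this into your mean-value estimate gives
\begin{equation*}
\bigl|\partial_\tau\theta(x,y,t,\cdot)-\partial_\tau\theta(x_0,y_0,t,\cdot)\bigr|\;\le\;\Bigl(\tfrac{0.21}{0.81}\bigl(4|n-k|+2+\tfrac{8}{3}c_0\bigr)\Bigr)a^{1/2}+O(a)\;\approx\;\bigl(1.04\,|n-k|+0.78\bigr)a^{1/2}
\end{equation*}
at $c_0=3/8$, whereas the gap between $I_k(c_0/3-1)$ and $I_n(c_0)$ is only $(4|n-k|-3)a^{1/2}$. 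For the critical case of adjacent cusps $|n-k|=1$ this reads $1.8\,a^{1/2}$ versus $1\cdot a^{1/2}$: no contradiction. So your argument, once the $O(a^{1/2})$ interval widths and the $b^{2/3}$ distortion are kept, does not exclude $n=k\pm1$ for the allowed range $c_0\le 3/8$ (it could only be salvaged by shrinking $c_0$ well below $3/8$, contrary to your closing remark that $c_0\le 3/8$ is what makes the constants close).

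The paper avoids transporting the $J_k$ information through the $y$-variable altogether: it argues at a single point $(x,y,t)\in\text{ess-supp}(u^n_h)$. There the support condition on $g^n_h$ pins $\partial_\tau\theta+\eta^{1/3}\xi\,\partial_\tau\zeta$ to within $2c_0a^{1/2}$ of $4na^{1/2}(1+a)^{1/2}$, while on the Lagrangian $\Lambda_n$ one has $|\eta^{1/3}\xi|=(-\zeta)^{1/2}\le a^{1/2}(1+O(a))$ and $\partial_\tau\zeta=2(1+a)^{1/2}+O(a)$, so the $\xi$-term contributes at most $1+O(a)$ in the normalized units; combined with $t\in J_k$ (which pins $\partial_\tau\theta$ to within $\frac{2c_0}{3}a^{1/2}$ of $4ka^{1/2}(1+a)^{1/2}$) this yields $2|n-k|\le 1+\frac{4}{3}c_0+O(a)<2$, hence $n=k$, uniformly for all $c_0\le 3/8$. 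If you want to keep your two-point framework you would have to exploit this same $(-\zeta)^{1/2}\le a^{1/2}$ cancellation rather than the crude mean-value transport; as written, the proposal has a genuine gap at $|n-k|=1$.
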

\begin{proof}
Let $(x,y,t)\in \text{ess-supp}(u^{n}_{h})$ for some $0\leq n\leq N$. We show that $n=k$. First notice that we must have 
\begin{equation}\label{estcaustic}
x\leq C(y,1,-(1+a)^{1/2})= a b^{-1/3}(y)+O(a^2),
\end{equation}
otherwise being localized outside a neighborhood of $\Lambda_n$. Suppose that $n\neq k$. We have to show that the contribution from $u^{n}_{h}$ is $O_{L^{2}}(h^{\infty})$. It is enough to prove that if $0<c_{0}\leq \frac 38$, then the inequality \eqref{ineg1} cannot hold. On $\text{ess-supp}(u^n_h)$ we have
\begin{align*}
c_{0}& \geq \Big|\frac{\partial_{\tau}\Phi^{n}(x,y,t,1,-(1+a)^{1/2}))}{2(1+a)^{1/2}a^{1/2}}-2n\Big|\\
& =\Big|\frac{\partial_{\tau}\Phi^{k}(x,y,t,1,-(1+a)^{1/2})}{2(1+a)^{1/2}a^{1/2} }-2k-2(n-k)\Big|\\ 
& \geq 2|n-k|-\Big|\frac{\partial_{\tau}\theta(x,y,t,1,-(1+a)^{1/2})}{2(1+a)^{1/2}a^{1/2} }-2k\Big|-\frac{|\xi|\partial_{\tau}\zeta(x,y,1,-(1+a)^{1/2})}{2(1+a)^{1/2}a^{1/2} }\\ & \geq 2-\frac{c_{0}}{3}-\frac{((-\zeta)^{1/2}\partial_{\tau}\zeta)(x,y,1,-(1+a)^{1/2})}{2(1+a)^{1/2}a^{1/2}},
\end{align*}
where we used that $t\in J_k$ to estimate the second term in the third line and that on the Lagrangian $\Lambda_n$ we have $|\xi|=(-\zeta)^{1/2}$. Using the asymptotic expansion of $\zeta$ and \eqref{eqzet1tau}, we obtain
\[
-\zeta(x,y,1,-(1+a)^{1/2})=a-xb^{1/3}(y)+O(a^2)\geq a(2-\frac{c_0}{3})^2,
\]
which cannot hold if $c_0$ is sufficiently small, $0<c_0\leq \frac 38$.
We must then have $n=k$.
\end{proof}
\begin{rmq}\label{rmqnorml2uh}
Lemma \ref{lemdisjsup} shows that $u^{n}_{h}$ have almost disjoint essential supports in the variables $(y,t)$. Therefore, Proposition \ref{propnorml2square} applies and shows that  $U_{h}$ defined in \eqref{uhdef} is also an approximate solution to \eqref{wavvv} in the sense that for $t\in [0,T]$ we have
\begin{equation}\label{squareguh}
\|\square_{g} U_{h}(.,t)\|_{L^{2}(\Omega)}\leq O(h^{-1})\|U_{h}(.,t)\|_{L^{2}(\Omega)}.
\end{equation}
In the rest of this section we prove that $U_{h}$ satisfies the Dirichlet boundary condition 
\[
U_{h}|_{\partial\Omega\times [0,T]}=O(h^{\infty}).
\]
\end{rmq}

\begin{prop}\label{propdiruh01}(Dirichlet boundary condition for $U_{h}$)
The approximate solution $U_{h}$ to \eqref{wavvv} defined in \eqref{uhdef} satisfies the Dirichlet boundary condition
\begin{equation}\label{diruh01}
U_{h}|_{\partial\Omega\times [0,T]}=O(h^{\infty}).
\end{equation}
\end{prop}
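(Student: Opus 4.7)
The plan is to decompose the boundary trace of $U_h$ into contributions from each cusp via Proposition \ref{propboundjtracepm}, exploit the telescoping cancellation \eqref{condbondgen} to reduce matters to two endpoint terms, and finally verify via essential-support arguments that these endpoint contributions are negligible on $[0,T]$.

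First, using the equality \eqref{egaltoprove} in Proposition \ref{propboundjtracepm},
\begin{equation*}
U_h(0,y,t)=\sum_{n=0}^{N}\bigl[J(\mathrm{Tr}_+(u^n_{F,h}))(y,t)+J(\mathrm{Tr}_-(u^n_{F,h}))(y,t)\bigr].
\end{equation*}
Regrouping consecutive pairs and applying the cancellation \eqref{condbondgen} for each $0\leq n\leq N-1$ gives the telescoping identity
\begin{equation*}
U_h(0,y,t)=J(\mathrm{Tr}_+(u^0_{F,h}))(y,t)+J(\mathrm{Tr}_-(u^N_{F,h}))(y,t)+O_{L^2}(h^{\infty}).
\end{equation*}
It remains to establish that both residual terms are $O(h^{\infty})$ uniformly for $t\in[0,T]$ and $y$ in the (compact, near $0$) essential support of the symbol $d$ defining $J$.

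The third step uses the explicit representation from Proposition \ref{lemjfhn}: each residual term is (up to harmless factors) an oscillatory integral whose amplitude is $I_{\pm}(g^{n}_{h}(\cdot,y,\eta))_{\eta}$ evaluated at $z=\frac{\partial_{\tau}\theta_{0}(y,t,\eta,-\eta(1+a)^{1/2})}{2(1+a)^{1/2}a^{1/2}}-2n$. Since $g^{n}_{h}(\cdot,y,\eta)\in\mathcal{S}_{K_{0}}(\lambda/n)$ with $K_{0}=[-c_{0},c_{0}]$, Proposition \ref{lemlocal} gives $I_{\pm}(g^n_h)_\eta \in \mathcal{S}_{K_{\mp 1}}(\lambda/n)$. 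Consequently the contribution is $O(h^\infty)$ unless the evaluation point $z$ lies in the appropriate translated window: for $n=0$ and the $+$ sign this requires $\partial_{\tau}\theta_{0}(y,t,1,-(1+a)^{1/2})\in[-(1+c_0),-(1-c_0)]\cdot 2a^{1/2}(1+a)^{1/2}$, and for $n=N$ and the $-$ sign it requires this same quantity to lie in $[(2N+1-c_0),(2N+1+c_0)]\cdot 2a^{1/2}(1+a)^{1/2}$. Because $\theta_{0}$ is linear in $t$ with $\partial_{t}\theta_{0}=\tau$, one has $\partial_{\tau}\theta_{0}(y,t,1,-(1+a)^{1/2})=t+\tilde g(y)$ with $\tilde g(0)=0$, so for $t\in[0,T]$ and $y$ in the support of $d$ this quantity ranges approximately over $[-O(a),\,\bar t_{N}+O(a)]$. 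The lower endpoint of the $n=0$ window is $-(1-c_{0})\cdot 2a^{1/2}(1+a)^{1/2}$, which is well below $-O(a)$, so the first residual is $O(h^\infty)$. For the $n=N$ term, the choice of $T$ in \eqref{choiceT} together with \eqref{choiceYNT} pins $\partial_\tau \theta_0$ at $y=y_N,\,t=T$ to $\bar t_N$, while the lower endpoint of the window equals $\bar t_N+(2-2c_0)a^{1/2}(1+a)^{1/2}$, which is strictly larger for $c_{0}<1$ by a margin of order $a^{1/2}$, whence this residual is also $O(h^\infty)$.

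The main obstacle is making the essential-support argument uniform in $y$ across the full range dictated by the support of $d$, which by Proposition \ref{propnnn} can be comparable to $Y$. One needs to show that the variation of $\tilde g(y)$ as $y$ traverses this range stays strictly smaller than the $O(a^{1/2})$ safety margin separating the range of $\partial_\tau \theta_0$ from the two forbidden windows. This is exactly where the assumption $|b^{1/3}(y)-1|\leq 1/10$ on $[0,Y]$ intervenes via \eqref{thettay}, ensuring that the Jacobian computed in \eqref{matrixderivchi} stays close to the identity and that $\tilde g$ does not drift by more than the tolerated amount, so that the two endpoint contributions indeed remain outside their respective support windows uniformly.
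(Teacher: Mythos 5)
Your overall skeleton matches the paper's: telescoping via \eqref{egaltoprove} and \eqref{condbondgen} to reduce $U_h(0,y,t)$ to $J(Tr_{+}(u^{0}_{F,h}))+J(Tr_{-}(u^{N}_{F,h}))$, dismissing the $n=0$ term as supported in times below $[0,T]$, and then arguing that the $n=N$ term is essentially supported outside $[0,T]$. The gap is in your treatment of the $n=N$ term. You only exploit the amplitude localization, i.e.\ the window $\partial_{\tau}\theta_{0}(y,t,1,-(1+a)^{1/2})\in 2a^{1/2}(1+a)^{1/2}[2N+1-c_0,2N+1+c_0]$, and then claim that for $t\in[0,T]$ the quantity $\partial_{\tau}\theta_{0}=t+\tilde g(y)$ stays below this window uniformly in $y$, because the ``drift'' of $\tilde g$ is smaller than the $O(a^{1/2})$ safety margin. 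That last claim is false: by \eqref{thettay}, $\partial_y\tilde g=\partial^{2}_{y,\tau}\theta_0=1-b^{2/3}(y)+O(a)$ is only bounded by roughly $1/4$ (not by $o(a^{1/2})$), while the relevant range of $y$ has fixed size of order $Y$ (indeed $\bar y_N\simeq C_0Y$), independent of $h$. Hence $\tilde g(y)-\tilde g(y_N)$ can be of order $Y$, which dwarfs the margin $2a^{1/2}(1-c_0)$ as $h\to 0$; for suitable $y$ and $t<T$ the value $t+\tilde g(y)$ does enter the forbidden window, so the time-window constraint alone cannot conclude.

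What is missing is the second localization constraint coming from the Lagrangian (Proposition \ref{propsuppunhx}): on the essential support one also has
\begin{equation*}
\partial_{\eta}\theta_0(y,t,1,-(1+a)^{1/2})=(1+a)^{1/2}\partial_{\tau}\theta_0(y,t,1,-(1+a)^{1/2})-\tfrac43 Na^{3/2}-\tfrac23(-\zeta_0)^{3/2},
\end{equation*}
which ties the $y$-localization to the time-localization. The paper argues by contradiction at $t=T$: if some $y_T$ satisfied both \eqref{tauT1} and \eqref{etaT1}, then comparing with the values at $y_N$ fixed by \eqref{choiceYNT}--\eqref{choiceT}, the difference of the $\eta$-derivatives between $y_T$ and $y_N$ equals (up to $O(a)$) $(1+a)^{1/2}$ times the difference of the $\tau$-derivatives, both of size about $2a^{1/2}(1\pm c_0)$; but by the mean value theorem with \eqref{thetety} and \eqref{thettay}, the first difference is $\geq \tfrac34|y_T-y_N|$ while the second is $\leq \tfrac14|y_T-y_N|$, forcing $3(1-c_0)\leq 1+c_0$, i.e.\ $c_0\geq \tfrac12$, contradicting $c_0\leq\tfrac38$. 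This interplay between $\partial^{2}_{y,\eta}\theta_0\approx b^{2/3}\approx 1$ and $\partial^{2}_{y,\tau}\theta_0\approx 1-b^{2/3}\approx 0$ is exactly what replaces your unjustified drift bound, and without it (or some equivalent use of the $\eta$-stationarity of the phase to localize $y$ near $y_N$ up to $O(a^{1/2})$) the argument does not close.
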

\begin{proof}
Recall from Proposition \ref{propboundjtracepm} that we have
\[
J(Tr_{-}(u^{n}_{F,h}))(y,t)+J(Tr_{+}(u^{n+1}_{F,h}))(y,t)=O(h^{\infty}),
\]
from which we deduce
\begin{equation}\label{diruhohinf}
U_{h}(0,y,t) =J(Tr_{+}(u^{0}_{F,h}))(y,t)+J(Tr_{-}(u^{N}_{F,h}))(y,t).
\end{equation}
We have to prove that the term in the last line is $O(h^{\infty})$ for $t\in [0,T]$. Notice that the first term in the right hand side of \eqref{diruhohinf} is essentially supported for $t$ in a small interval that doesn't meet $[0,T]$, hence its contribution is clearly trivial for $t\in [0,T]$. We now deal with the second term in \eqref{diruhohinf}: precisely, we shall prove that its essential support in time doesn't meet the interval $[0,T]$ with $T$ given by \eqref{choiceT}, which will imply that its contribution on $\partial\Omega\times [0,T]$ will be $O(h^{\infty})$ (see Remark \ref{rmqchoiceT}). Let us proceed with details: it will be enough to show that 
\begin{equation}\label{contraT}
T\notin \text{ess-supp}(J(Tr_{-}(u^{N}_{F,h}))).
\end{equation}
We will argue by contradiction and assume that \eqref{contraT} holds true. Then,  according to \eqref{ineg1}, on the support of $g^N_h$ we must have for some $y_T\in [0,Y]$ 
\begin{equation}\label{inegT1}
\Big|\frac{(\partial_{\tau}\theta_0+\xi\partial_{\tau}\zeta_0)(y_T,T,1,-(1+a)^{1/2})}{2(1+a)^{1/2}a^{1/2}}-2n\Big|\\ \leq c_0.
\end{equation}
Using that \eqref{condlagra} must hold on $\Lambda_N$ and since we consider here the contribution on the boundary corresponding to the "minus" sign (i.e corresponding to $\xi<0$), we deduce that in \eqref{inegT1} we must have
\[
\xi=-(-\zeta_0)^{1/2}(1,-(1+a)^{1/2})=-a.
\]
Replacing this in \eqref{inegT1} and using again $\partial_{\tau}\zeta_0=2(1+a)^{1/2}+O(a)$ we find that 
\begin{equation}\label{tauT1}
\partial_{\tau}\theta_0(y_T,T,1,-(1+a)^{1/2})\in 2a^{1/2}(1+a)^{1/2}[2N+1-c_0,2N+1+c_0].
\end{equation}
From Proposition \ref{propsuppunhx} we also must have
\begin{multline}\label{etaT1}
\partial_{\eta}\theta_0(y_T,T,1,-(1+a)^{1/2})=(1+a)^{1/2}\partial_{\tau}\theta_0(y_T,T,1,-(1+a)^{1/2})\\-\frac 43Na^{3/2}-\frac 23(-\zeta_0)^{3/2}(1,-(1+a)^{1/2}).
\end{multline}
In order to show that this is not possible we come back to the choice of $T$ in the end of Section \ref{sectNt}. Precisely, using \eqref{choiceT} and \eqref{choiceYNT} together with the estimate \eqref{estcaustic} on the defining function $C$ of the caustic set which holds uniformly for $y\in [0,Y]$, yields
\begin{equation}\label{tauN1}
\partial_{\tau}\theta_0(y_N,T,1,-(1+a)^{1/2})=\bar{t}_N+O(a),
\end{equation}
\begin{equation}\label{etaN1}
\partial_{\eta}\theta_0(y_N,T,1,-(1+a)^{1/2})=\bar{y}_N+O(a),
\end{equation}
where we recall that $\bar{T}_N=4Na^{1/2}(1+a)^{1/2}$ and $\bar{y}_N=(1+a)^{1/2}\bar{t}_N-\frac 43 Na^{3/2}$. Notice also that for $N$ defined by \eqref{condnnn} the contribution of $Na^{3/2}$ is $O(a)$. 

Using \eqref{tauT1}, \eqref{tauN1}, \eqref{etaT1} and \eqref{etaN1} gives
\begin{multline}\label{contraegalTN}
|\partial_{\eta}\theta_0(y_T,T,1,-(1+a)^{1/2})-\partial_{\eta}\theta_0(y_N,T,1,-(1+a)^{1/2})|=\\(1+a)^{1/2}
|\partial_{\tau}\theta_0(y_T,T,1,-(1+a)^{1/2})-\partial_{\tau}\theta_0(y_N,T,1,-(1+a)^{1/2})|+O(a)\\
\in [2a^{1/2}(1+a)^{1/2}(1-c_0),2a^{1/2}(1+a)^{1/2}(1+c_0)],
\end{multline}
(where the inclusion in the last line follows from \eqref{tauT1} and \eqref{tauN1}).
Evaluating the term in the first line of \eqref{contraegalTN} yields
\begin{multline}
|\partial_{\eta}\theta_0(y_T,T,1,-(1+a)^{1/2})-\partial_{\eta}\theta_0(y_N,T,1,-(1+a)^{1/2})|\\
=|y_T-y_N|\times \int_0^1 \partial^2_{y,\eta}\theta_0(\alpha y_T+(1-\alpha)y_N,T,1,-(1+a)^{1/2})d\alpha\\
=|y_T-y_N|\times\Big( \int_0^1b^{2/3}(\alpha y_T+(1-\alpha)y_N)d\alpha+O(a)\Big)\\
\geq \frac 34 |y_T-y_N|,
\end{multline}
where in third line we used the approximation \eqref{thetety} and in the last line we used that $y_T,y_N\in [0,Y]$ where $|b^{2/3}-1|<\frac 14$ and therefore $b^{2/3}(y)\geq \frac 34$ for $y\in [0,Y]$. The term in the second line of \eqref{contraegalTN} is estimated similarly by
\begin{multline}
|\partial_{\tau}\theta_0(y_T,T,1,-(1+a)^{1/2})-\partial_{\tau}\theta_0(y_N,T,1,-(1+a)^{1/2})|\\
=|y_T-y_N|\times \int_0^1 \partial^2_{y,\tau}\theta_0(\alpha y_T+(1-\alpha)y_N,T,1,-(1+a)^{1/2})d\alpha\\
=|y_T-y_N|\times \Big(\int_0^1|b^{2/3}(\alpha y_T+(1-\alpha)y_N)-1|d\alpha+O(a)\Big)\\
\leq \frac 14 |y_T-y_N|,
\end{multline}
where we took advantage this time of \eqref{thettay} and that  $|b^{2/3}(y)-1|<\frac 14$ for $y\in [0,Y]$.
Using \eqref{contraegalTN} and the last two inequalities yields:
\begin{equation}
\frac 34|y_T-y_N|\leq 2a^{1/2}(1+a)^{1/2}(1+c_0)
\end{equation}
and
\begin{equation}
\frac 14|y_T-y_N|\geq 2a^{1/2}(1+a)^{1/2}(1-c_0),
\end{equation}
therefore we find 
\begin{equation}
3(1-c_0)\leq 1+c_0,
\end{equation}
hence we must have $c_0\geq \frac 12$ which is a contradiction since in Section \ref{secmodelconstruction} we have chosen $0<c_0\leq \frac 38$. Therefore \eqref{contraT} can't hold and in the same way we can see that $\text{ess-supp}(J(Tr_{-}(u^N_{F,h}))$ doesn't meet the interval $[0,T]$. 
\end{proof}

\subsubsection{Strichartz estimates for the approximate solution $U_{h}$}\label{sectstri}
\begin{prop}
Let $r>4$ and $\epsilon>0$ be the one fixed in Section \ref{secdoide}. We define
\begin{equation}
\beta(r)=\frac{3}{2}(\frac{1}{2}-\frac{1}{r})+\frac{1}{6}(\frac{1}{4}-\frac{1}{r})
\end{equation}
and let $\beta\leq\beta(r)-\epsilon$. Then the approximate solution $U_{h}$ of the wave equation \eqref{wavvv} satisfies 
\begin{equation}\label{stricon}
h^{\beta}\|U_{h}\|_{L^{q}([0,T],L^{r}(\Omega))}\geq h^{-7\epsilon/8}\|U_{h}|_{t=0}\|_{L^{2}(\Omega)}\gg
\|U_{h}|_{t=0}\|_{L^{2}(\Omega)}.
\end{equation}
\end{prop}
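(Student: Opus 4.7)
The plan is to compare lower and upper bounds on the two sides of the asserted inequality. The key structural fact is the almost-disjointness of the essential time supports of the cusps $u^n_h$, established in Lemma \ref{lemdisjsup}, which lets us treat the mixed norm $\|U_h\|_{L^{q}([0,T],L^{r}(\Omega))}$ as a sum over non-interacting pieces, and lets us identify the behavior at $t=0$ with that of the single cusp $u^0_h$.

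First I would lower bound $\|U_h\|_{L^{q}([0,T],L^{r}(\Omega))}$ by restricting the time integral to the union of the sub-intervals $J_k$ from Lemma \ref{lemlengsup}. On $J_k$ the parametrix $U_h$ coincides with $u^k_h$ up to an $O_{L^2}(h^\infty)$ remainder (which is also $O_{L^r}(h^\infty)$ by the frequency localization of the construction), and the Appendix (Proposition \ref{propnorm}) provides a uniform single-cusp lower bound $\|u^k_h(\cdot,t)\|_{L^r(\Omega)} \gtrsim h^{-\beta(r)+2\epsilon}$ for all $k\in\{0,\dots,N\}$ and all $t\in J_k$. Using $|J_k|\geq c_0 a^{1/2}$ together with the relation $Na^{1/2}\simeq Y/4$ from Proposition \ref{propnnn}, summing gives
\[
\|U_h\|^{q}_{L^{q}([0,T],L^{r}(\Omega))} \;\geq\; \sum_{k=0}^{N}\int_{J_k}\|u^k_h(\cdot,t)\|^{q}_{L^{r}(\Omega)}\,dt \;\gtrsim\; h^{-q\beta(r)+2q\epsilon},
\]
so that $\|U_h\|_{L^{q}([0,T],L^{r}(\Omega))}\gtrsim h^{-\beta(r)+2\epsilon}$.

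Next I would upper bound $\|U_h|_{t=0}\|_{L^2(\Omega)}$. Since the essential support in time of $u^k_h$ is centered around $\simeq 4ka^{1/2}$, for every $k\geq 1$ its contribution at $t=0$ is $O_{L^2}(h^\infty)$. Hence $U_h(\cdot,0)=u^0_h(\cdot,0)+O_{L^2}(h^\infty)$, and $\|u^0_h(\cdot,0)\|_{L^2(\Omega)}$ is controlled directly from the oscillatory representation \eqref{uhndef} with $n=0$ and the uniform boundedness of the symbol $g^0_h$, yielding $\|U_h|_{t=0}\|_{L^2(\Omega)}\lesssim 1$.

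Combining the two bounds gives $h^{\beta}\|U_h\|_{L^{q}([0,T],L^{r}(\Omega))}/\|U_h|_{t=0}\|_{L^2(\Omega)}\gtrsim h^{\beta-\beta(r)+2\epsilon}$, and the hypothesis $\beta\leq\beta(r)-\epsilon$ produces the desired growth factor relative to $h^{-7\epsilon/8}$ once the precise $\epsilon$-power coming from the Appendix is tracked. The main technical obstacle is the uniform-in-$k$ single-cusp $L^r$ lower bound: because the amplitude $\varrho^n\in\mathcal{S}_{K_0}(\lambda/n)$ becomes less concentrated as $n$ grows (the effective scale parameter $\lambda/n$ degrades), one must show that this smearing does not destroy the cusp concentration around the caustic curve $\{\zeta=0\}$; the Appendix handles this through a stationary phase computation in the cusp-parametrizing variables, and it is the balance between the resulting $\epsilon$-losses and the gap $\beta(r)-\beta$ that ultimately closes the argument.
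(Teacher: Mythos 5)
Your overall architecture (restricting the time integral to the intervals $J_k$, invoking Lemma \ref{lemdisjsup} so that only $u^k_h$ survives on $J_k$, using $|J_k|\gtrsim c_0a^{1/2}$ and $N\simeq \tfrac{Y}{4}a^{-1/2}$, and reducing to single-cusp norms via Proposition \ref{propnorm}) is exactly the paper's, but the quantitative core of your argument has a genuine gap. First, you misquote the Appendix: Proposition \ref{propnorm} gives, for the \emph{unnormalized} cusp and $r>4$, the two-sided estimate $\|u^n_h(\cdot,t)\|_{L^r(\Omega)}\simeq h^{\frac13+\frac{5}{3r}}$, which is a small (positive power of $h$) quantity, not the lower bound $h^{-\beta(r)+2\epsilon}$ you assert; the latter exponent only appears after dividing by $\|U_h|_{t=0}\|_{L^2}$, i.e. for the $L^2$-normalized functions $v^n_{h,\epsilon}$ constructed at the end of Section \ref{secdoide}. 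Second, and decisively, the upper bound $\|U_h|_{t=0}\|_{L^2(\Omega)}\lesssim 1$ is far too lossy to close the argument. The paper needs, and Proposition \ref{propnorm} (the $r=2$ case) together with the choice $a\simeq \tfrac{\sqrt{C_0}}{2}Y^{1/2}h^{(1-\epsilon)/2}$ provides, the two-sided smallness $\|U_h|_{t=0}\|_{L^2}\simeq h\,a^{1/4}\simeq Y^{1/8}h^{1+\frac{1-\epsilon}{8}}$. The factor $h^{-7\epsilon/8}$ in \eqref{stricon} comes precisely from the numerology $\beta(r)+\frac13+\frac{5}{3r}=\frac98$, so that $h^{\beta}\|U_h\|_{L^qL^r}\gtrsim h^{\beta(r)-\epsilon+\frac13+\frac{5}{3r}}=h^{\frac98-\epsilon}=h^{-7\epsilon/8}\,h^{1+\frac{1-\epsilon}{8}}\simeq h^{-7\epsilon/8}\|U_h|_{t=0}\|_{L^2}$.

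If you instead combine the bounds as you state them, the argument fails: with your (incorrect) $L^r$ lower bound and $\|U_h|_{t=0}\|_{L^2}\lesssim 1$ you get a ratio $\gtrsim h^{\beta-\beta(r)+2\epsilon}\geq h^{\epsilon}$, and with the corrected $L^r$ bound but still only $\lesssim 1$ on the $L^2$ side you get $\gtrsim h^{9/8-\epsilon}$; both tend to zero as $h\to0$ and show no growth at all. So the missing ingredient is not a refinement of "tracking the $\epsilon$-power from the Appendix", but the explicit computation of $\|u^0_h(\cdot,0)\|_{L^2}\simeq ha^{1/4}$ (equivalently, the sharp smallness of the data in $L^2$) and its insertion into the comparison; that is where the loss of $\tfrac16(\tfrac14-\tfrac1r)$ derivatives, and hence the divergence factor $h^{-7\epsilon/8}$, actually originates.
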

\begin{rmq}
Notice that the condition $\beta<\beta(r)$ shows that $U_{h}$ can't satisfy the Strichartz inequalities of the free case, a loss of at least $\frac{1}{6}(\frac{1}{4}-\frac{1}{r})$ derivatives being unavoidable.
\end{rmq}
\begin{proof}
We estimate from below the $L^{q}([0,T],L^{r}(\Omega))$ norm of $U_{h}$ using Proposition \ref{propnorm} from the Appendix. The key point here is that $u^{n}_{h}$ have almost disjoint supports in time and in the tangential variable, hence we can bound from below the $L^{q}([0,T])$ norm by a sum of integrals over small intervals of time $J_{k}$ on which there will be only one cusp, $u^{k}_{h}$ to consider, the contribution from all the others being trivial. The intervals $J_{k}$ will be the ones defined in \eqref{defjncontin} for which Lemma \ref{lemlengsup} applies.
\begin{align}\label{bonor}
\nonumber
\|U_{h}\|^{q}_{L^{q}([0,T],L^{r}(\Omega))}=\int_{0}^{T}\|U_{h}(.,t)\|^{q}_{L^{r}(\Omega)}dt =
\int_{0}^{T}\|\sum_{n=0}^{N}u^{n}_{h}(.,t)\|^{q}_{L^{r}(\Omega)}dt \\
\nonumber
\geq\sum_{k\leq N}\int_{t\in J_{k}}\|\sum_{n=0}^{N}u^{n}_{h}(.,t)\|^{q}_{L^{r}(\Omega)}dt +O(h^{\infty})\\ 
\nonumber
\simeq \sum_{k\leq N}|J_{k}|\|u^{0}_{h}(.,0)\|^{q}_{L^{r}(\Omega)} +O(h^{\infty})\\
\simeq \frac{c_0}{4}Y \|u^{0}_{h}(.,0)\|^{q}_{L^{r}(\Omega)}+O(h^{\infty}).
\end{align}

Indeed, we have shown in Lemma \ref{lemdisjsup} that for $t$ belonging to small intervals of time $J_{k}$ there is only $u^{k}_{h}$ to be considered in the sum since the contribution from  each $u^{n}_{h}$ with $n\neq k$ is $O_{L^{2}}(h^{\infty})$. In the last line of \eqref{bonor} we have used Lemma \ref{lemlengsup} to estimate from below $|J_{k}|$, uniformly in $k$, by $c_{0}a^{1/2}$, where $c_{0}\in (0,3/8]$ is fixed, and the fact that $N\simeq \frac{Y}{4} a^{-1/2}$.

On the other hand, for $t\in J_{k}$, the "piece" of cusp $u^{k}_{h}(.,t)$ does not "live" enough to reach the boundary, as it is shown in the last part of Lemma \ref{lemlengsup}. Precisely, from \eqref{xnotbond} it follows that if $t\in J_{k}$ then $x\geq a/2$, therefore on the essential support of $u^{k}_{h}(.,t)$ the normal variable doesn't approach the boundary. This means that the restrictions of $u^{k}_{h}$ to $J_{k}$ have disjoint supports.

Moreover, we see from Proposition \ref{propnorm} that for $t\in J_{k}$ the $L^{r}(\Omega)$ norms of $u^{k}_{h}(.,t)$ are all equivalent to the $L^{r}(\Omega)$ norm of $u^{0}_{h}(.,0)$. Using Proposition \ref{propnorm} and \eqref{defa} we deduce that there are constants $C=C(Y)$ independent of $h$ such that for $r=2$ 
\begin{align}\label{estnorm21}
\|U_{h}|_{t=0}\|_{L^{2}(\Omega)}& \simeq h\|\partial_{t}U_{h}|_{t=0}\|_{L^{2}(\Omega)}\\
\nonumber
&\simeq \|u^{0}_{h}(.,0)\|_{L^{2}(\Omega)}\\
\nonumber
&\simeq
ha^{1/4}\simeq Y^{1/8}h^{1+\frac{1-\epsilon}{8}}.
\end{align}
while for $r>4$ we get, using also \eqref{bonor},
\begin{equation}\label{estnorm3}
\|U_{h}\|_{L^{q}([0,T],L^{r}(\Omega))}\geq C(Y)
h^{\frac{1}{3}+\frac{5}{3r}}, \ \text{where}\ C(Y)=(\frac{c_0}{4}Y)^{1/q}.
\end{equation}
We deduce that for $\beta\leq\beta(r)-\epsilon$ the following holds
\begin{align}
\nonumber
h^{\beta}\|U_{h}\|_{L^{q}([0,T],L^{r}(\Omega))} 
& \geq C(Y) h^{\beta(r)-\epsilon}h^{\frac{1}{3}+\frac{5}{3r}}\\
\nonumber
& =C(Y) h^{-7\epsilon/8+1+(1-\epsilon)/8} \\
\nonumber
& \simeq (C(Y)Y^{-1/8}) h^{-7\epsilon/8}\|u_{h}(.,0)\|_{L^{2}}\\ 
 & \gg (\|U_{h}|_{t=0}\|_{L^{2}(\Omega)}+h\|\partial_{t}U_{h}|_{t=0}\|_{L^{2}}),
\end{align}
where we recall that $Y$ was fixed, depending on $b$ and, hence, on $\Omega$ only.
\begin{rmq}
Using Proposition \ref{propnorm} we can also estimate the $L^{r}$ norms for $2\leq r<4$,\begin{equation}\label{estimnorm11}
\|U_{h}\|_{L^{q}([0,T],L^{r}(\Omega))}\geq C
h^{\frac{1}{r}+\frac{1}{2}+\frac{1-\epsilon}{2}(\frac{1}{r}-\frac{1}{4})},
\end{equation}
for some constant $C$ depending on $\Omega$ only. 
Notice however that in this case there is no contradiction when comparing \eqref{estimnorm11} to the 
usual Strichartz inequalities of the free case \eqref{striw} recalled in Proposition \ref{proprd}.
\end{rmq}
\end{proof}

\subsubsection{End of the proof of Theorem \ref{thms2}}

We can now  achieve the proof of Theorem \ref{thms2}. Let $\epsilon>0$ be the one fixed in Section \ref{secdoide} above and $N$ be given by \eqref{condaN}. Consider the $L^{2}$ normalized approximate solution to \eqref{wavvv} 
\[
v^{n}_{h,\epsilon}(x,y,t):=\frac{1}{\|U_{h}(.,0)\|_{L^{2}(\Omega)}}u^{n}_{h}(x,y,t),
\]
and set
\[
\tilde{V}_{h,\epsilon}(x,y,t):=\sum_{n=0}^{N}v^{n}_{h}(x,y,t)=\frac{1}{\|U_{h}(.,0)\|_{L^{2}(\Omega)}}U_{h}(x,y,t).
\]
We claim that $\tilde{V}_{h,\epsilon}$ $v^{n}_{h}$ satisfy the conditions of Theorem \ref{thms3}. Notice that this would achieve the proof of Theorem \ref{thmstrichartz}, since in Section \ref{secred} we showed that matters can be reduced to proving Theorem \ref{thms3}.
Indeed, it follows from Proposition \ref{propnorm} that for $4<r<\infty$, $v^{n}_{h,\epsilon}$ satisfy
\begin{equation}  
\left\{
\begin{array}{ll}
\|v^{n}_{h,\epsilon}(.,t)\|_{L^{r}(\Omega)}\geq C h^{-\frac{3}{2}(\frac{1}{2}-\frac{1}{r})-\frac{1}{6}(\frac{1}{4}-\frac{1}{r})+2\epsilon},\quad\text{for} \quad t\in J_{n}\\
\sup_{\epsilon>0}\|v^{n}_{h,\epsilon}(.,t)\|_{L^{2}(\Omega)}\leq 1,
\end{array}
\right.
\end{equation}
where in order to bound uniformly the $L^{2}$ norms we use the fact that for $t\in J_{n}$ and $0\leq n\leq N$
\begin{equation}
\|u^{n}_{h}(.,t)\|_{L^{2}(\Omega)}\simeq \|u^{0}_{h}(.,t)\|_{L^{2}(\Omega)}\simeq\|u^{0}_{h}(.,0)\|_{L^{2}(\Omega)}=\|U_{h}|_{t=0}\|_{L^{2}(\Omega)}.
\end{equation}
From Proposition \ref{propsuppunhx}, the cusps $v^{n}_{h,\epsilon}$ have almost disjoint essential supports in the time and tangential variable and for the normal variable in an interval of size $a\simeq \frac 12 Y^{1/2}h^{\frac{1-\epsilon}{2}}$. Moreover, the approximate solution $\tilde{V}_{h,\epsilon}$ is localized at spatial frequency $1/h$ and satisfies 
\begin{equation}
\|\tilde{V}_{h,\epsilon}\|_{L^{2}(\Omega)}\lesssim 1,\quad \|\partial_{y}\tilde{V}_{h,\epsilon}\|_{L^{2}(\Omega)}\lesssim\frac{1}{h},\quad \|\partial^{2}_{y}\tilde{V}_{h,\epsilon}\|_{L^{2}(\Omega)}\lesssim\frac{1}{h^{2}},
\end{equation}
with constants independent of $\epsilon$,
which follows from the spectral localization together with the uniform bounds of the derivatives of $g^{n}_{h}$ with respect to $y$. 
From Proposition \ref{propnorml2square} and the almost orthogonality property of the supports in $y$ we also obtain
\begin{equation}
\square_{g} \tilde{V}_{h,\epsilon}=O_{L^{2}(\Omega)}(1/h).
\end{equation}
Finally, Proposition \ref{propdiruh01} assures that the Dirichlet boundary condition is satisfied by the restriction of $\tilde{V}_{h,\epsilon}$ to the interval of time $[0,T]$:
\begin{equation}
\tilde{V}_{h,\epsilon}|_{\partial\Omega\times [0,T]}=O(h^{\infty}).
\end{equation}

\section{Appendix}
In this section we compute the $L^{r}$ norms of the phase integrals associated to a cusp type
Lagrangian. We prove the following:
\begin{prop}\label{propnorm}
For $t\in J_{n}$ defined in \eqref{defjncontin} the $L^{r}(\Omega)$ norms of a cusp $u^{n}_{h}(.,t)$ of the form \eqref{uhndef} satisfy, uniformly for $n\in\{0,..,N\}$,
\begin{itemize}
\item 
for $2\leq r<4$ 
\begin{equation}\label{estimnorm111}
\|u^{n}_{h}(.,t)\|_{L^{r}(\Omega)}\simeq
h^{\frac{1}{r}+\frac{1}{2}}a^{\frac{1}{r}-\frac{1}{4}},
\end{equation}
\begin{equation}\label{estnorm211}
\|u^{n}_{h}(.,0)\|_{L^{2}(\Omega)}\simeq
ha^{1/4};
\end{equation}
\item for $r>4$ 
\begin{equation}\label{estnorm3}
\|u^{n}_{h}(.,t)\|_{L^{r}(\Omega)}\simeq
h^{\frac{1}{3}+\frac{5}{3r}}.
\end{equation}
\end{itemize}
\end{prop}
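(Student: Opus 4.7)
The plan is to first reduce \eqref{uhndef} to a one-dimensional integral of Airy type. The substitution $\xi = (h/\eta)^{1/3}\tilde\xi$ turns the $\xi$-part of the phase (divided by $h$) into the standard Airy form $\tilde\xi\zeta/h^{2/3} + \tilde\xi^3/3$; evaluating the symbol $g^n_h$ at the associated critical points and absorbing lower-order terms, this yields modulo $O(h^\infty)$
\begin{equation*}
u^n_h(x,y,t) \simeq h^{1/3}\int e^{\frac{i}{h}\psi^n(x,y,t,\eta)}\,Ai\!\left(\frac{\zeta(x,y,\eta,-\eta(1+a)^{1/2})}{h^{2/3}}\right)\tilde g^n_h(x,y,\eta)\,d\eta,
\end{equation*}
with $\psi^n = \theta(x,y,t,\eta,-\eta(1+a)^{1/2}) + \frac{4n}{3}(-\zeta_0)^{3/2}$ and $\tilde g^n_h$ a smooth symbol supported near $\eta = 1$ whose seminorms are uniform in $n \leq N$ by Proposition~\ref{propimportant1}.

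\textbf{Step 2 (Pointwise estimates).} Next I would apply standard Airy asymptotics. In the illuminated zone $\zeta \leq -h^{2/3}$, splitting $Ai(\zeta/h^{2/3})$ into its two oscillatory branches of amplitude $h^{1/6}|\zeta|^{-1/4}$ and phase $\pm\frac{2}{3}|\zeta|^{3/2}/h$, the resulting $\eta$-phase $\psi^n \mp \frac{2}{3}(-\zeta)^{3/2}$ is, by homogeneity and the Friedlander computation recalled in Section~\ref{sectmodel}, essentially linear in $\eta$ near $\eta = 1$ with slope proportional to the signed distance from $(x,y)$ to one of the two cusp branches $y = y^\pm_n(x,t)$ obtained as the projection of $\Lambda_n$ onto $\Omega$. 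Each $\eta$-integral is therefore a rescaled Fourier transform of a bump in $\eta$, of size $O(1)$ on an $h$-tube around the corresponding branch and $O(h^\infty)$ outside. Combining gives $|u^n_h(x,y,t)| \lesssim h^{1/2}|\zeta|^{-1/4}$ on these tubes. In the caustic region $|\zeta| \lesssim h^{2/3}$ the two tubes merge into a single strip, $Ai = O(1)$, and $|u^n_h| \lesssim h^{1/3}$ on a set of area $\simeq h^{2/3}\cdot h = h^{5/3}$ in $(x,y)$ at fixed $t$.

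\textbf{Step 3 ($L^r$ computation).} The third step is to integrate. For $t \in J_n$, Lemmas~\ref{lemlengsup} and \ref{lemdisjsup} guarantee that the cusp stays strictly inside $\Omega$ and that only $u^n_h$ contributes. The change of variables $dx \simeq d|\zeta|$ along each branch is valid since $\partial_x\zeta = \zeta_1 + O(a) > 0$ by \eqref{eqzet1tau} combined with $b^{1/3} \geq 9/10$ on $[0,Y]$; including the transverse $y$-tube factor $h$, one obtains for $2 \leq r < 4$
\begin{equation*}
\|u^n_h(\cdot,t)\|_{L^r(\Omega)}^r \simeq h \cdot h^{r/2}\int_{h^{2/3}}^{a} s^{-r/4}\,ds \simeq h^{1+r/2}\,a^{1-r/4},
\end{equation*}
which yields \eqref{estimnorm111} (specializing to $r=2$ gives \eqref{estnorm211}). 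For $r > 4$ this illuminated integral is dominated by the caustic contribution
\begin{equation*}
\int_{|\zeta|\lesssim h^{2/3}}|u^n_h|^r\,dx\,dy \simeq h^{r/3}\cdot h^{5/3} = h^{r/3+5/3},
\end{equation*}
which yields \eqref{estnorm3}. A direct check at $r=4$ shows both regimes give order $h^3$, so they match consistently.

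\textbf{Main obstacle.} The hard part will be obtaining uniform-in-$n$ constants: this rests on the uniform $\mathcal{S}_{K_0}(\lambda/n)$-seminorm bounds for $\varrho^n$ from Proposition~\ref{propimportant1} and on the uniform ellipticity of $J$ over all cusp pieces for $n\leq N$ from Proposition~\ref{propnnn}. A more delicate point is the two-sided equivalence $\simeq$ (as opposed to $\lesssim$) in each estimate: one recovers the matching lower bound by isolating the leading non-degenerate saddle in $\eta$ at $\eta = 1$ on each $h$-tube and using that the Fourier transform of the non-negative cut-off $\Psi$ is bounded below near the origin.
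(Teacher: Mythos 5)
Your proposal is correct and follows essentially the same route as the paper's Appendix: the paper likewise splits into the caustic zone $-\zeta\lesssim Mh^{2/3}$ (rescaled variables $X$, $Y=hW$, uniform symbol bounds plus ellipticity of $\mu_{0}$ giving $h^{5/3+r/3}$), the illuminated zone $Mh^{2/3}<-\zeta\leq\frac{3}{2}a$ (stationary phase in $\xi$ — your Airy asymptotics — producing the two branches of amplitude $(-\zeta)^{-1/4}$ with $h$-scale localization coming from the rapidly decreasing $\eta$-Fourier transform of the compactly supported symbol), and the region $-\zeta>\frac{3}{2}a$, which is $O_{L^{2}}(h^{\infty})$ because it lies away from $\Lambda_{n}$, and then integrates exactly as you do. The only cosmetic differences are that the paper keeps the full asymptotic expansions (the sums in $k,j$ with the $L_{j}$ operators of the stationary phase theorem) instead of "absorbing lower-order terms", and obtains the uniform-in-$n$ control of the $\eta$-dependence of $(F_{\eta\lambda})^{*n}$ from the argument in Lemma \ref{lemlagrawfset} (with the lower bounds coming from the ellipticity of $\mu_{0}$, i.e. of $J$, via Proposition \ref{propnnn}) rather than from Proposition \ref{propimportant1} alone.
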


\begin{proof}
To estimate the $L^{r}(\Omega)$ norms of $u^{n}_{h}$,
we must distinguish three cases:
\begin{itemize}
\item If $-\zeta(x,y,1,-(1+a)^{1/2})\leq Mh^{2/3}$
for some constant $M\geq 1$, we make the changes of variables 
\[
h^{2/3}X=-\zeta(x,y,1,-(1+a)^{1/2}),
\]
which, by the implicit function theorem yields the smooth function $x=x(X,y)$ and
\[
Y(X,y,t)= \partial_{\eta}\theta(x,y,t,1,-(1+a)^{1/2})\\ -(1+a)^{1/2}\partial_{\tau}\theta(x,y,t,1,-(1+a)^{1/2})+\frac{4}{3}na^{3/2},
\]
for $x=x(X,y)$. The Jacobian of the last application can be easily estimated
\begin{align}
\nonumber
\frac{dY}{dy}(X,y,t) & =\Big(\partial^{2}_{y,\eta}\theta_0 -(1+a)^{1/2}\partial^{2}_{y,\tau}\theta_0\Big)(x(X,y),y,t,1,-(1+a)^{1/2})+O(x(X,y))\\
\nonumber
& =b^{2/3}(y)-(1+a)^{1/2}(b^{2/3}(y)-1)+O(a)+O(x(X,y))\\
& =1+O(a)+O(x(X,y)).
\end{align}
Write $y=y(X,Y,t)$ and consider the change of variables $\xi\rightarrow h^{1/3}\xi$. We set
\[
Q(X,u):=\frac{\xi^{3}}{3}-X\xi.
\]
For $\beta:\mathbb{R}\rightarrow [0,1]$ we also define
\begin{multline}
f^{n}_{\beta}(X,Y,t,\eta,h):=\int e^{i\eta
Q(X,\xi)}\beta(\xi)\\ \times g^{n}_{h}(\frac{\partial_{\tau}\theta(x(X,y),y(X,Y,t),t,1,-(1+a)^{1/2})}{2a^{1/2}(1+a)^{1/2}}+h^{1/3-\alpha/2}\xi-2n,y(X,Y,t),\eta,\lambda)d\xi.
\end{multline}
We introduce
\begin{equation}
F^{n}_{\beta}(X,Y,t,h):=\int e^{i\eta Y/h}f^{n}_{\beta}(X,Y,t,\eta,h)d\eta,
\end{equation}
and we make integrations by parts with respect to $\eta$ in order to compute
\begin{equation}
Y^{p}F^{n}_{\beta}(X,Y,t,h)=(ih)^{p}\int e^{i\eta Y/h}\partial^{p}_{\eta}f^{n}_{\beta}(X,Y,t,\eta,h)d\eta.
\end{equation}
The derivatives of $f_{\beta}^{n}$ are estimated using the precise form of the symbol $g^{n}_{h}$. Recall that 
\begin{equation}\label{sigmndefapp}
g^{n}_{h}(z,y,\eta)
\simeq
\Psi(\eta)\eta^{1/3}\Big(\sum_{k\geq 0} h^{k/2}a^{-k/2}\mu_{k}(y,\eta,h)\partial^{k}_{z}\varrho^{n}(z,\eta,\lambda)\Big),
\end{equation}
where 
\begin{equation}\label{convprodnvr}
\partial^{k}_{z}\varrho^{n}(z,\eta,\lambda)=(F_{\eta\lambda})^{*n}*\partial^{k}_{z}\varrho^{0}(.,\lambda)(z),\quad \forall k\geq 0,
\end{equation}
with $\varrho^{0}(.,\lambda)\in\mathcal{S}_{K_{0}}(\lambda)$ independent of $\eta$ and where $(F_{\eta\lambda})^{*n}$ is defined in \eqref{Fn}. The derivatives of $(F_{\eta\lambda})^{*n}$ with respect to $\eta$ were estimated in the proof of Lemma \ref{lemlagrawfset} and are bounded uniformly in $n$. On the other hand, $\mu_{k}$ are symbols of order $0$ and type $(1,0)$ hence all the derivatives $\partial^{p-l}_{\eta}(\eta^{1/3}\Psi(\eta)\mu_{k})(y,\eta,h))$ are bounded (on the support of $\Psi(\eta)$) by constants $C_{p-l}$. 
We estimate $\partial^{p}_{\eta}f^{n}_{\beta}$ as follows
\begin{multline}
\partial^{p}_{\eta}f^{n}_{\beta}(X,Y,t,\eta,h)
=\int e^{i\eta Q(X,\xi)}\beta(\xi)\sum_{l=0}^{p}C_{p}^{l}(iQ(X,\xi))^{p-l}\\
\times \partial^{l}_{\eta}g^{n}_{h}(\frac{\partial_{\tau}\theta(x(X,y),y(X,Y,t),t,1,-(1+a)^{1/2})}{2a^{1/2}(1+a)^{1/2}}+h^{1/3-\alpha /2}\xi-2n,\eta,\lambda) d\xi.
\end{multline}
To estimate $\partial^{l}_{\eta}g^{n}_{h}$ we use \eqref{sigmndefapp}. We first take $\beta(\xi):=1_{|\xi|\leq \sqrt{1+M}}$ and estimate the $L^{\infty}$ norms of $w^{p}F^{n}_{\beta}$ in this case, in order to use the multiplier's theorem to bound the $L^{r}$ norms of $u^{n}_{h}$. We have
\begin{multline}\label{bau}
\|(\frac{Y}{h})^{p}F^{n}_{\beta}(X,Y,t,h)\|_{L^{\infty}_{Y}}
\leq \sum_{k\geq 0}h^{k/2}a^{-k/2} \sum_{l=0}^{p}C(p,l)\sup_{|u|\leq\sqrt{1+M}}|Q(X,u)|^{l-j}\\
\times\sum_{j=0}^{l}\int_{\eta} \Big|\partial^{l-j}_{\eta}\Big(\Psi(\eta)\eta^{1/3}\mu_{k}(y,\eta,h)\Big)\Big|\\\times \Big|\partial^{j}_{\eta}\partial^{k}_{z}\varrho^{n}(\frac{\partial_{\tau}\theta(x(X,y),y(X,Y,t),t,1,-(1+a)^{1/2})}{2a^{1/2}(1+a)^{1/2}}+h^{1/3-\delta/2}u-2n,\eta,\lambda)\Big|d\eta ,
\end{multline}
therefore $\|(\frac{Y}{h})^{p}F^{n}_{\beta}(X,Y,t,h)\|_{L^{\infty}_{Y}}\leq C_{p,M}$
for some constants $C_{p,M}$ depending only of $p$ and $M$, where we estimated the derivatives of $\partial^{k}_{z}\varrho^{n}$ with respect of $\eta$ using the formula \eqref{convprodnvr} and the proof of Lemma \ref{lemlagrawfset}.

Since we need an uniform bound of $\|W^{p}F_{1}^{n}(X,hW,t,h)\|_{L^{\infty}}$ we have to consider also the case $1-\beta$. On the support of $(1-\beta)$ we have $\sqrt{1+M}\leq |\xi|\lesssim a^{1/2}h^{-1/3}$, and the contribution of the integral defining $u^n_h(.,t)$ is $O_{L^{2}}(h^{\infty})$. Indeed, in this case $|\xi^{2}-X|\geq 1$ and we conclude by repeated integrations by parts like in the proof of Lemma \ref{lemlagrawfset}.

As a result we obtain
\begin{multline}\label{estnorm}
\|u^{n}_{h}(.,t)\|^{r}_{L^{r}(-\zeta(x,y,1,-(1+a)^{1/2})\leq
Mh^{2/3},y)}\\
= h^{r/3}\int_{|X|\leq h^{2/3}}|F^{n}_{1}(X,Y,t,h)|^{r}\frac{dx(X,y)}{dX}\frac{dy(X,Y,t)}{dY}dYdX
\\
\simeq h^{5/3+r/3} \|F^{n}_{1}(X,hW,t,h)\|^{r}_{L^{r}(|X|\leq M,W)}\\\simeq
h^{5/3+r/3},
\end{multline}
where in the second line we used the estimates \eqref{bau}, while in the last line in \eqref{estnorm} we made the change of variables $Y=hW$ and we used the fact that $\mu_{0}$ is elliptic on the essential support of $u^n_h$ (this last fact follows from Proposition \ref{propnnn} and the choice of $N$ in \eqref{condnnn}).

\item If $-\zeta(x,y,1,-(1+a)^{1/2})\in(Mh^{2/3},\frac 32 a]$, where $M\gg1$ is large enough, we apply the stationary phase lemma.
\begin{prop}\label{thmphasestat}(\cite[Thm.7.7.5]{hormand})
Let $K\subset\mathbb{R}$ be a compact set, $f\in C^{\infty}_{0}(K)$,
$\phi\in C^{\infty}(\mathring{K})$ such that $\phi(0)=\phi'(0)=0$,
$\phi''(0)\neq 0$, $\phi'\neq 0$ in
$\mathring{K}\setminus 0\}$. Let $\omega\gg 1$, then for every
$k\geq 1$ we have
\begin{equation}
\int e^{i\omega\phi(u)}f(u)du\simeq \frac{(2\pi i
)^{1/2}e^{i\omega\phi(0)}}{(\omega\phi''(0))^{1/2}}\sum_{j\geq 0}\omega^{-j}L_{j}f.
\end{equation}
Here $C$ is bounded when $\phi$ stays in a bounded set in $C^{\infty}(\mathring{K})$, $|u|/|\phi'(u)|$ has a uniform bound and
\begin{equation}\label{hormand}
L_{j}f=\sum_{\nu-\mu=j}\sum_{2\nu\geq3\mu}\frac{i^{-j}2^{-\nu}}{\mu!\nu!}(\phi''(0))^{-\nu}\partial^{2\nu}
(\kappa^{\mu}f)(0).
\end{equation}
where
$\kappa(u)=\phi(u)-\phi(0)-\frac{\phi''(0)}{2}u^{2}$ vanishes of third order at $0$. 
\end{prop}
We make the change of variable $\xi=(-\zeta)^{1/2}(x,y,1,-(1+a)^{1/2})(\pm 1+u)$ to compute
the integral in $\xi$ in the formula defining $U^n_h(.,t)$. Recall that it writes
\begin{multline}\label{ispm}
\sum_{k\geq 0}h^{k/2}a^{-k/2}\Psi(\eta)\eta^{1/3}\mu_{k}(y,\eta,h)\\
\times \int e^{\frac{i}{h}\eta(\frac{\xi^{3}}{3}+\xi\zeta(x,y,1,-(1+a)^{1/2})}\partial^{k}_{z}\varrho^{n}(z+\xi/a^{1/2}-2n,\eta,\lambda)d\xi,
\end{multline}
where we set $z=\frac{\partial_{\tau}\theta(x,y,t,1,-(1+a)^{1/2})}{2a^{1/2}(1+a)^{1/2}}$ and where $\partial_z$ denotes the derivative in the first variable.
Applying Proposition \ref{thmphasestat} with 
\[
\omega:=(-\zeta)^{3/2}(x,y,1,-(1+a)^{1/2})/h\gg 1,
\]
\[
\phi_{\pm}(u):=\frac{u^{3}}{3}\pm u^{2},\quad 
\kappa_{\pm}(u):=u^{3}/3,
\]
the integrals in $\xi$ in \eqref{ispm} write, for each $k\geq 0$,
 \begin{multline}
(h\pi)^{1/2}\eta^{-2/3}(-\zeta)^{-1/4}e^{\mp\frac{2}{3}i\eta(-\zeta)^{3/2}/h
\pm\frac{i\pi}{4}}
\sum_{j\geq 0}h^{j}(-\zeta)^{-3j/2}\eta^{-j}\\ \times L_{j}\Big(\partial^{k}_{z}\varrho^{n}(z+(-\zeta)^{1/2}(\pm1+u)/a^{1/2}-2n,\eta,\lambda)\Big)_{\Big|_{u=0}},
\end{multline}
where $\zeta=\zeta(x,y,1,-(1+a)^{1/2})$.
Since $\partial^{k}_{z}\varrho^{n}$ writes as the convolution product \eqref{convprodnvr}, we have, for $z$ as above,
\begin{multline}\label{formunhalmfin}
u^{n}_{h}(x,y,t)=(\pi h)^{1/2}\sum_{k\geq 0}h^{k/2}a^{-k/2}\sum_{j\geq 0}h^{j}(-\zeta)^{-3j/2-1/4}L_{j}(\partial^{k}_{z}\varrho^{0})(.)*\\
*\Big(\int_{\eta}e^{\frac{i}{h}\eta(\theta(x,y,t,1,-(1+a)^{1/2})+\frac{4}{3}na^{3/2}\mp\frac{2}{3}(-\zeta)^{3/2}(x,y,1,-(1+a)^{1/2}))}\\\times \frac{\Psi(\eta)}{\eta^{j+1}}\mu_{k}(y,\eta,h)(F_{\eta\lambda})^{*n}d\eta\Big)_{\Big|_{(z\pm(-\zeta)^{1/2}(x,y,1,-(1+a)^{1/2})/a^{1/2}-2n)}}.
\end{multline}
We set
\[
F^{n,k,j}(.,y,\eta,h):=\frac{\Psi(\eta)}{\eta^{j+1}}\mu_{k}(y,\eta,h)(F_{\eta\lambda})^{*n}(.).
\]
Since $\Psi(\eta)$ is compactly supported for $\eta$ in a neighborhood of $1$, the Fourier transform $\widehat{F^{n,k,j}}$ with respect to $\eta$ of each $F^{n,k,j}$ is rapidly decreasing and the integral in $\eta$ in \eqref{formunhalmfin} becomes
\[
\widehat{F^{n,k,j}}\Big(.,y,\frac{(\theta(x,y,t,1,-(1+a)^{1/2})+\frac{4}{3}na^{3/2}\mp\frac{2}{3}(-\zeta)^{3/2}(x,y,t,1,-(1+a)^{1/2}))}{h},h\Big)
\]
We perform again the changes of variables $x\rightarrow x(X,y)$, $y\rightarrow y(X,Y,t)$ defined in the first part of the proof and take also $Y=hW$. Then \eqref{formunhalmfin} becomes
\begin{equation}
u^{n}_{h}(x,y,t):=\pi^{1/2}\sum_{k,j\geq 0}h^{k/2}a^{-k/2}u^{n,k,j}_{h}(x,y,t),
\end{equation}
where we set, for $z=\frac{\partial_{\tau}\theta(x(X,y),y(X,hW,t),t,1,-(1+a)^{1/2})}{2a^{1/2}(1+a)^{1/2}}$,
\begin{multline}
u^{n,k,j}_{h}(x,y,t):=h^{1/3}X^{-1/4-3j/2}L_{j}(\partial^{k}_{z}\varrho^{0}(.,\lambda))*\\
*\widehat{F^{n,k,j}}(.,y(X,hW,t),W\mp\frac{2}{3}X^{3/2},h)|_{(z+h^{1/3-\alpha/2}X^{1/2}-2n)}.
\end{multline}
We have to distinguish another two situations.
\begin{enumerate}
\item If $r>4$ then a simple computation shows that for $k\geq 0$ the $L^{r}$ norms of each $u^{n,k,j}$ can be estimated from above as follows
\begin{multline}
\|u^{n,k,j}_{h}(.,t)\|^{r}_{L^{r}((-\zeta)(x,y,t,1,-(1+a)^{1/2})\in(Mh^{2/3},\frac 32 a],y)} \\ \lesssim h^{r(1/2+j+5/3r-1/6-j)}\int_{M}^{\frac 32 ah^{-2/3}}X^{-r(1/4+3j/2)}dX\\
 \lesssim h^{r/3+5/3}\frac{M^{1-r(1/4+3j/2)}}{(r(1/4+3j/2)-1)},
\end{multline}
and since the operators $L_{j}$ are of order $2j$, for each $j$ there will be $2j$ terms in the sum  $\sum_{j}u^{n,k,j}_{h}$: summing up over $j\geq 0$ (taking $M\geq 2$ for example) and using that  $\varrho^{n}\in\mathcal{S}_{K_{0}}(\lambda/n)$ for $n\geq 1$, $\varrho^{0}\in\mathcal{S}_{K_{0}}(\lambda)$  (fact that assures uniform bounds for the derivatives $\partial^{k}_{z}\varrho^{n}$ for each $n\geq 0,k\geq 0$), yields
\[
\|\sum_{j\geq 0}u^{n,k,j}_{h}(.,t)\|_{L^{r}((-\zeta)(x,y,1,-(1+a)^{1/2})\in(Mh^{2/3},\frac 32 a],y)}\lesssim C(r)h^{r/3+5/3}, \quad C(r)=\frac{1}{r/4-1}.
\]
On the other hand
\begin{multline}
\|u^{n}_{h}(.,t)\|_{L^{r}((-\zeta)(x,y,1,-(1+a)^{1/2})\in(Mh^{2/3},\frac 32 a],y)}\\
\lesssim \sum_{k\geq 0} h^{k(1-\delta/2)}\|\sum_{j\geq 0}u^{n,k,j}_{h}(.,t)\|_{L^{r}((-\zeta)(x,y,1,-(1+a)^{2/3})\in(Mh^{2/3},\frac 32 a],y)}.
\end{multline}
For $k=0$, due to the ellipticity of the symbol $\mu_{0}(y,\eta,h)=a_{h}(y,\eta,-\eta(1+a)^{1/2})$ we can estimate also from below the $L^{r}$ norm of $u^{n,0,j}_{h}(.,t)$ by $C(r)h^{1/3+5/3r}$ and consequently we deduce
\[
\|u^{n,0}_{h}(.,t)\|_{L^{r}((-\zeta)(x,y,1,-(1+a)^{1/2})\in(Mh^{2/3},\frac 32 a],y)}\simeq C(r)h^{r/3+5/3}.
\]
Hence \eqref{estnorm211} follows. 

\item We deal now with the case $r\in [2,4)$. In this case we do not expect any contradiction to the usual Strichartz estimates, therefore we shall compute the $L^{2}$ norms of $\sum_{j\geq 0}u^{n,k,j}_{h}(.,t)$ only to prove \eqref{estnorm211}. We let \eqref{estimnorm111} as an exercise, since in the proof of Theorem \ref{thmstrichartz} we do not use this estimate. 

For $j=0$ we compute, as before
\[
\int_{M}^{\frac 32 ah^{-2/3}}X^{-1/2}dX\simeq 2(\frac 32 ah^{-2/3})^{1/2},
\]
while for $j\geq 1$ we have $2(1/4+3j/2)-1>0$ and
\[
\int_{M}^{\frac 32 ah^{-2/3}}X^{-2(1/4+3j/2)}dX=-\frac{X^{1-2(1/4+3j/2)}\Big|^{\frac 32 ah^{-2/3}}_{M}}{2(1/4+3j/2)-1}\simeq \frac{M^{1/2-3j}}{3j-1/2}.
\]
For $M\geq 2$ the sum of $\|u^{n,k,j}_{h}(.,y,t)\|^{2}_{L^{2}((-\zeta)(x,y,1,-(1+a)^{1/2})\in(Mh^{2/3},\frac 32 a],y)}$ over $j\geq 1$ (where for each $j$ we count $2j$ terms which appear in the expression of $L_{j}(\partial^{k}_{z}\varrho^{n}))$ is small enough compared to $\|u^{n,k,0}_{h}(.,y,t)\|^{2}_{L^{2}((-\zeta)(x,y,1,-(1+a)^{1/2})\in(Mh^{2/3},\frac 32 a],y)}$, while for $k=0$ we can estimate also from below, as before
\begin{align}
\nonumber
\|\sum_{j\geq 0}u^{n,0,j}_{h}(.,t)\|_{L^{2}((-\zeta)(x,y,1,-(1+a)^{1/2})\in(Mh^{2/3},\frac 32 a],y)} & \simeq h^{1/3+5/6}(\frac 32 ah^{-2/3})^{1/4}\\ & \simeq ha^{1/8}.
\end{align}
We have proved \eqref{estimnorm111} for $r=2$; for $r\in (2,4)$, as already mentioned, we do not give the proof since it is not used in the proof of Theorem \ref{thms3} and since it follows exactly in the same way as for $r=2$.
\end{enumerate}
\item In the last case corresponding to $(-\zeta)(x,y,1,-(1+a)^{1/2})\geq \frac 32 a$ we use Lemma \ref{lemlagrawfset} we obtain that the contribution in each $u^{n}_{h}(.,t)$ is $O_{L^{2}}(h^{\infty})$, since in this case we are localized away from a neighborhood of the Lagrangian $\Lambda_n$.
\end{itemize}
\end{proof}

\bibliography{ondes-bib2}

\begin{thebibliography}{10}

\bibitem{blsmso08}
Matthew~D. Blair, Hart~F. Smith, and Christopher~D. Sogge.
\newblock Strichartz estimates for the wave equation on manifolds with
  boundary.
\newblock {\em Ann. Inst. H. Poincar\'e Anal. Non Lin\'eaire},
  26(5):1817--1829, 2009.

\bibitem{botz}
Jean-Marc Bouclet and Nikolay Tzvetkov.
\newblock On global {S}trichartz estimates for non-trapping metrics.
\newblock {\em J. Funct. Anal.}, 254(6):1661--1682, 2008.

\bibitem{bulepl07}
Nicolas Burq, Gilles Lebeau, and Fabrice Planchon.
\newblock Global existence for energy critical waves in 3-{D} domains.
\newblock {\em J. Amer. Math. Soc.}, 21(3):831--845, 2008.

\bibitem{bupl07}
Nicolas Burq and Fabrice Planchon.
\newblock Global existence for energy critical waves in 3-d domains : Neumann
  boundary conditions.
\newblock {\em American J.of Math.}, 131(6):1715--1742, 2009.

\bibitem{cawe90}
Thierry Cazenave and Fred~B. Weissler.
\newblock The {C}auchy problem for the critical nonlinear {S}chr\"odinger
  equation in {$H\sp s$}.
\newblock {\em Nonlinear Anal.}, 14(10):807--836, 1990.

\bibitem{esk77}
Gregory Eskin.
\newblock Parametrix and propagation of singularities for the interior mixed
  hyperbolic problem.
\newblock {\em J. Analyse Math.}, 32:17--62, 1977.

\bibitem{give85}
J.~Ginibre and G.~Velo.
\newblock The global {C}auchy problem for the nonlinear {S}chr\"odinger
  equation revisited.
\newblock {\em Ann. Inst. H. Poincar\'e Anal. Non Lin\'eaire}, 2(4):309--327,
  1985.

\bibitem{give95}
J.~Ginibre and G.~Velo.
\newblock Generalized {S}trichartz inequalities for the wave equation.
\newblock In {\em Partial differential operators and mathematical physics
  ({H}olzhau, 1994)}, volume~78 of {\em Oper. Theory Adv. Appl.}, pages
  153--160. Birkh\"auser, Basel, 1995.

\bibitem{hormand}
Lars H{\"o}rmander.
\newblock {\em The analysis of linear partial differential operators. {III}},
  volume 274 of {\em Grundlehren der Mathematischen Wissenschaften [Fundamental
  Principles of Mathematical Sciences]}.
\newblock Springer-Verlag, Berlin, 1985.
\newblock Pseudodifferential operators.

\bibitem{doi}
Oana Ivanovici.
\newblock Counterexamples to {S}trichartz estimates for the wave equation in
  domains.
\newblock {\em Mathematische Annalen, to appear}, 2009.
\newblock see {\tt http://www.springerlink.com/content/f8242848k8360107}.

\bibitem{kap91}
L.~V. Kapitanski{\u\i}.
\newblock Estimates for norms in {B}esov and {L}izorkin-{T}riebel spaces for
  solutions of second-order linear hyperbolic equations.
\newblock {\em Zap. Nauchn. Sem. Leningrad. Otdel. Mat. Inst. Steklov. (LOMI)},
  171(Kraev. Zadachi Mat. Fiz. i Smezh. Voprosy Teor. Funktsii. 20):106--162,
  185--186, 1989.

\bibitem{lev90}
L.~V. Kapitanski{\u\i}.
\newblock Some generalizations of the {S}trichartz-{B}renner inequality.
\newblock {\em Algebra i Analiz}, 1(3):127--159, 1989.

\bibitem{ka87}
Tosio Kato.
\newblock On nonlinear {S}chr\"odinger equations.
\newblock {\em Ann. Inst. H. Poincar\'e Phys. Th\'eor.}, 46(1):113--129, 1987.

\bibitem{keta98}
Markus Keel and Terence Tao.
\newblock Endpoint {S}trichartz estimates.
\newblock {\em Amer. J. Math.}, 120(5):955--980, 1998.

\bibitem{gle06}
Gilles Lebeau.
\newblock Estimation de dispersion pour les ondes dans un convexe.
\newblock In {\em Journ\'ees ``\'Equations aux D\'eriv\'ees Partielles''
  (Evian, 2006)}. 2006.
\newblock see {\tt
  http://www.numdam.org/numdam-bin/fitem?id=JEDP\_2006\_\_\_\_A7\_0}.

\bibitem{ls95}
Hans Lindblad and Christopher~D. Sogge.
\newblock On existence and scattering with minimal regularity for semilinear
  wave equations.
\newblock {\em J. Funct. Anal.}, 130(2):357--426, 1995.

\bibitem{mel76}
R.~B. Melrose.
\newblock Equivalence of glancing hypersurfaces.
\newblock {\em Invent. Math.}, 37(3):165--191, 1976.

\bibitem{meta87}
Richard~B. Melrose and Michael~E. Taylor.
\newblock Boundary problems for the wave equations with grazing and gliding
  rays, 1987.

\bibitem{moseso}
Gerd Mockenhaupt, Andreas Seeger, and Christopher~D. Sogge.
\newblock Local smoothing of {F}ourier integral operators and
  {C}arleson-{S}j\"olin estimates.
\newblock {\em J. Amer. Math. Soc.}, 6(1):65--130, 1993.

\bibitem{sm98}
Hart~F. Smith.
\newblock A parametrix construction for wave equations with {$C\sp {1,1}$}
  coefficients.
\newblock {\em Ann. Inst. Fourier (Grenoble)}, 48(3):797--835, 1998.

\bibitem{smso95}
Hart~F. Smith and Christopher~D. Sogge.
\newblock On the critical semilinear wave equation outside convex obstacles.
\newblock {\em J. Amer. Math. Soc.}, 8(4):879--916, 1995.

\bibitem{smso06}
Hart~F. Smith and Christopher~D. Sogge.
\newblock On the {$L\sp p$} norm of spectral clusters for compact manifolds
  with boundary.
\newblock {\em Acta Math.}, 198(1):107--153, 2007.

\bibitem{stta02}
Gigliola Staffilani and Daniel Tataru.
\newblock Strichartz estimates for a {S}chr\"odinger operator with nonsmooth
  coefficients.
\newblock {\em Comm. Partial Differential Equations}, 27(7-8):1337--1372, 2002.

\bibitem{stri77}
Robert~S. Strichartz.
\newblock Restrictions of {F}ourier transforms to quadratic surfaces and decay
  of solutions of wave equations.
\newblock {\em Duke Math. J.}, 44(3):705--714, 1977.

\bibitem{tat02}
Daniel Tataru.
\newblock Strichartz estimates for second order hyperbolic operators with
  nonsmooth coefficients. {III}.
\newblock {\em J. Amer. Math. Soc.}, 15(2):419--442 (electronic), 2002.

\bibitem{vas06}
Andr{\'a}s Vasy.
\newblock Propagation of singularities for the wave equation on manifolds with
  corners.
\newblock {\em Ann. of Math. (2)}, 168(3):749--812, 2008.

\end{thebibliography}

\end{document}